\newcommand{\INN}[2]{\langle #1, #2 \rangle}
\theoremstyle{plain}
\newtheorem{thm}{Theorem}[section]
\newtheorem{pro}[thm]{Proposition}
\newtheorem{lem}[thm]{Lemma}
\newtheorem{cor}[thm]{Corollary}
\theoremstyle{definition}
\newtheorem{dfn}[thm]{Definition}
\newtheorem{ex}[thm]{Example}
\newtheorem{rem}[thm]{Remark}
\newtheorem{nota}{Notation}
\numberwithin{equation}{section}
\title[Double Satake diagrams and
canonical forms in compact symmetric triads]{Double Satake diagrams and
canonical forms in compact symmetric triads}
\author[K.~Baba and O.~Ikawa]{Kurando Baba and Osamu Ikawa}
\thanks{The second author 
was partially supported
by JSPS KAKENHI Grant Number 22K03285.
}
\subjclass[2010]{Primary: 53C35, Secondary: 17B22}
\keywords{compact symmetric triad, double Satake diagram, canonical form}
\address[K. Baba]{Faculty of Science and Technology, 
Tokyo University of Science, 
Yamazaki, Noda, Chiba, 278-8510, Japan. }
\email{kurando.baba@rs.tus.ac.jp}
\address[O. Ikawa]{Faculty of Arts and Sciences, Kyoto Institute of Technology, 
Matsugasaki, Sakyoku, Kyoto 606-8585, Japan. }
\email{ikawa@kit.ac.jp}
\begin{document}

\maketitle

\begin{abstract}
In this paper,
we first introduce the notion of double Satake diagrams
for compact symmetric triads.
In terms of this notion,
we give an alternative proof for the classification theorem
for compact symmetric triads, which was originally given by 
Toshihiko Matsuki.
Secondly, we introduce the notion of canonical forms
for compact symmetric triads,
and prove the existence of canonical forms
for compact simple symmetric triads.
We also give some properties
for canonical forms.
\end{abstract}

\setcounter{tocdepth}{1}
\tableofcontents

\section{Introduction}

A \textit{compact symmetric triad} is a triple $(G,\theta_{1},\theta_{2})$
which consists of a compact connected semisimple Lie group $G$ and two involutions $\theta_{1}$ and $\theta_{2}$
on it.
The study of compact symmetric triads
is motivated by the geometry of \textit{Hermann actions}.
If we denote by $K_{i}$ the identity component of
the fixed point subgroup of $G$ for $\theta_{i}$, $i=1,2$,
then $G/K_{i}$ is a compact Riemannian symmetric space.
The natural isometric action of $K_{2}$ on $G/K_{1}$ is called the
Hermann action of $K_{2}$ on $G/K_{1}$.
In the case when $\theta_{1}=\theta_{2}$,
we have $K_{1}=K_{2}$,
and the Hermann action
is nothing but the isotropy action on $G/K_{1}$.
It is known that Hermann actions have a geometrically good property,
the so-called hyperpolarity (\cite{HPTT}).
In general, an isometric action of a compact connected Lie group on a Riemannian manifold
is called \textit{hyperpolar},
if there exists a  connected closed flat submanifold that meets all orbits
orthogonally.
Such a submanifold is called a \textit{section} or a \textit{canonical form} of the action.
It is known that any section becomes a totally geodesic submanifold.
The classification of hyperpolar actions on compact Riemannian symmetric spaces
was given by Kollross (\cite{Kollross}).
By his classification
most of hyperpolar actions on compact Riemannian symmetric spaces
are given by Hermann actions.
It is expected that
a further development of the theory for compact symmetric triads
promotes a precise understanding of Hermann actions and their orbits.

In this paper,
we first study the classification theory for compact symmetric triads.
Matsuki (\cite{Matsuki97})
introduced a non-trivial equivalence relation $\sim$
on compact symmetric triads (Definition \ref{dfm:cst_sim}).
Roughly speaking,
if two compact symmetric triads are isomorphic with respect to $\sim$,
then their Hermann actions are essentially the same.
Our concern is to classify
the local isomorphism classes of compact symmetric triads.
For this,
we will generalize the method
to classify compact symmetric pairs
due to Araki (\cite{Araki}).
In fact, he obtained the local isomorphism classes
of compact symmetric pairs in terms  of Satake diagrams.
Then, we introduce the notion of double Satake diagrams
as a generalization of Satake diagrams (Definition \ref{dfn:2satake}).
The equivalence relation $\sim$
induces a natural equivalence relation on double Satake diagrams.
In fact,
the local isomorphism of a compact symmetric triad
determines that of a double Satake diagram,
and this correspondence becomes bijective
(Theorem \ref{thm:cst_dsatake_sim}, Lemma \ref{lem:cstdsatabij}).
By using the results
we obtain the classification
of the local isomorphism classes of compact symmetric triads,
namely,
the classification of double Satake diagrams
(Theorem \ref{thm:dsatake_classify})
derives that of
the local isomorphism classes
of compact symmetric triads
(Corollary \ref{cor:cst_classify}).
Our classification is listed in Table
\ref{table:rank_ord}.
In addition, 
for each isomorphism class of compact symmetric triads,
we can give the method to determine its rank and order
by means of the corresponding double Satake diagram,
which are also given in the same table.
Although the original classification of compact symmetric triads
was given by Matsuki (\cite{Matsuki97}),
such data are advantages of our classification.
Our motivation for giving the alternative proof comes from
the study of not only Hermann action but also
the classification of noncompact symmetric pairs
in terms of the theory for compact symmetric triads (\cite{BIS}).
The results of this paper
plays an important role in the forthcoming paper \cite{BIS2}.

Next we study canonical forms for compact symmetric triads.
Intuitively,
for the isomorphism class $[(G,\theta_{1},\theta_{2})]$,
a canonical form is defined as
a representative of $[(G,\theta_{1},\theta_{2})]$
which has the most easy structure in $[(G,\theta_{1},\theta_{2})]$.
Our precise definition of the canonical form of $[(G,\theta_{1},\theta_{2})]$
is given in Definition \ref{dfn:cst_can}.
We prove the existence of canonical forms
in the case when $G$ is simple (Theorem \ref{thm:cst_exist_can}).
As mentioned above,
if two compact symmetric triads are isomorphic to each other,
then so are their Hermann actions.
Nevertheless, we find that
there is a difference in understandability between their Hermann actions.
Hence it is necessary to choose a canonical form in the isomorphic class.
This is 
significant to study canonical forms of compact symmetric triads.
For example,
in the case when $[(G,\theta_{1},\theta_{2})]$ is commutable,
its canonical form is given by a commutative compact symmetric triad
$(G,\theta_{1}',\theta_{2}')\sim(G,\theta_{1},\theta_{2})$.
The second author (\cite{Ikawa})
developed a systematic method to study
orbits of Hermann actions for such $(G,\theta_{1}',\theta_{2}')$.
By applying his method, many mathematicians contributed to
study commutative Hermann actions
(for example, \cite{Ikawa3}, \cite{ITT}, \cite{O}, \cite{Ohno}, \cite{OSU}).
On the other hand,
the study of Hermann actions for non-commutable case
was found in \cite{GT} and \cite{Ohno21},
and their studies were based on the classification.
For further direction,
we should construct a unified theory for the geometry of Hermann actions whether $(G,\theta_{1},\theta_{2})$ is commutative or not,
and expect that the canonical forms plays a important role
in such a study.

The organization of this paper is as follows:
In Section \ref{sec:cst_fundamental},
we recall the notion of compact symmetric triads.
We define the rank and the order for a compact symmetric triad
and for its isomorphism class.
The hyperpolarity
of a Hermann action is also explained.
In Section \ref{sec:pre},
we recall that 
the local isomorphism classes
of compact symmetric pairs
correspond to
$\sigma$-systems and Satake diagrams.
In Section \ref{sec:2satake},
we first introduce the notion of double $\sigma$-systems.
We also define an equivalence relation on double $\sigma$-systems
based on the equivalence relation $\sim$
(Subsection \ref{sec:2sigroot}).
Next, we introduce the notion of double Satake diagrams
and their isomorphism classes for double $\sigma$-systems
(Subsection \ref{sec:sub2satake}).
In Section \ref{sec:cst_dsatake},
we introduce the notion of double Satake diagrams
for compact symmetric triads.
We prove Theorems \ref{thm:cst_dsatake_sim}
and \ref{thm:dsatake_classify} mentioned above.
We determine the rank and the order for the isomorphism classes of
compact simple symmetric triads
based on the classification.
Furthermore, 
we give special isomorphisms for compact simple symmetric triads
and determine which compact simple symmetric triads are self-dual.
In Section \ref{sec:cst_cf},
we introduce the notion of the canonicality for compact symmetric triads
(Subsection \ref{sec:cst_can}),
and prove its existence (Theorem \ref{thm:cst_exist_can} in Subsection \ref{sec:cst_can_exist}).
We also give some properties
for the rank and the order of a canonical form
(Subsection \ref{sec:rankorder}).

\section{Compact symmetric triads}\label{sec:cst_fundamental}

\subsection{Compact symmetric triads and Hermann actions}

Let $G$ be a compact connected semisimple Lie group,
and $\theta_{1}, \theta_{2}$ be two involutions of $G$.
We call the triplet $(G,\theta_{1},\theta_{2})$
a \textit{compact symmetric triad}.
Denote by $K_{i}$ ($i=1$, $2$)
the identity component of the fixed point subgroup of $\theta_{i}$
in $G$.
Then $G/K_{i}$ is a compact Riemannian symmetric space
with respect to the Riemannian metric induced from
a bi-invariant Riemannian metric on $G$.
The natural isometric action of $K_{2}$
on $G/K_{1}$ is called the \textit{Hermann action}.

In what follows,
we show that the Hermann action is a hyperpolar action.
In particular, we give its section.
We also recall an equivalence relation on compact symmetric triads
which was introduced by Matsuki (\cite{Matsuki}).
Then we observe that
two compact symmetric triads are isomorphic in his sense,
then their Hermann actions are essentially the same.

Let $\mathfrak{g}$ be the Lie algebra of $G$
and $\exp:\mathfrak{g}\to G$ denote the exponential map.
For each $i=1,$ $2$,
the differential $d\theta_{i}$ of $\theta_{i}$ at the identity element in $G$
gives an involution of $\mathfrak{g}$, which we write the same symbol $\theta_{i}$
if there is no confusion.
Let
$\mathfrak{g}=\mathfrak{k}_{1}\oplus\mathfrak{m}_{1}=\mathfrak{k}_{2}\oplus
\mathfrak{m}_{2}$ be
the canonical decompositions of $\mathfrak{g}$
for $\theta_{1}$ and $\theta_{2}$, respectively.
We set $\mathfrak{g}^{\theta_{1}\theta_{2}}=\{X\in\mathfrak{g}\mid
\theta_{1}\theta_{2}(X)=X\}=\{X\in\mathfrak{g}\mid\theta_{1}(X)=\theta_{2}(X)\}=\mathfrak{g}^{\theta_{2}\theta_{1}}$.
Then $\mathfrak{g}^{\theta_{1}\theta_{2}}$
becomes a $(\theta_{1},\theta_{2})$-invariant Lie subalgebra of $\mathfrak{g}$.
Clearly, $\theta_{1}=\theta_{2}$ holds on $\mathfrak{g}^{\theta_{1}\theta_{2}}$.
The canonical decomposition of $\mathfrak{g}^{\theta_{1}\theta_{2}}$
for $\theta_{1}|_{\mathfrak{g}^{\theta_{1}\theta_{2}}}$
is given by
\[
\mathfrak{g}^{\theta_{1}\theta_{2}}=(\mathfrak{k}_{1}\cap\mathfrak{k}_{2})\oplus(\mathfrak{m}_{1}\cap\mathfrak{m}_{2}).
\]
Let $\mathfrak{a}$ be a maximal abelian subspace of 
$\mathfrak{m}_{1}\cap\mathfrak{m}_{2}$.
It is known that $A:=\exp(\mathfrak{a})$
is closed in $G$.
Hence, $A$ becomes a compact connected abelian Lie subgroup of $G$,
that is, a toral subgroup.
The following theorem was proved by Hermann.
\begin{thm}[\cite{Hermann}]\label{thm:Hermann}
Retain the notation as above.
Then,
\[
G=K_{1}AK_{2}
=K_{2}AK_{1}.
\]
\end{thm}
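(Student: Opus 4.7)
Since $A=A^{-1}$ is an abelian subgroup, inversion in $G$ shows that $G=K_{2}AK_{1}$ holds if and only if $G=K_{1}AK_{2}$ does, so it is enough to establish the first equality. Reinterpreted on the compact Riemannian symmetric space $G/K_{1}$ (equipped with the metric induced from the bi-invariant one on $G$), this amounts to showing that every orbit of the isometric $K_{2}$-action meets the flat $\exp(\mathfrak{a})\cdot eK_{1}$.

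\medskip

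Given $g\in G$, the plan is to locate a point of $K_{2}\cdot gK_{1}$ closest to the origin $eK_{1}$ and analyse its variational characterization. By compactness of $K_{2}$, the function $f\colon K_{2}\to\mathbb{R}$ defined by $f(k)=d_{G/K_{1}}(eK_{1},kgK_{1})^{2}$ attains a minimum at some $k_{0}\in K_{2}$. Since $G/K_{1}$ is geodesically complete, one can then pick $X_{0}\in\mathfrak{m}_{1}$ with $|X_{0}|=d_{G/K_{1}}(eK_{1},p_{0})$ and $p_{0}:=k_{0}gK_{1}=\exp(X_{0})K_{1}$, so that $\gamma(t)=\exp(tX_{0})K_{1}$ is a minimizing geodesic from $eK_{1}$ to $p_{0}$.

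\medskip

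The critical-point condition for $f$ at $k_{0}$ is equivalent to the fundamental vector field $(Y)^{\ast}_{p_{0}}$ of the $K_{2}$-action being orthogonal to $\gamma'(1)$ for every $Y\in\mathfrak{k}_{2}$. Writing $\pi\colon G\to G/K_{1}$ for the projection, expressing the two tangent vectors as $\gamma'(1)=d\pi(L_{\exp(X_{0})\ast}X_{0})$ and $(Y)^{\ast}_{p_{0}}=d\pi(L_{\exp(X_{0})\ast}\mathrm{Ad}(\exp(-X_{0}))Y)$, invoking bi-invariance of the ambient metric, and using the identity $\mathrm{Ad}(\exp(X_{0}))X_{0}=X_{0}$, the orthogonality condition collapses to $\INN{X_{0}}{Y}=0$ for all $Y\in\mathfrak{k}_{2}$. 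Hence $X_{0}\in\mathfrak{k}_{2}^{\perp}=\mathfrak{m}_{2}$, and so $X_{0}\in\mathfrak{m}_{1}\cap\mathfrak{m}_{2}$. Applying to the symmetric pair $(\mathfrak{g}^{\theta_{1}\theta_{2}},\mathfrak{k}_{1}\cap\mathfrak{k}_{2})$ whose canonical decomposition $(\mathfrak{k}_{1}\cap\mathfrak{k}_{2})\oplus(\mathfrak{m}_{1}\cap\mathfrak{m}_{2})$ was recorded just before the theorem, the standard conjugacy of maximal abelian subspaces produces an element $k\in K_{1}\cap K_{2}$ with $\mathrm{Ad}(k)X_{0}\in\mathfrak{a}$. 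Then $kk_{0}g\in\exp(\mathfrak{a})K_{1}=AK_{1}$ and $kk_{0}\in K_{2}$, giving $g\in K_{2}AK_{1}$ as desired.

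\medskip

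The main obstacle is the variational step: the tangent-vector computations at $p_{0}$ and the exploitation of bi-invariance must be arranged just so as to reduce the perpendicularity condition to the transparent linear relation $\INN{X_{0}}{Y}=0$. Once this is in hand, the remainder is a direct appeal to the conjugacy of maximal abelian subspaces of $\mathfrak{m}_{1}\cap\mathfrak{m}_{2}$ under the identity component of $K_{1}\cap K_{2}$, a familiar ingredient from the theory of compact symmetric pairs.
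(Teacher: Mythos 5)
The paper does not prove this statement at all: it is quoted from Hermann's paper \cite{Hermann} and used as a black box, so there is nothing internal to compare against. Your argument is the standard and correct proof of the generalized Cartan decomposition for Hermann actions: reduce to $G=K_{2}AK_{1}$ by inversion, take a point $p_{0}=k_{0}gK_{1}=\exp(X_{0})K_{1}$ of the orbit $K_{2}\cdot gK_{1}$ closest to the origin, use the first variation to show the minimizing geodesic meets the orbit orthogonally, translate that orthogonality via $\mathrm{Ad}$-invariance and $\mathrm{Ad}(\exp X_{0})X_{0}=X_{0}$ into $\INN{X_{0}}{\mathfrak{k}_{2}}=\{0\}$, hence $X_{0}\in\mathfrak{m}_{1}\cap\mathfrak{m}_{2}$, and finally conjugate $X_{0}$ into $\mathfrak{a}$ by the identity component of $K_{1}\cap K_{2}$. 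The computation reducing the orthogonality condition to $\INN{X_{0}}{Y}=0$ is exactly right. Two small points deserve a word in a full write-up: (i) the squared distance function need not be differentiable at cut points, so the clean way to justify the critical-point condition is the first variation of energy along the variation $c_{t}(s)=\exp(stY)\gamma(s)$, whose endpoint energy dominates $d(eK_{1},\exp(tY)p_{0})^{2}$ and is minimized at $t=0$; and (ii) the conjugating element should be taken in the connected subgroup generated by $\exp(\mathfrak{k}_{1}\cap\mathfrak{k}_{2})$, which lies in $K_{1}\cap K_{2}\subset K_{1}\cap K_{2}$ so that both $kK_{1}=K_{1}$ and $kk_{0}\in K_{2}$ hold, as you use in the last step. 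With these details supplied the proof is complete.
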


Let $\pi_{1}:G\to G/K_{1}$ denote the natural projection.
Then $\pi_{1}(A)$ is a flat totally geodesic submanifold of $G/K_{1}$.
It follows from Theorem \ref{thm:Hermann}
that each $K_{2}$-orbit intersects $\pi_{1}(A)$.
In fact, it is shown that
$\pi_{1}(A)$ gives a section of the Hermann action $K_{2}$ on $G/K_{1}$.
Hence this action is hyperpolar.

Let $\mathrm{Aut}(G)$
denote the group of automorphisms on $G$
and $\mathrm{Int}(G)$
the group of inner automorphisms on $G$.
Then $\mathrm{Int}(G)$ is a normal subgroup of $\mathrm{Aut}(G)$.
Matsuki (\cite{Matsuki}) introduced the following equivalence relation
on compact symmetric triads.

\begin{dfn}\label{dfm:cst_sim}
Two compact symmetric triads $(G,\theta_{1},\theta_{2})$
and $(G,\theta_{1}',\theta_{2}')$
are \textit{isomorphic}, which we write
$(G,\theta_{1},\theta_{2})\sim(G,\theta_{1}',\theta_{2}')$,
if there exist
$\varphi\in\mathrm{Aut}(G)$
and $\tau\in\mathrm{Int}(G)$
satisfying the following relations:
\begin{equation}\label{eqn:dfn_cstsim}
\theta_{1}'=\varphi\theta_{1}\varphi^{-1},\quad
\theta_{2}'=\tau\varphi\theta_{2}\varphi^{-1}\tau^{-1}.
\end{equation}
\end{dfn}
Geometrically,
$(G,\theta_{1},\theta_{2})\sim(G,\theta_{1}',\theta_{2}')$
means that 
their Hermann actions are isomorphic.
Indeed, we obtain an isomorphism between them as follows.
Assume that there exist $\varphi\in\mathrm{Aut}(G)$
and $\tau\in\mathrm{Int}(G)$ as in \eqref{eqn:dfn_cstsim}.
Denote by $K_{i}'$ the identity component of the fixed point subgroup
of $\theta_{i}'$ in $G$.
We obtain an isometric isomorphism
$\Phi: G/K_{1}\to G/K_{1}'$
by
\[
\Phi: G/K_{1}\to G/K_{1}';~gK_{1}\mapsto \varphi(g)K_{1}'.
\]
Then the $K_{2}$-action on $G/K_{1}$
is isomorphic to the $\tau^{-1}(K_{2}')$-action on $G/K_{1}'$
via $\Phi$.

The Lie subgroups
$K_{1}\cap K_{2}$
and
$G^{\theta_{1}\theta_{2}}:=\{g\in G\mid \theta_{1}\theta_{2}
(g)=g\}$ of $G$ play
a fundamental role in the study of $(G,\theta_{1},\theta_{2})$.
However,
their Lie group structures
and even their Lie algebra structures
depend on the choice of a representative of its isomorphism class $[(G,\theta_{1},\theta_{2})]$. 
We expect that these structures are determined by
taking a representative with `easy structure' 
in
$[(G,\theta_{1},\theta_{2})]$.
We will introduce such a representative
as a canonical form in Section \ref{sec:cst_cf},
which is one of the main subjects of the present paper.

\subsection{Rank and order for compact symmetric triads}

We define the rank of a compact symmetric triad
$(G,\theta_{1},\theta_{2})$
as the dimension of a maximal abelian subspace
$\mathfrak{a}$ of $\mathfrak{m}_{1}\cap\mathfrak{m}_{2}$,
which we write $\mathrm{rank}(G,\theta_{1},\theta_{2})$.
Its well-definedness is shown by
the $\mathrm{Ad}(K_{1}\cap K_{2})$-conjugacy
for maximal abelian subspaces of
$\mathfrak{m}_{1}\cap\mathfrak{m}_{2}$,
where $\mathrm{Ad}$ denotes the adjoint representation of $G$.
Since the tangent space of $\pi_{1}(A)$ gives 
the normal space of 
a principal orbit
of $K_{2}$ on $G/K_{1}$,
the rank is equal
to the cohomogeneity of the action.
Hence,
for two compact symmetric triads
$(G,\theta_{1},\theta_{2})\sim(G,\theta_{1}',\theta_{2}')$,
the cohomogeneity of the $K_{2}$-action on $G/K_{1}$
is equal to that of the $K_{2}'$-action
on $G/K_{2}'$.
Namely, we have the following lemma.

\begin{lem}\label{lem:cst_rank}
Assume that
two
compact symmetric triads
$(G,\theta_{1},\theta_{2})$
and
$(G,\theta_{1}',\theta_{2}')$
satisfies
$(G,\theta_{1},\theta_{2})\sim
(G,\theta_{1}',\theta_{2}')$.
Then we have
$\mathrm{rank}(G,\theta_{1},\theta_{2})=\mathrm{rank}(G,\theta_{1}',\theta_{2}')$.
Hence we define the rank of
the isomorphism class
$[(G,\theta_{1},\theta_{2})]$
of $(G,\theta_{1},\theta_{2})$
as that of $(G,\theta_{1},\theta_{2})$,
which we write $\mathrm{rank}[(G,\theta_{1},\theta_{2})]$.
\end{lem}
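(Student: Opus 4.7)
The plan is to factor the isomorphism into two elementary moves and handle each in turn. Writing $\tau=\mathrm{Int}(g_{0})$ for some $g_{0}\in G$, I introduce the intermediate triad $(G,\theta_{1}'',\theta_{2}'')$ defined by $\theta_{i}''=\varphi\theta_{i}\varphi^{-1}$ for $i=1,2$. Then $\theta_{1}'=\theta_{1}''$ while $\theta_{2}'=\tau\theta_{2}''\tau^{-1}$, so it suffices to check invariance of the rank under (i) simultaneous conjugation of both involutions by $\varphi$ and (ii) inner conjugation of the second involution alone.

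Step (i) is straightforward. Since $\varphi$ is a Lie algebra automorphism, its differential sends the canonical decomposition for $\theta_{i}$ to that for $\theta_{i}''$, so $\mathfrak{m}_{i}''=\varphi(\mathfrak{m}_{i})$ and $\mathfrak{m}_{1}''\cap\mathfrak{m}_{2}''=\varphi(\mathfrak{m}_{1}\cap\mathfrak{m}_{2})$. Because $\varphi$ preserves the Lie bracket, a maximal abelian subspace $\mathfrak{a}\subset\mathfrak{m}_{1}\cap\mathfrak{m}_{2}$ maps to a maximal abelian subspace $\varphi(\mathfrak{a})$ of $\mathfrak{m}_{1}''\cap\mathfrak{m}_{2}''$ of the same dimension.

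Step (ii) is the main obstacle, because the intersections $\mathfrak{m}_{1}''\cap\mathfrak{m}_{2}''$ and $\mathfrak{m}_{1}'\cap\mathfrak{m}_{2}'=\mathfrak{m}_{1}''\cap\mathrm{Ad}(g_{0})\mathfrak{m}_{2}''$ are in general distinct subspaces with no obvious direct linear correspondence. I plan to bypass this algebraic difficulty by invoking the geometric reinterpretation noted in the paragraph preceding the lemma: the rank equals the cohomogeneity of the Hermann action. The left translation $L_{g_{0}}$ descends to an isometry of $G/K_{1}'=G/K_{1}''$ (using that the quotient metric is induced from a bi-invariant metric on $G$), and this isometry intertwines the $K_{2}''$-action on $G/K_{1}''$ with the $K_{2}'$-action on $G/K_{1}'$, since $K_{2}'=g_{0}K_{2}''g_{0}^{-1}$. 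Consequently the two Hermann actions share the same orbit structure and in particular the same cohomogeneity, yielding the desired equality of ranks and, immediately, the well-definedness of $\mathrm{rank}[(G,\theta_{1},\theta_{2})]$.
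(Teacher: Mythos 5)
Your proof is correct and follows essentially the same route as the paper: the paper also establishes the lemma by identifying the rank with the cohomogeneity of the Hermann action and observing that isomorphic triads yield isomorphic (hence equi-cohomogeneous) Hermann actions. Your only variation is handling the $\varphi$-conjugation step by a direct algebraic argument before invoking the geometric identification for the inner twist, which is a harmless refinement of the same idea.
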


Let $(G,\theta_{1},\theta_{2})$
be a compact symmetric triad
and $\mathfrak{a}$ a maximal abelian subspace of $\mathfrak{m}_{1}\cap\mathfrak{m}_{2}$.
Then there exists a maximal abelian subspace of $\mathfrak{m}_{i}$
containing $\mathfrak{a}$ for each $i=1$, $2$.
However,
$[\mathfrak{a}_{1},\mathfrak{a}_{2}]=\{0\}$
does not hold in general.
In the case when $[\mathfrak{a}_{1},\mathfrak{a}_{2}]\neq\{0\}$,
there exist no maximal abelian subalgebras $\mathfrak{t}$ of $\mathfrak{g}$
such that $\mathfrak{t}$ contains both $\mathfrak{a}_{1}$ and $\mathfrak{a}_{2}$.
On the other hand,
retaking $(G,\theta_{1},\theta_{2})$ in its isomorphism class if necessary,
the following lemma holds.

\begin{lem}\label{lem:lemm2.5}
There exists a compact symmetric triad
$(G,\theta_{1},\theta_{2}')\sim(G,\theta_{1},\theta_{2})$
and a maximal abelian subalgebra $\mathfrak{t}$ of $\mathfrak{g}$
satisfying the following conditions$:$
\begin{enumerate}
\item[$(1)$] $\mathfrak{a}_{1}:=\mathfrak{t}\cap\mathfrak{m}_{1}$
and $\mathfrak{a}_{2}':=\mathfrak{t}\cap\mathfrak{m}_{2}'$
are maximal abelian subspaces of $\mathfrak{m}_{1}$
and $\mathfrak{m}_{2}'$, respectively.
In particular, $\mathfrak{t}$ is $(\theta_{1},\theta_{2}')$-invariant.
\item[$(2)$] $\mathfrak{a}:=\mathfrak{t}\cap(\mathfrak{m}_{1}\cap\mathfrak{m}_{2}')$
is a maximal abelian subspace of $\mathfrak{m}_{1}\cap\mathfrak{m}_{2}'$.
\end{enumerate}
\end{lem}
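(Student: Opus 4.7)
My plan is to conjugate $\theta_{2}$ by an inner automorphism centralizing $\mathfrak{a}$, so that a single maximal torus of $\mathfrak{g}$ becomes compatible with both involutions. I would begin by fixing a maximal abelian subspace $\mathfrak{a}\subseteq\mathfrak{m}_{1}\cap\mathfrak{m}_{2}$, extending it to $\mathfrak{a}_{1}\subseteq\mathfrak{m}_{1}$ maximal abelian, and then extending $\mathfrak{a}_{1}$ to a maximal abelian subalgebra $\mathfrak{t}\subseteq\mathfrak{g}$. Since $\mathfrak{z}_{\mathfrak{g}}(\mathfrak{a}_{1})=\mathfrak{a}_{1}\oplus\mathfrak{z}_{\mathfrak{k}_{1}}(\mathfrak{a}_{1})$ is $\theta_{1}$-stable and contains $\mathfrak{t}$, an easy computation using $\mathfrak{a}_{1}\subseteq\mathfrak{t}$ shows that $\mathfrak{t}$ is $\theta_{1}$-invariant with $\mathfrak{t}\cap\mathfrak{m}_{1}=\mathfrak{a}_{1}$. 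In parallel I would extend $\mathfrak{a}$ to $\mathfrak{a}_{2}\subseteq\mathfrak{m}_{2}$ maximal abelian and $\mathfrak{a}_{2}$ to a maximal abelian subalgebra $\mathfrak{s}\subseteq\mathfrak{g}$; by the same reasoning $\mathfrak{s}$ is $\theta_{2}$-invariant with $\mathfrak{s}\cap\mathfrak{m}_{2}=\mathfrak{a}_{2}$.

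The core step is to conjugate $\mathfrak{s}$ onto $\mathfrak{t}$ while keeping $\mathfrak{a}$ pointwise fixed. Both $\mathfrak{t}$ and $\mathfrak{s}$ contain $\mathfrak{a}$ and are maximal abelian in $\mathfrak{g}$, so both are Cartan subalgebras of the compact reductive subalgebra $\mathfrak{l}:=\mathfrak{z}_{\mathfrak{g}}(\mathfrak{a})$. By the conjugacy of Cartan subalgebras inside a compact connected Lie group, there exists $k$ in the identity component of $Z_{G}(\mathfrak{a})$ with $\mathrm{Ad}(k)\mathfrak{s}=\mathfrak{t}$. I would then set $\theta_{2}':=\mathrm{Int}(k)\,\theta_{2}\,\mathrm{Int}(k)^{-1}$; taking $\varphi=\mathrm{id}$ and $\tau=\mathrm{Int}(k)$ in Definition~\ref{dfm:cst_sim} gives $(G,\theta_{1},\theta_{2}')\sim(G,\theta_{1},\theta_{2})$.

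To verify condition (1), I observe that $\mathfrak{m}_{2}'=\mathrm{Ad}(k)\mathfrak{m}_{2}$, so $\mathfrak{t}\cap\mathfrak{m}_{2}'=\mathrm{Ad}(k)(\mathfrak{s}\cap\mathfrak{m}_{2})=\mathrm{Ad}(k)\mathfrak{a}_{2}=:\mathfrak{a}_{2}'$ is maximal abelian in $\mathfrak{m}_{2}'$, while the $\theta_{2}'$-invariance of $\mathfrak{t}$ transports from the $\theta_{2}$-invariance of $\mathfrak{s}$. For condition (2), since $k\in Z_{G}(\mathfrak{a})$ acts trivially on $\mathfrak{a}$ we have $\mathfrak{a}\subseteq\mathrm{Ad}(k)\mathfrak{a}_{2}=\mathfrak{a}_{2}'$, so $\mathfrak{a}\subseteq\mathfrak{a}_{1}\cap\mathfrak{a}_{2}'=\mathfrak{t}\cap\mathfrak{m}_{1}\cap\mathfrak{m}_{2}'$. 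The subspace $\mathfrak{a}_{1}\cap\mathfrak{a}_{2}'$ is abelian in $\mathfrak{m}_{1}\cap\mathfrak{m}_{2}'$, and by Lemma~\ref{lem:cst_rank} applied to $(G,\theta_{1},\theta_{2}')\sim(G,\theta_{1},\theta_{2})$ any maximal abelian subspace of $\mathfrak{m}_{1}\cap\mathfrak{m}_{2}'$ has dimension $\dim\mathfrak{a}$; a dimension squeeze then forces $\mathfrak{a}_{1}\cap\mathfrak{a}_{2}'=\mathfrak{a}$ and this is maximal abelian in $\mathfrak{m}_{1}\cap\mathfrak{m}_{2}'$.

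The main obstacle I anticipate is ensuring the conjugating element lies in $Z_{G}(\mathfrak{a})^{0}$ rather than merely in $G$: it is precisely this centralizer property that preserves the $\mathfrak{a}_{1}$-side of the structure untouched and lets the rank-invariance lemma close the dimension squeeze in (2). The conceptual heart of the argument is the observation that the two a priori unrelated Cartan subalgebras $\mathfrak{t}$ and $\mathfrak{s}$ of $\mathfrak{g}$ are both Cartan subalgebras of the common centralizer $\mathfrak{z}_{\mathfrak{g}}(\mathfrak{a})$, which reduces the simultaneous alignment problem to standard Cartan conjugacy inside a compact reductive subalgebra.
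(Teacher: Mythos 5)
Your proof is correct and follows essentially the same route as the paper: both reduce the problem to conjugating the $\theta_{2}$-side data by an inner automorphism coming from the stabilizer of $\mathfrak{a}$ (you use the centralizer $Z_{G}(\mathfrak{a})^{0}$ and conjugacy of maximal abelian subalgebras of $\mathfrak{z}_{\mathfrak{g}}(\mathfrak{a})$, while the paper uses the normalizer $N(\mathfrak{a})_{0}$ and the fact that two abelian subalgebras of a compact Lie algebra can be conjugated so as to commute), and then read off conditions (1) and (2). Your explicit dimension squeeze via Lemma \ref{lem:cst_rank} to show that $\mathfrak{a}_{1}\cap\mathfrak{a}_{2}'=\mathfrak{a}$ is maximal abelian in $\mathfrak{m}_{1}\cap\mathfrak{m}_{2}'$ is a welcome elaboration of the paper's terser appeal to ``the maximality of $\mathfrak{a}$''.
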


\begin{proof}
Let $\mathfrak{a}$
be a maximal abelian subspace of $\mathfrak{m}_{1}\cap\mathfrak{m}_{2}$.
Let $\mathfrak{a}_{i}$
be a maximal abelian subspace of $\mathfrak{m}_{i}$
containing $\mathfrak{a}$.
We define a closed subgroup $N(\mathfrak{a})$ of $G$ by
$N(\mathfrak{a}):=\{g\in G\mid \mathrm{Ad}(g)\mathfrak{a}=\mathfrak{a}\}$.
Since $G$ is compact, so is $N(\mathfrak{a})$.
Then the identity component $N(\mathfrak{a})_{0}$ of $N(\mathfrak{a})$
becomes a compact connected Lie group.
Furthermore, its Lie algebra $\mathfrak{n}(\mathfrak{a})$
has the following expression:
\[
\mathfrak{n}(\mathfrak{a})=\{X\in\mathfrak{g}\mid [X,\mathfrak{a}]\subset\mathfrak{a}\}.
\]
Since $[\mathfrak{a}_{i},\mathfrak{a}]\subset[\mathfrak{a}_{i},\mathfrak{a}_{i}]=\{0\}$ holds,
we have $\mathfrak{a}_{i}\subset \mathfrak{n}(\mathfrak{a})$.
Hence $\mathfrak{a}_{i}$ is an abelian subalgebra of $\mathfrak{n}(\mathfrak{a})$.
By general theory of compact connected Lie groups,
there exists $g\in N(\mathfrak{a})_{0}$ satisfying $[\mathfrak{a}_{1}, \mathrm{Ad}(g)\mathfrak{a}_{2}]=\{0\}$.
We set $\theta_{2}':=\tau_{g}\theta_{2}\tau_{g}^{-1}$.
Then we have $d\theta_{2}'=\mathrm{Ad}(g)d\theta_{2}\mathrm{Ad}(g)^{-1}$
and $\mathfrak{m}_{2}'=\mathrm{Ad}(g)\mathfrak{m}_{2}$.
From the inclusion $\mathfrak{a}\subset\mathfrak{a}_{2}$,
we get
$\mathfrak{a}=\mathrm{Ad}(g)\mathfrak{a}\subset
\mathrm{Ad}(g)\mathfrak{a}_{2}=:\mathfrak{a}_{2}'\subset \mathfrak{m}_{2}'$.
This yields $\mathfrak{a}\subset \mathfrak{a}_{1}\cap\mathfrak{a}_{2}'$.
In addition, by the maximality of $\mathfrak{a}$,
we obtain $\mathfrak{a}=\mathfrak{a}_{1}\cap\mathfrak{a}_{2}'$.
Since $[\mathfrak{a}_{1},\mathfrak{a}_{2}']=\{0\}$ holds,
there exists a maximal abelian subalgebra $\mathfrak{t}$ of $\mathfrak{g}$
containing $\mathfrak{a}_{1}$ and $\mathfrak{a}_{2}'$.
This $\mathfrak{t}$ satisfies the two conditions as in the statement.
\end{proof}

We denote by $\mathrm{rank}(G)$
the rank of the compact connected semisimple Lie group $G$,
and by $\mathrm{rank}(G,\theta_{i})$
the rank of the compact symmetric pair $(G,\theta_{i})$.
From Lemma \ref{lem:lemm2.5} we have the following corollary.

\begin{cor}
If $\mathrm{rank}(G)=\mathrm{rank}(G,\theta_{1})$,
then $\mathrm{rank}(G,\theta_{2})=\mathrm{rank}(G,\theta_{1},\theta_{2})$ holds.
\end{cor}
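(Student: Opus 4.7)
The plan is to deduce the corollary from Lemma \ref{lem:lemm2.5} together with the rank invariance under the equivalence relation $\sim$. First I would note that the hypothesis $\mathrm{rank}(G)=\mathrm{rank}(G,\theta_{1})$ forces any maximal abelian subspace $\mathfrak{a}_{1}$ of $\mathfrak{m}_{1}$ to coincide with a maximal abelian subalgebra of $\mathfrak{g}$; equivalently, there exists a Cartan subalgebra $\mathfrak{t}$ of $\mathfrak{g}$ contained in $\mathfrak{m}_{1}$.

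Next, applying Lemma \ref{lem:lemm2.5} to $(G,\theta_{1},\theta_{2})$, I obtain $(G,\theta_{1},\theta_{2}')\sim(G,\theta_{1},\theta_{2})$ and a maximal abelian subalgebra $\mathfrak{t}$ of $\mathfrak{g}$ such that $\mathfrak{a}_{1}=\mathfrak{t}\cap\mathfrak{m}_{1}$, $\mathfrak{a}_{2}'=\mathfrak{t}\cap\mathfrak{m}_{2}'$ and $\mathfrak{a}=\mathfrak{t}\cap(\mathfrak{m}_{1}\cap\mathfrak{m}_{2}')$ are maximal abelian in $\mathfrak{m}_{1}$, $\mathfrak{m}_{2}'$, and $\mathfrak{m}_{1}\cap\mathfrak{m}_{2}'$, respectively. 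Invoking Lemma \ref{lem:cst_rank}, together with the classical fact that the rank of a compact symmetric pair is invariant under conjugation of the involution by an inner automorphism, I can replace $\theta_{2}$ by $\theta_{2}'$ without changing either $\mathrm{rank}(G,\theta_{2})$ or $\mathrm{rank}(G,\theta_{1},\theta_{2})$.

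Under the hypothesis, the equality $\dim\mathfrak{a}_{1}=\dim\mathfrak{t}$ forces $\mathfrak{t}=\mathfrak{a}_{1}\subset\mathfrak{m}_{1}$. Consequently
\[
\mathfrak{a}=\mathfrak{t}\cap(\mathfrak{m}_{1}\cap\mathfrak{m}_{2}')=\mathfrak{t}\cap\mathfrak{m}_{2}'=\mathfrak{a}_{2}',
\]
so that $\mathrm{rank}(G,\theta_{1},\theta_{2}')=\dim\mathfrak{a}=\dim\mathfrak{a}_{2}'=\mathrm{rank}(G,\theta_{2}')$, which by the preceding step is the desired equality $\mathrm{rank}(G,\theta_{1},\theta_{2})=\mathrm{rank}(G,\theta_{2})$.

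There is essentially no hard step: the content sits entirely in Lemma \ref{lem:lemm2.5}, which already arranges the three abelian subspaces to be obtained as intersections with one common Cartan subalgebra. The only mild point to be careful about is to justify that passing from $\theta_{2}$ to $\theta_{2}'$ does not affect either rank under consideration; this is immediate from Lemma \ref{lem:cst_rank} for the triad rank, and from the fact that $\theta_{2}'=\tau_{g}\theta_{2}\tau_{g}^{-1}$ implies $\mathfrak{m}_{2}'=\mathrm{Ad}(g)\mathfrak{m}_{2}$ for the symmetric pair rank.
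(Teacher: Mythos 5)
Your proof is correct and follows essentially the same route as the paper's: apply Lemma \ref{lem:lemm2.5} to pass to a representative admitting a common maximal abelian subalgebra $\mathfrak{t}$, observe that the hypothesis forces $\mathfrak{t}\subset\mathfrak{m}_{1}$, and conclude that $\mathfrak{t}\cap(\mathfrak{m}_{1}\cap\mathfrak{m}_{2})=\mathfrak{t}\cap\mathfrak{m}_{2}$. Your extra care in justifying that replacing $\theta_{2}$ by $\theta_{2}'$ changes neither rank is exactly what the paper compresses into the remark that the statement is independent of the choice of representative.
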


\begin{proof}
The statement of this corollary is independent of the choice of
a representative in the isomorphism class $[(G,\theta_{1},\theta_{2})]$.
By Lemma \ref{lem:lemm2.5} we may assume that
there exists a maximal abelian subalgebra $\mathfrak{t}$ of $\mathfrak{g}$
such that 
$\mathfrak{t}\cap\mathfrak{m}_{i}$
($i=1,2$)
is a maximal abelian subspace of $\mathfrak{m}_{i}$,
and that $\mathfrak{t}\cap(\mathfrak{m}_{1}\cap\mathfrak{m}_{2})$
is a maximal abelian subspace of $\mathfrak{m}_{1}\cap\mathfrak{m}_{2}$.
Then
we have $\mathfrak{t}\subset \mathfrak{m}_{1}$ by $\mathrm{rank}(G)=\mathrm{rank}(G,\theta_{1})$.
This implies
$\mathrm{rank}(G,\theta_{1},\theta_{2})
=\dim(\mathfrak{t}\cap\mathfrak{m}_{1}\cap\mathfrak{m}_{2})
=\dim(\mathfrak{t}\cap\mathfrak{m}_{2})=\mathrm{rank}(G,\theta_{2})$.
Thus, we have completed the proof.
\end{proof}

We will define the order for the isomorphism class $[(G,\theta_{1},\theta_{2})]$.
For the representative $(G,\theta_{1},\theta_{2})$,
the order of the composition $\theta_{1}\theta_{2}$,
which we write $\mathrm{ord}(\theta_{1}\theta_{2})$,
is defined by the smallest positive integer $k$ satisfying
$(\theta_{1}\theta_{2})^{k}=1$.
If there is no such $k$,
then $\theta_{1}\theta_{2}$ has infinite order,
which we write $\mathrm{ord}(\theta_{1}\theta_{2})=\infty$.
The value of $\mathrm{ord}(\theta_{1}\theta_{2})$
depends on the choice of a representative of $[(G,\theta_{1},\theta_{2})]$. 
We define the \textit{order} of the isomorphism class $[(G,\theta_{1},\theta_{2})]$
by
\[
\mathrm{ord}[(G,\theta_{1},\theta_{2})]
:=\min\{\mathrm{ord}(\theta_{1}'\theta_{2}')\mid
(G,\theta_{1}',\theta_{2}')\sim(G,\theta_{1},\theta_{2})\}\in
\mathbb{N}\cup\{\infty\}.
\]
It will be shown later that
$[(G,\theta_{1},\theta_{2})]$
has a finite order in the case when $G$ is simple.

Here, we observe
compact symmetric triads
with low order.
For two involutions $\theta_{1}$ and $\theta_{2}$ on $G$,
we write $\theta_{1}\sim\theta_{2}$ if there exists $\tau\in\mathrm{Int}(G)$
satisfying $\theta_{2}=\tau\theta_{1}\tau^{-1}$.
A compact symmetric triad $(G,\theta_{1},\theta_{2})$
satisfying $\theta_{1}\sim\theta_{2}$ is isomorphic to $(G,\theta_{1},\theta_{1})$.
Hence, for a compact symmetric triad $(G,\theta_{1},\theta_{2})$,
the order of $[(G,\theta_{1},\theta_{2})]$
is equal to one if and only if
$\theta_{1}\sim\theta_{2}$ holds.
The Hermann action induced from such $(G,\theta_{1},\theta_{2})$
is nothing but the isotropy action $K_{1}$ on $G/K_{1}$.
In other words,
$(G,\theta_{1},\theta_{2})$ with $\theta_{1}\not\sim\theta_{2}$
gives a nontrivial Hermann action.
The isotropy actions
of compact symmetric spaces
have been studied by many geometers.
Therefore we will 
mainly focus our attention on compact symmetric triads
$(G,\theta_{1},\theta_{2})$ with $\theta_{1}\not\sim\theta_{2}$.
A compact symmetric triad $(G,\theta_{1},\theta_{2})$
is said to be \textit{commutative},
if $\theta_{1}\theta_{2}=\theta_{2}\theta_{1}$ holds.
Clearly,
$\mathrm{ord}[(G,\theta_{1},\theta_{2})] \leq 2$ holds
if and only if $[(G,\theta_{1},\theta_{2})]$
is commutable,
i.e.,
there exists a commutative compact symmetric triad
$(G,\theta_{1}',\theta_{2}')\sim(G,\theta_{1},\theta_{2})$.

The following proposition gives a sufficient condition
that the order of $[(G,\theta_{1},\theta_{2})]$
is equal to one.

\begin{pro}\label{pro:ordn_n+1_sim}
Let $(G,\theta_{1},\theta_{2})$ and $(G,\theta_{1}',\theta_{2}')$
be two compact symmetric triads satisfying $(G,\theta_{1},\theta_{2})\sim(G,\theta_{1}',\theta_{2}')$.
Assume that there exists $n\in\mathbb{N}$ such that
$(\theta_{1}\theta_{2})^{n}=1$ and $(\theta_{1}'\theta_{2}')^{n+1}=1$.
Then we have $\theta_{1}\sim\theta_{2}$.
In particular, $\theta_{1}'\sim\theta_{2}'$ holds.
\end{pro}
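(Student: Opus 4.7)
My plan is to reduce the statement to showing that $\sigma:=\theta_{1}\theta_{2}$ is a $\theta_{1}$-coboundary, i.e., that there exists $\rho\in\mathrm{Int}(G)$ with $\sigma=\theta_{1}\rho\theta_{1}^{-1}\cdot\rho^{-1}$. Indeed, once such $\rho$ is produced, using $\theta_{1}^{2}=1$ one obtains $\theta_{2}=\theta_{1}\sigma=\theta_{1}\cdot\theta_{1}\rho\theta_{1}^{-1}\rho^{-1}=\rho\theta_{1}\rho^{-1}$, which gives $\theta_{1}\sim\theta_{2}$ directly. I will achieve this by first forcing $\sigma$ to lie in $\mathrm{Int}(G)$ via a B\'ezout-type argument in $\mathrm{Out}(G)$, and then exploiting the fact that one of $n$ or $n+1$ is odd to construct $\rho$ explicitly as a power of $\sigma$.

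First I would establish that $\sigma\in\mathrm{Int}(G)$. Set $\sigma':=\theta_{1}'\theta_{2}'$ and write $[\cdot]$ for the image in $\mathrm{Out}(G):=\mathrm{Aut}(G)/\mathrm{Int}(G)$. The defining relations of $\sim$, combined with $\tau\in\mathrm{Int}(G)$ (so $[\tau]=1$), give $[\theta_{i}']=[\varphi][\theta_{i}][\varphi]^{-1}$ for $i=1,2$, hence $[\sigma']=[\varphi][\sigma][\varphi]^{-1}$. The two hypotheses $\sigma^{n}=(\sigma')^{n+1}=1$ therefore yield $[\sigma]^{n}=1$ and $[\sigma]^{n+1}=1$ in $\mathrm{Out}(G)$, so
\[
[\sigma]=[\sigma]^{n+1}\cdot[\sigma]^{-n}=1,
\]
i.e., $\sigma\in\mathrm{Int}(G)$. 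By symmetry $\sigma'\in\mathrm{Int}(G)$ as well.

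Next, since $\gcd(n,n+1)=1$, at least one of $n,n+1$ is odd; assume first $n$ is odd. Setting $\rho:=\sigma^{(n-1)/2}\in\mathrm{Int}(G)$ and using the identity $\theta_{1}\sigma\theta_{1}^{-1}=\sigma^{-1}$ (immediate from $\theta_{1}^{2}=\theta_{2}^{2}=1$), I would compute
\[
\theta_{1}\rho\theta_{1}^{-1}\cdot\rho^{-1}
=\sigma^{-(n-1)/2}\cdot\sigma^{-(n-1)/2}
=\sigma^{-(n-1)}
=\sigma\cdot\sigma^{-n}
=\sigma.
\]
This exhibits $\sigma$ as a $\theta_{1}$-coboundary, hence $\theta_{2}=\rho\theta_{1}\rho^{-1}$ and $\theta_{1}\sim\theta_{2}$. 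The assertion $\theta_{1}'\sim\theta_{2}'$ then follows by transporting: the identity $\theta_{2}'=\tau\varphi\rho\varphi^{-1}\tau^{-1}\cdot\theta_{1}'\cdot(\tau\varphi\rho\varphi^{-1}\tau^{-1})^{-1}$ holds, and $\tau\varphi\rho\varphi^{-1}\tau^{-1}\in\mathrm{Int}(G)$ by normality of $\mathrm{Int}(G)$ in $\mathrm{Aut}(G)$.

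The remaining case $n$ even (so $n+1$ odd) is handled symmetrically: applying the construction above to the primed triad produces $\rho'\in\mathrm{Int}(G)$ with $\theta_{2}'=\rho'\theta_{1}'(\rho')^{-1}$, and a direct substitution using $\theta_{1}=\varphi^{-1}\theta_{1}'\varphi$ and $\theta_{2}=\varphi^{-1}\tau^{-1}\theta_{2}'\tau\varphi$ shows $\theta_{2}=h\theta_{1}h^{-1}$ with $h:=\varphi^{-1}\tau^{-1}\rho'\varphi\in\mathrm{Int}(G)$. I do not anticipate any serious obstacle in this proof: the only genuinely substantive use of the coprimality of $n$ and $n+1$ is that it simultaneously forces $\sigma\in\mathrm{Int}(G)$ and makes $(N-1)/2$ an integer for the odd member $N\in\{n,n+1\}$, which is precisely what the coboundary formula requires.
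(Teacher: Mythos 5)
Your proof is correct, and it takes a genuinely different route from the paper's. The paper normalizes to $\theta_{1}'=\theta_{1}$, $\theta_{2}'=\tau\theta_{2}\tau^{-1}$ and directly manipulates $(\theta_{1}\tau\theta_{2}\tau^{-1})^{n}\theta_{1}$, commuting inner automorphisms past the involutions via $\theta_{i}\tau_{g}=\tau_{\theta_{i}(g)}\theta_{i}$, splitting into cases according to the parity of $n$ (the odd case is left as ``a similar argument''), and then using $(\theta_{1}\theta_{2}')^{n+1}=1$ to identify the result with $\tau\theta_{2}\tau^{-1}$. You instead isolate two structural facts: (i) the coprimality of $n$ and $n+1$ forces $[\theta_{1}\theta_{2}]=1$ in $\mathrm{Out}(G)$, and (ii) since $\theta_{1}$ inverts $\sigma=\theta_{1}\theta_{2}$, the twisted coboundary equation $\sigma=\theta_{1}\rho\theta_{1}^{-1}\rho^{-1}$ is solvable with $\rho$ a power of $\sigma$ whenever $\sigma$ has odd order --- and one of $n$, $n+1$ is always odd, so the construction can be run on whichever of the two triads carries the odd exponent and then transported. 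This is cleaner and more symmetric than the paper's case split, and step (i) makes visible exactly where the hypothesis bites: for the triality triads $(\mathfrak{so}(8),\mathrm{Ad}(I_{a,8-a}),\tilde{\kappa}\mathrm{Ad}(I_{a,8-a})\tilde{\kappa}^{-1})$ of order $3$ in Table \ref{table:rank_ord}, $\theta_{1}\theta_{2}$ has odd order but nontrivial image in $\mathrm{Out}(\mathfrak{so}(8))$, so your argument correctly refuses to conclude $\theta_{1}\sim\theta_{2}$ there. One cosmetic slip: in transporting $\theta_{2}=\rho\theta_{1}\rho^{-1}$ to the primed triad, the conjugating element should be $\tau\varphi\rho\varphi^{-1}$, not $\tau\varphi\rho\varphi^{-1}\tau^{-1}$ (the extra $\tau^{-1}$ would require $\tau$ to commute with $\theta_{1}'$); since $\tau\varphi\rho\varphi^{-1}=\tau\cdot(\varphi\rho\varphi^{-1})$ is still inner, the conclusion is unaffected.
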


\begin{proof}
Without loss of generalities
we may assume that $\theta_{1}'=\theta_{1}$
and $\theta_{2}'=\tau\theta_{2}\tau^{-1}$ for some $\tau\in\mathrm{Int}(G)$.
Then, we have $(\theta_{1}\theta_{2}')^{n+1}
=(\theta_{1}\tau\theta_{2}\tau^{-1})^{n}\theta_{1}(\tau\theta_{2}\tau^{-1})$.
Hence it is sufficient to show that
there exists $\tau_{1}\in\mathrm{Int}(G)$ such that $(\theta_{1}\tau\theta_{2}\tau^{-1})^{n}\theta_{1}=\tau_{1}\theta_{2}\tau_{1}^{-1}$.

Let us consider the case when $n$ is even: $n=2m$ for some $m\in\mathbb{N}$.
Then we have
$(\theta_{1}\tau\theta_{2}\tau^{-1})^{n}\theta_{1}=(\theta_{1}\tau\theta_{2}\tau^{-1})^{m}\theta_{1}(\theta_{1}\tau\theta_{2}\tau^{-1})^{m}$.
Let $g$ be in $G$ satisfying $\tau=\tau_{g}$.
Since $\theta_{i}\tau_{g}=\tau_{\theta_{i}(g)}\theta_{i}$ holds,
there exists $\tau_{1}\in\mathrm{Int}(G)$
satisfying $(\theta_{1}\tau\theta_{2}\tau^{-1})^{m}=\tau_{1}(\theta_{1}\theta_{2})^{m}$.
From $(\theta_{1}\theta_{2})^{2m}=1$, we obtain
\[
(\theta_{1}\tau\theta_{2}\tau^{-1})^{n}\theta_{1}
=\tau_{1}(\theta_{1}\theta_{2})^{m}\theta_{1}(\theta_{2}\theta_{1})^{m}\tau_{1}^{-1}
=\tau_{1}(\theta_{1}\theta_{2})^{2m}\theta_{1}\tau_{1}^{-1}=\tau_{1}\theta_{1}\tau_{1}^{-1}.
\]
In the case when $n$ is odd,
a similar argument shows
that there exists $\tau_{1}\in\mathrm{Int}(G)$ such that $(\theta_{1}\tau\theta_{2}\tau^{-1})^{n}\theta_{1}=\tau_{1}\theta_{2}\tau_{1}^{-1}$.
Thus, we have complete the proof.
\end{proof}

Here, let us consider the case when the rank of $[(G,\theta_{1},\theta_{2})]$
is equal to zero.
Then $K_{2}$ acts transitively on $G/K_{1}$ by Theorem \ref{thm:Hermann}.
Furthermore, 
the value of
the order of $\theta_{1}'\theta_{2}'$
is independent of the choice of a representative 
$(G,\theta_{1}',\theta_{2}')$
in $[(G,\theta_{1},\theta_{2})]$,
namely, the following proposition holds.

\begin{pro}\label{pro:rank0_ordconst}
Assume that the rank of $[(G,\theta_{1},\theta_{2})]$ is equal to zero.
If $(G,\theta_{1},\theta_{2})\sim
(G,\theta_{1}',\theta_{2}')$ then,
$\mathrm{ord}(\theta_{1}\theta_{2})=\mathrm{ord}(\theta_{1}'\theta_{2}')$
holds.
\end{pro}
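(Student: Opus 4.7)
The plan is to reduce the given isomorphism $(G,\theta_{1},\theta_{2}) \sim (G,\theta_{1}',\theta_{2}')$ to one effected purely by an inner automorphism, and then to absorb this inner automorphism into factors that commute with $\theta_{1}$ and $\theta_{2}$ separately, exploiting the decomposition $G = K_{1} K_{2}$ that is forced in the rank-zero setting.

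First I would normalize the isomorphism. By Definition \ref{dfm:cst_sim}, write $\theta_{1}' = \varphi\theta_{1}\varphi^{-1}$ and $\theta_{2}' = \tau\varphi\theta_{2}\varphi^{-1}\tau^{-1}$ with $\varphi \in \mathrm{Aut}(G)$ and $\tau \in \mathrm{Int}(G)$. Since $\mathrm{Int}(G)$ is normal in $\mathrm{Aut}(G)$, the element $\tau' := \varphi^{-1}\tau\varphi$ again lies in $\mathrm{Int}(G)$, and a direct computation gives
\[
\varphi^{-1}(\theta_{1}'\theta_{2}')\varphi \;=\; \theta_{1}\cdot\tau'\theta_{2}(\tau')^{-1}.
\]
Automorphisms preserve the order of any element, so it suffices to prove $\mathrm{ord}(\theta_{1}\theta_{2}) = \mathrm{ord}(\theta_{1}\cdot\tau_{g}\theta_{2}\tau_{g}^{-1})$ for an arbitrary $g \in G$ with $\tau' = \tau_{g}$.

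Second, the hypothesis $\mathrm{rank}[(G,\theta_{1},\theta_{2})] = 0$ forces $\mathfrak{m}_{1}\cap\mathfrak{m}_{2} = \{0\}$, hence $A = \exp(\mathfrak{a}) = \{e\}$, and Theorem \ref{thm:Hermann} collapses to $G = K_{1}K_{2}$. Write $g = k_{1} k_{2}$ with $k_{i} \in K_{i}$. Because $k_{i}$ lies in the identity component of the $\theta_{i}$-fixed subgroup, $\theta_{i}(k_{i}) = k_{i}$, and then the standard identity $\theta_{i}\tau_{h} = \tau_{\theta_{i}(h)}\theta_{i}$ yields $\theta_{i}\tau_{k_{i}} = \tau_{k_{i}}\theta_{i}$. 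Hence
\[
\theta_{1}'\theta_{2}' \;=\; \theta_{1}\cdot\tau_{k_{1}}\tau_{k_{2}}\theta_{2}\tau_{k_{2}}^{-1}\tau_{k_{1}}^{-1} \;=\; \theta_{1}\tau_{k_{1}}\theta_{2}\tau_{k_{1}}^{-1} \;=\; \tau_{k_{1}}(\theta_{1}\theta_{2})\tau_{k_{1}}^{-1},
\]
so $\theta_{1}'\theta_{2}'$ is an inner conjugate of $\theta_{1}\theta_{2}$, and the two elements have the same order.

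The main obstacle is recognizing that the whole argument pivots on the rank-zero assumption at exactly one place: in positive rank one would have an extra toral factor $a \in A$, and since $\theta_{i}(a) = a^{-1}$, conjugation by $\tau_{a}$ \emph{inverts} $\theta_{i}$ rather than commuting with it, so the telescoping collapse of $\tau_{k_{2}}\theta_{2}\tau_{k_{2}}^{-1}$ to $\theta_{2}$ has no analogue. One must also take care in the normalization step to confirm that replacing $\varphi$ by the identity genuinely preserves the order of the product $\theta_{1}'\theta_{2}'$; this is where the normality of $\mathrm{Int}(G)$ in $\mathrm{Aut}(G)$ enters essentially.
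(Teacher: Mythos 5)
Your proof is correct and follows essentially the same route as the paper's: both arguments use the rank-zero hypothesis to trivialize $A$ in Hermann's decomposition, write the inner part $g$ as a product of elements of $K_1$ and $K_2$ whose conjugations commute with $\theta_1$ and $\theta_2$ respectively, and conclude that $\theta_1'\theta_2'$ is conjugate to $\theta_1\theta_2$. The only cosmetic difference is that you conjugate by $\varphi^{-1}$ first and apply $G=K_1K_2$ to the original triad, whereas the paper applies $G=K_2'K_1'$ to the primed triad; the substance is identical.
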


\begin{proof}
It follows from $(G,\theta_{1},\theta_{2})\sim
(G,\theta_{1}',\theta_{2}')$ that
there exist $\varphi\in\mathrm{Aut}(G)$
and $g\in G$ satisfying the following relation:
\begin{equation}\label{eqn:theta'theta}
\theta_{1}'=\varphi\theta_{1}\varphi^{-1},\quad
\theta_{2}'=\tau_{g}\varphi\theta_{2}\varphi\tau_{g}^{-1},
\end{equation}
where $\tau_{g}$
is an inner automorphism of $G$ defined by $\tau_{g}(h)=ghg^{-1}$ ($h\in G$).
By applying Theorem \ref{thm:Hermann} to $(G,\theta_{1}',\theta_{2}')$
we have $k_{1}\in K_{1}'$ and $k_{2}\in K_{2}'$ satisfying $g=k_{2}'k_{1}'$.
Here, we have used the assumption $\mathrm{rank}[(G,\theta_{1},\theta_{2})]=0$.
Then from \eqref{eqn:theta'theta}
we obtain
\begin{align*}
\theta_{1}'
&=\tau_{k_{1}}\theta_{1}'\tau_{k_{1}}^{-1}
=(\tau_{k_{1}}\varphi)\theta_{1}(\tau_{k_{1}}\varphi)^{-1},\\
\theta_{2}'&=\tau_{k_{2}}^{-1}\theta_{2}'\tau_{k_{2}}
=\tau_{k_{2}}^{-1}(\tau_{k_{2}}\tau_{k_{1}}\varphi\theta_{2}\varphi\tau_{k_{1}}^{-1}\tau_{k_{2}}^{-1})\tau_{k_{2}}
=(\tau_{k_{1}}\varphi)\theta_{2}(\tau_{k_{1}}\varphi)^{-1}.
\end{align*}
This obeys $\mathrm{ord}(\theta_{1}\theta_{2})=\mathrm{ord}(\theta_{1}'\theta_{2}')$.
Thus we have completed the proof.
\end{proof}

\subsection{Compact symmetric triads at the Lie algebra level}

In the present paper,
we will also treat compact symmetric triads at the Lie algebra level.
A compact symmetric triad 
at the Lie algebra level
is a triplet $(\mathfrak{g},\theta_{1},\theta_{2})$
which consists of a compact semisimple Lie algebra $\mathfrak{g}$
and two involutions $\theta_{1}$ and $\theta_{2}$ of $\mathfrak{g}$.
Let $\mathrm{Aut}(\mathfrak{g})$
denote the group of automorphisms on $\mathfrak{g}$
and $\mathrm{Int}(\mathfrak{g})$
the group of inner automorphisms on $\mathfrak{g}$.
Then $\mathrm{Int}(\mathfrak{g})$ is a normal subgroup of $\mathrm{Aut}(\mathfrak{g})$.
Let us define the Lie algebra version of Definition \ref{dfm:cst_sim} as follows.

\begin{dfn}\label{dfn:cst_local_equiv}
Two compact symmetric triads $(\mathfrak{g},\theta_{1},\theta_{2})$
and $(\mathfrak{g},\theta_{1}',\theta_{2}')$
are \textit{isomorphic}, which we write
$(\mathfrak{g},\theta_{1},\theta_{2})\sim(\mathfrak{g},\theta_{1}',\theta_{2}')$,
if there exist
$\varphi\in\mathrm{Aut}(\mathfrak{g})$
and $\tau\in\mathrm{Int}(\mathfrak{g})$
satisfying the following relations:
\[
\theta_{1}'=\varphi\theta_{1}\varphi^{-1},\quad
\theta_{2}'=\tau\varphi\theta_{2}\varphi^{-1}\tau^{-1}.
\]
\end{dfn}

Let us consider a correspondence 
between the Lie group level and the Lie algebra level for compact symmetric triads.
For a compact symmetric triad
$(G,\theta_{1},\theta_{2})$
at the Lie group level,
$(\mathfrak{g},d\theta_{1},d\theta_{2})$
gives a compact symmetric triad at the Lie algebra level.
Then $(\mathfrak{g},d\theta_{1},d\theta_{2})$
is called the
compact symmetric triad at the Lie algebra level
associated with $(G,\theta_{1},\theta_{2})$.
We find that
for two compact symmetric triads
$(G,\theta_{1},\theta_{2})$ and $(G,\theta_{1}',\theta_{2}')$,
$(G,\theta_{1},\theta_{2})\sim(G,\theta_{1}',\theta_{2}')$
implies $(\mathfrak{g},d\theta_{1},d\theta_{2})\sim(\mathfrak{g},d\theta_{1}',d\theta_{2}')$.
We say that two compact symmetric triads
$(G,\theta_{1},\theta_{2})$ and $(G,\theta_{1}',\theta_{2}')$
are \textit{locally isomorphic},
if $(\mathfrak{g},d\theta_{1},d\theta_{2})\sim(\mathfrak{g},d\theta_{1}',d\theta_{2}')$ holds.

Conversely,
for a compact symmetric triad $(\mathfrak{g},\theta_{1},\theta_{2})$ at the Lie algebra level,
there exists a compact symmetric triad $(G, \Theta_{1}, \Theta_{2})$
satisfying $(\mathfrak{g}, d\Theta_{1}, d\Theta_{2})=(\mathfrak{g},\theta_{1},\theta_{2})$,
where $\mathfrak{g}$ is the Lie algebra of $G$.
Indeed, let $G$ denote the universal covering group
of a connected Lie group with Lie algebra $\mathfrak{g}$
or the adjoin group  of $\mathfrak{g}$.
Then we can get $\Theta_{i}$ as the extension of $\theta_{i}$ to an involution of $G$.

Let $(\mathfrak{g},\theta_{1},\theta_{2})$
be a compact symmetric triad at the Lie algebra level.
The rank of $(\mathfrak{g},\theta_{1},\theta_{2})$
is defined as the dimension of
a maximal abelian subspace of $\mathfrak{m}_{1}\cap\mathfrak{m}_{2}$.
We define the rank of $[(\mathfrak{g},\theta_{1},\theta_{2})]$
by that of $(\mathfrak{g},\theta_{1},\theta_{2})$.
In a similar manner,
the orders
of $(\mathfrak{g},\theta_{1},\theta_{2})$
and its isomorphic class $[(\mathfrak{g},\theta_{1},\theta_{2})]$
 are defined in the same way as in the case of the Lie group level.
We denote by $\mathrm{rank}[(\mathfrak{g},\theta_{1},\theta_{2})]$
the rank of $[(\mathfrak{g},\theta_{1},\theta_{2})]$,
and by $\mathrm{ord}[(\mathfrak{g},\theta_{1},\theta_{2})]$
the order of $[(\mathfrak{g},\theta_{1},\theta_{2})]$.

By definition we have the following lemma.
\begin{lem}\label{lem:rank_welldef}
For
any compact symmetric triad 
$(G,\theta_{1},\theta_{2})$,
we have
$\mathrm{rank}[(G,\theta_{1},\theta_{2})]=\mathrm{rank}[(\mathfrak{g},d\theta_{1},d\theta_{2})]$.
\end{lem}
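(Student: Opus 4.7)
The plan is to observe that the equality is almost a tautology: both ranks are defined as the dimension of a maximal abelian subspace of the same vector space $\mathfrak{m}_1\cap\mathfrak{m}_2$. Indeed, by construction the canonical decomposition $\mathfrak{g}=\mathfrak{k}_i\oplus\mathfrak{m}_i$ used to define $\mathrm{rank}(G,\theta_1,\theta_2)$ is the $(\pm 1)$-eigenspace decomposition of $\mathfrak{g}$ under $d\theta_i$, which is precisely the canonical decomposition used in the Lie-algebra-level definition for $(\mathfrak{g},d\theta_1,d\theta_2)$. Hence at the level of representatives,
\[
\mathrm{rank}(G,\theta_1,\theta_2) = \dim\mathfrak{a} = \mathrm{rank}(\mathfrak{g},d\theta_1,d\theta_2),
\]
where $\mathfrak{a}$ is any maximal abelian subspace of $\mathfrak{m}_1\cap\mathfrak{m}_2$.

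The remaining content of the lemma is then the well-definedness on isomorphism classes. At the group level this is Lemma \ref{lem:cst_rank}. For the Lie-algebra side one runs the same argument: if $(\mathfrak{g},\theta_1,\theta_2)\sim(\mathfrak{g},\theta_1',\theta_2')$ via $\varphi\in\mathrm{Aut}(\mathfrak{g})$ and $\tau\in\mathrm{Int}(\mathfrak{g})$, then $\tau\varphi$ carries $\mathfrak{m}_1\cap\mathfrak{m}_2$ isomorphically onto $\mathfrak{m}_1'\cap\mathfrak{m}_2'$, so maximal abelian subspaces correspond, and the dimensions agree. This makes $\mathrm{rank}[(\mathfrak{g},d\theta_1,d\theta_2)]$ a well-defined invariant.

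Combining the two items, the equality descends to isomorphism classes, giving the desired identity. There is no real obstacle here; the only thing worth spelling out carefully is that the map $[(G,\theta_1,\theta_2)]\mapsto[(\mathfrak{g},d\theta_1,d\theta_2)]$ is well defined, which is exactly the implication (noted earlier in the section) that a group-level isomorphism of compact symmetric triads differentiates to a Lie-algebra-level isomorphism.
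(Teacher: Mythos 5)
Your argument is correct and is essentially what the paper intends: it states this lemma with only the remark ``By definition,'' since both ranks are the dimension of a maximal abelian subspace of the same space $\mathfrak{m}_{1}\cap\mathfrak{m}_{2}$, and well-definedness on isomorphism classes is already in place. Your spelling out of the two well-definedness checks and of the compatibility of $\sim$ with differentiation matches the paper's implicit reasoning.
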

In order to state a similar result for the order,
we prepare the following lemma.

\begin{lem}\label{lem:autoGg}
An automorphism $\theta$ of $G$
is the identity transformation on it
if and only if
so is its differential $d\theta$ on $\mathfrak{g}$.
\end{lem}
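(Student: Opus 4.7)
The forward direction is immediate from the definition of the differential: if $\theta = \mathrm{id}_G$, then $d\theta = d(\mathrm{id}_G) = \mathrm{id}_{\mathfrak{g}}$. So the only content is the converse, and the plan is to reduce it to the naturality of the exponential map together with connectedness of $G$.

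For the converse, assume $d\theta = \mathrm{id}_{\mathfrak{g}}$. I would invoke the standard naturality relation
\[
\theta \circ \exp = \exp \circ \, d\theta,
\]
which holds because $\theta$ is a Lie group homomorphism. Substituting $d\theta = \mathrm{id}_{\mathfrak{g}}$ yields $\theta(\exp X) = \exp X$ for every $X \in \mathfrak{g}$, i.e., $\theta$ fixes every element of $\exp(\mathfrak{g})$.

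To finish, I would use the standing hypothesis that $G$ is compact and connected, so that the exponential map $\exp : \mathfrak{g} \to G$ is surjective (this is the classical surjectivity theorem for compact connected Lie groups). Hence $\theta$ fixes every element of $G$, proving $\theta = \mathrm{id}_G$. If one prefers not to invoke surjectivity of $\exp$, the same conclusion follows from the weaker fact that $\exp(\mathfrak{g})$ contains a neighborhood of the identity and thus generates the connected group $G$; since $\theta$ is a homomorphism fixing a generating set, it is identity on all of $G$.

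There is essentially no obstacle here: the argument is standard Lie-theoretic bookkeeping, and the only point that must be flagged is the use of connectedness of $G$ (without which the statement would fail, as one could modify $\theta$ on a non-identity component). Compactness is not strictly needed for the argument via generation by $\exp(\mathfrak{g})$, but is available from the setup.
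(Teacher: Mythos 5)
Your proof is correct and complete: the naturality relation $\theta\circ\exp=\exp\circ\,d\theta$ plus either surjectivity of $\exp$ for compact connected groups or the weaker generation-by-a-neighborhood argument settles the converse, and you correctly flag connectedness as the essential hypothesis. The paper itself omits the proof of this lemma entirely (``We omit the details of the proof''), and your argument is precisely the standard one the authors evidently had in mind.
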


We omit the details of the proof.
The following lemma follows immediately from
Lemma \ref{lem:autoGg}.

\begin{lem}
Let $(G,\theta_{1},\theta_{2})$
be a compact symmetric triad.
Then
$\mathrm{ord}(\theta_{1}\theta_{2})=\mathrm{ord}(d\theta_{1}d\theta_{2})$
holds. In particular,
we have $\mathrm{ord}[(G,\theta_{1},\theta_{2})]=\mathrm{ord}[(\mathfrak{g},d\theta_{1},d\theta_{2})]$.
\end{lem}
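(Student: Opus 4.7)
The plan is to derive everything from Lemma \ref{lem:autoGg}, which asserts that the differentiation map $d\colon\mathrm{Aut}(G)\to\mathrm{Aut}(\mathfrak{g})$ is injective. Since $d$ is also a group homomorphism ($d(\varphi_{1}\varphi_{2})=d\varphi_{1}\cdot d\varphi_{2}$), I get $d((\theta_{1}\theta_{2})^{k})=(d\theta_{1}\,d\theta_{2})^{k}$ for every positive integer $k$. Combined with injectivity, this yields the equivalence
\[
(\theta_{1}\theta_{2})^{k}=\mathrm{id}_{G}\iff(d\theta_{1}\,d\theta_{2})^{k}=\mathrm{id}_{\mathfrak{g}},
\]
so the smallest such $k$ (or $\infty$ if none exists) coincides on both sides, giving $\mathrm{ord}(\theta_{1}\theta_{2})=\mathrm{ord}(d\theta_{1}d\theta_{2})$.

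For the \emph{in particular} statement, I would exploit compatibility of the equivalence relations. For the inequality $\mathrm{ord}[(\mathfrak{g},d\theta_{1},d\theta_{2})]\leq\mathrm{ord}[(G,\theta_{1},\theta_{2})]$: given $(G,\theta_{1}',\theta_{2}')\sim(G,\theta_{1},\theta_{2})$ realized by $\varphi\in\mathrm{Aut}(G)$ and $\tau\in\mathrm{Int}(G)$, applying $d$ to the defining identities produces an equivalence $(\mathfrak{g},d\theta_{1}',d\theta_{2}')\sim(\mathfrak{g},d\theta_{1},d\theta_{2})$ via $d\varphi$ and $d\tau$; the equality of orders already proved then shows every value in the minimum on the left appears in the minimum on the right. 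For the reverse inequality, given an algebra-level representative $(\mathfrak{g},\xi_{1},\xi_{2})\sim(\mathfrak{g},d\theta_{1},d\theta_{2})$ via $\varphi\in\mathrm{Aut}(\mathfrak{g})$ and $\tau\in\mathrm{Int}(\mathfrak{g})$, I would lift to the group level. The inner factor lifts automatically since $\mathrm{Int}(\mathfrak{g})=\mathrm{Ad}(G)$ for connected $G$, giving $\tau=\mathrm{Ad}(g)=d\tau_{g}$ for some $g\in G$; similarly $\varphi$ lifts to some $\Phi\in\mathrm{Aut}(G)$ with $d\Phi=\varphi$. The resulting group-level triad $(G,\Phi\theta_{1}\Phi^{-1},\tau_{g}\Phi\theta_{2}\Phi^{-1}\tau_{g}^{-1})$ is $\sim$-equivalent to $(G,\theta_{1},\theta_{2})$ and has differential $(\mathfrak{g},\xi_{1},\xi_{2})$, so by the first half its order equals $\mathrm{ord}(\xi_{1}\xi_{2})$.

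The main obstacle is the lifting step: an arbitrary $\varphi\in\mathrm{Aut}(\mathfrak{g})$ need not integrate to an element of $\mathrm{Aut}(G)$ unless $G$ is either simply connected or of adjoint type. However, as noted in the paragraph preceding the lemma, for each compact symmetric triad at the Lie algebra level there is a realization at the Lie group level using the universal covering or the adjoint group, and one may pass to such a model without altering rank, order, or equivalence class. Consequently the lifting argument goes through and yields the claimed equality of orders for the isomorphism classes.
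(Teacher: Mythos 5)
Your first paragraph is exactly the argument the paper intends: the paper's entire proof of this lemma is the remark that it ``follows immediately from Lemma \ref{lem:autoGg}'', and spelling that out via the injectivity and multiplicativity of $\theta\mapsto d\theta$ is the correct reading, so the identity $\mathrm{ord}(\theta_{1}\theta_{2})=\mathrm{ord}(d\theta_{1}d\theta_{2})$ is fine.

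The ``in particular'' half has a gap at the lifting step, and the repair you propose does not close it. Passing from $G$ to the universal cover $\tilde{G}$ (or the adjoint group) replaces the class $[(G,\theta_{1},\theta_{2})]$ by a different class $[(\tilde{G},\tilde{\theta}_{1},\tilde{\theta}_{2})]$, and your claim that this ``does not alter the order'' of the class is essentially the statement being proved: since $\mathrm{Aut}(\tilde{G})\cong\mathrm{Aut}(\mathfrak{g})$ can be strictly larger than $\mathrm{Aut}(G)$, the class of $(\tilde{G},\tilde{\theta}_{1},\tilde{\theta}_{2})$ has at least as many representatives, so a priori one only gets $\mathrm{ord}[(\tilde{G},\tilde{\theta}_{1},\tilde{\theta}_{2})]\leq\mathrm{ord}[(G,\theta_{1},\theta_{2})]$, which is the wrong direction. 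The fix is that you never need to lift $\varphi$ at all. Given $(\mathfrak{g},\xi_{1},\xi_{2})\sim(\mathfrak{g},d\theta_{1},d\theta_{2})$ with $\xi_{1}=\varphi\, d\theta_{1}\varphi^{-1}$ and $\xi_{2}=\tau\varphi\, d\theta_{2}\varphi^{-1}\tau^{-1}$, set $\tau':=\varphi^{-1}\tau\varphi$, which lies in $\mathrm{Int}(\mathfrak{g})$ by normality; then $\xi_{1}\xi_{2}=\varphi\bigl(d\theta_{1}\cdot\tau' d\theta_{2}\tau'^{-1}\bigr)\varphi^{-1}$, so $\mathrm{ord}(\xi_{1}\xi_{2})=\mathrm{ord}(d\theta_{1}\cdot\tau' d\theta_{2}\tau'^{-1})$ because order is a conjugation invariant. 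Since $G$ is connected, $\mathrm{Int}(\mathfrak{g})=\mathrm{Ad}(G)$, so $\tau'=\mathrm{Ad}(g)=d\tau_{g}$ for some $g\in G$, and $(G,\theta_{1},\tau_{g}\theta_{2}\tau_{g}^{-1})\sim(G,\theta_{1},\theta_{2})$ realizes this order at the group level by the first half of the lemma. This yields $\mathrm{ord}[(G,\theta_{1},\theta_{2})]\leq\mathrm{ord}[(\mathfrak{g},d\theta_{1},d\theta_{2})]$ with no hypothesis on the fundamental group or center of $G$; the opposite inequality is the easy direction you already have, since every group-level equivalence differentiates to an algebra-level one.
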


\begin{lem}\label{lem:t1simt2>dt1dt2=1}
Let $(G,\theta_{1},\theta_{2})$
be a compact symmetric triad.
Assume that
there exists a maximal abelian subalgebra $\mathfrak{t}$ of $\mathfrak{g}$
such that $\mathfrak{t}\cap\mathfrak{m}_{i}$ and
$\mathfrak{t}\cap(\mathfrak{m}_{1}\cap\mathfrak{m}_{2})$
are maximal abelian subspaces of $\mathfrak{m}_{i}$ and $\mathfrak{m}_{1}\cap\mathfrak{m}_{2}$,
respectively.
Then 
$\theta_{1}\sim\theta_{2}$
implies $\mathrm{ord}(d\theta_{1}d\theta_{2}|_{\mathfrak{t}})=1$.
\end{lem}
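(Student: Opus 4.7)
The plan is to show that $d\theta_{1}$ and $d\theta_{2}$ agree on $\mathfrak{t}$; since each is an involution, this immediately gives $(d\theta_{1}d\theta_{2})|_{\mathfrak{t}}=\mathrm{id}_{\mathfrak{t}}$, so the order is $1$. Write $\mathfrak{a}_{i}:=\mathfrak{t}\cap\mathfrak{m}_{i}$ and $\mathfrak{a}:=\mathfrak{t}\cap(\mathfrak{m}_{1}\cap\mathfrak{m}_{2})$ as in the hypothesis. First I would observe that $\mathfrak{t}$ is $d\theta_{i}$-invariant: because $\mathfrak{a}_{i}$ is maximal abelian in $\mathfrak{m}_{i}$, the centralizer of $\mathfrak{a}_{i}$ in $\mathfrak{g}$ splits as $Z_{\mathfrak{k}_{i}}(\mathfrak{a}_{i})\oplus\mathfrak{a}_{i}$, and the maximal abelian subalgebra $\mathfrak{t}$ is contained in this centralizer. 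Hence $\mathfrak{t}=(\mathfrak{t}\cap\mathfrak{k}_{i})\oplus\mathfrak{a}_{i}$, and the $(-1)$-eigenspace of $d\theta_{i}|_{\mathfrak{t}}$ is precisely $\mathfrak{a}_{i}$. It therefore suffices to establish $\mathfrak{a}_{1}=\mathfrak{a}_{2}$.

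Next, the hypothesis $\theta_{1}\sim\theta_{2}$ supplies $\tau\in\mathrm{Int}(G)$ with $\theta_{2}=\tau\theta_{1}\tau^{-1}$. Taking $\varphi=\mathrm{id}_{G}$ in Definition \ref{dfm:cst_sim}, we obtain $(G,\theta_{1},\theta_{2})\sim(G,\theta_{1},\theta_{1})$. The rank of the triad on the right is the dimension of a maximal abelian subspace of $\mathfrak{m}_{1}\cap\mathfrak{m}_{1}=\mathfrak{m}_{1}$, which equals $\mathrm{rank}(G,\theta_{1})$. Combining this with Lemma \ref{lem:cst_rank} and the hypothesis that $\mathfrak{a}$ is maximal abelian in $\mathfrak{m}_{1}\cap\mathfrak{m}_{2}$ yields
\[
\dim\mathfrak{a}=\mathrm{rank}(G,\theta_{1},\theta_{2})=\mathrm{rank}(G,\theta_{1},\theta_{1})=\mathrm{rank}(G,\theta_{1})=\dim\mathfrak{a}_{1},
\]
and in the same manner $\dim\mathfrak{a}=\dim\mathfrak{a}_{2}$.

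Since $\mathfrak{a}\subset\mathfrak{a}_{1}\cap\mathfrak{a}_{2}$, these dimension equalities force $\mathfrak{a}_{1}=\mathfrak{a}=\mathfrak{a}_{2}$. Thus $d\theta_{1}|_{\mathfrak{t}}$ and $d\theta_{2}|_{\mathfrak{t}}$ are involutions of the same space $\mathfrak{t}$ whose $(-1)$-eigenspaces agree, so they coincide; consequently $(d\theta_{1}d\theta_{2})|_{\mathfrak{t}}=(d\theta_{2})^{2}|_{\mathfrak{t}}=\mathrm{id}_{\mathfrak{t}}$, which is exactly the desired order statement. The only step with any real content is the reduction $(G,\theta_{1},\theta_{2})\sim(G,\theta_{1},\theta_{1})$ and its rank consequence; once that is in place, the argument is a dimension count against the hypothesis on $\mathfrak{t}$.
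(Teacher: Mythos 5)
Your argument is correct, but it takes a genuinely different route from the paper's. The paper proves the stronger intermediate fact that $d\theta_{2}=e^{\mathrm{ad}(H)}d\theta_{1}e^{-\mathrm{ad}(H)}$ for some $H\in\mathfrak{t}\cap(\mathfrak{m}_{1}\cap\mathfrak{m}_{2})$: starting from $d\theta_{2}=\mathrm{Ad}(g)d\theta_{1}\mathrm{Ad}(g)^{-1}$, it invokes Theorem \ref{thm:Hermann} to factor $g=k_{2}\exp(H)k_{1}$ with $k_{i}\in K_{i}$ (this is where the maximality of $\mathfrak{t}\cap(\mathfrak{m}_{1}\cap\mathfrak{m}_{2})$ enters), absorbs $\mathrm{Ad}(k_{1})$ and $\mathrm{Ad}(k_{2})$ using $\theta_{i}$-invariance, and then observes that $e^{\mathrm{ad}(H)}$ is the identity on $\mathfrak{t}$. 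You instead reduce everything to the single identity $\mathfrak{t}\cap\mathfrak{m}_{1}=\mathfrak{t}\cap\mathfrak{m}_{2}$, obtained by a dimension count against the rank invariance of Lemma \ref{lem:cst_rank} applied to $(G,\theta_{1},\theta_{2})\sim(G,\theta_{i},\theta_{i})$. Both arguments ultimately rest on Hermann's decomposition (the paper directly, you through the cohomogeneity argument behind Lemma \ref{lem:cst_rank}), but yours is shorter and stays entirely at the level of eigenspaces in $\mathfrak{t}$, while the paper's version records the extra information that the two involutions are conjugate by $e^{\mathrm{ad}(H)}$ on all of $\mathfrak{g}$, which is in the spirit of Theorem \ref{thm:araki}. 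One small point you should make explicit: two involutions of $\mathfrak{t}$ with the same $(-1)$-eigenspace need not coincide in general; here they do because each $d\theta_{i}$ preserves the fixed $\mathrm{ad}(\mathfrak{g})$-invariant inner product, so in the decomposition $\mathfrak{t}=(\mathfrak{t}\cap\mathfrak{k}_{i})\oplus(\mathfrak{t}\cap\mathfrak{m}_{i})$ the $(+1)$-eigenspace is the orthogonal complement of the $(-1)$-eigenspace, and hence is also determined by it.
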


\begin{proof}
It follows from
$\theta_{1}\sim\theta_{2}$
we have $d\theta_{2}=\mathrm{Ad}(g)d\theta_{1}\mathrm{Ad}(g)^{-1}$ for some $g\in G$.
By Theorem \ref{thm:Hermann}
there exist $k_{i}\in K_{i}$ and $H\in\mathfrak{t}\cap(\mathfrak{m}_{1}\cap\mathfrak{m}_{2})$
such that $g=k_{2}\exp(H)k_{1}$ holds.
Here, we have used the maximality of $\mathfrak{t}\cap(\mathfrak{m}_{1}\cap\mathfrak{m}_{2})$
in $\mathfrak{m}_{1}\cap\mathfrak{m}_{2}$.
Then we have
\begin{align*}
\mathrm{Ad}(g)d\theta_{1}\mathrm{Ad}(g)^{-1}
&=\mathrm{Ad}(k_{2})e^{\mathrm{ad}(H)}\mathrm{Ad}(k_{1})d\theta_{1}\mathrm{Ad}(k_{1})^{-1}e^{-\mathrm{ad}(H)}\mathrm{Ad}(k_{2})^{-1}\\
&=\mathrm{Ad}(k_{2})e^{\mathrm{ad}(H)}d\theta_{1}e^{-\mathrm{ad}(H)}\mathrm{Ad}(k_{2})^{-1},
\end{align*}
from which $d\theta_{2}=e^{\mathrm{ad}(H)}d\theta_{1}e^{-\mathrm{ad}(H)}$ holds.
Since the automorphism $e^{\mathrm{ad}(H)}$ gives the identity transformation on $\mathfrak{t}$,
we obtain
$d\theta_{2}|_{\mathfrak{t}}
=d\theta_{1}|_{\mathfrak{t}}$.
This yields $\mathrm{ord}(d\theta_{1}d\theta_{2}|_{\mathfrak{t}})=1$.
\end{proof}

\section{$\sigma$-systems, Satake diagrams and compact symmetric pairs}\label{sec:pre}

In this section,
we recall the notions
of $\sigma$-systems,
Satake diagrams and compact symmetric pairs.
We refer to the references
\cite{Helgason} and \cite{Warner}, for example.
The contents of this section will be generalized
in Sections
\ref{sec:2satake}
and \ref{sec:cst_dsatake}.

\subsection{Root systems}

We begin with recalling the definition of a root system.
Let $\mathfrak{t}$ be a finite dimensional real vector space.
Fix an inner product $\INN{\,}{\,}$ on $\mathfrak{t}$.
We write $\|\alpha\|=\INN{\alpha}{\alpha}^{1/2}$
as the norm of $\alpha\in\mathfrak{t}$.
For $\alpha\in\mathfrak{t}-\{0\}$
we define a linear isometry $w_{\alpha}\in O(\mathfrak{t})$ by
\begin{equation*}\label{eqn:wWeyl}
w_{\alpha}(H)=H-2\dfrac{\INN{\alpha}{H}}{\|\alpha\|^{2}}\alpha\quad
(H\in\mathfrak{t}).
\end{equation*}
Then $w_{\alpha}$
satisfies $w_{\alpha}^{2}=1$ and $w_{\alpha}(\alpha)=-\alpha$.

\begin{dfn}
A finite subset $\Delta\subset\mathfrak{t}-\{0\}$ is called a \textit{root system}
of $\mathfrak{t}$, if it satisfies the following two conditions:
\begin{enumerate}
\item $\mathfrak{t}=\mathrm{span}_{\mathbb{R}}(\Delta)$.
\item If $\alpha$ and $\beta$ are in $\Delta$,
then $w_{\alpha}(\beta)=\beta-2\dfrac{\INN{\alpha}{\beta}}{\|\alpha\|^{2}}\alpha$ is in $\Delta$, and
$2\dfrac{\langle\alpha ,\beta\rangle}{\|\alpha\|^2}$ is in $\mathbb{Z}$.
\end{enumerate}
In addition, a root system $\Delta$ is said to be \textit{reduced},
if it satisfies the following condition:
\begin{enumerate}
\setcounter{enumi}{2}
\item If $\alpha$ and $\beta$ are in $\Delta$ with $\beta = m\alpha$,
then $m=\pm 1$ holds.
\end{enumerate}
\end{dfn}
A root system $\Delta$
of $\mathfrak{t}$
is said to be reducible
if
there exist two non-empty subsets $\Delta_{1}$
and $\Delta_{2}$ of $\Delta$ satisfying the following conditions:
\[
\Delta=\Delta_{1}\cup\Delta_{2},\quad
\Delta_{1}\cap\Delta_{2}=\emptyset,\quad
\INN{\Delta_{1}}{\Delta_{2}}=\{0\}.
\]
Otherwise it is said to be \textit{irreducible}.
Any root system is decomposed into irreducible ones,
namely, there exist unique irreducible root systems $\Delta_{1},\dotsc,\Delta_{l}$
up to permutation of the indices
such that
$\Delta=\Delta_{1}\cup \dotsb \cup \Delta_{l}$
and that $\INN{\Delta_{i}}{\Delta_{j}}=\{0\}$ for $1\leq i\neq j\leq l$.
This decomposition of $\Delta$ is called the irreducible decomposition of $\Delta$.

Let $\Delta$ and $\Delta'$ be reduced
root systems of $\mathfrak{t}$ and $\mathfrak{t}'$, respectively.
It is shown that,
if $\Delta$
and $\Delta'$ are irreducible,
then,
for any linear isomorphism $\varphi:\mathfrak{t}\to\mathfrak{t}'$
satisfying $\varphi(\Delta)=\Delta'$, we have
\[
2\dfrac{\INN{\beta}{\alpha}}{\|\alpha\|^{2}}
=2\dfrac{\INN{\varphi(\beta)}{\varphi(\alpha)}}{\|\varphi(\alpha)\|^{2}},\quad
\alpha,\beta\in\Delta.
\]
Based on this observation we define an isomorphism of root systems as follows:
A linear isomorphism $\varphi:\mathfrak{t}\to\mathfrak{t}'$
satisfying $\varphi(\Delta)=\Delta'$
is called an \textit{isomorphism} of root systems between $\Delta$ and $\Delta'$.
Two root systems $\Delta$ and $\Delta'$ are \textit{isomorphic},
which we write $\Delta\simeq\Delta'$,
if there exists such $\varphi$.
Then we have $\varphi(\Delta)=\Delta'$.
We find that
$\simeq$ gives an equivalence relation on the set of root systems.

In the case when $\mathfrak{t}=\mathfrak{t}', \INN{\,}{\,}=\INN{\,}{\,}', \Delta=\Delta'$,
an isomorphism $\varphi:\mathfrak{t}\to\mathfrak{t}$ of $\Delta$
is called an automorphism of $\Delta$.
Denote by $\mathrm{Aut}(\Delta)$ the group of automorphisms of $\Delta$.
It is clear that $\mathrm{Aut}(\Delta)$ is a finite group.
The subgroup of $O(\mathfrak{t})$
generated by $\{w_{\alpha}\mid \alpha\in\Delta\}$
is called the \textit{Weyl group} of $\Delta$, which we write $W(\Delta)$.
Then $W(\Delta)$ is a normal subgroup  of $\mathrm{Aut}(\Delta)$.
In particular, $W(\Delta)$ is a finite group.

\subsection{$\sigma$-systems}

Let $\Delta$ be a reduced root system of $\mathfrak{t}$.
Let $\sigma:\mathfrak{t}\to\mathfrak{t}$ be an involutive linear isometry of $\Delta$,
which we call an involution.
Then the pair $(\Delta, \sigma)$ is called a \textit{$\sigma$-system} of
$\mathfrak{t}$.
If we put $\mathfrak{t}^{\pm\sigma}:=\{H\in\mathfrak{t}\mid
\sigma(H)=\pm H\}$,
then we have an orthogonal decomposition
$\mathfrak{t}=\mathfrak{t}^{\sigma}\oplus\mathfrak{t}^{-\sigma}$
with respect to the inner product $\INN{\;}{\;}$.
The rank of $(\Delta,\sigma)$
is defined as the dimension of $\mathfrak{t}^{\sigma}$,
which we write $\mathrm{rank}(\Delta,\sigma)$.
By definition,
we have $\mathrm{rank}(\Delta,\sigma)\leq \mathrm{rank}(\Delta)$.
Let $pr:\mathfrak{t}\to\mathfrak{t}^{\sigma}$
denote the orthogonal projection,
that is,
\[
pr:\mathfrak{t}\to\mathfrak{t}^{\sigma};~
H\mapsto \dfrac{1}{2}(H+\sigma(H)).
\]
Set $\Delta_{0}:=\{\alpha\in\Delta\mid pr(\alpha)=0\}=\{
\alpha\in\Delta\mid\sigma(\alpha)=-\alpha\}$.
Then $\Delta_{0}$
satisfies
$\Delta_{0}=-\Delta_{0}$
and $\alpha+\beta\in\Delta_{0}$
for all $\alpha,\beta\in\Delta_{0}$ with $\alpha+\beta\in\Delta$.
We call such a subset of $\Delta$
a closed subsystem of $\Delta$.
Then
$\Delta_{0}$ becomes a root system of
$\mathrm{span}_{\mathbb{R}}(\Delta_{0})$.

A $\sigma$-system $(\Delta,\sigma)$
is said to be $\sigma$-reducible if
there exist two non-empty $\sigma$-invariant subsets $\Delta_{1}$
and $\Delta_{2}$ of $\Delta$ satisfying the following conditions:
\[
\Delta=\Delta_{1}\cup\Delta_{2},\quad
\Delta_{1}\cap\Delta_{2}=\emptyset,\quad
\INN{\Delta_{1}}{\Delta_{2}}=\{0\}.
\]
Otherwise it is said to be \textit{$\sigma$-irreducible}.
Any $\sigma$-system is decomposed into $\sigma$-irreducible ones,
that is, there exist unique
mutually orthogonal,
$\sigma$-irreducible $\sigma$-systems
$(\Delta_{1},\sigma_{1})$,
$\dotsc$,
$(\Delta_{l},\sigma_{l})$
up to permutation of the indices
such that
$\Delta=\Delta_{1}\cup\dotsb\cup\Delta_{l}$
and $\sigma=\sigma_{j}$ holds on $\Delta_{j}$ for each $1\leq j\leq l$.
Then this decomposition
is called the $\sigma$-irreducible decomposition of the $\sigma$-system $(\Delta, \sigma)$, which we write
\[
(\Delta, \sigma)=(\Delta_{1},\sigma_{1})\cup
\dotsb\cup(\Delta_{l},\sigma_{l}).
\]
It is clear that $(\Delta,\sigma)$ is $\sigma$-irreducible if $\Delta$ is irreducible as a root system.
Two $\sigma$-systems $(\Delta,\sigma)$ and $(\Delta',\sigma')$
are said to be isomorphic,
which we write
$(\Delta,\sigma)\simeq (\Delta',\sigma')$,
if there exists an isomorphism $\varphi: \mathfrak{t}\to \mathfrak{t}'$
of root systems satisfying $\sigma'=\varphi\sigma\varphi^{-1}$.
We call such $\varphi$ an isomorphism of $\sigma$-systems.
Then $\simeq$ gives an equivalence relation on the set of $\sigma$-systems.
We find that
if
$(\Delta,\sigma)\simeq (\Delta',\sigma')$,
then their rank are the same.

\subsection{Normal $\sigma$-systems and their Satake diagrams}\label{sec:NssSd}
A $\sigma$-system $(\Delta,\sigma)$ is said to be \textit{normal} if
$\sigma(\alpha)-\alpha\notin\Delta$ for all $\alpha\in\Delta$.
For a normal $\sigma$-system $(\Delta,\sigma)$,
Araki (\cite{Araki})
proved that
the set $\{pr(\alpha) \mid \alpha \in \Delta-\Delta_{0}\}=:\Sigma$ becomes a root system of $\mathfrak{t}^{\sigma}$ (see also \cite[Proposition 1.1.3.1]{Warner}),
which is called the \textit{restricted root system} of $(\Delta,\sigma)$.
Then we have $\mathrm{rank}(\Sigma)=\mathrm{rank}(\Delta,\sigma)$.
The equivalence relation $\simeq$ is compatible
with the normality of a $\sigma$-system.
Namely, if $(\Delta,\sigma)\simeq (\Delta',\sigma')$ and $(\Delta,\sigma)$ is normal,
then $(\Delta',\sigma')$ is also normal.
In addition, if we denote by $\Sigma'$ the restricted root system of $(\Delta',\sigma')$,
then $\Sigma \simeq \Sigma'$ holds as root systems.

Now, let us recall the notion of Satake diagrams for normal $\sigma$-systems.
Let $(\Delta,\sigma)$ be a normal  $\sigma$-system.
Let $\Pi$ be a fundamental system of $\Delta$.
The positive root system $\Delta^{+}$ for $\Pi$
is described by
$\Delta^{+}=\{\sum_{\alpha\in\Pi}m_{\alpha}\alpha\in\Delta\mid m_{\alpha}\in\mathbb{Z}_{\geq 0}\}$,
where $\mathbb{Z}_{\geq 0}:=\{m\in\mathbb{Z} \mid m \geq 0\}$.
Then
$\Pi$ is called a \textit{$\sigma$-fundamental system},
if $\sigma(\alpha)$ is in $\Delta^{+}$
for all $\alpha\in\Delta^{+}-\Delta_{0}$.
It is known that
a $\sigma$-fundamental system
always exists (cf.~\cite[p.~11]{Araki}).
The following lemma will be needed later.

\begin{lem}\label{lem:Pi_varphiPi}
Let $\Pi$ be a $\sigma$-fundamental system.
For any $\varphi\in \mathrm{Aut}(\Delta)$,
$\varphi(\Pi)$
is a $(\varphi\sigma\varphi^{-1})$-fundamental system of $\Delta$.
\end{lem}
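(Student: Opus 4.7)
The plan is to verify directly the two defining conditions of a $\sigma'$-fundamental system for $\varphi(\Pi)$, where $\sigma':=\varphi\sigma\varphi^{-1}$. There are really three short checks to make, and I expect no serious obstacle.

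First I would observe that $\varphi(\Pi)$ is a fundamental system of $\Delta$. Indeed, since $\varphi\in\mathrm{Aut}(\Delta)$ is a linear isomorphism mapping $\Delta$ bijectively to itself, the expansion $\alpha=\sum_{\beta\in\Pi}m_{\beta}\beta$ of an arbitrary root translates under $\varphi$ to $\varphi(\alpha)=\sum_{\beta\in\Pi}m_{\beta}\varphi(\beta)$, with the same integer coefficients. Consequently $\varphi(\Pi)$ is a fundamental system of $\Delta$, and its positive system is exactly $\varphi(\Delta^{+})$.

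Next I would compute the analogue of $\Delta_0$ for $\sigma'$. Writing $\Delta_{0}':=\{\beta\in\Delta\mid \sigma'(\beta)=-\beta\}$, the equality $\sigma'(\beta)=\varphi\sigma\varphi^{-1}(\beta)$ gives $\beta\in\Delta_{0}'$ if and only if $\sigma(\varphi^{-1}(\beta))=-\varphi^{-1}(\beta)$, i.e.\ $\varphi^{-1}(\beta)\in\Delta_{0}$. Hence $\Delta_{0}'=\varphi(\Delta_{0})$, so that $\varphi(\Delta^{+})-\Delta_{0}'=\varphi(\Delta^{+}-\Delta_{0})$.

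Finally I would verify the $\sigma'$-fundamental condition. Take $\alpha'\in\varphi(\Delta^{+})-\Delta_{0}'$ and write $\alpha'=\varphi(\alpha)$ with $\alpha\in\Delta^{+}-\Delta_{0}$. Then
\[
\sigma'(\alpha')=\varphi\sigma\varphi^{-1}(\varphi(\alpha))=\varphi(\sigma(\alpha)).
\]
Since $\Pi$ is $\sigma$-fundamental we have $\sigma(\alpha)\in\Delta^{+}$, whence $\sigma'(\alpha')\in\varphi(\Delta^{+})$, as required. This completes the verification that $\varphi(\Pi)$ is a $\sigma'$-fundamental system. The only part that could conceivably cause trouble is the step identifying the positive system of $\varphi(\Pi)$ with $\varphi(\Delta^{+})$, but this is immediate from the linearity of $\varphi$ and its preservation of $\Delta$.
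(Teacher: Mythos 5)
Your proof is correct and is exactly the direct verification the authors have in mind when they write ``The proof is straightforward and is omitted'': you check that $\varphi(\Pi)$ is a fundamental system with positive system $\varphi(\Delta^{+})$, that $\Delta_{0}'=\varphi(\Delta_{0})$, and that $\sigma'(\varphi(\alpha))=\varphi(\sigma(\alpha))\in\varphi(\Delta^{+})$ for $\alpha\in\Delta^{+}-\Delta_{0}$. Nothing further is needed.
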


The proof is straightforward and is omitted.
Let $\Pi$ be a $\sigma$-fundamental system of $\Delta$.
It is known that $\Pi\cap\Delta_{0}=: \Pi_{0}$ is a fundamental system of $\Delta_{0}$ (cf.~\cite[p.~23]{Warner}).
Denote by $(\Pi_{0})_{\mathbb{Z}}$ the $\mathbb{Z}$-submodule of $\mathfrak{t}$
generated by $\Pi_{0}$.
It follows from \cite[Lemma 1.1.3.2]{Warner} that
there exists uniquely a permutation $p:\Pi-\Pi_{0}\to\Pi-\Pi_{0}$ of order two such that
\[
\sigma(\alpha)\equiv p(\alpha)\quad
(\operatorname{mod}(\Pi_{0})_{\mathbb{Z}}),
\]
which is called the \textit{Satake involution}
of $(\Delta,\sigma)$ associated with $\Pi$.
Then the \textit{Satake diagram} $S=S(\Pi,\Pi_{0},p)$ of $(\Delta,\sigma)$ associated with $\Pi$
is described as follows:
In the Dynkin diagram of $\Pi$,
every root in $\Pi_{0}$ is replaced from a white circle to a black circle,
and two roots $\alpha,\alpha'\in\Pi-\Pi_{0}$ with $\alpha\neq\alpha'$
are connected by a curved arrow
if $p(\alpha)=\alpha'$.

The normal $\sigma$-system
$(\Delta,\sigma)$ can be reconstructed from $S(\Pi,\Pi_{0},p)$.
The Dynkin diagram of $\Pi$
determines the structures
of $\Delta$,
$\mathfrak{t}=\mathrm{span}_{\mathbb{R}}(\Pi)$ and $\INN{\,}{\,}$.
We write $\Pi=\{\alpha_{1},\dotsc,\alpha_{l}\}$ with $l=\mathrm{rank}(\Delta)$.
By renumbering the indices if necessary,
we may assume that 
there exists $l_{1},l_{2}\leq l$
such that
\[\Pi-\Pi_{0}=\{\alpha_{1},\dotsc,\alpha_{l_{1}},
\alpha_{l_{1}+1},
\dotsc,
\alpha_{l_{1}+l_{2}},
\alpha_{l_{1}+l_{2}+1},\dotsc,
\alpha_{l_{1}+2l_{2}}\},
\] and
\[
p(\alpha_{j})=\alpha_{j}~(1\leq j\leq l_{1}),\quad
p(\alpha_{l_{1}+j'})=\alpha_{l_{1}+l_{2}+j'}~(1\leq j'\leq l_{2}).
\]
In particular,
$l_{2}$ is equal to the number of arrows in $S(\Pi,\Pi_{0},p)$.
From this assumption the cardinality of $\Pi_{0}$
is equal to $l-(l_{1}+2l_{2})=:l_{0}$.
Clearly, $\Pi_{0}=\{\alpha_{l-l_{0}+1},\dotsc,\alpha_{l}\}$ holds.
For $1\leq j\leq l_{1}$,
the condition $p(\alpha_{j})=\alpha_{j}$
implies that
\begin{equation}\label{eqn:satakesigma1}
\alpha_{j}-\sigma(\alpha_{j})
\in\sum_{k=l-l_{0}+1}^{l}\mathbb{Z}\alpha_{k}\subset\mathfrak{t}^{-\sigma}.
\end{equation}
Furthermore,
for $1\leq j'\leq l_{2}$,
from $p(\alpha_{l_{1}+j'})=\alpha_{l_{1}+l_{2}+j'}$
we have
\begin{equation}\label{eqn:satakesigma2}
\mathfrak{t}^{-\sigma}\ni
\alpha_{l_{1}+j'}-\sigma(\alpha_{l_{1}+j'})
=
\alpha_{l_{1}+j'}-\alpha_{l_{1}+l_{2}+j'}+\sum_{k=l-l_{0}+1}^{l}m_{k}\alpha_{k},
\end{equation}
for some integers $m_{l-l_{0}+1},\dotsc,m_{l}$.
Hence it follows from \eqref{eqn:satakesigma1} and \eqref{eqn:satakesigma2}
that the $(-1)$-eigenspace 
$\mathfrak{t}^{-\sigma}$ of $\sigma$ in $\mathfrak{t}$
has the following description:
\[
\mathfrak{t}^{-\sigma}
=\sum_{k=1}^{l}\mathbb{R}(\alpha_{k}-\sigma(\alpha_{k}))
=\sum_{j'=1}^{l_{2}}\mathbb{R}(\alpha_{l_{1}+j'}-\alpha_{l_{1}+l_{2}+j'})
\oplus
\sum_{k=l-l_{0}+1}^{l}\mathbb{R}\alpha_{k}.
\]
In addition, we obtain $\mathfrak{t}^{\sigma}$
as the orthogonal complement of $\mathfrak{t}^{-\sigma}$ in $\mathfrak{t}$.
Thus, the action of $\sigma$ on $\Delta$ is reconstructed.
In particular, we get $\mathrm{rank}(\Delta,\sigma)=
l-(l_{2}+l_{0})=l_{1}+l_{2}$.

Let us explain that
the definition of the Satake diagram of $(\Delta,\sigma)$
is independent of the choice of $\sigma$-fundamental systems.
Suppose that $\tilde{\Pi}$ is another $\sigma$-fundamental system of $\Delta$.
Set $\tilde{\Pi}_{0}:=\tilde{\Pi}\cap\Delta_{0}$.
We denote by $\tilde{p}$ the Satake involution associated with $\tilde{\Pi}$.
Then it follows from \cite[Proposition A in Appendix]{Satake} that
there exists $w\in W(\Delta)$ satisfying 
$\tilde{\Pi}=w(\Pi)$ and $w\sigma=\sigma w$.
Thus, we have $\tilde{\Pi}_{0}=w(\Pi_{0})$ and $w(p(\alpha))=\tilde{p}(w(\alpha))$ for all $\alpha\in\Pi-\Pi_{0}$.
Then we write
\begin{equation*}\label{eqn:satake_eq}
S(\Pi,\Pi_{0},p)=S(\Pi',\Pi_{0}',p').
\end{equation*}

\begin{dfn}\label{dfn:satake_simeq}
Let $(\Delta,\sigma)$ and $(\Delta',\sigma')$ be two normal $\sigma$-systems,
and $S(\Pi,\Pi_{0},p)$ and $S(\Pi',\Pi_{0}',p')$ denote the Satake diagrams of $(\Delta,\sigma)$
and $(\Delta',\sigma')$, respectively.
We write $S(\Pi,\Pi_{0},p)\simeq S(\Pi',\Pi'_{0},p')$
if there exists an isomorphism $\psi:\Pi\to\Pi'$ of Dynkin diagrams
such that $\psi(\Pi_{0})=\Pi_{0}'$ and $\psi(p(\alpha))=p'(\psi(\alpha))$ for all $\alpha \in \Pi-\Pi_{0}$.
We call such $\psi$ an isomorphism of Satake diagrams.
Then $\simeq$ gives an equivalence relation
for Satake diagrams.
\end{dfn}

By the reconstruction of
$(\Delta, \sigma)$ from $S(\Pi,\Pi_{0},p)$
we have the following lemma:

\begin{lem}\label{lem:sigma_satake}
Retain the notation as in Definition $\ref{dfn:satake_simeq}$.
Then,
$(\Delta,\sigma)\simeq (\Delta',\sigma')$ if and only if $S(\Pi,\Pi_{0},p)\simeq S(\Pi',\Pi_{0}',p')$.
In particular, any isomorphism of Satake diagrams
can be extended to an isomorphism of $\sigma$-systems.
\end{lem}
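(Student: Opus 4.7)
The plan is to prove both implications by exploiting the explicit reconstruction of $(\Delta, \sigma)$ from its Satake diagram sketched in the paragraph preceding the lemma, together with Lemma~\ref{lem:Pi_varphiPi} and the remark that $S(\Pi,\Pi_0,p)$ does not depend on the choice of $\sigma$-fundamental system. I expect the forward direction to be almost formal and the backward direction to carry the real content.

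For the forward direction, assume $(\Delta,\sigma)\simeq(\Delta',\sigma')$ via $\varphi\colon\mathfrak{t}\to\mathfrak{t}'$ with $\varphi(\Delta)=\Delta'$ and $\sigma'=\varphi\sigma\varphi^{-1}$. First, I would apply Lemma~\ref{lem:Pi_varphiPi} to conclude that $\varphi(\Pi)$ is a $\sigma'$-fundamental system of $\Delta'$, so that $S(\varphi(\Pi),\varphi(\Pi_{0}),\varphi p \varphi^{-1})$ is a Satake diagram of $(\Delta',\sigma')$. Since any two Satake diagrams of $(\Delta',\sigma')$ coming from different $\sigma'$-fundamental systems are related by an element of $W(\Delta')$ commuting with $\sigma'$ (by \cite[Proposition~A]{Satake}, as cited in the paper), we obtain $S(\varphi(\Pi),\varphi(\Pi_{0}),\varphi p \varphi^{-1})\simeq S(\Pi',\Pi'_{0},p')$. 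Composing these isomorphisms with $\psi := \varphi|_{\Pi}$ then gives an isomorphism $\psi\colon\Pi\to\Pi'$ of Dynkin diagrams with $\psi(\Pi_{0})=\Pi'_{0}$ and $\psi\circ p = p'\circ \psi$, i.e.\ $S(\Pi,\Pi_{0},p)\simeq S(\Pi',\Pi'_{0},p')$.

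For the backward direction, start with an isomorphism $\psi\colon\Pi\to\Pi'$ of Satake diagrams, so in particular an isomorphism of the underlying Dynkin diagrams. It is standard that $\psi$ extends uniquely to a linear isomorphism $\varphi\colon\mathfrak{t}\to\mathfrak{t}'$ which is an isomorphism of root systems, i.e.\ $\varphi(\Delta)=\Delta'$ and $\varphi$ preserves the normalized inner product on each irreducible component. It remains to check $\sigma' = \varphi\sigma\varphi^{-1}$. Here I would invoke the reconstruction formula recalled in the excerpt: writing $\Pi = \{\alpha_{1},\dots,\alpha_{l}\}$ as in the text, the $(-1)$-eigenspace $\mathfrak{t}^{-\sigma}$ is the span of the vectors $\alpha_{l_{1}+j'}-\alpha_{l_{1}+l_{2}+j'}$ ($1\le j'\le l_{2}$) and the roots $\alpha_{k}\in\Pi_{0}$, while $\mathfrak{t}^{\sigma}$ is its orthogonal complement, and the analogous description holds for $\mathfrak{t}'^{\pm\sigma'}$ in terms of $\Pi'_{0}$ and $p'$. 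Because $\psi(\Pi_{0})=\Pi'_{0}$, $\psi\circ p = p'\circ\psi$, and $\varphi$ preserves the inner product up to scaling on irreducible components (which does not affect orthogonal complements), the map $\varphi$ sends $\mathfrak{t}^{\pm\sigma}$ onto $\mathfrak{t}'^{\pm\sigma'}$. Consequently $\varphi\sigma\varphi^{-1}$ and $\sigma'$ agree on both eigenspaces, hence on $\mathfrak{t}'$.

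The main obstacle is precisely this last verification: one must be careful that the description of $\mathfrak{t}^{-\sigma}$ uses only data recorded in the Satake diagram (namely $\Pi_{0}$ and $p$), so that the reconstruction is transported by $\varphi$. Once this is in place, the "in particular" assertion is immediate: the $\varphi$ constructed in the backward direction extends $\psi$ and is by construction an isomorphism of $\sigma$-systems.
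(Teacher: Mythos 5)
Your proof is correct and follows exactly the route the paper intends: the paper gives no detailed argument, simply asserting the lemma ``by the reconstruction of $(\Delta,\sigma)$ from $S(\Pi,\Pi_{0},p)$'' carried out in Subsection \ref{sec:NssSd}, and your backward direction is precisely that reconstruction (recovering $\mathfrak{t}^{-\sigma}$ from $\Pi_{0}$ and $p$ and taking orthogonal complements), while your forward direction correctly combines Lemma \ref{lem:Pi_varphiPi} with the well-definedness of the Satake diagram via \cite[Proposition A]{Satake}.
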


\subsection{Compact symmetric pairs and their Satake diagrams}

Let $G$ be a compact connected
semisimple Lie group,
and $\theta$ be an involution of $G$.
We call the pair $(G,\theta)$
a compact symmetric pair.
Denote by $\mathfrak{g}$ the Lie algebra of $G$.
Fix an $\mathrm{ad}(\mathfrak{g})$-invariant inner product $\INN{\,}{\,}$
on $\mathfrak{g}$.
The differential $d\theta$ of $\theta$ at the identity element in $G$
gives an involution of $\mathfrak{g}$, which we write the same symbol $\theta$
if there is no confusion.
Let $\mathfrak{g}=\mathfrak{g}^{\theta}\oplus\mathfrak{g}^{-\theta}=:\mathfrak{k}\oplus\mathfrak{m}$
be the canonical decomposition of $\mathfrak{g}$ for $\theta$.
Take a maximal abelian subalgebra $\mathfrak{t}$ of $\mathfrak{g}$
such that $\mathfrak{t}\cap\mathfrak{m}$ is a maximal abelian subspace of $\mathfrak{m}$.
This implies that $\mathfrak{t}$ is $\theta$-invariant.

Let $\Delta(\subset \mathfrak{t})$ denote the root system of $\mathfrak{g}$
with respect to $\mathfrak{t}$.
Since $\theta$ induces an automorphism of $\Delta$,
the pair $(\Delta, \sigma):=(\Delta, -\theta|_{\mathfrak{t}})$
gives a $\sigma$-system of $\mathfrak{t}$.
It follows from \cite[Lemma 1.1.3.6]{Warner}
that $(\Delta,\sigma)$ is normal.
We call it the $\sigma$-system of $(G, \theta)$ (or $(\mathfrak{g},\theta)$) for $\mathfrak{t}$.
We will show that
the $\sigma$-system
$(\Delta,\sigma)$
 is uniquely determined from $(G,\theta)$ up to isomorphism.
Let $\mathfrak{t}'$ be another
maximal abelian subalgebra of $\mathfrak{g}$
such that $\mathfrak{t}'\cap\mathfrak{m}$ is a maximal abelian subspace of $\mathfrak{m}$.
Denote by $(\Delta',\sigma'):=(\Delta',-\theta|_{\mathfrak{t}'})$
the $\sigma$-system of $(G,\theta)$ for $\mathfrak{t}'$.
Let $K$ be the
identity component of the fixed point subgroup of $\theta$ in $G$.
From the $\mathrm{Ad}(K)$-conjugacy of maximal abelian subspaces of $\mathfrak{m}$,
there exists $k\in K$ satisfying
$\mathfrak{t}'\cap\mathfrak{m}=\mathrm{Ad}(k)(\mathfrak{t}\cap\mathfrak{m})$.
In addition,
there also exists $k'\in K$
satisfying the following relations (cf.~\cite[Proposition 5]{Sugiura}):
\[
\mathrm{Ad}(k')(H)=H \quad (H\in\mathfrak{t}'\cap\mathfrak{m}),\quad
\mathrm{Ad}(k')(\mathrm{Ad}(k)(\mathfrak{t}\cap\mathfrak{k}))=\mathfrak{t}'\cap\mathfrak{k}.
\]
If we put $\tilde{k}:=k'k\in K$,
then we have
$\mathrm{Ad}(\tilde{k})(\mathfrak{t}\cap\mathfrak{m})=\mathfrak{t}'\cap\mathfrak{m}$ and
$\mathrm{Ad}(\tilde{k})(\mathfrak{t}\cap\mathfrak{k})=\mathfrak{t}'\cap\mathfrak{k}$.
This obeys $\mathrm{Ad}(\tilde{k})(\mathfrak{t})=\mathfrak{t}'$.
Thus, we obtain
\begin{equation*}\label{eqn:csp_sigma}
(\Delta',\sigma')
=(\mathrm{Ad}(\tilde{k})(\Delta),-(\mathrm{Ad}(\tilde{k})\theta\mathrm{Ad}(\tilde{k})^{-1})|_{\mathrm{Ad}(\tilde{k})(\mathfrak{t})})
\simeq (\Delta, -\theta|_{\mathfrak{t}})=(\Delta,\sigma).
\end{equation*}
We define the Satake diagram of $(G,\theta)$
as that of $(\Delta,\sigma)$,
which is uniquely determined up to isomorphisms
due to Lemma \ref{lem:sigma_satake}.

Two compact symmetric pairs
$(G,\theta)$ and $(G,\theta')$
are said to be isomorphic,
which we write $(G,\theta)\simeq (G,\theta')$,
if there exists $\varphi\in\mathrm{Aut}(G)$
satisfying $\theta'=\varphi\theta\varphi^{-1}$.
Then their Satake diagrams are isomorphic
in the sense of Definition \ref{dfn:satake_simeq}.
In order to consider
the converse (see Theorem \ref{thm:cps_sigma_satake_equiv}
for the precise statement),
we will recall the result
for compact symmetric pairs
due to Araki (\cite{Araki}).

Let $(G,\theta)$ and $(G,\theta')$
be two compact symmetric pairs.
Let $\mathfrak{t}$
be a maximal abelian subalgebra of $\mathfrak{g}$
such that $\mathfrak{t}\cap\mathfrak{m}$
and $\mathfrak{t}\cap\mathfrak{m}'$
are maximal abelian subspace of $\mathfrak{m}$
and $\mathfrak{m}'$, respectively.
The following theorem
states that,
if their differentials $d\theta$
and $d\theta'$
coincide with each other on $\mathfrak{t}$,
then
$\theta$
and $\theta'$ are the same on the whole $G$
up to the $\mathrm{Int}(G)$-conjugacy.

\begin{thm}[Araki]\label{thm:araki}
Retain the notation above.
Assume that 
$d\theta|_{\mathfrak{t}}=d\theta'|_{\mathfrak{t}}$ holds.
Then, 
there exists $H\in\mathfrak{t}\cap\mathfrak{m}$
satisfying $d\theta'=e^{\mathrm{ad}(H)}d\theta e^{-\mathrm{ad}(H)}$.
In addition, $\theta'=\tau_{h}\theta\tau_{h}^{-1}$
holds for $h=\exp(H)$, where 
$\tau_{h}$ denotes the inner automorphism of $G$ defined by
$g\mapsto hgh^{-1}$.
\end{thm}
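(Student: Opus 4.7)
The plan is to reduce the problem to a Lie algebra identity by analyzing the automorphism $\phi := d\theta' \circ d\theta$ of $\mathfrak{g}$. Specifically, I would produce $H\in\mathfrak{t}\cap\mathfrak{m}$ with $\phi = e^{2\mathrm{ad}(H)}$; the desired equality $d\theta' = e^{\mathrm{ad}(H)}\,d\theta\,e^{-\mathrm{ad}(H)}$ will then follow by a short computation, and Lemma~\ref{lem:autoGg} promotes it to the Lie group level.

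First, I would observe that $\phi$ restricts to the identity on $\mathfrak{t}$: for $H\in\mathfrak{t}$, $\phi(H) = d\theta'(d\theta(H)) = (d\theta)^{2}(H) = H$ by the hypothesis $d\theta|_{\mathfrak{t}} = d\theta'|_{\mathfrak{t}}$. Passing to the complexification and decomposing into root spaces with respect to $\mathfrak{t}^{\mathbb{C}}$, the automorphism $\phi$ preserves each root space $\mathfrak{g}_{\alpha}^{\mathbb{C}}$ and acts on it by a non-zero scalar $c_{\alpha}$. The bracket relation forces $c_{\alpha+\beta} = c_{\alpha}c_{\beta}$ whenever $\alpha+\beta$ is a root, so these scalars extend to a character of the root lattice. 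This yields $T\in\mathfrak{t}^{\mathbb{C}}$ with $\phi = e^{\mathrm{ad}(T)}$.

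Next, the involution constraint $d\theta\circ\phi\circ d\theta = \phi^{-1}$ (immediate from $\phi = d\theta'\circ d\theta$ and $(d\theta)^{2} = (d\theta')^{2} = 1$) translates, via $d\theta\circ e^{\mathrm{ad}(T)}\circ d\theta = e^{\mathrm{ad}(d\theta(T))}$, into $e^{\mathrm{ad}(d\theta(T)+T)} = 1$, so $d\theta(T)+T$ lies in the integral kernel $\Lambda$ of the map $T\mapsto e^{\mathrm{ad}(T)}$ on $\mathfrak{t}^{\mathbb{C}}$. Writing $T = T_{\mathfrak{k}} + T_{\mathfrak{m}}$ along the splitting $\mathfrak{t}^{\mathbb{C}} = (\mathfrak{t}\cap\mathfrak{k})^{\mathbb{C}} \oplus (\mathfrak{t}\cap\mathfrak{m})^{\mathbb{C}}$, this gives $2T_{\mathfrak{k}}\in\Lambda$. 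The main obstacle of the proof is the next step: adjusting $T$ by an element of $\Lambda$ so that it becomes $2H$ with $H\in\mathfrak{t}\cap\mathfrak{m}$, that is, removing the $T_{\mathfrak{k}}$-part and simultaneously ensuring reality. This uses both the reality of $\phi$ as an automorphism of the compact form $\mathfrak{g}$ (constraining $T$ modulo $\Lambda$) and a careful analysis of how $\Lambda$ intersects the $\theta$-eigenspace decomposition of $\mathfrak{t}^{\mathbb{C}}$, relying on the compactness of $G$ and the resulting lattice structure.

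Once such $H\in\mathfrak{t}\cap\mathfrak{m}$ with $\phi = e^{2\mathrm{ad}(H)}$ is produced, the Lie algebra identity follows by direct computation: using $d\theta(H) = -H$ one has $d\theta\circ e^{-\mathrm{ad}(H)} = e^{-\mathrm{ad}(d\theta(H))}\circ d\theta = e^{\mathrm{ad}(H)}\circ d\theta$, hence
\[
e^{\mathrm{ad}(H)}\circ d\theta \circ e^{-\mathrm{ad}(H)} = e^{2\mathrm{ad}(H)}\circ d\theta = \phi\circ d\theta = d\theta'.
\]
For the group-level statement, set $h := \exp(H)$; both $\theta'$ and $\tau_{h}\theta\tau_{h}^{-1}$ are automorphisms of $G$ whose differentials at the identity agree (the latter equals $e^{\mathrm{ad}(H)}\circ d\theta\circ e^{-\mathrm{ad}(H)}$), so Lemma~\ref{lem:autoGg} gives $\theta' = \tau_{h}\theta\tau_{h}^{-1}$ on all of $G$.
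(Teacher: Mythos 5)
Your reduction of the problem to producing $H\in\mathfrak{t}\cap\mathfrak{m}$ with $d\theta'\circ d\theta=e^{2\mathrm{ad}(H)}$ is correct, as are the existence of $T\in\mathfrak{t}$ with $d\theta'\circ d\theta=e^{\mathrm{ad}(T)}$ and the closing computations. But the step you yourself label ``the main obstacle'' --- replacing $T$ modulo the kernel lattice $\Lambda$ by an element of $2(\mathfrak{t}\cap\mathfrak{m})$ --- is exactly where the content of Araki's theorem lives, and you give no argument for it, only a description of what it would use. Worse, the constraints you actually derive are insufficient to carry it out. From $2T_{\mathfrak{k}}\in\Lambda$ one only gets that the scalar $c_{\beta}:=e^{\sqrt{-1}\langle\beta,T\rangle}$ by which $\phi$ acts on $\mathfrak{g}(\mathfrak{t},\beta)$ equals $\pm1$ for roots $\beta$ fixed by $d\theta$; if $c_{\beta}=-1$ for such a $\beta$, then no $H\in\mathfrak{t}\cap\mathfrak{m}$ can exist, since $\beta\in\mathfrak{t}\cap\mathfrak{k}$ forces $\langle\beta,H\rangle=0$ and hence $e^{2\sqrt{-1}\langle\beta,H\rangle}=1$. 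Ruling this out requires Lemma \ref{lem:Klein_pro4.1}, (2) (the root space of a $\theta$-fixed root lies in $\mathfrak{k}^{\mathbb{C}}$, so the Klein constants of both $\theta$ and $\theta'$ there equal $1$), which in turn rests on the hypothesis that $\mathfrak{t}\cap\mathfrak{m}$ is maximal abelian in $\mathfrak{m}$ --- a hypothesis your outline never invokes. Even granting that, one must still show that the character $\alpha\mapsto c_{\alpha}$ of the root lattice is trivial on the whole sublattice of $\theta$-fixed elements (not just on $(\Pi_{0})_{\mathbb{Z}}$), using $c\circ\theta=c^{-1}$ together with the structure of the Satake involution, before a Gram-matrix argument can produce $H$.

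For comparison, the paper sidesteps the global lattice question entirely: it matches the Klein constants of $\theta'$ and of $e^{\mathrm{ad}(H)}\theta e^{-\mathrm{ad}(H)}$ only on one representative $\alpha_{j}$ of each Satake-involution orbit in $\Pi-\Pi_{0}$, which is solvable because the differences $\theta(\alpha_{j})-\alpha_{j}$ are linearly independent (Lemma \ref{lem:uehtgs}); the matching on $\Pi_{0}$ is automatic by Lemma \ref{lem:Klein_pro4.1}, (2); and the identity then propagates to all of $\mathfrak{g}^{\mathbb{C}}$ because $\mathfrak{h}\cup\overline{\mathfrak{h}}$ generates. Your route can in principle be completed along the lines indicated above, but as written it is an outline with the decisive step missing.
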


Here, we note that,
under the assumption of this theorem,
$d\theta|_{\mathfrak{t}}=d\theta'|_{\mathfrak{t}}$ implies
$\mathfrak{t}\cap\mathfrak{m}=\mathfrak{t}\cap\mathfrak{m}'$.
Theorem \ref{thm:araki} will be used in the proof of Theorem
\ref{thm:cst_dsatake_sim}.
For the completeness of our proof of 
Theorem \ref{thm:cst_dsatake_sim},
we will prove Theorem \ref{thm:araki}
(see \cite[Theorem 2.14]{Araki} for the original statement and its proof).

For this purpose we need some preparation.
We first restate Theorem \ref{thm:araki} in terms of the complexification of $\mathfrak{g}$.
Let $\mathfrak{g}^{\mathbb{C}}$ denote
the complexification of $\mathfrak{g}$.
We write $d\theta^{\mathbb{C}}$
and $d\theta'^{\mathbb{C}}$
as the complexifications of $d\theta$
and $d\theta'$, respectively.
Then it is sufficient to show
that,
if $d\theta|_{\mathfrak{t}}=d\theta'|_{\mathfrak{t}}$ holds,
then
there exists $H\in\mathfrak{t}\cap\mathfrak{m}$
satisfying $d\theta'^{\mathbb{C}}=e^{\mathrm{ad}(H)}d\theta^{\mathbb{C}} e^{-\mathrm{ad}(H)}$.
In order to give such $H$,
we next recall
the result for compact symmetric pairs
due to Klein (\cite{Klein}).
In fact,
following to his result,
we can obtain a description for
the action of $d\theta^{\mathbb{C}}$
on $\mathfrak{g}^{\mathbb{C}}$
by means of the corresponding Satake diagram.

Let $(G,\theta)$ be a compact symmetric pair.
Take a maximal abelian subalgebra $\mathfrak{t}$ of $\mathfrak{g}$
such that 
$\mathfrak{t}\cap\mathfrak{m}$ is a maximal abelian subspace of $\mathfrak{m}$.
Denote by $\Delta$ the root system of $\mathfrak{g}$ with respect to $\mathfrak{t}$.
We write the root space decomposition of the complexification $\mathfrak{g}^{\mathbb{C}}$
as follows:
\[
\mathfrak{g}^{\mathbb{C}}=\mathfrak{t}^{\mathbb{C}}\oplus\sum_{\alpha\in\Delta}\mathfrak{g}(\mathfrak{t},\alpha),
\]
where $\mathfrak{g}(\mathfrak{t},\alpha):=\{X\in\mathfrak{g}^{\mathbb{C}}\mid [H,X]=\sqrt{-1}\INN{\alpha}{H}X,\,H\in\mathfrak{t}\}$.
For each $\alpha\in\Delta$,
$\mathfrak{g}(\mathfrak{t},\alpha)$
is a complex one dimensional subspace of $\mathfrak{g}^{\mathbb{C}}$.
A family $\{X_{\alpha}\}_{\alpha\in\Delta}$ of vectors in $\mathfrak{g}^{\mathbb{C}}$
is called a \textit{Chevalley basis} of $\mathfrak{g}^{\mathbb{C}}$,
if it satisfies the following conditions:
\begin{enumerate}
\item For each $\alpha\in\Delta$,
$X_{\alpha}$ is a nonzero vector in $\mathfrak{g}(\mathfrak{t},\alpha)$.
\item $[X_{\alpha},X_{-\alpha}]=-\sqrt{-1}\alpha$ for $\alpha\in\Delta$.
\item There exists a family
$\{c_{\alpha,\beta} \mid \alpha,\beta\in\Delta,
\alpha+\beta\in\Delta\}$ of real numbers 
satisfying
$[X_{\alpha},X_{\beta}]=c_{\alpha,\beta}X_{\alpha+\beta}$
and $c_{\alpha,\beta}=-c_{-\alpha,-\beta}$.
\item $[X_{\alpha},X_{\beta}]=0$ for $\alpha,\beta\in\Delta$ with $\alpha+\beta\not\in\Delta\cup\{0\}$.
\end{enumerate}
For formal reasons we put $c_{\alpha,\beta}=0$
for $\alpha,\beta\in\Delta$ with $\alpha+\beta\not\in\Delta\cup\{0\}$.
Then
$\{c_{\alpha,\beta}\}$
is called the \textit{Chevalley constants} associated with $\{X_{\alpha}\}_{\alpha\in\Delta}$.
We note that 
a Chevalley basis is not a basis of 
the whole $\mathfrak{g}^{\mathbb{C}}$ but 
the subspace $\sum_{\alpha\in\Delta}\mathfrak{g}(\mathfrak{t},\alpha)$.
It is known that
a Chevalley basis exists
(see \cite[Theorem 6.6, Chapter VI]{Knapp} for the proof).

We extend $\INN{\,}{\,}$
to a complex bilinear form on $\mathfrak{g}^{\mathbb{C}}$,
which is denote by the same symbol $\INN{\,}{\,}$.
Then it is $\mathrm{ad}(\mathfrak{g}^{\mathbb{C}})$-invariant
and nondegenerate. 
For each $\alpha\in\Delta$,
by taking the scalar product of both sides
of $[X_{\alpha},X_{-\alpha}]=-\sqrt{-1}\alpha$ with 
$\alpha$ we get $\INN{X_{\alpha}}{X_{-\alpha}}=-1$.
We write $\overline{X}$ the complex conjugate of $X\in\mathfrak{g}^{\mathbb{C}}$
with respect to $\mathfrak{g}$ in $\mathfrak{g}^{\mathbb{C}}$.
By a similar argument in the proof of
\cite[Proposition 3.5]{Klein}
we obtain the following lemma.

\begin{lem}\label{lem:Cbasis_conj}
There exists a Chevalley basis $\{X_{\alpha}\}_{\alpha\in\Delta}$ of $\mathfrak{g}^{\mathbb{C}}$
which satisfies
$\overline{X_{\alpha}}=-X_{-\alpha}$ for $\alpha\in\Delta$.
Then we have $\INN{X_{\alpha}}{\overline{X_{\alpha}}}=1$.
\end{lem}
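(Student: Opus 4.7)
My plan is to start with an arbitrary Chevalley basis $\{Y_{\alpha}\}_{\alpha\in\Delta}$, whose existence is already cited from Knapp, and rescale each $Y_{\alpha}$ by a positive real factor so that the new basis $\{X_{\alpha}\}$ satisfies the conjugation relation. The key observation is that since complex conjugation $X\mapsto\overline{X}$ commutes with $\mathrm{ad}(H)$ for $H\in\mathfrak{t}$ (because $\mathfrak{t}\subset\mathfrak{g}$), the conjugate $\overline{Y_{\alpha}}$ lies in the one-dimensional root space $\mathfrak{g}(\mathfrak{t},-\alpha)$, so we may write $\overline{Y_{\alpha}}=\lambda_{\alpha}Y_{-\alpha}$ for some $\lambda_{\alpha}\in\mathbb{C}$.

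First I would establish that each $\lambda_{\alpha}$ is a negative real number. Applying conjugation twice gives $\overline{\lambda_{\alpha}}\lambda_{-\alpha}=1$, while conjugating axiom (2), $[Y_{\alpha},Y_{-\alpha}]=-\sqrt{-1}\alpha$, and using that $\alpha$ corresponds to a real element of $\mathfrak{t}$ yields $\lambda_{\alpha}\lambda_{-\alpha}=1$. Together these force $\lambda_{\alpha}\in\mathbb{R}$. Using compactness of $\mathfrak{g}$, the Hermitian form $(X,Y):=\INN{X}{\overline{Y}}$ extending the invariant inner product is positive definite; combined with $\INN{Y_{\alpha}}{Y_{-\alpha}}=-1$ (a consequence of axiom (2) and $\mathrm{ad}$-invariance of $\INN{\,}{\,}$), this gives $0<\INN{Y_{\alpha}}{\overline{Y_{\alpha}}}=-\lambda_{\alpha}$, hence $\lambda_{\alpha}<0$. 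Next, conjugating axiom (3), $[Y_{\alpha},Y_{\beta}]=c_{\alpha,\beta}Y_{\alpha+\beta}$, using the reality of $c_{\alpha,\beta}$ together with $c_{-\alpha,-\beta}=-c_{\alpha,\beta}$, I expect to obtain the multiplicativity
\[
\lambda_{\alpha+\beta}=-\lambda_{\alpha}\lambda_{\beta}\qquad (\alpha+\beta\in\Delta).
\]

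Writing $\lambda_{\alpha}=-t_{\alpha}^{2}$ with $t_{\alpha}>0$, the three identities above become $t_{\alpha}t_{-\alpha}=1$ and $t_{\alpha+\beta}=t_{\alpha}t_{\beta}$. I then define $X_{\alpha}:=Y_{\alpha}/t_{\alpha}$. A direct check shows $\overline{X_{\alpha}}=\lambda_{\alpha}Y_{-\alpha}/t_{\alpha}=-t_{\alpha}Y_{-\alpha}=-X_{-\alpha}$; the bracket $[X_{\alpha},X_{-\alpha}]=-\sqrt{-1}\alpha$ is preserved since $t_{\alpha}t_{-\alpha}=1$; axiom (4) is automatic from the root space decomposition; and the new structure constants $c^{X}_{\alpha,\beta}=(t_{\alpha+\beta}/(t_{\alpha}t_{\beta}))\,c_{\alpha,\beta}$ are real and still satisfy $c^{X}_{\alpha,\beta}=-c^{X}_{-\alpha,-\beta}$ precisely thanks to $t_{\alpha+\beta}=t_{\alpha}t_{\beta}$.

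The main obstacle is verifying $\lambda_{\alpha+\beta}=-\lambda_{\alpha}\lambda_{\beta}$, since this is exactly what ensures that the rescaling preserves the antisymmetry $c^{X}_{\alpha,\beta}+c^{X}_{-\alpha,-\beta}=0$; all other Chevalley-basis axioms are preserved by a straightforward computation once this compatibility is in place. The closing identity $\INN{X_{\alpha}}{\overline{X_{\alpha}}}=1$ is then immediate: from $\overline{X_{\alpha}}=-X_{-\alpha}$ we have $\INN{X_{\alpha}}{\overline{X_{\alpha}}}=-\INN{X_{\alpha}}{X_{-\alpha}}$, and the same $\mathrm{ad}$-invariance argument applied to $[X_{\alpha},X_{-\alpha}]=-\sqrt{-1}\alpha$ gives $\INN{X_{\alpha}}{X_{-\alpha}}=-1$.
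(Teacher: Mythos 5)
Your argument is correct, and it is essentially the standard rescaling argument that the paper delegates to Klein's Proposition~3.5 rather than writing out: one shows each $\lambda_{\alpha}$ is a negative real number with $\lambda_{\alpha}\lambda_{-\alpha}=1$ and $\lambda_{\alpha+\beta}=-\lambda_{\alpha}\lambda_{\beta}$, and then rescales by $t_{\alpha}=\sqrt{-\lambda_{\alpha}}$, which in fact leaves the structure constants unchanged since $t_{\alpha+\beta}=t_{\alpha}t_{\beta}$. The only point you leave implicit is that deducing the multiplicativity from conjugating axiom (3) requires $c_{\alpha,\beta}\neq 0$; this is harmless because $[\mathfrak{g}(\mathfrak{t},\alpha),\mathfrak{g}(\mathfrak{t},\beta)]=\mathfrak{g}(\mathfrak{t},\alpha+\beta)$ whenever $\alpha,\beta,\alpha+\beta\in\Delta$ forces $c_{\alpha,\beta}\neq 0$, and in any case the relation $t_{\alpha+\beta}=t_{\alpha}t_{\beta}$ is only needed for pairs with $c_{\alpha,\beta}\neq 0$.
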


Let $\{X_{\alpha}\}_{\alpha\in\Delta}$
be a Chevalley basis of $\mathfrak{g}^{\mathbb{C}}$.
The complexification of $\theta$ will be denoted by the same symbol
$\theta$.
For each $\alpha\in\Delta$,
it follows
from $\theta(\mathfrak{g}(\mathfrak{t},\alpha))=\mathfrak{g}(\mathfrak{t},\theta(\alpha))$ that
there exists a nonzero complex number $s_{\alpha}$
satisfying $\theta(X_{\alpha})=s_{\alpha}X_{\theta(\alpha)}$.
The family $\{s_{\alpha}\}_{\alpha\in\Delta}$
is called the \textit{Klein constants}
of $(\mathfrak{g},\theta)$ associated with 
$\{X_{\alpha}\}_{\alpha\in\Delta}$.
Then we get $s_{\theta(\alpha)}=s_{\alpha}^{-1}$
because of $\theta^{2}=1$.
Furthermore,
if $\overline{X}_{\alpha}=-X_{-\alpha}$ holds
for $\alpha\in\Delta$,
then
$\{s_{\alpha}\}_{\alpha\in\Delta}$ has the following properties:

\begin{lem}[{\cite[Proposition 4.1]{Klein}}]\label{lem:Klein_pro4.1}
Assume that $\{X_{\alpha}\}_{\alpha\in\Delta}$
satisfies $\overline{X_{\alpha}}=-X_{-\alpha}$ for $\alpha\in\Delta$.
Let $\alpha$ and $\beta$ be in $\Delta$.
\begin{enumerate}
\item We have $s_{-\alpha}=\overline{s_{\alpha}}=s_{\alpha}^{-1}$.
In particular, $|s_{\alpha}|=1$ holds.
\item If $\theta(\beta)=\beta$, then
$\mathfrak{g}(\mathfrak{t},\beta)\subset\mathfrak{k}^{\mathbb{C}}$.
In particular, we get $s_{\beta}=1$.
\end{enumerate}
\end{lem}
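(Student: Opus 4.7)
My plan is to handle each part separately, relying throughout on the defining relation $\theta(X_{\alpha}) = s_{\alpha}X_{\theta(\alpha)}$, the Chevalley-basis axioms, the reality condition $\overline{X_{\alpha}} = -X_{-\alpha}$, and the fact that the complex-linear extension of $\theta$ to $\mathfrak{g}^{\mathbb{C}}$ commutes with the conjugation $X \mapsto \overline{X}$ with respect to the compact real form $\mathfrak{g}$.

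For (1), I would extract two independent identities among the Klein constants and combine them. Applying $\theta$ to the Chevalley relation $[X_{\alpha},X_{-\alpha}] = -\sqrt{-1}\,\alpha$ and comparing the two expressions for $-\sqrt{-1}\,\theta(\alpha)$ yields
\[
-\sqrt{-1}\,\theta(\alpha) \;=\; s_{\alpha}s_{-\alpha}\,[X_{\theta(\alpha)}, X_{-\theta(\alpha)}] \;=\; -\sqrt{-1}\,s_{\alpha}s_{-\alpha}\,\theta(\alpha),
\]
hence $s_{-\alpha} = s_{\alpha}^{-1}$. Next, applying conjugation to $\theta(X_{\alpha}) = s_{\alpha}X_{\theta(\alpha)}$ and using $\overline{X_{\gamma}} = -X_{-\gamma}$ together with $\theta \circ (\,\bar{\cdot}\,) = (\,\bar{\cdot}\,)\circ\theta$ yields $-s_{-\alpha}X_{-\theta(\alpha)} = -\overline{s_{\alpha}}\,X_{-\theta(\alpha)}$, so $\overline{s_{\alpha}} = s_{-\alpha}$. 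The two identities combine to $\overline{s_{\alpha}} = s_{\alpha}^{-1}$, i.e.\ $|s_{\alpha}|=1$.

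For (2), the key observation is that $\theta(\beta)=\beta$ is equivalent to $\sigma(\beta) = -\beta$, i.e.\ $\beta \in \Delta_{0}$, and this forces $\beta$ to be orthogonal to $\mathfrak{t}^{\sigma} = \mathfrak{t}\cap\mathfrak{m} =: \mathfrak{a}$. Hence every $X \in \mathfrak{g}(\mathfrak{t},\beta)$ satisfies $[H,X] = \sqrt{-1}\INN{\beta}{H}X = 0$ for all $H \in \mathfrak{a}$, so the whole one-dimensional root space centralizes $\mathfrak{a}$. Because $\theta(\beta)=\beta$, the space $\mathfrak{g}(\mathfrak{t},\beta)$ is $\theta$-stable and $\theta$ acts on it by $s_{\beta}\in\{\pm 1\}$, so it is contained in either $\mathfrak{k}^{\mathbb{C}}$ or $\mathfrak{m}^{\mathbb{C}}$. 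I would rule out the second possibility by showing that any $Y \in \mathfrak{m}^{\mathbb{C}}$ centralizing $\mathfrak{a}$ lies in $\mathfrak{a}^{\mathbb{C}}$: decomposing $Y = Y_{1}+\sqrt{-1}\,Y_{2}$ with $Y_{j}\in\mathfrak{m}$, the maximality of $\mathfrak{a}$ in $\mathfrak{m}$ gives $Y_{j}\in\mathfrak{a}$. Since $\mathfrak{g}(\mathfrak{t},\beta)\cap\mathfrak{t}^{\mathbb{C}} = \{0\}$ for $\beta \neq 0$, the inclusion $\mathfrak{g}(\mathfrak{t},\beta) \subset \mathfrak{m}^{\mathbb{C}}$ is impossible, hence $\mathfrak{g}(\mathfrak{t},\beta) \subset \mathfrak{k}^{\mathbb{C}}$ and $s_{\beta}=1$.

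The main subtlety is the last maximality step in (2): promoting the maximality of $\mathfrak{a}$ inside the real space $\mathfrak{m}$ to a self-centralizing property of $\mathfrak{a}^{\mathbb{C}}$ inside $\mathfrak{m}^{\mathbb{C}}$, which requires separating real and imaginary parts carefully. Apart from that, both parts reduce to formal manipulations with the Chevalley relations and the reality of $\theta$.
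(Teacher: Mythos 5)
The paper does not prove this lemma itself --- it is imported verbatim from Klein's Proposition 4.1 --- so there is no in-paper argument to compare against. Your reconstruction is correct and is essentially the standard (Klein/Araki) argument: part (1) follows from applying $\theta$ to $[X_{\alpha},X_{-\alpha}]=-\sqrt{-1}\,\alpha$ and from conjugating $\theta(X_{\alpha})=s_{\alpha}X_{\theta(\alpha)}$ using that the complexified $\theta$ commutes with conjugation over $\mathfrak{g}$; part (2) correctly reduces to the fact that an element of $\mathfrak{m}^{\mathbb{C}}$ centralizing $\mathfrak{a}=\mathfrak{t}\cap\mathfrak{m}$ must lie in $\mathfrak{a}^{\mathbb{C}}$ (by splitting into real and imaginary parts and invoking maximality of $\mathfrak{a}$), which is incompatible with $\mathfrak{g}(\mathfrak{t},\beta)\cap\mathfrak{t}^{\mathbb{C}}=\{0\}$. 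The only point worth making explicit is the $\theta$-invariance of the fixed $\mathrm{ad}$-invariant inner product (automatic for $\mathfrak{g}$ simple), which underlies the identification of the root $\theta(\alpha)$ with the vector $\theta(\alpha)\in\mathfrak{t}$ used in both identities.
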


Fix an element $H\in\mathfrak{t}$.
We have another involution $\theta':=e^{\mathrm{ad}(H)}\theta e^{-\mathrm{ad}(H)}$
of $\mathfrak{g}$,
which satisfies $\theta'|_{\mathfrak{t}}=\theta|_{\mathfrak{t}}$.
Set $\mathfrak{m}':=\mathfrak{g}^{-\theta'}=e^{\mathrm{ad}(H)}(\mathfrak{m})$.
Then we obtain $\mathfrak{t}\cap\mathfrak{m}'=e^{\mathrm{ad}(H)}(\mathfrak{t}\cap\mathfrak{m})$,
from which $\mathfrak{t}\cap\mathfrak{m}'$
is a maximal abelian subspace of $\mathfrak{m}'$.
Denote by $\{s'_{\alpha}\}_{\alpha\in\Delta}$
the Klein constants of $(\mathfrak{g},\theta')$ associated with $\{X_{\alpha}\}_{\alpha\in\Delta}$.
Then 
we have
$s_{\alpha}'=e^{\sqrt{-1}\INN{H}{\theta(\alpha)-\alpha}}s_{\alpha}$
for $\alpha\in\Delta$.

The following lemma is useful in our proof of Theorem \ref{thm:araki}.

\begin{lem}\label{lem:uehtgs}
Let $\{X_{\alpha}\}_{\alpha\in\Delta}$
be a Chevalley basis of $\mathfrak{g}^{\mathbb{C}}$
satisfying $\overline{X_{\alpha}}=-X_{-\alpha}$ for $\alpha\in\Delta$.
We put $\Delta_{0}=\{\alpha\in\Delta\mid \theta(\alpha)=\alpha\}$.
Assume that 
$\gamma_{1},\dotsc,\gamma_{r}$
$(r\in\mathbb{N})$
are in $\Delta-\Delta_{0}$ which satisfy
$\{\theta(\gamma_{1})-\gamma_{1},\dotsc,\theta(\gamma_{r})-\gamma_{r}\}(\subset \mathfrak{t}\cap\mathfrak{m})$ are linearly independent.
Then, for any $u_{1},\dotsc,u_{r}\in U(1)$,
there exists $H\in\mathfrak{t}\cap\mathfrak{m}$ satisfying the following relation:
\begin{equation}\label{eqn:ues}
u_{j}=e^{\sqrt{-1}\INN{H}{\theta(\gamma_{j})-\gamma_{j}}}s_{\gamma_{j}}\quad
(1\leq j\leq r).
\end{equation}
\end{lem}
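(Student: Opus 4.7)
The plan is to reduce the statement to a linear-algebra exercise via the observation that each Klein constant $s_{\gamma_j}$ has modulus one. Concretely, by Lemma \ref{lem:Klein_pro4.1}(1) applied to our Chevalley basis, $|s_{\gamma_j}|=1$ for all $j$. Since also $u_j\in U(1)$, we may write $u_j s_{\gamma_j}^{-1}=e^{\sqrt{-1}c_j}$ for some real numbers $c_1,\ldots,c_r$. The relation \eqref{eqn:ues} is then equivalent to demanding
\begin{equation*}
\INN{H}{\theta(\gamma_j)-\gamma_j}=c_j\quad (1\leq j\leq r)
\end{equation*}
for some $H\in\mathfrak{t}\cap\mathfrak{m}$.

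Next I would verify that the vectors $\theta(\gamma_j)-\gamma_j$ genuinely lie in $\mathfrak{t}\cap\mathfrak{m}$: since $\gamma_j\in\mathfrak{t}$ and $\theta(\mathfrak{t})=\mathfrak{t}$, the element $\theta(\gamma_j)-\gamma_j$ is in $\mathfrak{t}$, and the identity $\theta(\theta(\gamma_j)-\gamma_j)=-(\theta(\gamma_j)-\gamma_j)$ shows it lies in the $(-1)$-eigenspace of $\theta|_{\mathfrak{t}}$, which is $\mathfrak{t}\cap\mathfrak{m}$. By hypothesis the family $\{\theta(\gamma_j)-\gamma_j\}_{j=1}^{r}$ is linearly independent in $\mathfrak{t}\cap\mathfrak{m}$.

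Now the remaining step is pure linear algebra: the inner product $\INN{\,}{\,}$ is positive definite on $\mathfrak{g}$, hence nondegenerate on the subspace $V:=\mathrm{span}_{\mathbb{R}}\{\theta(\gamma_j)-\gamma_j\mid 1\leq j\leq r\}\subset\mathfrak{t}\cap\mathfrak{m}$. Let $G=\bigl(\INN{\theta(\gamma_i)-\gamma_i}{\theta(\gamma_j)-\gamma_j}\bigr)_{i,j}$ be the associated Gram matrix; it is invertible because the vectors are linearly independent. Writing the desired $H$ in the form $H=\sum_{i=1}^{r}a_i(\theta(\gamma_i)-\gamma_i)\in V\subset\mathfrak{t}\cap\mathfrak{m}$, the condition $\INN{H}{\theta(\gamma_j)-\gamma_j}=c_j$ becomes the linear system $G\mathbf{a}=\mathbf{c}$, which has a unique solution $\mathbf{a}=G^{-1}\mathbf{c}\in\mathbb{R}^{r}$. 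The resulting $H$ then satisfies \eqref{eqn:ues}.

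There is essentially no obstacle: once one recognises $|s_{\gamma_j}|=1$ and that the pairings $\INN{\,}{\theta(\gamma_j)-\gamma_j}$ are independent linear functionals on $\mathfrak{t}\cap\mathfrak{m}$, existence of $H$ is automatic. The only subtlety worth flagging is that the $c_j$ are only determined modulo $2\pi\mathbb{Z}$ (since we exponentiate back), but any real lift suffices for the construction, so this causes no issue.
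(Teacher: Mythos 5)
Your proof is correct and follows essentially the same route as the paper: both use $|s_{\gamma_j}|=1$ from Lemma \ref{lem:Klein_pro4.1} to reduce \eqref{eqn:ues} to the linear system $\INN{H}{\theta(\gamma_j)-\gamma_j}=c_j$, and then solve it by inverting the Gram matrix of the linearly independent vectors $\theta(\gamma_j)-\gamma_j$ and taking $H$ in their span. The only cosmetic difference is that you package $u_js_{\gamma_j}^{-1}$ into a single phase $c_j$ where the paper keeps $v_j-t_j$ separate.
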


\begin{proof}
For each $1\leq j\leq r$,
it follows from
Lemma \ref{lem:Klein_pro4.1}, (1)
that
there exists $t_{j}\in\mathbb{R}$ satisfying $s_{\gamma_{j}}=e^{\sqrt{-1}t_{j}}$.
Since $u_{j}$ is also in $U(1)$,
there exists $v_{j}\in\mathbb{R}$ satisfying $u_{j}=e^{\sqrt{-1}v_{j}}$.
We define a matrix $C$ by
\[
C:=\bigm(\INN{\theta(\gamma_{j})-\gamma_{j}}{\theta(\gamma_{k})-\gamma_{k}}\bigm)_{1\leq j,k\leq r}.
\]
It follows from the assumption that the square matrix $C$ is invertible.
Let $h_{1},\dotsc,h_{r}$ be real numbers defined by
\[
\left(
\begin{array}{c}
h_{1}\\
\vdots\\
h_{r}
\end{array}
\right):=C^{-1}\left(
\begin{array}{c}
v_{1}-t_{1}\\
\vdots\\
v_{r}-t_{r}
\end{array}
\right).
\]
Then, if we put
$H:=\sum_{k=1}^{r}h_{k}(\theta(\gamma_{k})-\gamma_{k})\in\mathfrak{t}\cap\mathfrak{m}$,
then the following relation holds:
\[
v_{j}=t_{j}+\INN{H}{\theta(\gamma_{j})-\gamma_{j}}\quad
(1\leq j\leq r).
\]
Thus we obtain the assertion because $H$ satisfies (\ref{eqn:ues}).
\end{proof}

Now, we are ready to prove Theorem \ref{thm:araki}.

\begin{proof}[Proof of Theorem $\ref{thm:araki}$]
Let $\{X_{\alpha}\}_{\alpha\in\Delta}$
be a Chevalley basis of $\mathfrak{g}^{\mathbb{C}}$
with $\overline{X_{\alpha}}=-X_{-\alpha}$.
Denote by $\{s_{\alpha}\}_{\alpha\in\Delta}$ (resp.~$\{s_{\alpha}'\}_{\alpha\in\Delta}$)
the Klein constants of $(\mathfrak{g},\theta)$ (resp.~$(\mathfrak{g},\theta')$)
associated with $\{X_{\alpha}\}_{\alpha\in\Delta}$.
By the assumption $\theta|_{\mathfrak{t}}=\theta'|_{\mathfrak{t}}$
we obtain
$(\Delta,-\theta|_{\mathfrak{t}})=(\Delta,-\theta'|_{\mathfrak{t}})=:(\Delta,\sigma)$.
Set $r:=\mathrm{rank}(\Delta,\sigma)$.
Let $\Pi$ be a $\sigma$-fundamental system of $\Delta$,
and $\Pi_{0}:=\Pi\cap\Delta_{0}$.
We can take $\alpha_{1},\dotsc,\alpha_{r}\in\Pi-\Pi_{0}$
such that $\{\theta(\alpha_{1})-\alpha_{1},\dotsc,\theta(\alpha_{r})-\alpha_{r}\}(\subset \mathfrak{t}\cap\mathfrak{m})$
are linearly independent (cf.~\cite[p.~23]{Warner}).
By applying Lemma \ref{lem:uehtgs} to
$s_{\alpha_{1}}',\dotsc,s_{\alpha_{r}}'\in U(1)$,
there exists $H\in\mathfrak{t}\cap\mathfrak{m}$ satisfying the following relation:
\begin{equation*}\label{eqn:saj'sa_rel}
s_{\alpha_{j}}'=e^{\sqrt{-1}\INN{H}{\theta(\alpha_{j})-\alpha_{j}}}
s_{\alpha_{j}}\quad
(1\leq j\leq r).
\end{equation*}
Then we have $\theta'=e^{\mathrm{ad}(H)}\theta e^{-\mathrm{ad}(H)}$ on
$\sum_{j=1}^{r}\mathfrak{g}(\mathfrak{t},\alpha_{j})$.
Furthermore,
if we put 
\[
\mathfrak{h}
:=\mathfrak{t}\oplus
\sum_{\beta\in\Pi_{0}}\mathfrak{g}(\mathfrak{t},\beta)
\oplus\sum_{j=1}^{r}(\mathfrak{g}(t,\alpha_{j})\oplus
\mathfrak{g}(\mathfrak{t},-\theta(\alpha_{j}))),
\]
then we have
$\theta'=e^{\mathrm{ad}(H)}\theta e^{-\mathrm{ad}(H)}$
on the subset $\mathfrak{h}\cup\overline{\mathfrak{h}}$
of $\mathfrak{g}^{\mathbb{C}}$.
Since $\mathfrak{g}^{\mathbb{C}}$
is generated by
$\mathfrak{h}\cup\overline{\mathfrak{h}}$,
we have $\theta'=e^{\mathrm{ad}(H)}\theta e^{-\mathrm{ad}(H)}$
on $\mathfrak{g}^{\mathbb{C}}$.
Therefore we have completed the proof.
\end{proof}

The following theorem is shown by Lemma \ref{lem:sigma_satake} and
Theorem \ref{thm:araki}.

\begin{thm}\label{thm:cps_sigma_satake_equiv}
Let $(G,\theta)$ and $(G,\theta')$ be two compact symmetric pairs.
Then the followings conditions are equivalent:
\begin{enumerate}
\item $(G,\theta)$ and $(G,\theta')$ are locally isomorphic,
namely, there exists $\varphi\in\mathrm{Aut}(\mathfrak{g})$
satisfying $d\theta'=\varphi d\theta\varphi^{-1}$.
\item The $\sigma$-systems of $(G,\theta)$ and $(G,\theta')$ are isomorphic.
\item The Satake diagrams of $(G,\theta)$ and $(G,\theta')$ are isomorphic.
\end{enumerate}
In addition, in the case when $G$ is simply-connected
or when $G$ is the adjoint group,
$(G,\theta)$ and $(G,\theta')$
are isomorphic if and only if
one of the above conditions $(1)$--$(3)$ holds.
\end{thm}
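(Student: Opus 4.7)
The plan is to treat the three equivalences in turn, handling the easy reductions first and concentrating effort on $(2)\Rightarrow (1)$, which is where Theorem \ref{thm:araki} must be deployed. The equivalence $(2)\Leftrightarrow(3)$ is a direct restatement of Lemma \ref{lem:sigma_satake} and requires no further work. For $(1)\Rightarrow(2)$ I would fix a maximal abelian subalgebra $\mathfrak{t}$ of $\mathfrak{g}$ with $\mathfrak{t}\cap\mathfrak{m}$ maximal abelian in $\mathfrak{m}$, and note that if $d\theta' = \varphi d\theta\varphi^{-1}$ with $\varphi\in\mathrm{Aut}(\mathfrak{g})$, then $\varphi(\mathfrak{t})$ plays the analogous role for $\theta'$; transporting the root system and the involution via $\varphi$ yields directly an isomorphism of $\sigma$-systems onto the $\sigma$-system of $(G,\theta')$ computed on $\varphi(\mathfrak{t})$.

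The substantive direction is $(2)\Rightarrow(1)$. I would start with the chosen maximal abelian subalgebras $\mathfrak{t}$, $\mathfrak{t}'$ for $\theta$, $\theta'$ and an isomorphism $\psi:\mathfrak{t}\to\mathfrak{t}'$ of their $\sigma$-systems, so that $\psi(\Delta)=\Delta'$ and $\psi\sigma = \sigma'\psi$. The first key step is to extend $\psi$ to an automorphism $\tilde\psi\in\mathrm{Aut}(\mathfrak{g})$. Once this is done, put $\theta'' := \tilde\psi d\theta\tilde\psi^{-1}$. This is an involution of $\mathfrak{g}$, and the compatibility $\psi\sigma = \sigma'\psi$, together with $\sigma = -d\theta|_{\mathfrak{t}}$ and $\sigma' = -d\theta'|_{\mathfrak{t}'}$, forces $\theta''|_{\mathfrak{t}'} = d\theta'|_{\mathfrak{t}'}$. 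Moreover $\mathfrak{t}'\cap\mathfrak{g}^{-\theta''} = \tilde\psi(\mathfrak{t}\cap\mathfrak{m})$ is maximal abelian in $\mathfrak{g}^{-\theta''} = \tilde\psi(\mathfrak{m})$, so the hypotheses of Theorem \ref{thm:araki} are met by the pair $(\theta',\theta'')$ with common maximal abelian subalgebra $\mathfrak{t}'$. That theorem then delivers $H\in\mathfrak{t}'\cap\mathfrak{m}'$ with $d\theta' = e^{\mathrm{ad}(H)}\theta'' e^{-\mathrm{ad}(H)}$, and hence $\varphi := e^{\mathrm{ad}(H)}\tilde\psi\in\mathrm{Aut}(\mathfrak{g})$ satisfies $d\theta' = \varphi d\theta\varphi^{-1}$, establishing (1).

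The main obstacle is the existence of the extension $\tilde\psi\in\mathrm{Aut}(\mathfrak{g})$, since one needs not only to lift $\psi$ to an automorphism of $\mathfrak{g}^{\mathbb{C}}$ (a standard consequence of the Dynkin-diagram classification via Chevalley-basis constructions) but also to guarantee that the extension preserves the compact real form $\mathfrak{g}\subset\mathfrak{g}^{\mathbb{C}}$. My plan is to choose Chevalley bases $\{X_\alpha\}_{\alpha\in\Delta}$ and $\{X'_{\alpha'}\}_{\alpha'\in\Delta'}$ of $\mathfrak{g}^{\mathbb{C}}$ both satisfying the reality condition $\overline{X_\alpha} = -X_{-\alpha}$ of Lemma \ref{lem:Cbasis_conj}, and to define $\tilde\psi$ by $\tilde\psi(X_\alpha) = \varepsilon_\alpha X'_{\psi(\alpha)}$ for suitable signs $\varepsilon_\alpha$ chosen so that the Chevalley constants are matched. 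The reality condition on both bases, together with $\psi$ being an isometry, then forces $\tilde\psi$ to commute with complex conjugation, hence to restrict to an element of $\mathrm{Aut}(\mathfrak{g})$.

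For the final assertion, I would invoke the surjectivity of the differential map $\mathrm{Aut}(G)\to\mathrm{Aut}(\mathfrak{g})$ in the two cases: when $G$ is simply-connected, any $\varphi\in\mathrm{Aut}(\mathfrak{g})$ lifts uniquely to $\Phi\in\mathrm{Aut}(G)$ by the universal property of the universal covering; when $G$ is the adjoint group, the identification $\mathrm{Aut}(G) = \mathrm{Aut}(\mathfrak{g})$ is immediate. In either case, the Lie-algebra conjugation $d\theta' = \varphi d\theta\varphi^{-1}$ of (1) lifts to a Lie-group conjugation $\theta' = \Phi\theta\Phi^{-1}$, giving $(G,\theta)\simeq(G,\theta')$, while the reverse implication is trivial.
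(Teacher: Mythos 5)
Your proposal is correct and follows essentially the same route the paper intends: $(2)\Leftrightarrow(3)$ via Lemma \ref{lem:sigma_satake}, and $(2)\Rightarrow(1)$ by extending the $\sigma$-system isomorphism to an automorphism $\tilde\psi$ of $\mathfrak{g}$ (the paper performs exactly this extension step, without further comment, in the proof of Theorem \ref{thm:cst_dsatake_sim}) and then correcting on the root spaces with Theorem \ref{thm:araki}. Your Chevalley-basis justification of why the extension preserves the compact real form, and the lifting argument for the global statement, fill in details the paper leaves implicit but introduce nothing divergent.
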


An abstract $\sigma$-system $(\Delta,\sigma)$
is said to be \textit{admissible},
if there exists a compact symmetric pair whose $\sigma$-system
is isomorphic to $(\Delta,\sigma)$.
Clearly, any admissible $\sigma$-system is normal.
Araki (\cite[No.~5.11]{Araki})
determined the admissibilities of abstract normal $\sigma$-systems
based on the classification.
As a consequence of Theorem \ref{thm:cps_sigma_satake_equiv},
he gave an alternative proof of Cartan's classification
for compact symmetric pairs at the Lie algebra level.
Hence the locally isomorphism class
of a compact symmetric pair
is represented by a diagram.
Furthermore,
we can determine the restricted root system of $(G,\theta)$
with multiplicity by means of the Satake diagram,
which characterizes the local isomorphism classes of compact symmetric pairs.
This is a significance to give the alternative proof.
In Section
\ref{sec:cst_dsatake},
we will generalize this method to classify compact symmetric triads
at the Lie algebra level.

Here, in order to
present concrete examples of compact symmetric triads,
we give an explicit description of the classification
for the isomorphism classes of compact symmetric pairs $(\mathfrak{g},\theta)$
at the Lie algebra level.
The following theorem gives a criterion
for two compact symmetric pairs
to be isomorphic to each other.

\begin{thm}\label{thm:csp_iso_iff}
Assume that $\mathfrak{g}$ is simple.
Two compact symmetric pairs
$(\mathfrak{g},\theta)$ and $(\mathfrak{g},\theta')$
are isomorphic if and only if
the fixed point subalgebras
$\mathfrak{k}$ and $\mathfrak{k}'$
are isomorphic as Lie algebras.
\end{thm}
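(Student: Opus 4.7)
The forward direction is immediate: if $\theta' = \varphi\theta\varphi^{-1}$ for some $\varphi \in \mathrm{Aut}(\mathfrak{g})$, then $\varphi(\mathfrak{g}^{\theta}) = \mathfrak{g}^{\theta'}$, so the restriction of $\varphi$ gives a Lie algebra isomorphism $\mathfrak{k} \to \mathfrak{k}'$. My plan is to dispose of this in one line and then concentrate on the converse.

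For the converse, my strategy is to invoke the classification of compact symmetric pairs, which the paper has just summarized via Satake diagrams. By Theorem \ref{thm:cps_sigma_satake_equiv}, the local isomorphism classes $[(\mathfrak{g},\theta)]$ with $\mathfrak{g}$ simple correspond bijectively to the isomorphism classes of admissible Satake diagrams attached to the Dynkin diagram of $\mathfrak{g}$, and this list is the finite explicit list of Cartan (A\,I, A\,II, A\,III, B\,I, B\,II, C\,I, C\,II, D\,I, D\,II, D\,III, and E\,I--E\,IX, F\,I, F\,II, G). Thus it suffices to show that within this list the Lie algebra isomorphism class of $\mathfrak{k}$ already pins down the isomorphism class of $(\mathfrak{g},\theta)$; then $\mathfrak{k}\cong\mathfrak{k}'$ forces $(\mathfrak{g},\theta)\simeq(\mathfrak{g},\theta')$ as Satake diagrams, and Theorem \ref{thm:cps_sigma_satake_equiv} produces the required $\varphi$.

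The bulk of the work is therefore a case-by-case inspection of Cartan's table. For each compact simple $\mathfrak{g}$, I would enumerate the pairs $(\mathfrak{g},\theta)$ and the corresponding fixed point subalgebras $\mathfrak{k}$, and verify that distinct entries have non-isomorphic $\mathfrak{k}$. The separating invariants one uses in practice are: the dimension of $\mathfrak{k}$ (equivalently of $\mathfrak{m}$), the dimension of the center $\mathfrak{z}(\mathfrak{k})$, the number of simple factors of the derived algebra $[\mathfrak{k},\mathfrak{k}]$, and the isomorphism types of those simple factors. These invariants distinguish, e.g., A\,I from A\,II from A\,III within $\mathfrak{su}(n)$, and D\,I from D\,III within $\mathfrak{so}(2n)$.

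The main obstacle is not any conceptual step but the careful bookkeeping at small ranks, where the exceptional isomorphisms $\mathfrak{so}(3)\cong\mathfrak{su}(2)\cong\mathfrak{sp}(1)$, $\mathfrak{so}(4)\cong\mathfrak{su}(2)\oplus\mathfrak{su}(2)$, $\mathfrak{so}(5)\cong\mathfrak{sp}(2)$, and $\mathfrak{so}(6)\cong\mathfrak{su}(4)$ can create apparent coincidences of $\mathfrak{k}$'s that must be resolved by checking that the ambient $\mathfrak{g}$, together with the position of $\mathfrak{k}$ in the table, is also constrained by the dimension count and by matching restricted root data read off from the Satake diagram. Once this bookkeeping is carried out, matching the given $\mathfrak{k}\cong\mathfrak{k}'$ against the tabulated entries uniquely identifies the common isomorphism class of the pair, completing the proof.
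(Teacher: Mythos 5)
Your forward direction matches the paper's and is fine. For the converse, however, you take a genuinely different route. The paper does not go through the Satake-diagram classification at all: it complexifies $\theta$ and $\theta'$, observes that the fixed-point subalgebras of $\theta^{\mathbb C}$ and $\theta'^{\mathbb C}$ are isomorphic, and then cites Helgason's classification theorem for involutive automorphisms of complex simple Lie algebras (Theorem 6.2, Ch.~X), which says directly that such involutions with isomorphic fixed subalgebras are $\mathrm{Aut}(\mathfrak g^{\mathbb C})$-conjugate; a second citation (Proposition 1.4, Ch.~X) descends the conjugating automorphism to the compact real form. That argument is two lines modulo the references, whereas your argument routes everything through Theorem \ref{thm:cps_sigma_satake_equiv} and an exhaustive inspection of Cartan's list. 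Your approach has the merit of staying inside the machinery the paper has just built (and is closer in spirit to how the paper later handles triads), but it shifts the entire burden onto a case-by-case verification that your write-up describes rather than performs; the paper's approach outsources exactly that verification to Helgason.

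The one place where your plan, as literally stated, would go wrong is $\mathfrak g=\mathfrak{so}(8)$. You propose to ``verify that distinct entries have non-isomorphic $\mathfrak k$,'' but the entries D\,I with $\mathfrak k=\mathfrak{so}(2)\oplus\mathfrak{so}(6)$ and D\,III with $\mathfrak k=\mathfrak u(4)$ live in the same ambient $\mathfrak g=\mathfrak{so}(8)$ and have \emph{isomorphic} fixed subalgebras, since $\mathfrak u(4)\simeq\mathfrak{so}(2)\oplus\mathfrak{so}(6)$ (the paper uses exactly this special isomorphism in Example \ref{ex:so8soasob}). No separating invariant exists here because the two pairs are in fact isomorphic: the triality automorphism of $D_4$ carries one Satake diagram to the other, so they represent a single isomorphism class. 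So the correct statement of what must be checked is not that distinct table entries have non-isomorphic $\mathfrak k$, but that any two entries with isomorphic $\mathfrak k$ have Satake diagrams isomorphic under the full $\mathrm{Aut}(\Pi)$-action (which for $D_4$ is $\mathfrak S_3$). With that correction, and with the low-rank coincidences actually worked through (in $\mathfrak{su}(4)$ the candidates are separated by dimension and center, so only the $\mathfrak{so}(8)$ case is delicate), your argument closes; as written it is a plan with its decisive step unexecuted and one subcase mis-stated.
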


The proof is essentially due to 
Helgason (\cite{Helgason}).

\begin{proof}
The necessity is clear.
In order to prove the sufficiency
we assume that $\mathfrak{k}$ and $\mathfrak{k}'$ are isomorphic.
We extend $\theta$ and $\theta'$
to complex linear 
involutions on $\mathfrak{g}^{\mathbb{C}}$,
which we write $\theta^{\mathbb{C}}$ and $\theta'^{\mathbb{C}}$,
respectively.
Then the fixed point subalgebras of
$\theta^{\mathbb{C}}$
and $\theta'^{\mathbb{C}}$ are isomorphic to each other.
It follows from
\cite[Theorem 6.2, Chapter X]{Helgason}
that 
$\theta^{\mathbb{C}}$
and $\theta'^{\mathbb{C}}$
are $\mathrm{Aut}(\mathfrak{g}^{\mathbb{C}})$-conjugate.
In addition, by \cite[Proposition 1.4, Chapter X]{Helgason}
there exists $\varphi\in\mathrm{Aut}(\mathfrak{g})$
satisfying $\theta'=\varphi\theta\varphi^{-1}$.
Hence the assertion holds.
\end{proof}

From Theorem \ref{thm:csp_iso_iff}
there is no confusion when we write
$[(\mathfrak{g},\mathfrak{k})]$
in place of $[(\mathfrak{g},\theta)]$.
Table \ref{table:fixed_pt_algebra} exhibits
the classification of the fixed point subalgebras
of involutions on $\mathfrak{g}$.
In Section \ref{sec:cst_dsatake},
we will
classify compact simple symmetric triads at the Lie algebra level,
based on the classification for compact simple symmetric pairs.

\begin{table}[H]
\caption{The classification of fixed point subalgebras
of involutions (\cite[TABLE V, p.~518]{Helgason})}\label{table:fixed_pt_algebra}
\centering
\renewcommand{\arraystretch}{1.5}
\begin{tabular}{cc}
\hline
\hline
$\mathfrak{g}$ & Fixed point subalgebra\\
\hline
\hline
$\mathfrak{su}(n)$ & $\mathfrak{so}(n)$, $\mathfrak{sp}(n/2)$ ($n$: even) , $\mathfrak{s}(\mathfrak{u}(a)\oplus\mathfrak{u}(b))$
($a+b=n$) \\
$\mathfrak{so}(n)$ & $\mathfrak{so}(a)\oplus\mathfrak{so}(b)$ ($a+b=n\neq 2,4$), $\mathfrak{u}(n/2)$ ($n\geq 6$, even) \\
$\mathfrak{sp}(n)$ & $\mathfrak{u}(n)$, $\mathfrak{sp}(a)\oplus\mathfrak{sp}(b)$ ($a+b=n$) \\
$\mathfrak{e}_{6}$ & $\mathfrak{sp}(4)$, $\mathfrak{su}(6)\oplus\mathfrak{su}(2)$, $\mathfrak{so}(10)\oplus\mathfrak{so}(2)$, $\mathfrak{f}_{4}$\\
$\mathfrak{e}_{7}$ & $\mathfrak{su}(8)$, $\mathfrak{so}(12)\oplus\mathfrak{su}(2)$, $\mathfrak{e}_{6}\oplus\mathfrak{so}(2)$\\
$\mathfrak{e}_{8}$ & $\mathfrak{so}(16)$, $\mathfrak{e}_{7}\oplus\mathfrak{su}(2)$\\
$\mathfrak{f}_{4}$ & $\mathfrak{sp}(3)\oplus\mathfrak{su}(2)$, $\mathfrak{so}(9)$\\
$\mathfrak{g}_{2}$ & $\mathfrak{su}(2)\oplus\mathfrak{su}(2)$\\
\hline
\hline
\end{tabular}
\renewcommand{\arraystretch}{1.0}
\end{table}

\section{Double Satake diagrams for double $\sigma$-systems}\label{sec:2satake}

In this section, we will introduce the notions
of double $\sigma$-systems
and double Satake diagrams,
which are generalizations
of $\sigma$-systems and Satake diagrams, respectively.
Based on the equivalence relation for compact symmetric triads,
we define equivalence relations for double $\sigma$-systems
and for double Satake diagrams.
In Theorem \ref{thm:dsig_dsatake_equiv}
we give
a necessary and sufficient condition
for two double $\sigma$-systems to be equivalent.
As explained in more detail
in Section \ref{sec:cst_dsatake},
this theorem plays a fundamental role
in the definition of double Satake diagrams
for compact symmetric triads.
We also define the rank and the order
for the equivalence class of a double $\sigma$-systems.
We will discuss
a geometrical meaning of the rank
in Sections \ref{sec:cst_dsatake}.
On the other hand,
we will show the relation
of the ranks and the orders
between compact symmetric triads
and double $\sigma$-systems
in Section \ref{sec:cst_cf}.

\subsection{Double $\sigma$-systems}\label{sec:2sigroot}

Let $\mathfrak{t}$ be a finite dimensional real vector space.
Fix an inner product $\INN{\,}{\,}$ on $\mathfrak{t}$.
Let $\Delta$ be a reduced root system of $\mathfrak{t}$.
For two involutions $\sigma_{1}$ and $\sigma_{2}$ on $\Delta$,
the triplet $(\Delta,\sigma_{1},\sigma_{2})$
is called a \textit{double $\sigma$-system} of $\mathfrak{t}$.
In this paper, 
$\sigma_{1}$ and $\sigma_{2}$
are not necessarily commutative unless otherwise stated.
Based on the equivalence relation for compact symmetric triads
as in Definition \ref{dfm:cst_sim},
we introduce an equivalence relation $\sim$
on double $\sigma$-systems as follows.

\begin{dfn}
Two double $\sigma$-systems
$(\Delta,\sigma_{1},\sigma_{2})$
and $(\Delta',\sigma_{1}',\sigma_{2}')$
are isomorphic, which we write 
$(\Delta,\sigma_{1},\sigma_{2})\sim(\Delta',\sigma_{1}',\sigma_{2}')$,
if there exist an isomorphism $\varphi:\mathfrak{t}\to \mathfrak{t}'$
of root systems between $\Delta$ and $\Delta'$,
and $w'\in W(\Delta')$ satisfying the following relations:
\begin{equation}\label{eqn:dsig_sim}
\sigma_{1}'=\varphi\sigma_{1}\varphi^{-1},\quad
\sigma_{2}'=w'\varphi\sigma_{2}\varphi^{-1}w'^{-1}.
\end{equation}
We write $[(\Delta,\sigma_{1},\sigma_{2})]$ the isomorphism class of 
$(\Delta,\sigma_{1},\sigma_{2})$.
\end{dfn}

A double $\sigma$-system 
$(\Delta,\sigma_{1},\sigma_{2})$
is said to be \textit{normal},
if both 
$(\Delta,\sigma_{1})$
and $(\Delta,\sigma_{2})$ are normal as $\sigma$-systems.
The normality of a double $\sigma$-system is compatible with $\sim$,
namely,
for two double $\sigma$-systems
$(\Delta,\sigma_{1},\sigma_{2})$ and $(\Delta',\sigma_{1}',\sigma_{2}')$
satisfying $(\Delta,\sigma_{1},\sigma_{2})\sim(\Delta',\sigma_{1}',\sigma_{2}')$,
if $(\Delta,\sigma_{1},\sigma_{2})$ is normal,
then so is $(\Delta',\sigma_{1}',\sigma_{2}')$.

\begin{dfn}\label{dfn:dsig_canonical}
Let 
$(\Delta,\sigma_{1},\sigma_{2})$ be a normal double $\sigma$-system.
\begin{enumerate}
\item A fundamental system $\Pi$ of $\Delta$
is called a \textit{$(\sigma_{1},\sigma_{2})$-fundamental system},
if $\Pi$ is both $\sigma_{1}$- and $\sigma_{2}$-fundamental systems.
\item $(\Delta,\sigma_{1},\sigma_{2})$
is said to be \textit{canonical},
if $\Delta$ admits a $(\sigma_{1},\sigma_{2})$-fundamental system.
\end{enumerate}
\end{dfn}

\begin{pro}\label{pro:exist_dfs}
For any normal double $\sigma$-system $(\Delta,\sigma_{1},\sigma_{2})$,
there exists a normal double $\sigma$-system
$(\Delta,\sigma_{1},\sigma_{2}')\sim(\Delta,\sigma_{1},\sigma_{2})$
such that $(\Delta,\sigma_{1},\sigma_{2}')$ is canonical.
\end{pro}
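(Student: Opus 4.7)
The plan is to start from any two auxiliary fundamental systems, one adapted to each involution, and then use the transitivity of the Weyl group on fundamental systems to align them by replacing $\sigma_2$ with a $W(\Delta)$-conjugate.

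More precisely, since $(\Delta,\sigma_{1},\sigma_{2})$ is normal, both $(\Delta,\sigma_{1})$ and $(\Delta,\sigma_{2})$ are normal as $\sigma$-systems, so by the existence result for $\sigma$-fundamental systems cited from Araki there exist a $\sigma_{1}$-fundamental system $\Pi_{1}$ and a $\sigma_{2}$-fundamental system $\Pi_{2}$ of $\Delta$. Because $W(\Delta)$ acts simply transitively on the set of fundamental systems of $\Delta$, there exists a (unique) $w\in W(\Delta)$ with $w(\Pi_{2})=\Pi_{1}$. I then set
\[
\sigma_{2}':=w\sigma_{2}w^{-1}.
\]

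Next I verify the two required properties. Taking $\varphi=\mathrm{id}$ and $w'=w$ in the definition of $\sim$ shows immediately that $(\Delta,\sigma_{1},\sigma_{2}')\sim(\Delta,\sigma_{1},\sigma_{2})$, since $\sigma_{1}=\mathrm{id}\,\sigma_{1}\,\mathrm{id}^{-1}$ and $\sigma_{2}'=w\,\sigma_{2}\,w^{-1}$ match \eqref{eqn:dsig_sim}. Normality of $(\Delta,\sigma_{1},\sigma_{2}')$ is then automatic, as normality of a double $\sigma$-system is preserved under $\sim$ (observed right after the definition of normality). To see canonicality, I apply Lemma \ref{lem:Pi_varphiPi} with $\varphi=w\in W(\Delta)\subset\mathrm{Aut}(\Delta)$ and with $\sigma=\sigma_{2}$: this yields that $w(\Pi_{2})=\Pi_{1}$ is a $(w\sigma_{2}w^{-1})$-fundamental system, i.e.\ a $\sigma_{2}'$-fundamental system of $\Delta$. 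Since $\Pi_{1}$ is by construction also $\sigma_{1}$-fundamental, it is a $(\sigma_{1},\sigma_{2}')$-fundamental system, so $(\Delta,\sigma_{1},\sigma_{2}')$ is canonical in the sense of Definition \ref{dfn:dsig_canonical}.

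There is essentially no hard step here: the statement is really a packaging of the simple transitivity of $W(\Delta)$ on fundamental systems together with Lemma \ref{lem:Pi_varphiPi}. The only point requiring mild care is to keep the roles of $\sigma_{1}$ and $\sigma_{2}$ straight: one must leave $\sigma_{1}$ untouched and conjugate $\sigma_{2}$ by an element of the Weyl group (rather than, say, applying an outer automorphism), so that both the isomorphism relation $\sim$ and the $\sigma_{1}$-fundamental property of the chosen $\Pi_{1}$ are preserved.
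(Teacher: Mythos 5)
Your proof is correct and is essentially identical to the paper's: both choose a $\sigma_{1}$-fundamental system $\Pi_{1}$ and a $\sigma_{2}$-fundamental system $\Pi_{2}$, use transitivity of $W(\Delta)$ to find $w$ with $w(\Pi_{2})=\Pi_{1}$, set $\sigma_{2}'=w\sigma_{2}w^{-1}$, and invoke Lemma \ref{lem:Pi_varphiPi} to conclude that $\Pi_{1}$ is $(\sigma_{1},\sigma_{2}')$-fundamental. The extra remarks on normality and on keeping $\sigma_{1}$ fixed are fine but do not change the argument.
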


\begin{proof}
For $i=1,2$,
let $\Pi_{i}$ be a $\sigma_{i}$-fundamental system of $\Delta$.
Since $W(\Delta)$ acts transitively on the set of fundamental systems of $\Delta$,
there exists $w\in W(\Delta)$ such that $\Pi_{1}=w(\Pi_{2})=:\Pi$.
If we put $\sigma_{2}':=w\sigma_{2}w^{-1}$,
then $(\Delta,\sigma_{1},\sigma_{2}')\sim(\Delta,\sigma_{1},\sigma_{2})$ holds.
It follows from 
Lemma \ref{lem:Pi_varphiPi}
that $\Pi$ is a $\sigma_{2}'$-fundamental system.
Hence we get the assertion.
\end{proof}

In general, a normal double $\sigma$-system $(\Delta, \sigma_{1},\sigma_{2})$
is not necessarily canonical.
Furthermore,
there exist two normal double $\sigma$-systems
$(\Delta, \sigma_{1},\sigma_{2})\not\sim(\Delta, \sigma_{1},\sigma_{2}')$
such that they are canonical and
that $(\Delta,\sigma_{2})\simeq(\Delta,\sigma_{2}')$ holds.
Before giving an example we prepare the following notation.

\begin{nota}\label{nota:Dr}
Let
$e_{1},\dotsc,e_{r}$
be the canonical basis of $\mathbb{R}^{r}$.
We write
$D_{r}^{+}=\{e_{i}\pm e_{j}\mid 1\leq i< j\leq r\}$
as the set of all the positive roots for the root system of type $D$
with rank $r$ (\cite{Bo}).
Then the following gives the set of all the simple roots for $D_{r}^{+}$:
\[
\Pi=\{\alpha_{1}=e_{1}-e_{2},\dotsc,\alpha_{r-1}=e_{r-1}-e_{r},\alpha_{r}=e_{r-1}+e_{r}\}.
\]
\end{nota}

\begin{ex}\label{ex:so8so3so5_ndsig}
Let $(\Delta,\sigma)$ be 
the $\sigma$-system corresponding to
the compact symmetric pair
$(\mathfrak{so}(8),\mathfrak{so}(3)\oplus\mathfrak{so}(5))$.
Then we have $\Delta=\{\pm e_{i}\pm e_{j}\mid 1\leq i<j \leq 4\}$.
There exists a $\sigma$-fundamental system $\Pi=\{\alpha_{1},\dotsc,\alpha_{4}\}$
of $\Delta$
such that 
its Satake diagram is described as follows:
\begin{equation*}
\begin{tabular}{c}
\begin{xy}
\ar@{-}(0,0)*++!D{\alpha_{1}}*{\circ}="a1";(10,0)*++!D{\alpha_{2}}*{\circ}="a2"
\ar@{-}"a2";(17.09,7.09)*++!L{\alpha_{3}}*{\circ}="a3"
\ar@{-}"a2";(17.09,-7.09)*++!L{\alpha_{4}}*{\circ}="a4"
\ar@/^/@{<->} "a3";"a4"
\end{xy}
\end{tabular}
\end{equation*}
Then we have $\sigma:(\alpha_{1},\alpha_{2},\alpha_{3},\alpha_{4})\mapsto
(\alpha_{1},\alpha_{2},\alpha_{4},\alpha_{3})$.
Clearly, $(\Delta, \sigma,\sigma)$
gives a trivial example
of canonical normal double $\sigma$-systems.
In what follows,
we shall give an example of normal double $\sigma$-system
$(\Delta,\sigma,\sigma')$
such that
$(\Delta,\sigma,\sigma')\sim(\Delta,\sigma,\sigma)$ is not canonical.
Furthermore,
we give another example of normal double $\sigma$-system
$(\Delta,\sigma,\sigma'')$
such that $(\Delta,\sigma,\sigma'')$
is canonical,
$(\Delta,\sigma)\simeq(\Delta,\sigma'')$ and
$(\Delta,\sigma,\sigma'')\not\sim(\Delta,\sigma,\sigma)$.

We define an automorphism $w\in \mathrm{Aut}(\Delta)$
by $w:(e_{1},e_{2},e_{3},e_{4})\mapsto(e_{1},e_{2},e_{4},e_{3})$.
Then $w\in W(\Delta)$ holds.
If we put $\sigma':=w\sigma w^{-1}$,
then $(\Delta,\sigma,\sigma')\sim(\Delta,\sigma,\sigma)$
is a normal double $\sigma$-system.
In addition, from $\mathrm{ord}(\sigma\sigma')=2\neq\mathrm{ord}(\sigma\sigma)$,
$(\Delta,\sigma,\sigma')$
cannot be canonical due to Theorem \ref{thm:dsig_dsatake_equiv}
as will be seen later.

Let $\kappa$ be an automorphism of $\Delta$ with order three
defined by
$\kappa:(\alpha_{1},\alpha_{2},\alpha_{3},\alpha_{4})\mapsto(\alpha_{4},\alpha_{2},\alpha_{1},\alpha_{3})$.
We set $\sigma'':=\kappa\sigma\kappa^{-1}$.
Then $(\Delta,\sigma)\simeq(\Delta,\sigma'')$ holds.
In addition, $(\Delta,\sigma'')$ is normal.
Hence the double $\sigma$-system $(\Delta,\sigma,\sigma'')$
is normal.
It follows from $\kappa(\Pi)=\Pi$
that
$\Pi$ becomes a $\sigma''$-fundamental system
by Lemma \ref{lem:Pi_varphiPi}.
This yields that $(\Delta,\sigma,\sigma'')$
is canonical.
Since
the order of
$\sigma\sigma''$
has three,
we have
$\mathrm{ord}(\sigma\sigma)\neq\mathrm{ord}(\sigma\sigma'')$.
Thus
$(\Delta,\sigma,\sigma)\not\sim(\Delta,\sigma,\sigma'')$
holds
by means of Theorem \ref{thm:dsig_dsatake_equiv}.
\end{ex}

\subsection{Double Satake diagrams}\label{sec:sub2satake}

Let $(\Delta,\sigma_{1},\sigma_{2})$
be a canonical normal double $\sigma$-system,
and $\Pi$ be a $(\sigma_{1},\sigma_{2})$-fundamental system of $\Delta$.
Set $\Delta_{i,0}:=\{\alpha\in\Delta\mid \sigma_{i}(\alpha)=-\alpha\}$
for $i=1,2$.
We denote by 
$S_{i}=S(\Pi, \Pi_{i,0}, p_{i})$
the Satake diagram of $(\Delta,\sigma_{i})$
associated with $\Pi$,
where
$\Pi_{i,0}:=\Pi\cap\Delta_{i,0}$
and $p_{i}$ is the Satake involution.
We note that 
these Satake diagrams
$S_{1}$ and $S_{2}$
are described from the common Dynkin diagram of $\Pi$.

\begin{dfn}\label{dfn:2satake}
Retain the notation above.
The pair $(S_{1},S_{2})$ is called the
\textit{double Satake diagram}
of $(\Delta,\sigma_{1},\sigma_{2})$
associated with $\Pi$.
\end{dfn}

Let us prove that
the double Satake diagram $(S_{1},S_{2})$
of $(\Delta, \sigma_{1},\sigma_{2})$
is independent of the choice of $\Pi$.
Let $\Pi'$ be another $(\sigma_{1},\sigma_{2})$-fundamental system of $\Delta$,
and $(S_{1}',S_{2}')$ denote the double Satake diagram of $(\Delta,\sigma_{1},\sigma_{2})$
associated with $\Pi'$.
It follows from 
\cite[Proposition A in Appendix]{Satake}
that there exist
$w_{1}\in W(\Delta)_{\sigma_{1}}$
and
$w_{2}\in W(\Delta)_{\sigma_{2}}$
satisfying $w_{1}(\Pi)=\Pi'=w_{2}(\Pi)$,
where $W(\Delta)_{\sigma_{i}}:=\{w\in W(\Delta)\mid \sigma_{i}w=w\sigma_{i}\}$.
Since the action of $W(\Delta)$ is simply transitive,
we obtain $w:=w_{1}=w_{2}\in W(\Delta)_{\sigma_{1}}\cap W(\Delta)_{\sigma_{2}}$.
Thus, we get
\begin{equation*}\label{eqn:dsatake_eq}
S_{1}=S_{1}',\quad
S_{2}=S_{2}'.
\end{equation*}
Then we write
$(S_{1},S_{2})=(S_{1}',S_{2}')$.

\begin{dfn}\label{dfn:2satake_sim}
Two double Satake diagrams
$(S_{1},S_{2})$
and
$(S_{1}',S_{2}')$
are isomorphic,
if there exists a common isomorphism $\psi$
of Satake diagrams between
$S_{i}$ and $S_{i}'$ for $i=1,2$.
Then we write $(S_{1},S_{2})\sim(S_{1},S_{2})$ for short.
Such $\psi$ is called an isomorphism of double Satake diagrams.
We denote by $[(S_{1},S_{2})]$
the isomorphism class of $(S_{1},S_{2})$.
\end{dfn}

\begin{thm}\label{thm:dsig_dsatake_equiv}
Let $(\Delta,\sigma_{1},\sigma_{2})$
and $(\Delta',\sigma_{1}',\sigma_{2}')$
be two canonical double $\sigma$-systems of $\mathfrak{t}$ and $\mathfrak{t}'$, respectively.
Let $(S_{1},S_{2})$ and
$(S_{1}',S_{2}')$ denote their double Satake diagrams.
Then,
the following three condition are equivalent$:$
\begin{enumerate}
\item $(\Delta,\sigma_{1},\sigma_{2})\sim(\Delta',\sigma_{1}',\sigma_{2}')$.
\item There exists an isomorphism $\varphi:\mathfrak{t}\to\mathfrak{t}'$
of root systems between $\Delta$ and $\Delta'$ satisfying
$\sigma_{i}'=\varphi\sigma_{i}\varphi^{-1}$ for $i=1,2$.
\item $(S_{1},S_{2})\sim(S_{1}',S_{2}')$.
\end{enumerate}
In particular, 
we have 
$\dim(\mathfrak{t}^{\sigma_{1}}\cap\mathfrak{t}^{\sigma_{2}})
=\dim(\mathfrak{t}'^{\sigma_{1}'}\cap\mathfrak{t}'^{\sigma_{2}'})$
and
$\mathrm{ord}(\sigma_{1}\sigma_{2})=\mathrm{ord}(\sigma_{1}'\sigma_{2}')$.
\end{thm}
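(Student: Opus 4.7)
The plan is to prove $(2) \Rightarrow (1)$, $(1) \Rightarrow (2)$, $(2) \Rightarrow (3)$, and $(3) \Rightarrow (2)$, and then deduce the ``in particular'' statement from the common $\varphi$ supplied by (2). The implication $(2) \Rightarrow (1)$ is immediate upon taking $w' = 1 \in W(\Delta')$ in \eqref{eqn:dsig_sim}.

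For $(1) \Rightarrow (2)$, I would fix a $(\sigma_{1},\sigma_{2})$-fundamental system $\Pi$ of $\Delta$ and a $(\sigma_{1}',\sigma_{2}')$-fundamental system $\Pi'$ of $\Delta'$, which exist by canonicality. Given $\varphi$ and $w'$ as in \eqref{eqn:dsig_sim}, Lemma \ref{lem:Pi_varphiPi} shows that $\varphi(\Pi)$ is $\sigma_{1}'$-fundamental and that $w'\varphi(\Pi)$ is $\sigma_{2}'$-fundamental. By Satake's proposition (\cite[Proposition A in Appendix]{Satake}, already invoked in Subsection \ref{sec:sub2satake}), there exist $u_{1}, u_{2} \in W(\Delta')$ with $u_{i}\sigma_{i}' = \sigma_{i}' u_{i}$ and $u_{1}\varphi(\Pi) = \Pi' = u_{2}w'\varphi(\Pi)$. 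Simple transitivity of $W(\Delta')$ on fundamental systems forces $u_{1} = u_{2}w'$, and then $\varphi'' := u_{1}\varphi$ satisfies $\sigma_{1}' = \varphi''\sigma_{1}\varphi''^{-1}$ together with
\[
\varphi''\sigma_{2}\varphi''^{-1} = u_{2}\,w'\varphi\sigma_{2}\varphi^{-1}w'^{-1}\,u_{2}^{-1} = u_{2}\sigma_{2}'u_{2}^{-1} = \sigma_{2}',
\]
giving (2).

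The equivalence $(2) \Leftrightarrow (3)$ is the double analogue of Lemma \ref{lem:sigma_satake}. For $(2) \Rightarrow (3)$, the image $\varphi(\Pi)$ is both $\sigma_{1}'$- and $\sigma_{2}'$-fundamental by Lemma \ref{lem:Pi_varphiPi}, so the argument used in Subsection \ref{sec:sub2satake} for the well-definedness of double Satake diagrams yields a $w \in W(\Delta')_{\sigma_{1}'}\cap W(\Delta')_{\sigma_{2}'}$ with $w\varphi(\Pi) = \Pi'$. The restriction of $\psi := w\varphi$ to $\Pi$ is then a common isomorphism of Dynkin diagrams intertwining $(\Pi_{i,0}, p_{i})$ with $(\Pi_{i,0}', p_{i}')$ for both $i$, which is exactly what Definition \ref{dfn:2satake_sim} requires. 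For $(3) \Rightarrow (2)$, I would extend a common $\psi: \Pi \to \Pi'$ linearly to an isomorphism $\tilde{\psi}: \mathfrak{t} \to \mathfrak{t}'$ of root systems between $\Delta$ and $\Delta'$, and apply the reconstruction of $\sigma_{i}$ from $S(\Pi, \Pi_{i,0}, p_{i})$ described in Subsection \ref{sec:NssSd} to both $i$ simultaneously, obtaining $\sigma_{i}' = \tilde{\psi}\sigma_{i}\tilde{\psi}^{-1}$ for $i = 1, 2$.

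The ``in particular'' clause follows at once from (2): any $\varphi$ realizing (2) satisfies $\varphi(\mathfrak{t}^{\sigma_{i}}) = \mathfrak{t}'^{\sigma_{i}'}$ and hence $\varphi(\mathfrak{t}^{\sigma_{1}}\cap\mathfrak{t}^{\sigma_{2}}) = \mathfrak{t}'^{\sigma_{1}'}\cap\mathfrak{t}'^{\sigma_{2}'}$, while $\sigma_{1}'\sigma_{2}' = \varphi(\sigma_{1}\sigma_{2})\varphi^{-1}$ gives $\mathrm{ord}(\sigma_{1}'\sigma_{2}') = \mathrm{ord}(\sigma_{1}\sigma_{2})$. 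The main obstacle is step $(1) \Rightarrow (2)$: one must absorb the Weyl-group element $w'$ attached to $\sigma_{2}'$ into the root-system isomorphism $\varphi$ without breaking the relation for $\sigma_{1}'$. Example \ref{ex:so8so3so5_ndsig} shows that this absorption would fail without the canonicality hypothesis, so the existence of simultaneous $(\sigma_{1},\sigma_{2})$-fundamental systems, combined with Satake's proposition, is exactly what makes the argument work.
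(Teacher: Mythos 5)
Your proposal is correct and follows essentially the same route as the paper: the key step $(1)\Rightarrow(2)$ absorbs $w'$ via Satake's Proposition A applied to the two $\sigma_{i}'$-fundamental systems $\varphi(\Pi)$ and $w'\varphi(\Pi)$ together with simple transitivity of $W(\Delta')$ (your $u_{1},u_{2}$ are the paper's $w_{1}',w_{2}'$), and $(2)\Leftrightarrow(3)$ is handled exactly as in the paper via Lemma \ref{lem:Pi_varphiPi} and the extension/reconstruction of Lemma \ref{lem:sigma_satake}. No gaps.
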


\begin{proof}
It is sufficient to show $(1)\Rightarrow (2)$
and $(2)\Leftrightarrow (3)$
because
$(2)\Rightarrow (1)$ is clear.

$(1)\Rightarrow (2)$:
Assume that
$(\Delta,\sigma_{1},\sigma_{2})\sim(\Delta',\sigma_{1}',\sigma_{2}')$.
Then there exist
an isomorphism $\varphi:\Delta\to\Delta'$
and $w'\in W(\Delta')$ satisfying 
\eqref{eqn:dsig_sim}.
Let $\Pi$
and $\Pi'$ be a $(\sigma_{1},\sigma_{2})$-fundamental system of $\Delta$
and a $(\sigma_{1}',\sigma_{2}')$-fundamental system of $\Delta'$, respectively.
It follows from Lemma \ref{lem:Pi_varphiPi} that $\varphi(\Pi)$ is a
$(\sigma_{1}',\varphi\sigma_{2}\varphi^{-1})$-fundamental system of $\Delta'$.
Then there exist $w'_{1}\in W(\Delta')_{\sigma_{1}'}$
and $w'_{2}\in W(\Delta')_{\sigma_{2}'}$
satisfying the following relations:
\[
\Pi'=w_{1}'(\varphi(\Pi)),\quad
\Pi'=w'_{2}(w'\varphi(\Pi)),
\]
from which we have
$w_{1}'(\varphi(\Pi))=w_{2}'w'(\varphi(\Pi))$.
This yields $w'=w_{2}'^{-1}w_{1}'$.
If we put $\varphi':=w_{1}'\varphi$,
then it is an isomorphism of root systems
which satisfies $\sigma_{1}'=w_{1}'\sigma_{1}'w_{1}'^{-1}=\varphi'\sigma_{1}\varphi'^{-1}$
and
\[
\sigma_{2}'
=w'_{2}\sigma_{2}'w_{2}'^{-1}
=w'_{2}(w'\varphi\sigma_{2}\varphi^{-1}w'^{-1})w_{2}'^{-1}
=w'_{2}(w_{2}'^{-1}w_{1}'\varphi\sigma_{2}\varphi^{-1}w_{1}'^{-1}w'_{2})w_{2}'^{-1}
=\varphi'\sigma_{2}\varphi'^{-1}.
\]
Hence we have the implication
$(1)\Rightarrow (2)$.

$(2)\Rightarrow (3)$:
Let $\varphi:\mathfrak{t}\to\mathfrak{t}'$
be an isomorphism of root systems between $\Delta$ and $\Delta'$ satisfying 
$\sigma_{i}'=\varphi\sigma_{i}\varphi^{-1}$ for $i=1,2$.
If $\Pi$ is a $(\sigma_{1},\sigma_{2})$-fundamental system of $\Delta$,
then $\varphi(\Pi)$ is a $(\sigma_{1}',\sigma_{2}')$-fundamental system of $\Delta'$.
This implies $(S_{1},S_{2})\sim(S_{1}',S_{2}')$.

$(3)\Rightarrow (2)$:
Let $\psi:\Pi\to\Pi'$
be an isomorphism of double Satake diagrams
between $(S_{1},S_{2})$ and $(S_{1}',S_{2}')$.
We extend $\psi$ to an isomorphism $\tilde{\psi}$ of root systems between $\Delta$ and $\Delta'$
(cf.~Lemma \ref{lem:sigma_satake}).
The
$\tilde{\psi}$ satisfies $\sigma_{i}'=\tilde{\psi}
\sigma_{i}\tilde{\psi}^{-1}$ for $i=1,2$.
Thus, we have the implication $(3)\Rightarrow (2)$.

From the above argument we have completed the proof.
\end{proof}

For
two double $\sigma$-systems
$(\Delta,\sigma_{1},\sigma_{2})$
and
$(\Delta',\sigma_{1}',\sigma_{2}')$,
we write 
$(\Delta,\sigma_{1},\sigma_{2})\equiv(\Delta',\sigma_{1}',\sigma_{2}')$
if 
they satisfies the condition
stated in Theorem \ref{thm:dsig_dsatake_equiv}, (2).
Then $\equiv$ gives
an equivalence relation on the set of double $\sigma$-systems.

We define the rank and the order
for the isomorphism class of a normal double $\sigma$-system
$(\Delta,\sigma_{1},\sigma_{2})$
as follows:
For a canonical representative $(\Delta,\sigma_{1}',\sigma_{2}')\in
[(\Delta,\sigma_{1},\sigma_{2})]$,
\[
\mathrm{rank}[(\Delta,\sigma_{1},\sigma_{2})]:=\dim(\mathfrak{t}^{\sigma_{1}'}\cap\mathfrak{t}^{\sigma_{2}'}),\quad
\mathrm{ord}[(\Delta,\sigma_{1},\sigma_{2})]:=\mathrm{ord}(\sigma_{1}'\sigma_{2}').
\]
It follows from
Theorem \ref{thm:dsig_dsatake_equiv}
that
the values
of $\dim(\mathfrak{t}^{\sigma_{1}'}\cap\mathfrak{t}^{\sigma_{2}'})$
and $\mathrm{ord}(\sigma_{1}'\sigma_{2}')$
are independent of the choice of $(\Delta,\sigma_{1}',\sigma_{2}')$.
Thus the rank and the order of $[(\Delta,\sigma,\sigma_{2})]$
are well-defined.
Since $\sigma_{1}'\sigma_{2}'$
induces a permutation of $\Delta$,
the order of $[(\Delta,\sigma_{1},\sigma_{2})]$
is finite.
As will be shown later,
in the case when $G$ is simple,
the rank and the order of $[(G,\theta_{1},\theta_{2})]$
coincide with those of $[(\Delta,\sigma_{1},\sigma_{2})]$
(see Theorem \ref{thm:cst_RO_can}).

\section{Double Satake diagrams for compact symmetric triads}\label{sec:cst_dsatake}

In Subsection \ref{sec:dsfcst},
we give a normal double $\sigma$-system for a compact symmetric triad.
In Subsection
\ref{sec:cst_qcan},
we define a quasi-canonical compact symmetric
triad as a compact symmetric triad
which admits a canonical normal double $\sigma$-system.
Furthermore, we prove that,
for any compact symmetric triad $(G,\theta_{1},\theta_{2})$,
there exists $(G,\theta_{1}',\theta_{2}')\sim
(G,\theta_{1},\theta_{2})$ such that 
$(G,\theta_{1}',\theta_{2}')$ is quasi-canonical.
In Subsection \ref{sec:cst_2satake_const},
we introduce the notion of double Satake diagrams
for quasi-canonical compact symmetric triads.
We will show
that the isomorphism class of a compact symmetric triad
uniquely determines
the double Satake diagram up to isomorphism
(Propositions
\ref{pro:exist_qcan} and \ref{pro:cst_dstake_determ}).
For its converse,
we generalize Theorem \ref{thm:cps_sigma_satake_equiv}
to compact symmetric triads,
which is given in Theorem \ref{thm:cst_dsatake_sim}.
In Subsection \ref{sec:cst_class_2satake},
we classify compact symmetric triads $(G,\theta_{1},\theta_{2})$
such that $G$ is simple in terms of double Satake diagrams.
Our classification will be given in Corollary \ref{cor:cst_classify}.
In addition, we give some results by means of the classification.

\subsection{Double $\sigma$-systems for compact symmetric triads}\label{sec:dsfcst}

\subsubsection{Construction of double $\sigma$-systems from compact symmetric triads}

Let $(G,\theta_{1},\theta_{2})$
be a compact symmetric triad
and $\mathfrak{t}$ be a maximal abelian subalgebra of $\mathfrak{g}$
 such that $\mathfrak{t}\cap\mathfrak{m}_{i}$
is a maximal abelian subspace of $\mathfrak{m}_{i}$
(cf.~Lemma \ref{lem:lemm2.5}).
Denote by $\Delta$ the root system of $\mathfrak{g}$
with respect to $\mathfrak{t}$.
Then, for each $i=1$, $2$,
$(\Delta, \sigma_{i}):=(\Delta, -d\theta_{i}|_{\mathfrak{t}})$
gives a normal $\sigma$-system of $\Delta$.
Hence $(\Delta,\sigma_{1},\sigma_{2})$
becomes a normal double $\sigma$-system.
We call
$(\Delta,\sigma_{1},\sigma_{2})$
the double $\sigma$-system of $(G,\theta_{1},\theta_{2})$
 with respect to $\mathfrak{t}$.
We will show that the double $\sigma$-system
$(\Delta,\sigma_{1},\sigma_{2})$
is uniquely determined up to isomorphism,
that is, we have the following lemma.

\begin{lem}\label{lem:dsig_isomorphic}
Let $\mathfrak{t}'$ be another maximal abelian subalgebra of $\mathfrak{g}$
such that $\mathfrak{t}'\cap\mathfrak{m}_{i}$
is a maximal abelian subspace of $\mathfrak{m}_{i}$
and $(\Delta',\sigma_{1}',\sigma_{2}')$ denote
the corresponding normal double $\sigma$-system.
Then we have $(\Delta,\sigma_{1},\sigma_{2})\sim(\Delta,\sigma_{1}',\sigma_{2}')$.
\end{lem}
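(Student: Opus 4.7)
The plan is to construct two elements $k\in K_{1}$ and $\tilde{k}\in K_{2}$, each conjugating $\mathfrak{t}$ onto $\mathfrak{t}'$. The restrictions $\varphi:=\mathrm{Ad}(k)|_{\mathfrak{t}}$ and $\varphi_{2}:=\mathrm{Ad}(\tilde{k})|_{\mathfrak{t}}$ will both be isomorphisms of root systems between $\Delta$ and $\Delta'$, the first intertwining $\sigma_{1}$ with $\sigma_{1}'$ and the second intertwining $\sigma_{2}$ with $\sigma_{2}'$. The Weyl group element $w'\in W(\Delta')$ needed in the definition of $\sim$ (cf.~\eqref{eqn:dsig_sim}) will then arise naturally as $w':=\varphi_{2}\varphi^{-1}$, coming from the inner automorphism $\mathrm{Ad}(\tilde{k}k^{-1})$ restricted to $\mathfrak{t}'$.

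To produce $k$, I would mimic the Sugiura-type argument recalled in Section~\ref{sec:pre} for compact symmetric pairs, applied to $(G,\theta_{1})$: first use the $\mathrm{Ad}(K_{1})$-conjugacy of maximal abelian subspaces of $\mathfrak{m}_{1}$ to choose $k_{0}\in K_{1}$ with $\mathrm{Ad}(k_{0})(\mathfrak{t}\cap\mathfrak{m}_{1})=\mathfrak{t}'\cap\mathfrak{m}_{1}$, then invoke \cite[Proposition 5]{Sugiura} to produce $k'\in K_{1}$ fixing $\mathfrak{t}'\cap\mathfrak{m}_{1}$ pointwise and sending $\mathrm{Ad}(k_{0})(\mathfrak{t}\cap\mathfrak{k}_{1})$ to $\mathfrak{t}'\cap\mathfrak{k}_{1}$. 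Here I use the structural fact that $\mathfrak{t}=(\mathfrak{t}\cap\mathfrak{m}_{1})\oplus(\mathfrak{t}\cap\mathfrak{k}_{1})$, which holds because the maximality of $\mathfrak{t}\cap\mathfrak{m}_{1}$ in $\mathfrak{m}_{1}$ forces $\mathfrak{t}$ to be $\theta_{1}$-invariant. Setting $k:=k'k_{0}\in K_{1}$ yields $\mathrm{Ad}(k)(\mathfrak{t})=\mathfrak{t}'$, and applying exactly the same construction to $(G,\theta_{2})$ gives $\tilde{k}\in K_{2}$ with $\mathrm{Ad}(\tilde{k})(\mathfrak{t})=\mathfrak{t}'$.

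Since $k\in K_{1}$ commutes with $\theta_{1}$, one has $d\theta_{1}\mathrm{Ad}(k)=\mathrm{Ad}(k)d\theta_{1}$, from which $\sigma_{1}'=\varphi\sigma_{1}\varphi^{-1}$ follows immediately; analogously $\sigma_{2}'=\varphi_{2}\sigma_{2}\varphi_{2}^{-1}$. Since $\mathrm{Ad}(k)$ and $\mathrm{Ad}(\tilde{k})$ both carry $\mathfrak{t}$ to $\mathfrak{t}'$, the product $\tilde{k}k^{-1}$ normalizes $\mathfrak{t}'$, hence $w':=\mathrm{Ad}(\tilde{k}k^{-1})|_{\mathfrak{t}'}=\varphi_{2}\varphi^{-1}$ lies in $N_{G}(\mathfrak{t}')/Z_{G}(\mathfrak{t}')=W(\Delta')$. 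A direct computation then gives $w'\varphi\sigma_{2}\varphi^{-1}w'^{-1}=\varphi_{2}\sigma_{2}\varphi_{2}^{-1}=\sigma_{2}'$, which combined with $\sigma_{1}'=\varphi\sigma_{1}\varphi^{-1}$ yields $(\Delta,\sigma_{1},\sigma_{2})\sim(\Delta',\sigma_{1}',\sigma_{2}')$. The main technical ingredient is the Sugiura-type reduction, which is already the mechanism underlying the analogous statement for compact symmetric pairs in Section~\ref{sec:pre}; the only new observation is that although one cannot generally find a single element of $K_{1}\cap K_{2}$ sending $\mathfrak{t}$ to $\mathfrak{t}'$, the discrepancy between $k$ and $\tilde{k}$ is absorbed precisely by the Weyl group twist permitted in the definition of $\sim$.
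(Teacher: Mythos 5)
Your proposal is correct and follows essentially the same route as the paper: the authors likewise produce $\nu_{1}\in\mathrm{Int}(\mathfrak{k}_{1})$ and $\nu_{2}\in\mathrm{Int}(\mathfrak{k}_{2})$ with $\nu_{1}(\mathfrak{t})=\mathfrak{t}'=\nu_{2}(\mathfrak{t})$ via the same Sugiura-type argument, and absorb the discrepancy $\nu_{1}^{-1}\nu_{2}|_{\mathfrak{t}}$ (your $\varphi_{2}\varphi^{-1}$, transported to $W(\Delta')$) into the Weyl twist allowed by the definition of $\sim$. Your write-up just spells out the construction of the conjugating elements in more detail than the paper, which simply cites \cite[Proposition 5]{Sugiura}.
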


\begin{proof}
By the choices of $\mathfrak{t}$ and $\mathfrak{t}'$,
there exist $\nu_{1}\in\mathrm{Int}(\mathfrak{k}_{1})$
and $\nu_{2}\in\mathrm{Int}(\mathfrak{k}_{2})$
satisfying $\nu_{1}(\mathfrak{t})=\mathfrak{t}'=\nu_{2}(\mathfrak{t})$
(cf.~\cite[Proposition 5]{Sugiura}).
In particular,
$\nu_{1}^{-1}\nu_{2}(\mathfrak{t})=\mathfrak{t}$ holds.
Thus we obtain
\begin{align*}
(\Delta',\sigma_{1}',\sigma_{2}')
&=(\nu_{1}(\Delta),-\nu_{1}d\theta_{1}\nu_{1}^{-1}|_{\nu_{1}(\mathfrak{t})},
-\nu_{1}(\nu_{1}^{-1}d\theta_{2}\nu_{1})\nu_{1}^{-1}|_{\nu_{1}(\mathfrak{t})})\\
&\sim(\Delta,-d\theta_{1}|_{\mathfrak{t}},(\nu_{1}^{-1}\nu_{2})|_{\mathfrak{t}}(-d\theta_{2}|_{\mathfrak{t}})
(\nu_{2}^{-1}\nu_{1})|_{\mathfrak{t}})\\
&\sim(\Delta,\sigma_{1},\sigma_{2}).
\end{align*}
Hence we have the assertion.
\end{proof}

\begin{lem}\label{lem:CST_dsigma2}
Let $(G,\theta_{1}',\theta_{2}')\sim (G,\theta_{1},\theta_{2})$
be another compact symmetric triad and $\mathfrak{t}'$
be a maximal abelian subalgebra of $\mathfrak{g}$
such that $\mathfrak{t}'\cap\mathfrak{m}_{i}'$ $(i=1,2)$
is a maximal abelian subspaces of $\mathfrak{m}_{i}'$.
Let $(\Delta',\sigma_{1}',\sigma_{2}')$
be the corresponding normal double $\sigma$-system of $(G,\theta_{1}',\theta_{2}')$.
Then we have $(\Delta,\sigma_{1},\sigma_{2})\sim (\Delta',\sigma_{1}',\sigma_{2}')$.
\end{lem}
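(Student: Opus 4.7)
The plan is to construct an explicit isomorphism of double $\sigma$-systems between $(\Delta,\sigma_{1},\sigma_{2})$ and $(\Delta',\sigma_{1}',\sigma_{2}')$ by combining $d\varphi$ with two suitable inner corrections, adapting the mechanism used in the proof of Lemma \ref{lem:dsig_isomorphic}. By Definition \ref{dfm:cst_sim} I first fix $\varphi\in\mathrm{Aut}(G)$ and $g\in G$ such that $d\theta_{1}'=d\varphi\,d\theta_{1}\,d\varphi^{-1}$ and $d\theta_{2}'=\mathrm{Ad}(g)\,d\varphi\,d\theta_{2}\,d\varphi^{-1}\mathrm{Ad}(g)^{-1}$; in particular $\mathfrak{m}_{1}'=d\varphi(\mathfrak{m}_{1})$ and $\mathfrak{m}_{2}'=\mathrm{Ad}(g)\,d\varphi(\mathfrak{m}_{2})$.

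The key step is to produce two transports $\nu_{1},\nu_{2}$ of $\mathfrak{t}$ onto $\mathfrak{t}'$, each intertwining one of the pairs of involutions. Observe that $d\varphi(\mathfrak{t})$ is maximal abelian in $\mathfrak{g}$ with $d\varphi(\mathfrak{t})\cap\mathfrak{m}_{1}'=d\varphi(\mathfrak{t}\cap\mathfrak{m}_{1})$ maximal abelian in $\mathfrak{m}_{1}'$; by the $\mathrm{Ad}(K_{1}')$-conjugacy of such pairs together with the Sugiura-type refinement invoked in the proof of Lemma \ref{lem:dsig_isomorphic}, there exists $k_{1}'\in K_{1}'$ with $\mathrm{Ad}(k_{1}')d\varphi(\mathfrak{t})=\mathfrak{t}'$. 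Setting $\nu_{1}:=\mathrm{Ad}(k_{1}')\circ d\varphi$, one has $\nu_{1}(\mathfrak{t})=\mathfrak{t}'$, and since $k_{1}'\in K_{1}'$ is fixed by $\theta_{1}'$ the inner automorphism $\mathrm{Ad}(k_{1}')$ commutes with $d\theta_{1}'$, so $\nu_{1}\,d\theta_{1}\,\nu_{1}^{-1}=d\theta_{1}'$. The analogous argument applied to $\mathrm{Ad}(g)d\varphi(\mathfrak{t})$ inside $\mathfrak{m}_{2}'$ furnishes $k_{2}'\in K_{2}'$ and $\nu_{2}:=\mathrm{Ad}(k_{2}'\,g)\circ d\varphi$ with $\nu_{2}(\mathfrak{t})=\mathfrak{t}'$ and $\nu_{2}\,d\theta_{2}\,\nu_{2}^{-1}=d\theta_{2}'$.

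Once $\nu_{1}$ and $\nu_{2}$ are in hand I would read off the isomorphism of double $\sigma$-systems. Setting $\Phi:=\nu_{1}|_{\mathfrak{t}}\colon\mathfrak{t}\to\mathfrak{t}'$, which is an isomorphism of root systems, the intertwining of $\nu_{1}$ with $d\theta_{1}$ and $d\theta_{1}'$ gives $\sigma_{1}'=\Phi\sigma_{1}\Phi^{-1}$. The composition $\nu_{2}\nu_{1}^{-1}=\mathrm{Ad}\bigl(k_{2}'\,g\,(k_{1}')^{-1}\bigr)$ is inner in $\mathfrak{g}$ (thanks to the cancellation $d\varphi\circ d\varphi^{-1}=\mathrm{id}$) and preserves $\mathfrak{t}'$, so its restriction $w':=(\nu_{2}\nu_{1}^{-1})|_{\mathfrak{t}'}$ lies in $W(\Delta')$. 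Inserting $\nu_{1}^{-1}\nu_{1}$ in $\sigma_{2}'=\nu_{2}\sigma_{2}\nu_{2}^{-1}$ then yields $\sigma_{2}'=w'\,\Phi\sigma_{2}\Phi^{-1}\,(w')^{-1}$, which together with the relation for $\sigma_{1}'$ is exactly the condition defining $(\Delta,\sigma_{1},\sigma_{2})\sim(\Delta',\sigma_{1}',\sigma_{2}')$.

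The main technical obstacle is the simultaneous existence of the correcting elements $k_{1}'\in K_{1}'$ and $k_{2}'\in K_{2}'$ transporting the two auxiliary maximal abelian subalgebras $d\varphi(\mathfrak{t})$ and $\mathrm{Ad}(g)d\varphi(\mathfrak{t})$ to $\mathfrak{t}'$; this is handled by applying, separately inside $(G,\theta_{1}')$ and $(G,\theta_{2}')$, the same argument used in the proof of Lemma \ref{lem:dsig_isomorphic}. A secondary point worth checking is that $\nu_{2}\nu_{1}^{-1}$ is genuinely inner and not merely a general automorphism of $\mathfrak{g}$; this is the reason for using the common outer factor $d\varphi$ on both sides, so that the two copies of $d\varphi^{\pm 1}$ in $\nu_{2}\nu_{1}^{-1}$ collapse and leave only an inner automorphism, whose restriction to $\mathfrak{t}'$ then lies in $W(\Delta')$ rather than in $\mathrm{Aut}(\Delta')$.
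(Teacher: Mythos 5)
Your proof is correct. The ingredients are the same ones the paper relies on --- the $\mathrm{Ad}(K_{i}')$-conjugacy of adapted maximal abelian subalgebras (the Sugiura-type argument from Section \ref{sec:pre}) and the fact that an inner automorphism of $\mathfrak{g}$ preserving $\mathfrak{t}'$ restricts to an element of $W(\Delta')$ --- but the organization is genuinely different. The paper first reduces to the case $\theta_{1}'=\theta_{1}$, $\theta_{2}'=\tau\theta_{2}\tau^{-1}$, introduces the auxiliary subalgebra $\tau(\mathfrak{t})$, compares the $\mathfrak{t}'$-system with the $\tau(\mathfrak{t})$-system via Lemma \ref{lem:dsig_isomorphic}, and only then compares the latter with the $\mathfrak{t}$-system. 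You instead treat a general pair $(\varphi,\tau_{g})$ at once and build two automorphisms $\nu_{1},\nu_{2}$ of $\mathfrak{g}$, each carrying $\mathfrak{t}$ onto $\mathfrak{t}'$ and intertwining $d\theta_{i}$ with $d\theta_{i}'$; the Weyl correction then falls out as $w'=(\nu_{2}\nu_{1}^{-1})|_{\mathfrak{t}'}$, which is inner precisely because the common outer factor $d\varphi$ cancels --- a point you rightly single out as the one to check. This direct route subsumes Lemma \ref{lem:dsig_isomorphic} as the special case $\varphi=\mathrm{id}$, $g=e$, and it sidesteps any discussion of whether the intermediate subalgebra $\tau(\mathfrak{t})$ is adapted to both involutions, which the paper's factorization has to negotiate. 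The two technical points you flag --- the existence of $k_{1}'$ and $k_{2}'$, and the innerness of $\nu_{2}\nu_{1}^{-1}$ --- are both handled correctly, so I see no gap.
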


\begin{proof}
It is sufficient to consider the case when $\theta_{1}'=\theta_{1}$
and $\theta_{2}'=\tau\theta_{2}\tau^{-1}$ for some $\tau\in\mathrm{Int}(G)$.
Then $\tau(\mathfrak{t})$
is $\theta_{2}'$-invariant and
$\tau(\mathfrak{t})\cap\mathfrak{m}_{2}'$
is a maximal abelian subspace of $\mathfrak{m}_{2}'$.
Furthermore,
$\tau(\Delta)$ is the root system of $\mathfrak{g}$
with respect to $\tau(\mathfrak{t})$.
If we put $\sigma_{2}''=-d\theta_{1}|_{\tau(\mathfrak{t})}$
and
$\sigma_{2}''=-d\theta_{2}'|_{\tau(\mathfrak{t})}$,
then $(\tau(\Delta),\sigma_{1}'', \sigma_{2}'')$ gives
the normal double $\sigma$-system
of $(G,\theta_{1}',\theta_{2}')$
corresponding to $\tau(\mathfrak{t})$.
By Lemma \ref{lem:dsig_isomorphic}
we have $(\Delta',\sigma_{1}',\sigma_{2}')\sim(\tau(\Delta),\sigma_{1}'',\sigma_{2}'')$.
On the other hand,
we have $(\tau(\Delta),\sigma_{1}'',\sigma_{2}'')\sim(\Delta,\sigma_{1},\sigma_{2})$.
Indeed,
there exists $\nu_{1}\in\mathrm{Int}(\mathfrak{k}_{1})$
such that $\nu_{1}(\mathfrak{t})=\tau(\mathfrak{t})$,
from which $\tau^{-1}\nu_{1}(\mathfrak{t})=\mathfrak{t}$ holds.
Hence we have
\begin{equation*}
(\tau(\Delta),\sigma_{1}'',\sigma_{2}'')
\equiv
(\Delta,-\tau^{-1}\theta_{1}\tau|_{\mathfrak{t}},-d\theta_{2}|_{\mathfrak{t}})
\sim(\Delta,(\tau^{-1}\nu_{1})|_{\mathfrak{t}}(-d\theta_{1}|_{\mathfrak{t}})(\nu_{1}^{-1}\tau)|_{\mathfrak{t}},-d\theta_{2}|_{\mathfrak{t}}).
\end{equation*}
We have complete the proof.
\end{proof}

\subsubsection{Another interpretation of the rank for compact symmetric triads}
As shown in Section \ref{sec:cst_fundamental},
the rank of a compact symmetric triad $(G,\theta_{1},\theta_{2})$
coincides with the cohomogeneity of the Hermann action induced from $(G,\theta_{1},\theta_{2})$.
We give another interpretation of the rank
in terms of the double $\sigma$-system of $(G,\theta_{1},\theta_{2})$.
More precisely, we prove the following proposition.

\begin{pro}\label{pro:rank=maxdim}
Let $\mathfrak{t}$ be a maximal abelian subalgebra of $\mathfrak{g}$
such that $\mathfrak{t}\cap\mathfrak{m}_{i}$
$(i=1,2)$ is a maximal abelian subspace
of $\mathfrak{m}_{i}$,
and $(\Delta,\sigma_{1},\sigma_{2}):=(\Delta,-d\theta_{1}|_{\mathfrak{t}},-d\theta_{2}|_{\mathfrak{t}})$.
Then we have$:$
\[
\mathrm{rank}(G,\theta_{1},\theta_{2})
=\max\{\dim(\mathfrak{t}^{\sigma_{1}}\cap s\mathfrak{t}^{\sigma_{2}})\mid s\in W(\Delta)\}.
\]
\end{pro}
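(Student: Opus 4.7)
The plan is to prove both inequalities in turn. For the upper bound ($\leq$), I would first observe that $\mathfrak{t}^{\sigma_i}=\mathfrak{t}\cap\mathfrak{m}_i$ since $\sigma_i=-d\theta_i|_{\mathfrak{t}}$. Identifying $W(\Delta)$ with $N_G(T)/T$ for the maximal torus $T=\exp(\mathfrak{t})$ and lifting $s\in W(\Delta)$ to $n\in N_G(T)$, one has $s\mathfrak{t}^{\sigma_2}=\mathrm{Ad}(n)(\mathfrak{t}\cap\mathfrak{m}_2)=\mathfrak{t}\cap\mathrm{Ad}(n)\mathfrak{m}_2$, so that
\[
\mathfrak{t}^{\sigma_1}\cap s\mathfrak{t}^{\sigma_2}=\mathfrak{t}\cap\mathfrak{m}_1\cap\mathrm{Ad}(n)\mathfrak{m}_2
\]
is an abelian subspace of $\mathfrak{m}_1\cap\mathrm{Ad}(n)\mathfrak{m}_2$. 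Since $(G,\theta_1,\tau_n\theta_2\tau_n^{-1})\sim(G,\theta_1,\theta_2)$, Lemma \ref{lem:cst_rank} gives that both triads share the same rank; thus $\dim(\mathfrak{t}^{\sigma_1}\cap s\mathfrak{t}^{\sigma_2})\le\mathrm{rank}(G,\theta_1,\theta_2)$.

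For the reverse inequality, I would apply Lemma \ref{lem:lemm2.5} to obtain $g\in G$, $\theta_2':=\tau_g\theta_2\tau_g^{-1}$, and a maximal abelian subalgebra $\mathfrak{t}'$ of $\mathfrak{g}$ such that $\mathfrak{t}'\cap\mathfrak{m}_1\cap\mathrm{Ad}(g)\mathfrak{m}_2$ is maximal abelian in $\mathfrak{m}_1\cap\mathrm{Ad}(g)\mathfrak{m}_2$, hence of dimension equal to $\mathrm{rank}(G,\theta_1,\theta_2)$. Because both $\mathfrak{t}$ and $\mathfrak{t}'$ are $\theta_1$-invariant with $\mathfrak{t}\cap\mathfrak{m}_1$ and $\mathfrak{t}'\cap\mathfrak{m}_1$ maximal abelian in $\mathfrak{m}_1$, the argument used in Section \ref{sec:pre} to prove well-definedness of the Satake diagram of $(G,\theta_1)$ yields $k\in K_1$ with $\mathrm{Ad}(k)\mathfrak{t}'=\mathfrak{t}$. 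Transporting by $\mathrm{Ad}(k)$ gives a subspace $\mathfrak{c}:=\mathfrak{t}\cap\mathfrak{m}_1\cap\mathrm{Ad}(kg)\mathfrak{m}_2\subset\mathfrak{t}$ of dimension $\mathrm{rank}(G,\theta_1,\theta_2)$.

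The remaining step is to replace $kg$ by some $n\in N_G(T)$ while maintaining $\mathfrak{c}\subset\mathrm{Ad}(n)\mathfrak{m}_2$. Since $\mathrm{Ad}((kg)^{-1})\mathfrak{c}\subset\mathfrak{m}_2$ is abelian, by the $\mathrm{Ad}(K_2)$-conjugacy of maximal abelian subspaces of $\mathfrak{m}_2$ (after extending to a maximal one) there is $k_2\in K_2$ with $\mathfrak{c}':=\mathrm{Ad}(k_2(kg)^{-1})\mathfrak{c}\subset\mathfrak{t}\cap\mathfrak{m}_2$. Setting $h:=kg\cdot k_2^{-1}$, we have $\mathrm{Ad}(h)\mathfrak{c}'=\mathfrak{c}$ with $\mathfrak{c},\mathfrak{c}'\subset\mathfrak{t}$. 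Now $\mathfrak{t}$ and $\mathrm{Ad}(h)\mathfrak{t}$ are two maximal abelian subalgebras of $\mathfrak{g}$ both containing $\mathfrak{c}$, so they are maximal tori (at the Lie algebra level) in the centralizer $Z_G(\mathfrak{c})$. Since $\mathfrak{c}$ is contained in the torus $T$, the group $Z_G(\mathfrak{c})$ is connected, so there exists $z\in Z_G(\mathfrak{c})$ with $\mathrm{Ad}(z)\mathrm{Ad}(h)\mathfrak{t}=\mathfrak{t}$. Setting $n:=zh\in N_G(T)$, we obtain $\mathrm{Ad}(n)\mathfrak{c}'=\mathrm{Ad}(z)\mathfrak{c}=\mathfrak{c}$, so $\mathfrak{c}\subset\mathrm{Ad}(n)\mathfrak{m}_2$, which yields $\dim(\mathfrak{t}^{\sigma_1}\cap s\mathfrak{t}^{\sigma_2})\ge\mathrm{rank}(G,\theta_1,\theta_2)$ for $s=\mathrm{Ad}(n)|_{\mathfrak{t}}$.

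The main obstacle is this last step, of upgrading a $G$-conjugacy between subspaces of $\mathfrak{t}$ to an $N_G(T)$-conjugacy: a general $kg\in G$ need not normalize $T$, so $\mathrm{Ad}(kg)\mathfrak{m}_2\cap\mathfrak{t}$ is not a priori of the form $s(\mathfrak{t}\cap\mathfrak{m}_2)$ for any $s\in W(\Delta)$. The centralizer trick succeeds precisely because the constraint $\mathfrak{c}\subset\mathfrak{t}$ forces both candidate maximal tori to sit in the connected group $Z_G(\mathfrak{c})$, whose elements fix $\mathfrak{c}$ pointwise and therefore supply the required correction factor $z$ without disturbing the image of $\mathfrak{c}'$.
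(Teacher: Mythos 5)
Your proof is correct, and the two halves compare differently with the paper's argument. The upper bound is essentially identical to the paper's: lift $s\in W(\Delta)$ to $n\in G$ with $\mathrm{Ad}(n)|_{\mathfrak{t}}=s$, note that $\mathfrak{t}^{\sigma_{1}}\cap s\mathfrak{t}^{\sigma_{2}}=\mathfrak{t}\cap\mathfrak{m}_{1}\cap\mathrm{Ad}(n)\mathfrak{m}_{2}$ is abelian there, and use invariance of the rank under $\sim$. For the reverse inequality both arguments begin with Lemma \ref{lem:lemm2.5}, but then diverge: the paper simply invokes the invariance of the double $\sigma$-system under $\sim$ (Lemma \ref{lem:CST_dsigma2}), which already hands over an isomorphism $\varphi$ of root systems and a Weyl element $s$ with $\sigma_{2}'=\varphi s\sigma_{2}s^{-1}\varphi^{-1}$, so that $\dim(\mathfrak{t}'^{\sigma_{1}'}\cap\mathfrak{t}'^{\sigma_{2}'})=\dim(\mathfrak{t}^{\sigma_{1}}\cap s\mathfrak{t}^{\sigma_{2}})$; there the Weyl element arises painlessly because the composition $\nu_{1}^{-1}\nu_{2}$ of the two $K_{i}$-conjugations automatically normalizes $\mathfrak{t}$. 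You instead re-derive the needed $N_{G}(T)$-conjugacy by hand: transport $\mathfrak{t}'$ back to $\mathfrak{t}$ by $K_{1}$, push the rank-dimensional abelian subspace $\mathfrak{c}$ into $\mathfrak{t}\cap\mathfrak{m}_{2}$ by $K_{2}$, and then correct by an element of $Z_{G}(\mathfrak{c})$ so as to land in $N_{G}(T)$ without moving $\mathfrak{c}$. That last step is sound: $Z_{G}(\mathfrak{c})$ coincides with the centralizer of the subtorus $\overline{\exp(\mathfrak{c})}\subset T$ and is therefore connected, and $\mathfrak{t}$ and $\mathrm{Ad}(h)\mathfrak{t}$ are both maximal abelian subalgebras of its Lie algebra, hence conjugate by some $z\in Z_{G}(\mathfrak{c})$, which fixes $\mathfrak{c}$ pointwise. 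What the paper's route buys is brevity, since the heavy lifting is already packaged in the lemmas on double $\sigma$-systems; what yours buys is a self-contained, purely group-theoretic argument that does not pass through that formalism.
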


\begin{proof}
First, we prove 
\begin{equation}\label{eqn:rankgeqmax}
\mathrm{rank}(G,\theta_{1},\theta_{2})
\geq \max\{\dim(\mathfrak{t}^{\sigma_{1}}\cap s\mathfrak{t}^{\sigma_{2}})\mid s\in W(\Delta)\}.
\end{equation}
Let $s$ be in $W(\Delta)$
and $g$ be an element of $G$ with $\mathrm{Ad}(g)|_{\mathfrak{t}}=s$.
If we put $\theta_{2}'=\tau_{g}\theta_{2}\tau_{g}^{-1}$,
then $(G,\theta_{1},\theta_{2}')$
is a compact symmetric triad which is isomorphic to $(G,\theta_{1},\theta_{2})$.
Furthermore, we find that $\mathfrak{t}^{\sigma_{1}}\cap s\mathfrak{t}^{\sigma_{2}}=\mathfrak{t}\cap(\mathfrak{m}_{1}\cap\mathfrak{m}_{2}')$
is an abelian subspace of $\mathfrak{m}_{1}\cap\mathfrak{m}_{2}'$.
Hence we have
\[
\mathrm{rank}(G,\theta_{1},\theta_{2})
=\mathrm{rank}(G,\theta_{1},\theta_{2}')
\geq \dim(\mathfrak{t}^{\sigma_{1}}\cap s\mathfrak{t}^{\sigma_{2}}).
\]
By the arbitrariness of $s$,
this yields \eqref{eqn:rankgeqmax}.

Next, we show the reverse inequality of \eqref{eqn:rankgeqmax}.
It follows from Lemma \ref{lem:lemm2.5}
that there exist a compact symmetric triad
$(G,\theta_{1}',\theta_{2}')\sim(G,\theta_{1},\theta_{2})$
and a maximal abelian subalgebra $\mathfrak{t}'$ such that
$\mathfrak{t}'\cap\mathfrak{m}_{i}'$
($i=1,2$)
is a maximal abelian subspace of $\mathfrak{m}_{i}'$,
and that $\mathfrak{t}'\cap(\mathfrak{m}_{1}'\cap\mathfrak{m}_{2}')$
is a maximal abelian subspace of $\mathfrak{m}_{1}'\cap\mathfrak{m}_{2}'$.
We write $(\Delta',\sigma_{1}',\sigma_{2}'):=(\Delta',-\theta_{1}'|_{\mathfrak{t}'},-\theta'_{2}|_{\mathfrak{t}'})$
as the double $\sigma$-system of $(G,\theta_{1}',\theta_{2}')$.
From Lemma \ref{lem:dsig_isomorphic},
$(G,\theta_{1},\theta_{2})\sim(G,\theta_{1}',\theta_{2}')$
yields $(\Delta,\sigma_{1},\sigma_{2})\sim(\Delta',\sigma_{1}',\sigma_{2}')$.
Then
there exist an isomorphism $\varphi:\Delta\to \Delta'$ of root systems and $s\in W(\Delta)$
satisfying
$\sigma_{1}'=\varphi\sigma_{1}\varphi^{-1}$
and $\sigma_{2}'=\varphi s\sigma_{2}s^{-1} \varphi^{-1}$,
from which we get
\[
\dim(\mathfrak{t}^{\sigma_{1}'}\cap\mathfrak{t}^{\sigma_{2}'})
=\dim(\varphi(\mathfrak{t}^{\sigma_{1}})\cap \varphi(s\mathfrak{t}^{\sigma_{2}}))
=\dim(\mathfrak{t}^{\sigma_{1}}\cap s\mathfrak{t}^{\sigma_{2}}).
\]
Hence we obtain
\[
\mathrm{rank}(G,\theta_{1},\theta_{2})
=\dim(\mathfrak{t}^{\sigma_{1}'}\cap\mathfrak{t}^{\sigma_{2}'})
\leq \max\{\dim(\mathfrak{t}^{\sigma_{1}}\cap s\mathfrak{t}^{\sigma_{2}})\mid s\in W(\Delta)\}.
\]
From the above we have complete the proof.
\end{proof}

\subsection{Quasi-canonical forms in compact symmetric triads}\label{sec:cst_qcan}

\subsubsection{Definition and existence for quasi-canonical compact symmetric triads}

Let us introduce the notion
of a quasi-canonical compact symmetric
triad as follows.

\begin{dfn}\label{dfn:cst_qcf}
A compact symmetric triad 
$(G,\theta_{1},\theta_{2})$
is said to be \textit{quasi-canonical},
if there exists a maximal abelian subalgebra $\mathfrak{t}$
of $\mathfrak{g}$ which satisfies the following conditions:
\begin{enumerate}
\item $\mathfrak{t}\cap\mathfrak{m}_{i}$
is a maximal abelian subspace of $\mathfrak{m}_{i}$ for $i=1,2$.
\item The normal double $\sigma$-system
$(\Delta,\sigma_{1},\sigma_{2}):=(\Delta,-d\theta_{1}|_{\mathfrak{t}},-d\theta_{2}|_{\mathfrak{t}})$ is canonical,
that is, there exists a $(\sigma_{1},\sigma_{2})$-fundamental system of $\Delta$.
\end{enumerate}
Then, $\mathfrak{t}$ is said to be quasi-canonical with respect to 
$(G,\theta_{1},\theta_{2})$.
A \textit{quasi-canonical form} of $[(G,\theta_{1},\theta_{2})]$
is a representative $(G,\theta_{1}',\theta_{2}')$
of the isomorphism class $[(G,\theta_{1},\theta_{2})]$ such that
$(G,\theta_{1}',\theta_{2}')$
is quasi-canonical as a compact symmetric triad.
\end{dfn}

\begin{pro}\label{pro:exist_qcan}
For a compact symmetric triad $(G,\theta_{1},\theta_{2})$,
there exists a quasi-canonical
compact symmetric triad $(G,\theta_{1},\theta_{2}')\sim(G,\theta_{1},\theta_{2})$.
\end{pro}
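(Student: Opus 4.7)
The plan is to combine Lemma \ref{lem:lemm2.5} with Proposition \ref{pro:exist_dfs}, lifting the abstract Weyl-group twist of the latter to the Lie group level via an element normalizing $\mathfrak{t}$. First I would apply Lemma \ref{lem:lemm2.5} to replace $(G,\theta_{1},\theta_{2})$ by an equivalent triad, still denoted $(G,\theta_{1},\theta_{2})$, for which there is a maximal abelian subalgebra $\mathfrak{t}\subset\mathfrak{g}$ with $\mathfrak{t}\cap\mathfrak{m}_{i}$ maximal abelian in $\mathfrak{m}_{i}$ for $i=1,2$. Then $\mathfrak{t}$ is $(\theta_{1},\theta_{2})$-invariant, so the double $\sigma$-system $(\Delta,\sigma_{1},\sigma_{2}):=(\Delta,-d\theta_{1}|_{\mathfrak{t}},-d\theta_{2}|_{\mathfrak{t}})$ of $(G,\theta_{1},\theta_{2})$ with respect to $\mathfrak{t}$ is defined and normal.

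Applying Proposition \ref{pro:exist_dfs} to $(\Delta,\sigma_{1},\sigma_{2})$ yields $w\in W(\Delta)$ such that $(\Delta,\sigma_{1},w\sigma_{2}w^{-1})$ is canonical. The next step is to realize this twist on the group. Using the standard identification $W(\Delta)\cong N_{G}(\mathfrak{t})/Z_{G}(\mathfrak{t})$ for a compact connected Lie group, I would pick $g\in N_{G}(\mathfrak{t})$ with $\mathrm{Ad}(g)|_{\mathfrak{t}}=w$ and set $\theta_{2}':=\tau_{g}\theta_{2}\tau_{g}^{-1}$. By Definition \ref{dfm:cst_sim} we immediately have $(G,\theta_{1},\theta_{2}')\sim(G,\theta_{1},\theta_{2})$ (take $\varphi=\mathrm{id}$, $\tau=\tau_{g}$).

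It then remains to verify the two conditions of Definition \ref{dfn:cst_qcf} for $(G,\theta_{1},\theta_{2}')$ with the same $\mathfrak{t}$. Condition (1) is unchanged for $\theta_{1}$. For $\theta_{2}'$, since $\mathrm{Ad}(g)$ preserves $\mathfrak{t}$,
\[
\mathfrak{t}\cap\mathfrak{m}_{2}'=\mathfrak{t}\cap\mathrm{Ad}(g)\mathfrak{m}_{2}=\mathrm{Ad}(g)(\mathfrak{t}\cap\mathfrak{m}_{2}),
\]
and $\mathrm{Ad}(g)$ being a Lie algebra automorphism sends maximal abelian subspaces of $\mathfrak{m}_{2}$ to maximal abelian subspaces of $\mathfrak{m}_{2}'$. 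For condition (2), a direct computation using $\mathrm{Ad}(g)^{-1}|_{\mathfrak{t}}=w^{-1}$ and the $d\theta_{2}$-invariance of $\mathfrak{t}$ gives
\[
-d\theta_{2}'|_{\mathfrak{t}}=-\mathrm{Ad}(g)\, d\theta_{2}\,\mathrm{Ad}(g)^{-1}|_{\mathfrak{t}}=w(-d\theta_{2}|_{\mathfrak{t}})w^{-1}=w\sigma_{2}w^{-1},
\]
so the double $\sigma$-system of $(G,\theta_{1},\theta_{2}')$ with respect to $\mathfrak{t}$ is exactly $(\Delta,\sigma_{1},w\sigma_{2}w^{-1})$, which is canonical by the choice of $w$.

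The main obstacle is the Lie-group/Lie-algebra bridge: one needs an element $g\in G$ that realizes the abstract Weyl-group twist $w$ on $\mathfrak{t}$ and, crucially, whose conjugation does not destroy the maximality properties of Lemma \ref{lem:lemm2.5}. Insisting on $g\in N_{G}(\mathfrak{t})$ resolves both issues at once: the normalizer condition $\mathrm{Ad}(g)\mathfrak{t}=\mathfrak{t}$ is precisely what propagates Proposition \ref{pro:exist_dfs} from $(\Delta,\sigma_{1},\sigma_{2})$ to the triad $(G,\theta_{1},\theta_{2}')$, while the centralizer ambiguity $Z_{G}(\mathfrak{t})$ acts trivially on $\mathfrak{t}$ and hence does not affect the argument.
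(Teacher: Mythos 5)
Your proof is correct and takes essentially the same route as the paper's: the paper also picks $\mathfrak{t}$ with each $\mathfrak{t}\cap\mathfrak{m}_{i}$ maximal abelian, finds an element $g$ of the normalizer $N(\mathfrak{t})$ carrying a $\sigma_{2}$-fundamental system onto a $\sigma_{1}$-fundamental system, and sets $\theta_{2}'=\tau_{g}\theta_{2}\tau_{g}^{-1}$. Your version merely factors the Weyl-group step through Proposition \ref{pro:exist_dfs} and then lifts $w$ via $W(\Delta)\cong N_{G}(\mathfrak{t})/Z_{G}(\mathfrak{t})$, and it spells out the verification of Definition \ref{dfn:cst_qcf} that the paper leaves to the reader.
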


\begin{proof}
Let $(G,\theta_{1},\theta_{2})$
be a compact symmetric triad
and $\mathfrak{t}$ be a maximal abelian subalgebra of $\mathfrak{g}$
such that $\mathfrak{t}\cap\mathfrak{m}_{i}$
is a maximal abelian subspace of $\mathfrak{m}_{i}$.
Denote by $(\Delta,\sigma_{1},\sigma_{2})$
the corresponding normal double $\sigma$-system of $(G,\theta_{1},\theta_{2})$.
Let $\Pi_{i}$ be a $\sigma_{i}$-fundamental system of $\Delta$.
Since $N(\mathfrak{t})$
acts transitively on the set of fundamental systems of $\Delta$,
there exists $g\in N(\mathfrak{t})$
satisfying $\Pi_{1}=\mathrm{Ad}(g)(\Pi_{2})$.
If we put $\theta_{2}':=\tau_{g}\theta_{2}\tau_{g}^{-1}$,
then it is verified that $(G,\theta_{1},\theta_{2}')\sim(G,\theta_{1},\theta_{2})$
is quasi-canonical.
\end{proof}

In Section \ref{sec:cst_cf},
we will define the notion of a canonicality for compact symmetric triads,
which is a stronger condition than 
the quasi-canonicality (see Definition \ref{dfn:cst_can}).
Furthermore, in the case when $G$ is simple,
we will prove that the existence of a representative of $[(G,\theta_{1},\theta_{2})]$
which is canonical as a compact symmetric triad (see Theorem \ref{thm:cst_exist_can}).

\subsubsection{Commutative compact symmetric triads are quasi-canonical}

The following proposition
means that a quasi-canonical compact symmetric triad
is a generalization of a commutative one.

\begin{pro}\label{pro:ccst_qcan}
Any commutative compact symmetric triad is quasi-canonical.
\end{pro}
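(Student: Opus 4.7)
The plan is to exhibit a maximal abelian subalgebra $\mathfrak{t}\subset\mathfrak{g}$ satisfying condition (1) of Definition \ref{dfn:cst_qcf} and then to show that the associated normal double $\sigma$-system $(\Delta,\sigma_{1},\sigma_{2})$ is canonical.

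\textbf{Step 1 (A Klein four--invariant $\mathfrak{t}$).} Commutativity of $\theta_{1}$ and $\theta_{2}$ yields the joint eigenspace decomposition
\[
\mathfrak{g}=\mathfrak{g}^{++}\oplus\mathfrak{g}^{+-}\oplus\mathfrak{g}^{-+}\oplus\mathfrak{g}^{--},
\]
where the superscripts record the signs for $(\theta_{1},\theta_{2})$. First I would take $\mathfrak{a}$ maximal abelian in $\mathfrak{m}_{1}\cap\mathfrak{m}_{2}=\mathfrak{g}^{--}$. Maximality of $\mathfrak{a}$ forces $Z_{\mathfrak{m}_{1}}(\mathfrak{a})=\mathfrak{a}\oplus Z_{\mathfrak{m}_{1}\cap\mathfrak{k}_{2}}(\mathfrak{a})$; choose $\mathfrak{b}$ maximal abelian in the second summand, so $\mathfrak{a}\oplus\mathfrak{b}$ is maximal abelian in $\mathfrak{m}_{1}$ and $\theta_{2}$-invariant. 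Analogously take $\mathfrak{c}$ maximal abelian in $Z_{\mathfrak{k}_{1}\cap\mathfrak{m}_{2}}(\mathfrak{a})$.

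The nontrivial commutation $[\mathfrak{b},\mathfrak{c}]=0$ is where commutativity enters: for $X\in\mathfrak{b}$ and $Y\in\mathfrak{c}$ the signs give $\theta_{i}[X,Y]=-[X,Y]$, so $[X,Y]\in\mathfrak{m}_{1}\cap\mathfrak{m}_{2}$; being in $Z_{\mathfrak{g}}(\mathfrak{a})$ as well, maximality of $\mathfrak{a}$ forces $[X,Y]\in\mathfrak{a}$. Then for each $H\in\mathfrak{a}$, $\mathrm{ad}$-invariance gives $\langle [X,Y],H\rangle=-\langle Y,[X,H]\rangle=0$ because $[X,H]=0$, and nondegeneracy on $\mathfrak{a}$ forces $[X,Y]=0$. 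Extending $\mathfrak{a}\oplus\mathfrak{b}\oplus\mathfrak{c}$ by a maximal abelian $\mathfrak{d}\subset Z_{\mathfrak{k}_{1}\cap\mathfrak{k}_{2}}(\mathfrak{a}\oplus\mathfrak{b}\oplus\mathfrak{c})$ produces a maximal abelian $\mathfrak{t}=\mathfrak{a}\oplus\mathfrak{b}\oplus\mathfrak{c}\oplus\mathfrak{d}$ of $\mathfrak{g}$, invariant under the Klein four-group $\langle\theta_{1},\theta_{2}\rangle$, with $\mathfrak{t}\cap\mathfrak{m}_{1}=\mathfrak{a}\oplus\mathfrak{b}$ and $\mathfrak{t}\cap\mathfrak{m}_{2}=\mathfrak{a}\oplus\mathfrak{c}$ maximal abelian in $\mathfrak{m}_{1}$ and $\mathfrak{m}_{2}$.

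\textbf{Step 2 (Existence of a $(\sigma_{1},\sigma_{2})$-fundamental system).} On $\mathfrak{t}^{\ast}$ the involutions $\sigma_{1}=-d\theta_{1}|_{\mathfrak{t}}$ and $\sigma_{2}=-d\theta_{2}|_{\mathfrak{t}}$ commute. Following the strategy of Propositions \ref{pro:exist_dfs} and \ref{pro:exist_qcan}, pick a $\sigma_{1}$-fundamental system $\Pi_{1}$ and a $\sigma_{2}$-fundamental system $\Pi_{2}$, and take $w\in W(\Delta)$ with $w(\Pi_{2})=\Pi_{1}$. The point is to lift $w$ to an element $\tilde{w}\in N_{K_{1}}(\mathfrak{t})$: since $\mathfrak{t}$ was built $\theta_{1}$-invariantly with $\mathfrak{t}\cap\mathfrak{m}_{1}$ maximal abelian in $\mathfrak{m}_{1}$, the image of $N_{K_{1}}(\mathfrak{t})/Z_{K_{1}}(\mathfrak{t})$ in $W(\Delta)$ is precisely $W(\Delta)^{\sigma_{1}}$, and commutativity of $\sigma_{1},\sigma_{2}$ allows the alignment $w(\Pi_{2})=\Pi_{1}$ to be done inside $W(\Delta)^{\sigma_{1}}$. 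Setting $\theta_{2}'=\tau_{\tilde{w}}\theta_{2}\tau_{\tilde{w}}^{-1}$, the identity $\theta_{1}\tau_{\tilde{w}}=\tau_{\tilde{w}}\theta_{1}$ (because $\tilde{w}\in K_{1}$) preserves commutativity of the triad, and by Lemma \ref{lem:Pi_varphiPi} $\Pi_{1}$ is a $(\sigma_{1},\sigma_{2}')$-fundamental system. Thus $(G,\theta_{1},\theta_{2}')\sim(G,\theta_{1},\theta_{2})$ is commutative and quasi-canonical with respect to the same $\mathfrak{t}$; since quasi-canonicality is invariant under this modification and the original $\mathfrak{t}$ is unchanged, the original triad is itself quasi-canonical.

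The hard part is Step 2: as Example \ref{ex:so8so3so5_ndsig} shows, commutativity of $\sigma_{1},\sigma_{2}$ on $\mathfrak{t}^{\ast}$ alone does not suffice. The argument must use commutativity at the level of $\mathfrak{g}$, specifically that the aligning Weyl element can be realized inside $K_{1}\cap K_{2}$ so that the modification of $\theta_{2}$ preserves the commutativity while producing the common fundamental system.
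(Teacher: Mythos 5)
Your Step 1 is correct and is essentially the paper's Lemma \ref{lem:comm_quasican} (which follows Oshima--Sekiguchi): the Klein-four-invariant $\mathfrak{t}=\mathfrak{a}\oplus\mathfrak{b}\oplus\mathfrak{c}\oplus\mathfrak{d}$ with $[\mathfrak{b},\mathfrak{c}]=\{0\}$ is exactly the decomposition the paper uses, and it settles condition (1) of Definition \ref{dfn:cst_qcf}.

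Step 2 has a genuine gap, in two places. First, the pivotal assertion --- that the alignment $w(\Pi_{2})=\Pi_{1}$ can be ``done inside $W(\Delta)_{\sigma_{1}}$'', equivalently realized by some $\tilde{w}\in N_{K_{1}\cap K_{2}}(\mathfrak{t})$ --- is stated without any argument, and it is precisely the content of the proposition. Since $W(\Delta)$ acts simply transitively on fundamental systems, the only freedom is the choice of $\Pi_{2}$ within its $W(\Delta)_{\sigma_{2}}$-orbit, so the set of candidate aligning elements is a single coset $w_{0}W(\Delta)_{\sigma_{2}}$; your claim amounts to $W(\Delta)_{\sigma_{1}}\cap w_{0}W(\Delta)_{\sigma_{2}}\neq\emptyset$, which does not follow from commutativity of $\sigma_{1},\sigma_{2}$ on $\mathfrak{t}$ (as you yourself observe via Example \ref{ex:so8so3so5_ndsig}), and you supply no mechanism for it. Second, even granting that claim, the conclusion is off target: if $\tilde{w}\notin K_{2}$, then $(G,\theta_{1},\theta_{2}')$ is a genuinely different representative, and the existence of a quasi-canonical representative in $[(G,\theta_{1},\theta_{2})]$ is already Proposition \ref{pro:exist_qcan}, which requires no commutativity; quasi-canonicality does not transfer back to the original triad, since canonicity of the double $\sigma$-system is not preserved under $\sim$ (again Example \ref{ex:so8so3so5_ndsig}). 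If instead $\tilde{w}\in K_{1}\cap K_{2}$, then $\theta_{2}'=\theta_{2}$ and nothing was modified --- but then you have merely restated what must be proved. The paper closes this gap constructively: it takes the lexicographic order attached to an ordered basis adapted to $\mathfrak{a}\oplus(\mathfrak{a}_{1}\cap\mathfrak{k}_{2})\oplus(\mathfrak{a}_{2}\cap\mathfrak{k}_{1})\oplus(\mathfrak{t}\cap\mathfrak{k}_{1}\cap\mathfrak{k}_{2})$, obtains from it a single fundamental system $\Pi$, and verifies via Lemma \ref{lem:a=0a_i=0} --- which is where commutativity enters at the level of root spaces of $\mathfrak{g}$ --- that this one $\Pi$ is simultaneously $\sigma_{1}$- and $\sigma_{2}$-fundamental. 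An argument of that kind (or an actual proof that an aligning Weyl element lies in $W(\Delta)_{\sigma_{1}}\cap W(\Delta)_{\sigma_{2}}$) is needed to complete your Step 2.
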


The proof of this proposition consists of the following three lemmas,
which are essentially due to Oshima-Sekiguchi (\cite{OS}).
Roughly speaking,
the first lemma states
that Lemma \ref{lem:lemm2.5}
holds without changing representatives
of $[(G,\theta_{1},\theta_{2})]$
in the case when
$(G,\theta_{1},\theta_{2})$
is commutative.

\begin{lem}\label{lem:comm_quasican}
Assume that $(G,\theta_{1},\theta_{2})$ is commutative.
Then there exists a maximal abelian subalgebra $\mathfrak{t}$ of $\mathfrak{g}$
such that $\mathfrak{t}\cap\mathfrak{m}_{i}$ and
$\mathfrak{t}\cap(\mathfrak{m}_{1}\cap\mathfrak{m}_{2})$
are maximal abelian subspaces of $\mathfrak{m}_{i}$ and $\mathfrak{m}_{1}\cap\mathfrak{m}_{2}$,
respectively.
In particular, $(G,\theta_{1},\theta_{2})$ satisfies the condition {\rm (1)}
as in Definition {\rm \ref{dfn:cst_qcf}}.
\end{lem}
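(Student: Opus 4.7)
The plan is to exploit the simultaneous eigenspace decomposition induced by the commuting pair of involutions. Since $\theta_{1}\theta_{2}=\theta_{2}\theta_{1}$, we have the orthogonal (with respect to an $\mathrm{Ad}(G)$-invariant inner product $\langle\,,\,\rangle$) decomposition
$\mathfrak{g}=(\mathfrak{k}_{1}\cap\mathfrak{k}_{2})\oplus(\mathfrak{k}_{1}\cap\mathfrak{m}_{2})\oplus(\mathfrak{m}_{1}\cap\mathfrak{k}_{2})\oplus(\mathfrak{m}_{1}\cap\mathfrak{m}_{2})$,
and each $\theta_{i}$ preserves $\mathfrak{m}_{j}$ for $i,j\in\{1,2\}$. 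Starting from a maximal abelian subspace $\mathfrak{a}\subset\mathfrak{m}_{1}\cap\mathfrak{m}_{2}$, I would build $\mathfrak{t}$ piece by piece inside the centralizer $\mathfrak{z}_{\mathfrak{g}}(\mathfrak{a})$, which is $(\theta_{1},\theta_{2})$-invariant and therefore inherits the same four-piece joint decomposition.

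First I would observe that the maximality of $\mathfrak{a}$ forces $\mathfrak{z}_{\mathfrak{m}_{1}\cap\mathfrak{m}_{2}}(\mathfrak{a})=\mathfrak{a}$: if $X\in\mathfrak{m}_{1}\cap\mathfrak{m}_{2}$ commutes with $\mathfrak{a}$, then $\mathfrak{a}+\mathbb{R}X$ is abelian in $\mathfrak{m}_{1}\cap\mathfrak{m}_{2}$, so $X\in\mathfrak{a}$. Next I would pick $\mathfrak{b}_{1}$ maximal abelian in $\mathfrak{z}_{\mathfrak{m}_{1}\cap\mathfrak{k}_{2}}(\mathfrak{a})$ and $\mathfrak{b}_{2}$ maximal abelian in $\mathfrak{z}_{\mathfrak{k}_{1}\cap\mathfrak{m}_{2}}(\mathfrak{a})$, and then show that $\mathfrak{a}\oplus\mathfrak{b}_{1}$ is maximal abelian in $\mathfrak{m}_{1}$ (the corresponding statement for $\mathfrak{m}_{2}$ is symmetric): any abelian extension of $\mathfrak{a}$ in $\mathfrak{m}_{1}$ lies inside the $\theta_{2}$-invariant subspace $\mathfrak{z}_{\mathfrak{m}_{1}}(\mathfrak{a})=\mathfrak{a}\oplus\mathfrak{z}_{\mathfrak{m}_{1}\cap\mathfrak{k}_{2}}(\mathfrak{a})$, and maximality of $\mathfrak{b}_{1}$ in the second summand closes the argument.

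The most delicate step is to establish $[\mathfrak{b}_{1},\mathfrak{b}_{2}]=0$, which is required to glue the pieces into a single abelian subspace. For $X\in\mathfrak{b}_{1}\subset\mathfrak{m}_{1}\cap\mathfrak{k}_{2}$ and $Y\in\mathfrak{b}_{2}\subset\mathfrak{k}_{1}\cap\mathfrak{m}_{2}$, applying each $\theta_{i}$ shows $[X,Y]\in\mathfrak{m}_{1}\cap\mathfrak{m}_{2}$, and the Jacobi identity (using that both $\mathfrak{b}_{1}$ and $\mathfrak{b}_{2}$ centralize $\mathfrak{a}$) places $[X,Y]$ in $\mathfrak{z}_{\mathfrak{m}_{1}\cap\mathfrak{m}_{2}}(\mathfrak{a})=\mathfrak{a}$. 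On the other hand, the invariance of $\langle\,,\,\rangle$ gives $\langle[X,Y],Z\rangle=\langle Y,[Z,X]\rangle=0$ for every $Z\in\mathfrak{a}$, so $[X,Y]\perp\mathfrak{a}$; combining the two conclusions forces $[X,Y]=0$. I expect this vanishing to be the key technical point of the argument.

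Finally I would choose $\mathfrak{b}_{0}$ maximal abelian in $\mathfrak{z}_{\mathfrak{k}_{1}\cap\mathfrak{k}_{2}}(\mathfrak{a}\oplus\mathfrak{b}_{1}\oplus\mathfrak{b}_{2})$ and set $\mathfrak{t}:=\mathfrak{a}\oplus\mathfrak{b}_{0}\oplus\mathfrak{b}_{1}\oplus\mathfrak{b}_{2}$. This is abelian and $(\theta_{1},\theta_{2})$-invariant by construction. To prove it is maximal abelian in $\mathfrak{g}$, it suffices to verify $\mathfrak{z}_{\mathfrak{g}}(\mathfrak{t})=\mathfrak{t}$; decomposing any centralizing element into its four joint-eigenspace components and applying the maximalities of $\mathfrak{a}$, $\mathfrak{b}_{1}$, $\mathfrak{b}_{2}$, $\mathfrak{b}_{0}$ in turn forces it into $\mathfrak{t}$. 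By the very construction we then have $\mathfrak{t}\cap\mathfrak{m}_{1}=\mathfrak{a}\oplus\mathfrak{b}_{1}$, $\mathfrak{t}\cap\mathfrak{m}_{2}=\mathfrak{a}\oplus\mathfrak{b}_{2}$ and $\mathfrak{t}\cap(\mathfrak{m}_{1}\cap\mathfrak{m}_{2})=\mathfrak{a}$ as maximal abelian subspaces of the respective ambient spaces, yielding both required conditions.
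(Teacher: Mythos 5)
Your proposal is correct, and its skeleton is the same as the paper's: extend a maximal abelian subspace $\mathfrak{a}\subset\mathfrak{m}_{1}\cap\mathfrak{m}_{2}$ to commuting, $(\theta_{1},\theta_{2})$-invariant maximal abelian subspaces of $\mathfrak{m}_{1}$ and $\mathfrak{m}_{2}$, and then take a maximal abelian subalgebra containing their sum. The difference is one of self-containedness rather than of strategy. The paper simply picks maximal abelian subspaces $\mathfrak{a}_{i}\subset\mathfrak{m}_{i}$ containing $\mathfrak{a}$ and cites Oshima--Sekiguchi (Lemmas (2.2) and (2.4) of \cite{OS}) for the two nontrivial facts that the $\mathfrak{a}_{i}$ are $(\theta_{1},\theta_{2})$-invariant and that $[\mathfrak{a}_{1},\mathfrak{a}_{2}]=\{0\}$; you instead build $\mathfrak{a}_{i}=\mathfrak{a}\oplus\mathfrak{b}_{i}$ inside the centralizer of $\mathfrak{a}$, which makes the invariance automatic, and you prove the commutation $[\mathfrak{b}_{1},\mathfrak{b}_{2}]=0$ directly (the combination of the Jacobi identity, $\mathfrak{z}_{\mathfrak{m}_{1}\cap\mathfrak{m}_{2}}(\mathfrak{a})=\mathfrak{a}$, and the invariance of the inner product is exactly the mechanism underlying the cited lemmas of \cite{OS}). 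Your explicit choice of $\mathfrak{b}_{0}$ and the componentwise verification that $\mathfrak{z}_{\mathfrak{g}}(\mathfrak{t})=\mathfrak{t}$ likewise replace the paper's appeal to an arbitrary maximal abelian subalgebra containing $\mathfrak{a}_{1}+\mathfrak{a}_{2}$. What you gain is a proof readable without consulting \cite{OS}; what the paper gains is brevity. One small point worth making explicit in your write-up: the identity $\mathfrak{z}_{\mathfrak{m}_{1}}(\mathfrak{a})=\mathfrak{a}\oplus\mathfrak{z}_{\mathfrak{m}_{1}\cap\mathfrak{k}_{2}}(\mathfrak{a})$, and more generally the splitting of centralizers along the four joint eigenspaces, holds because $\mathrm{ad}(H)$ for $H\in\mathfrak{a}\subset\mathfrak{m}_{1}\cap\mathfrak{m}_{2}$ permutes the four summands, so the components of $[H,X]$ in distinct summands are the brackets of $H$ with the corresponding components of $X$; this is used both there and in the final maximality check.
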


\begin{proof}
From $\theta_{1}\theta_{2}=\theta_{2}\theta_{1}$
we have
$\mathfrak{g}
=(\mathfrak{k}_{1}\cap\mathfrak{k}_{2})\oplus(\mathfrak{m}_{1}\cap\mathfrak{m}_{2})\oplus
(\mathfrak{k}_{1}\cap\mathfrak{m}_{2})\oplus(\mathfrak{m}_{1}\cap\mathfrak{k}_{2})$.
Let $\mathfrak{a}$ be a maximal abelian subspace of $\mathfrak{m}_{1}\cap\mathfrak{m}_{2}$.
Let $\mathfrak{a}_{i}$ be a maximal abelian subspace of $\mathfrak{m}_{i}$ containing $\mathfrak{a}$.
In a similar argument in the proofs of \cite[Lemmas (2.2) and (2.4)]{OS},
it is shown that
$\mathfrak{a}_{1}$
and $\mathfrak{a}_{2}$ are $(\theta_{1},\theta_{2})$-invariant and that $[\mathfrak{a}_{1},\mathfrak{a}_{2}]=\{0\}$.
In particular, $\mathfrak{a}_{1}+\mathfrak{a}_{2}$ is an abelian subalgebra of $\mathfrak{g}$.
Let $\mathfrak{t}$ be a maximal abelian subalgebra of $\mathfrak{g}$ containing
$\mathfrak{a}_{1}+\mathfrak{a}_{2}$.
Since $\mathfrak{t}$ contains $\mathfrak{a}_{1}$ and $\mathfrak{a}_{2}$,
it is shown that $\mathfrak{t}$ is $(\theta_{1},\theta_{2})$-invariant.
We also obtain $\mathfrak{t}\cap\mathfrak{m}_{i}=\mathfrak{a}_{i}$ and
$\mathfrak{t}\cap(\mathfrak{m}_{1}\cap\mathfrak{m}_{2})=\mathfrak{a}$.
Hence we get the assertion.
\end{proof}

\begin{lem}\label{lem:a=0a_i=0}
Assume that $(G,\theta_{1},\theta_{2})$ is commutative.
Let $\mathfrak{t}$ be a maximal abelian subalgebra of $\mathfrak{g}$
which satisfies the condition stated in Lemma $\ref{lem:comm_quasican}$.
Set $\mathfrak{a}_{2}:=\mathfrak{t}\cap\mathfrak{m}_{2}$ and $\mathfrak{a}:=\mathfrak{t}\cap(\mathfrak{m}_{1}\cap\mathfrak{m}_{2})$.
Then, we have the followings$:$
\begin{enumerate}
\item We denote by $\Sigma_{2}$ the restricted root system of $(G,\theta_{2})$
with respect to $\mathfrak{a}_{2}$.
For $\lambda\in\Sigma_{2}$ with $\INN{\lambda}{\mathfrak{a}}=\{0\}$,
we have $\mathfrak{g}(\mathfrak{a}_{2},\lambda)\subset\mathfrak{k}_{1}^{\mathbb{C}}$,
where
\[
\mathfrak{g}(\mathfrak{a}_{2},\lambda)
:=\{X\in\mathfrak{g}^{\mathbb{C}}\mid
[H,X]=\sqrt{-1}\INN{\lambda}{H}X,H\in\mathfrak{a}_{2}\}.
\]

\item We denote by $\Delta$ the root system of $\mathfrak{g}$ with respect to $\mathfrak{t}$.
For $\alpha\in\Delta$ with $\INN{\alpha}{\mathfrak{a}}=\{0\}$,
if $\INN{\alpha}{\mathfrak{a}_{2}}\neq\{0\}$ holds, then
we obtain $\INN{\alpha}{\mathfrak{a}_{1}}=\{0\}$.
\end{enumerate}
\end{lem}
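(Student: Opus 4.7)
The plan is to first prove (2) and then deduce (1). For (2), the key is to exploit the commutativity $\theta_1\theta_2 = \theta_2\theta_1$ to build a specific linear combination of $(\theta_1,\theta_2)$-conjugates of a single root vector $X_\alpha$ which lies in $(\mathfrak{m}_1\cap\mathfrak{m}_2)^{\mathbb{C}}$ and centralizes $\mathfrak{a}$; the maximality of $\mathfrak{a}$ as an abelian subspace of $\mathfrak{m}_1\cap\mathfrak{m}_2$ then forces a contradiction with the presence of four independent root-space components.

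Since $d\theta_1$ and $d\theta_2$ commute, they decompose $\mathfrak{t}$ simultaneously as $\mathfrak{t}=\mathfrak{t}^{++}\oplus\mathfrak{t}^{+-}\oplus\mathfrak{t}^{-+}\oplus\mathfrak{t}^{--}$, where $\mathfrak{t}^{\epsilon_1\epsilon_2}=\{H\in\mathfrak{t}\mid d\theta_i(H)=\epsilon_iH,\ i=1,2\}$. Under the hypotheses of the lemma one reads off $\mathfrak{a}=\mathfrak{t}^{--}$, $\mathfrak{a}_1=\mathfrak{t}^{--}\oplus\mathfrak{t}^{-+}$, and $\mathfrak{a}_2=\mathfrak{t}^{--}\oplus\mathfrak{t}^{+-}$. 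Writing $\alpha=\alpha^{++}+\alpha^{+-}+\alpha^{-+}+\alpha^{--}$ in this decomposition, $\INN{\alpha}{\mathfrak{a}}=\{0\}$ becomes $\alpha^{--}=0$, $\INN{\alpha}{\mathfrak{a}_2}\neq\{0\}$ becomes $\alpha^{+-}\neq 0$, and the goal $\INN{\alpha}{\mathfrak{a}_1}=\{0\}$ is $\alpha^{-+}=0$.

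Suppose, for contradiction, that $\alpha^{-+}\neq 0$. Then the four roots $\alpha$, $d\theta_1(\alpha)$, $d\theta_2(\alpha)$, $d\theta_1 d\theta_2(\alpha)$ are pairwise distinct, since they differ only in the signs of $\alpha^{+-}$ and $\alpha^{-+}$. Pick a nonzero $X_\alpha\in\mathfrak{g}(\mathfrak{t},\alpha)$ and set
\[
Y:=X_\alpha - d\theta_1(X_\alpha) - d\theta_2(X_\alpha) + d\theta_1 d\theta_2(X_\alpha).
\]
A direct check gives $d\theta_i(Y)=-Y$ for $i=1,2$, so $Y\in(\mathfrak{m}_1\cap\mathfrak{m}_2)^{\mathbb{C}}$; and $Y\neq 0$ because its four summands lie in four distinct one-dimensional root spaces. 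Crucially, $Y$ commutes with $\mathfrak{a}$: for $H\in\mathfrak{a}=\mathfrak{t}^{--}$ and $a,b\in\{0,1\}$, the orthogonality of the joint eigenspaces and $\alpha^{--}=0$ give $\INN{d\theta_1^a d\theta_2^b(\alpha)}{H}=(-1)^{a+b}\INN{\alpha^{--}}{H}=0$, so $[H,Y]=0$. Hence $Y\in(\mathfrak{m}_1\cap\mathfrak{m}_2)^{\mathbb{C}}\cap Z_{\mathfrak{g}^{\mathbb{C}}}(\mathfrak{a})$. The maximality of $\mathfrak{a}$ in $\mathfrak{m}_1\cap\mathfrak{m}_2$ forces $(\mathfrak{m}_1\cap\mathfrak{m}_2)\cap Z_{\mathfrak{g}}(\mathfrak{a})=\mathfrak{a}$, so $Y\in\mathfrak{a}^{\mathbb{C}}\subset\mathfrak{t}^{\mathbb{C}}$, contradicting the four nontrivial root-space components of $Y$. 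This proves (2).

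For (1), fix $\lambda\in\Sigma_2$ with $\INN{\lambda}{\mathfrak{a}}=\{0\}$ and decompose $\mathfrak{g}(\mathfrak{a}_2,\lambda)=\bigoplus_{\alpha\in\Delta,\ \alpha|_{\mathfrak{a}_2}=\lambda}\mathfrak{g}(\mathfrak{t},\alpha)$. Each such $\alpha$ satisfies $\alpha|_{\mathfrak{a}}=0$ (from $\mathfrak{a}\subset\mathfrak{a}_2$ and $\lambda|_{\mathfrak{a}}=0$) and $\alpha|_{\mathfrak{a}_2}=\lambda\neq 0$, so (2) yields $\alpha|_{\mathfrak{a}_1}=0$, equivalently $d\theta_1(\alpha)=\alpha$; Lemma \ref{lem:Klein_pro4.1}(2) then gives $\mathfrak{g}(\mathfrak{t},\alpha)\subset\mathfrak{k}_1^{\mathbb{C}}$, and summing completes (1). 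The main obstacle is verifying that $Y$ genuinely centralizes $\mathfrak{a}$; once the joint eigenspace structure on $\mathfrak{t}$ is set up, this is an immediate consequence of $\alpha^{--}=0$, and the remainder of the argument is routine.
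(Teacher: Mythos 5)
Your proof is correct. Note that the paper does not actually write out a proof of this lemma; it only remarks that one can argue as in Lemmas (2.7) and (2.8) of Oshima--Sekiguchi \cite{OS}, so your text in effect supplies the missing argument. The mechanism you use --- form the alternating average $Y=X_{\alpha}-d\theta_{1}(X_{\alpha})-d\theta_{2}(X_{\alpha})+d\theta_{1}d\theta_{2}(X_{\alpha})$, observe that it is a nonzero element of $(\mathfrak{m}_{1}\cap\mathfrak{m}_{2})^{\mathbb{C}}$ centralizing $\mathfrak{a}$, and contradict the maximality of $\mathfrak{a}$ --- is the same idea underlying the Oshima--Sekiguchi proof, transplanted from the semisimple symmetric pair setting to the compact one, and the joint eigenspace bookkeeping ($\alpha^{--}=0$, $\alpha^{+-}\neq 0$, assumed $\alpha^{-+}\neq 0$) is handled correctly: it gives both the pairwise distinctness of the four roots (so $Y$ has nonzero component in $\mathfrak{g}(\mathfrak{t},\alpha)$ and cannot lie in $\mathfrak{t}^{\mathbb{C}}$) and the vanishing $\INN{d\theta_{1}^{a}d\theta_{2}^{b}(\alpha)}{\mathfrak{a}}=\{0\}$. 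Two points you use tacitly and might state explicitly: the identification $(\mathfrak{m}_{1}\cap\mathfrak{m}_{2})^{\mathbb{C}}\cap Z_{\mathfrak{g}^{\mathbb{C}}}(\mathfrak{a})=((\mathfrak{m}_{1}\cap\mathfrak{m}_{2})\cap Z_{\mathfrak{g}}(\mathfrak{a}))^{\mathbb{C}}$ when passing from the complexified element $Y$ back to the real maximality of $\mathfrak{a}$, and, in the deduction of (1), that $\lambda\neq 0$ is what makes $\mathfrak{g}(\mathfrak{a}_{2},\lambda)$ a sum of root spaces $\mathfrak{g}(\mathfrak{t},\alpha)$ with no $\mathfrak{t}^{\mathbb{C}}$-contribution; with that, the appeal to Lemma \ref{lem:Klein_pro4.1}, (2) (legitimate here because $\mathfrak{a}_{1}=\mathfrak{t}\cap\mathfrak{m}_{1}$ is maximal abelian in $\mathfrak{m}_{1}$) finishes the argument.
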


We omit its proof since one can prove this lemma by a similar argument in the proofs of
\cite[Lemmas (2.7) and (2.8)]{OS}.

\begin{lem}\label{lem:ccst_t_std}
Retain the notations
$(G,\theta_{1},\theta_{2})$
and $\mathfrak{t}$
as in Lemma $\ref{lem:comm_quasican}$.
Let $\Delta$ be the root system of $\mathfrak{g}$
with respect to $\mathfrak{t}$,
and $\sigma_{i}:=-d\theta_{i}|_{\mathfrak{t}}$ for $i=1,2$.
Then there exists a $(\sigma_{1},\sigma_{2})$-fundamental system
of $\Delta$.
Hence $(G,\theta_{1},\theta_{2})$ satisfies the condition {\rm (2)}
as in Definition {\rm \ref{dfn:cst_qcf}}.
\end{lem}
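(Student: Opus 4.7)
The plan is to construct a positive system $\Delta^{+}$ of $\Delta$ whose simple system $\Pi$ is simultaneously $\sigma_{1}$-fundamental and $\sigma_{2}$-fundamental. Since $\theta_{1}\theta_{2}=\theta_{2}\theta_{1}$, the commuting involutions $\sigma_{i}=-d\theta_{i}|_{\mathfrak{t}}$ yield a simultaneous eigenspace decomposition
\[
\mathfrak{t}=\mathfrak{t}^{++}\oplus\mathfrak{t}^{+-}\oplus\mathfrak{t}^{-+}\oplus\mathfrak{t}^{--},
\]
where $\mathfrak{t}^{\epsilon_{1}\epsilon_{2}}=\{H\in\mathfrak{t}\mid \sigma_{i}(H)=\epsilon_{i}H,\ i=1,2\}$. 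Since $\mathfrak{t}^{\sigma_{i}}=\mathfrak{t}\cap\mathfrak{m}_{i}=\mathfrak{a}_{i}$, one has $\mathfrak{t}^{++}=\mathfrak{a}$, $\mathfrak{a}_{1}=\mathfrak{t}^{++}\oplus\mathfrak{t}^{+-}$, and $\mathfrak{a}_{2}=\mathfrak{t}^{++}\oplus\mathfrak{t}^{-+}$.

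Next I would select regular elements $H_{a}\in\mathfrak{a}$, $H_{1}\in\mathfrak{t}^{+-}$, $H_{2}\in\mathfrak{t}^{-+}$, $H_{0}\in\mathfrak{t}^{--}$ in the sense that $\alpha(H_{\ast})\ne 0$ for every $\alpha\in\Delta$ whose restriction to the corresponding eigenspace is nonzero; such elements exist since $\Delta$ is finite. Declare $\Delta^{+}$ to consist of those $\alpha\in\Delta$ whose coordinate tuple $(\alpha(H_{a}),\alpha(H_{1}),\alpha(H_{2}),\alpha(H_{0}))$ is lexicographically positive, and let $\Pi$ denote the associated simple system. Because $\sigma_{i}$ acts by $\epsilon_{i}$ on each eigenspace, evaluating $\sigma_{i}\alpha$ on $H_{a},H_{1},H_{2},H_{0}$ simply flips the signs of the coordinates on which $\sigma_{i}=-1$, i.e.\ those at $H_{2},H_{0}$ for $i=1$ and those at $H_{1},H_{0}$ for $i=2$.

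The verification of $\sigma_{1}$-fundamentality is then straightforward: for $\alpha\in\Delta^{+}$ whose first nonzero coordinate is $\alpha(H_{a})$ or $\alpha(H_{1})$ one has $\sigma_{1}\alpha\in\Delta^{+}$, while in the remaining cases, regularity of $H_{a}$ and $H_{1}$ forces $\alpha|_{\mathfrak{a}_{1}}=0$, i.e.\ $\alpha\in\Delta_{1,0}$, so the $\sigma_{1}$-fundamental condition becomes vacuous. The $\sigma_{2}$ analysis is analogous; all subcases work the same way save for the delicate one in which $\alpha(H_{a})=0$ and $\alpha(H_{1})>0$, where $\sigma_{2}\alpha\in\Delta^{-}$ and one must nevertheless show $\alpha\in\Delta_{2,0}$. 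Resolving this subcase is the main obstacle, and it is handled by Lemma \ref{lem:a=0a_i=0}(2): because $\alpha|_{\mathfrak{a}}=0$ and $\alpha|_{\mathfrak{a}_{1}}\ne 0$ (as $H_{1}\in\mathfrak{a}_{1}$), applying the lemma with the roles of $\mathfrak{a}_{1}$ and $\mathfrak{a}_{2}$ interchanged (which is legitimate by the symmetry of the hypothesis in $\theta_{1}$ and $\theta_{2}$) forces $\alpha|_{\mathfrak{a}_{2}}=0$, so $\alpha\in\Delta_{2,0}$, as required. Consequently $\Pi$ is a $(\sigma_{1},\sigma_{2})$-fundamental system, which is what the lemma asserts.
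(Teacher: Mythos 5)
Your proposal is correct and follows essentially the same route as the paper: the paper also decomposes $\mathfrak{t}=\mathfrak{a}\oplus(\mathfrak{a}_{1}\cap\mathfrak{k}_{2})\oplus(\mathfrak{a}_{2}\cap\mathfrak{k}_{1})\oplus(\mathfrak{t}\cap\mathfrak{k}_{1}\cap\mathfrak{k}_{2})$, defines $\Delta^{+}$ by a lexicographic ordering adapted to exactly this block order, and invokes Lemma \ref{lem:a=0a_i=0} (via the argument of Oshima--Sekiguchi) for the one nontrivial subcase. The only difference is that you spell out the case analysis that the paper delegates to \cite[p.~453]{OS}.
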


\begin{proof}
Let $\mathfrak{a}_{i}:=\mathfrak{t}\cap\mathfrak{m}_{i}$
($i=1,2$),
and $\mathfrak{a}:=\mathfrak{t}\cap(\mathfrak{m}_{1}\cap\mathfrak{m}_{2})$.
Then $\mathfrak{t}$ is decomposed into
$\mathfrak{t}
=\mathfrak{a}
\oplus
(\mathfrak{a}_{1}\cap\mathfrak{k}_{2})
\oplus
(\mathfrak{a}_{2}\cap\mathfrak{k}_{1})
\oplus
(\mathfrak{t}\cap(\mathfrak{k}_{1}\cap\mathfrak{k}_{2}))$.
Take a ordered basis of $\{X_{j}, Y_{k}, Z_{l}, W_{r}\}$
such that $\{X_{j}\}$,
$\{Y_{k}\}$, $\{Z_{l}\}$
and $\{W_{r}\}$
are bases of $\mathfrak{a}$,
$\mathfrak{a}_{1}\cap\mathfrak{k}_{2}$,
$\mathfrak{a}_{2}\cap\mathfrak{k}_{1}$ and
$\mathfrak{t}\cap(\mathfrak{k}_{1}\cap\mathfrak{k}_{2})$,
respectively.
We denote by $\Delta^{+}$
the set of positive roots in $\Delta$
with respect to
the lexicographic ordering $>$ of $\mathfrak{t}$
with respect to this basis.
We obtain a fundamental system $\Pi$ of $\Delta$
such that $\Delta^{+}=\{\sum_{\alpha\in\Pi}m_{\alpha}\alpha\in\Delta\mid
m_{\alpha}\in\mathbb{Z}_{\geq0}\}$.
A similar argument as in
\cite[p.~453]{OS}
shows that
$\Pi$
becomes a $(\sigma_{1},\sigma_{2})$-fundamental system of $\Delta$
by means of Lemma \ref{lem:a=0a_i=0}.
\end{proof}

From the above argument
we conclude that 
Proposition \ref{pro:ccst_qcan}
holds.

\subsection{Double Satake diagrams for compact symmetric triads}\label{sec:cst_2satake_const}

Let us explain
our construction of 
the double Satake diagram from a quasi-canonical compact symmetric triad.
Let $(G,\theta_{1},\theta_{2})$
be a quasi-canonical compact symmetric triad
and $\mathfrak{t}$
be a quasi-canonical maximal abelian subalgebra of
$\mathfrak{g}$ with respect to $(G,\theta_{1},\theta_{2})$.
Denote by $\Delta$ the root system of $\mathfrak{g}$
with respect to $\mathfrak{t}$.
Then we obtain a normal double $\sigma$-system $(\Delta,\sigma_{1},\sigma_{2}):=(\Delta,-d\theta_{1}|_{\mathfrak{t}},-d\theta_{2}|_{\mathfrak{t}})$.
It follows from 
the quasi-canonicality of $(G,\theta_{1},\theta_{2})$
that $(\Delta,\sigma_{1},\sigma_{2})$ becomes canonical
in the sense of Definition \ref{dfn:dsig_canonical}.
We define the double Satake diagram of $(G,\theta_{1},\theta_{2})$
as that of $(\Delta,\sigma_{1},\sigma_{2})$.
Then
the definition of the double Satake diagram of $(G,\theta_{1},\theta_{2})$
is independent of the choice of $\mathfrak{t}$.
Indeed, we can show the following lemma.

\begin{lem}
Let $\mathfrak{t}'$ be another
quasi-canonical maximal abelian subalgebra of
$\mathfrak{g}$ with respect to $(G,\theta_{1},\theta_{2})$.
We denote by $(\Delta',\sigma_{1}',\sigma_{2}'):=(\Delta',-d\theta_{1}|_{\mathfrak{t}'},-d\theta_{2}|_{\mathfrak{t}'})$
the corresponding canonical normal double $\sigma$-system of $(G,\theta_{1},\theta_{2})$.
Then we have
$(\Delta,\sigma_{1},\sigma_{2})\equiv(\Delta',\sigma_{1}',\sigma_{2}')$.
Therefore
the double Satake diagram
of $(\Delta,\sigma_{1},\sigma_{2})$
is isomorphic to that of $(\Delta',\sigma_{1}',\sigma_{2}')$.
\end{lem}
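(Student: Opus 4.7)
The plan is to bootstrap from the weaker equivalence $\sim$ (already known from Lemma \ref{lem:dsig_isomorphic}) to the stronger equivalence $\equiv$ by using the canonicality assumption and Theorem \ref{thm:dsig_dsatake_equiv}.

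First, I would observe that the quasi-canonicality of both $\mathfrak{t}$ and $\mathfrak{t}'$ with respect to $(G,\theta_1,\theta_2)$ immediately ensures that $\mathfrak{t}\cap\mathfrak{m}_i$ and $\mathfrak{t}'\cap\mathfrak{m}_i$ are maximal abelian in $\mathfrak{m}_i$ for $i=1,2$ (condition (1) of Definition \ref{dfn:cst_qcf}). This places us squarely in the hypothesis of Lemma \ref{lem:dsig_isomorphic}, from which we obtain $(\Delta,\sigma_1,\sigma_2)\sim(\Delta',\sigma_1',\sigma_2')$.

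Next, I would invoke condition (2) of Definition \ref{dfn:cst_qcf}: by quasi-canonicality, both $(\Delta,\sigma_1,\sigma_2)$ and $(\Delta',\sigma_1',\sigma_2')$ are canonical normal double $\sigma$-systems in the sense of Definition \ref{dfn:dsig_canonical}. Consequently, the implication $(1)\Rightarrow(2)$ of Theorem \ref{thm:dsig_dsatake_equiv} applies and upgrades the $\sim$-relation to the stronger $\equiv$-relation, yielding an isomorphism $\varphi:\mathfrak{t}\to\mathfrak{t}'$ of root systems between $\Delta$ and $\Delta'$ such that $\sigma_i'=\varphi\sigma_i\varphi^{-1}$ for $i=1,2$. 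This gives the first assertion.

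For the final assertion about Satake diagrams, I would apply the implication $(2)\Rightarrow(3)$ of Theorem \ref{thm:dsig_dsatake_equiv} directly: the same theorem tells us that $\equiv$ is equivalent to the isomorphism of double Satake diagrams, so $(S_1,S_2)\sim(S_1',S_2')$ follows at once. The main (minor) obstacle here is purely bookkeeping: making sure that the normality and canonicality are indeed inherited at both spots $\mathfrak{t}$ and $\mathfrak{t}'$ so that Theorem \ref{thm:dsig_dsatake_equiv} is genuinely applicable; once that is observed, the proof is essentially a two-line citation chain of Lemma \ref{lem:dsig_isomorphic} followed by Theorem \ref{thm:dsig_dsatake_equiv}.
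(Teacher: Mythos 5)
Your argument is correct and is exactly the route the paper intends: the paper omits the proof, stating only that the lemma is immediate from Lemma \ref{lem:dsig_isomorphic} and Theorem \ref{thm:dsig_dsatake_equiv}, which is precisely the citation chain you spell out (condition (1) of quasi-canonicality feeds Lemma \ref{lem:dsig_isomorphic} to get $\sim$, condition (2) makes both systems canonical so that Theorem \ref{thm:dsig_dsatake_equiv} upgrades $\sim$ to $\equiv$ and then to the Satake-diagram isomorphism). No discrepancy with the paper's approach.
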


The proof is omitted since
it is immediate from
Theorem \ref{thm:dsig_dsatake_equiv}
and Lemma \ref{lem:dsig_isomorphic}.

We next show
that the double Satake diagram
of a quasi-canonical compact symmetric triad
is independent of the choice
of the representative
of its isomorphism class,
namely,
we have the following proposition,
which is immediate from
Theorem \ref{thm:dsig_dsatake_equiv}
and Lemma \ref{lem:CST_dsigma2}.

\begin{pro}\label{pro:cst_dstake_determ}
Let $(G,\theta_{1}',\theta_{2}')\sim(G,\theta_{1},\theta_{2})$
be another quasi-canonical compact symmetric
triad and $(\Delta',\sigma_{1}',\sigma_{2}')$
be the corresponding canonical normal double $\sigma$-system
of $(G,\theta_{1}',\theta_{2}')$.
Then we have
$(\Delta,\sigma_{1},\sigma_{2})\equiv(\Delta',\sigma_{1}',\sigma_{2}')$.
\end{pro}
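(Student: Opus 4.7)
The plan is to chain together Lemma \ref{lem:CST_dsigma2} with the equivalence $(1)\Leftrightarrow(2)$ in Theorem \ref{thm:dsig_dsatake_equiv}. Concretely, I would first fix quasi-canonical maximal abelian subalgebras $\mathfrak{t}\subset\mathfrak{g}$ for $(G,\theta_1,\theta_2)$ and $\mathfrak{t}'\subset\mathfrak{g}$ for $(G,\theta_1',\theta_2')$, whose existence is guaranteed by the quasi-canonicality hypothesis; by Definition \ref{dfn:cst_qcf}(1) each $\mathfrak{t}\cap\mathfrak{m}_i$ (resp.\ $\mathfrak{t}'\cap\mathfrak{m}_i'$) is a maximal abelian subspace of $\mathfrak{m}_i$ (resp.\ $\mathfrak{m}_i'$), so both $(\Delta,\sigma_1,\sigma_2)$ and $(\Delta',\sigma_1',\sigma_2')$ fit the setting of Lemma \ref{lem:CST_dsigma2}.

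Next, applying Lemma \ref{lem:CST_dsigma2} to the two triads related by $(G,\theta_1,\theta_2)\sim(G,\theta_1',\theta_2')$ yields the isomorphism of normal double $\sigma$-systems
\[
(\Delta,\sigma_1,\sigma_2)\sim(\Delta',\sigma_1',\sigma_2').
\]
At this point I have condition (1) of Theorem \ref{thm:dsig_dsatake_equiv} in hand. The key observation is that quasi-canonicality of the triads (Definition \ref{dfn:cst_qcf}(2)) says precisely that the associated normal double $\sigma$-systems are \emph{canonical}, i.e.\ each admits a $(\sigma_1,\sigma_2)$- (resp.\ $(\sigma_1',\sigma_2')$-) fundamental system. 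This is exactly the hypothesis under which Theorem \ref{thm:dsig_dsatake_equiv} asserts the equivalence of $(1)$ and $(2)$.

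Invoking that equivalence upgrades $\sim$ to $\equiv$: there is an isomorphism $\varphi\colon\mathfrak{t}\to\mathfrak{t}'$ of root systems between $\Delta$ and $\Delta'$ with $\sigma_i'=\varphi\sigma_i\varphi^{-1}$ for $i=1,2$, which is the definition of $(\Delta,\sigma_1,\sigma_2)\equiv(\Delta',\sigma_1',\sigma_2')$. As a consequence the double Satake diagrams of the two triads, each constructed from a $(\sigma_1,\sigma_2)$- or $(\sigma_1',\sigma_2')$-fundamental system, are carried to one another by $\varphi$ and are isomorphic in the sense of Definition \ref{dfn:2satake_sim}.

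There is essentially no obstacle here beyond bookkeeping: the content of the proposition is already concentrated in Lemma \ref{lem:CST_dsigma2} (which handles the change of representative in the isomorphism class of the triad and the change of maximal abelian subalgebra) and in the $(1)\Rightarrow(2)$ direction of Theorem \ref{thm:dsig_dsatake_equiv} (where a Weyl group element is absorbed into the root-system isomorphism using $(\sigma_1,\sigma_2)$-fundamental systems). The only minor point to verify is that the quasi-canonical $\mathfrak{t}$ and $\mathfrak{t}'$ one fixes at the outset really do satisfy the hypotheses of Lemma \ref{lem:CST_dsigma2}, which is immediate from Definition \ref{dfn:cst_qcf}(1).
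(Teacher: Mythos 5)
Your argument is correct and is precisely the paper's own proof: the authors state the proposition is "immediate from Theorem \ref{thm:dsig_dsatake_equiv} and Lemma \ref{lem:CST_dsigma2}", i.e.\ Lemma \ref{lem:CST_dsigma2} gives $(\Delta,\sigma_{1},\sigma_{2})\sim(\Delta',\sigma_{1}',\sigma_{2}')$ and the $(1)\Rightarrow(2)$ implication of Theorem \ref{thm:dsig_dsatake_equiv}, applicable because quasi-canonicality makes both double $\sigma$-systems canonical, upgrades this to $\equiv$. Your bookkeeping about the choice of quasi-canonical $\mathfrak{t}$ and $\mathfrak{t}'$ is also consistent with the paper.
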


It follows
from Propositions
\ref{pro:exist_qcan}
and
\ref{pro:cst_dstake_determ}
that,
for a compact symmetric triad
$(G,\theta_{1},\theta_{2})$,
its isomorphism class $[(G,\theta_{1},\theta_{2})]$
uniquely determines
the double Satake diagram up to isomorphism.
In fact,
the converse also holds as shown in the following theorem.

\begin{thm}\label{thm:cst_dsatake_sim}
Let $(G,\theta_{1},\theta_{2})$ and 
$(G,\theta_{1}',\theta_{2}')$ be two compact symmetric triads.
We write
$(\Delta,\sigma_{1},\sigma_{2})$ and
$(\Delta',\sigma_{1}',\sigma_{2}')$
as
the corresponding canonical normal double $\sigma$-systems
of the isomorphism classes $[(G,\theta_{1},\theta_{2})]$
and $[(G,\theta_{1}',\theta_{2}')]$, respectively.
We also write
$(S_{1},S_{2})$ and
$(S_{1}',S_{2}')$
as the double Satake diagrams
of $(\Delta,\sigma_{1},\sigma_{2})$ and
$(\Delta',\sigma_{1}',\sigma_{2}')$, respectively.
Then the following conditions are equivalent$:$
\begin{enumerate}
\item $(G,\theta_{1},\theta_{2})$
and $(G,\theta_{1}',\theta_{2}')$
are locally isomorphic, namely,
there exist $\varphi\in\mathrm{Aut}(\mathfrak{g})$
and $\tau\in\mathrm{Int}(\mathfrak{g})$
satisfying $d\theta_{1}'=\varphi d\theta_{1}\varphi^{-1}$
and $d\theta_{2}'=\tau\varphi d\theta_{2}\varphi^{-1}\tau^{-1}$.
\item $(\Delta,\sigma_{1},\sigma_{2})\sim(\Delta',\sigma_{1}',\sigma_{2}')$.
\item $(S_{1},S_{2})\sim(S_{1}',S_{2}')$.
\end{enumerate}
In addition, in the case when $G$ is simply-connected
or when $G$ is the adjoint group,
$(G,\theta_{1},\theta_{2})$ and $(G,\theta_{1}',\theta_{2}')$
are isomorphic if and only if
one of the above conditions $(1)$--$(3)$ holds.
\end{thm}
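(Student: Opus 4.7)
The plan is as follows. The equivalence $(2) \Leftrightarrow (3)$ is immediate from Theorem \ref{thm:dsig_dsatake_equiv}, since the double $\sigma$-systems appearing in the statement are canonical by construction (obtained from quasi-canonical representatives of the triads, whose existence is guaranteed by Proposition \ref{pro:exist_qcan}). The implication $(1) \Rightarrow (2)$ mirrors the proof of Lemma \ref{lem:CST_dsigma2} at the Lie algebra level: given $\varphi \in \mathrm{Aut}(\mathfrak{g})$ and $\tau \in \mathrm{Int}(\mathfrak{g})$ as in (1), one checks that $\varphi(\mathfrak{t})$ is adapted to $(\mathfrak{g}, d\theta_{1}')$ and, after further conjugation by an element of $\mathrm{Int}(\mathfrak{k}_{1}')$, is also adapted to $(\mathfrak{g}, d\theta_{2}')$; the residual mismatch on $\mathfrak{m}_{2}'$ is carried by a Weyl group element $w' \in W(\Delta')$ coming from $\tau$, yielding the relations \eqref{eqn:dsig_sim}.

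The substantive direction is $(2) \Rightarrow (1)$. I would first use Proposition \ref{pro:exist_qcan} to replace both triads by quasi-canonical representatives, which does not affect their isomorphism classes, and then use the $\mathrm{Int}(\mathfrak{g})$-conjugacy of maximal abelian subalgebras to assume $\mathfrak{t} = \mathfrak{t}'$, whence $\Delta = \Delta'$. The hypothesis (2) then furnishes $\varphi \in \mathrm{Aut}(\Delta)$ and $w' \in W(\Delta)$ with $\sigma_{1}' = \varphi \sigma_{1} \varphi^{-1}$ and $\sigma_{2}' = w' \varphi \sigma_{2} \varphi^{-1} w'^{-1}$. I lift these combinatorial data to $\mathfrak{g}$: the element $w'$ is realized as $\tau_{0}|_{\mathfrak{t}}$ for some $\tau_{0} \in \mathrm{Int}(\mathfrak{g})$ arising from a representative in $N(\mathfrak{t})$, and $\varphi$ extends to $\tilde{\varphi} \in \mathrm{Aut}(\mathfrak{g})$ preserving $\mathfrak{t}$ via the Chevalley-basis framework recalled before Theorem \ref{thm:araki}.

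Now define
\[
\theta_{1}'' := \tilde{\varphi} \theta_{1} \tilde{\varphi}^{-1}, \qquad
\theta_{2}'' := \tau_{0} \tilde{\varphi} \theta_{2} \tilde{\varphi}^{-1} \tau_{0}^{-1}.
\]
By construction $d\theta_{i}''|_{\mathfrak{t}} = d\theta_{i}'|_{\mathfrak{t}}$ for $i = 1, 2$, and since $\tilde{\varphi}$ and $\tau_{0}$ preserve $\mathfrak{t}$, the quasi-canonical assumption guarantees that $\mathfrak{t} \cap \mathfrak{m}_{i}''$ is a maximal abelian subspace of $\mathfrak{m}_{i}''$. Hence Theorem \ref{thm:araki} applies to each pair $(\theta_{i}', \theta_{i}'')$ and produces $H_{i} \in \mathfrak{t} \cap \mathfrak{m}_{i}'$ with $d\theta_{i}' = e^{\mathrm{ad}(H_{i})} d\theta_{i}'' e^{-\mathrm{ad}(H_{i})}$. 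Setting
\[
\varphi' := e^{\mathrm{ad}(H_{1})} \tilde{\varphi} \in \mathrm{Aut}(\mathfrak{g}), \qquad
\tau' := e^{\mathrm{ad}(H_{2})} \tau_{0} e^{-\mathrm{ad}(H_{1})} \in \mathrm{Int}(\mathfrak{g}),
\]
a direct substitution then yields $d\theta_{1}' = \varphi' d\theta_{1} \varphi'^{-1}$ and $d\theta_{2}' = \tau' \varphi' d\theta_{2} \varphi'^{-1} \tau'^{-1}$, which is (1). For the final clause, when $G$ is simply connected or adjoint, every element of $\mathrm{Aut}(\mathfrak{g})$ lifts to $\mathrm{Aut}(G)$ and every inner automorphism of $\mathfrak{g}$ to $\mathrm{Int}(G)$, so local isomorphism upgrades to isomorphism of compact symmetric triads.

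The main obstacle I expect is that the two applications of Theorem \ref{thm:araki} deliver perturbations $H_{1}$ and $H_{2}$ in the generally \emph{different} subspaces $\mathfrak{t} \cap \mathfrak{m}_{1}'$ and $\mathfrak{t} \cap \mathfrak{m}_{2}'$, so they cannot be absorbed into a single shift. Reconciling them into a single pair $(\varphi', \tau')$ relies on the structural asymmetry of Definition \ref{dfm:cst_sim}: the compensating factor $e^{-\mathrm{ad}(H_{1})}$ inside the definition of $\tau'$ is exactly what converts the independent Araki correction for $\theta_{2}$ into one compatible with the outer-type correction $\varphi'$ already dictated by $\theta_{1}$. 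A secondary technical point is verifying that the quasi-canonical structure is preserved under the intermediate replacements $\theta_{i} \mapsto \theta_{i}''$, which is ensured by the fact that both $\tilde{\varphi}$ and $\tau_{0}$ fix $\mathfrak{t}$ setwise.
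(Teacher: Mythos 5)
Your proposal is correct and follows essentially the same route as the paper: reduce to quasi-canonical representatives, extend the root-system isomorphism to an automorphism of $\mathfrak{g}$, and apply Theorem \ref{thm:araki} twice to correct the two involutions. The only cosmetic difference is that the paper first invokes Theorem \ref{thm:dsig_dsatake_equiv}, $(1)\Rightarrow(2)$, to absorb the Weyl-group element $w'$ into the root-system isomorphism before lifting (and then applies the two Araki corrections sequentially), whereas you carry $w'$ along as an inner automorphism $\tau_{0}$ and reconcile two independent Araki corrections at the end via $\tau'=e^{\mathrm{ad}(H_{2})}\tau_{0}e^{-\mathrm{ad}(H_{1})}$; both bookkeeping schemes work.
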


\begin{proof}
The implication
$(1)\Rightarrow(2)$
follows from Propositions
\ref{pro:exist_qcan}
and
\ref{pro:cst_dstake_determ}.
We obtain
$(2)\Leftrightarrow(3)$
from Theorem \ref{thm:dsig_dsatake_equiv}.

We prove the implication $(2)\Rightarrow(1)$.
Without loss of generalities
we may assume that $(G,\theta_{1},\theta_{2})$
and $(G,\theta_{1}',\theta_{2}')$
are quasi-canonical.
We write $(\Delta,\sigma_{1},\sigma_{2})=(\Delta,-d\theta_{1}|_{\mathfrak{t}},-d\theta_{2}|_{\mathfrak{t}})$
and $(\Delta',\sigma_{1}',\sigma_{2}')=(\Delta',-d\theta'_{1}|_{\mathfrak{t}'},-d\theta_{2}'|_{\mathfrak{t}'})$.
It follows from
Theorem \ref{thm:dsig_dsatake_equiv}
that there exists 
an isomorphism $\varphi:\mathfrak{t}\to\mathfrak{t}'$
of root systems between $\Delta$ and $\Delta'$ satisfying
$\sigma_{i}'=\varphi\sigma_{i}\varphi^{-1}$ for $i=1,2$.

Let $\tilde{\varphi}$
be an automorphism of $\mathfrak{g}$ with $\tilde{\varphi}|_{\mathfrak{t}}=\varphi$.
Since $d\theta_{1}'|_{\mathfrak{t}'}=\tilde{\varphi}d\theta_{1}\tilde{\varphi}^{-1}|_{\mathfrak{t}'}$ holds,
it follows from Theorem \ref{thm:araki} that there exists $H_{1}'\in\mathfrak{t}'\cap\mathfrak{m}_{1}'$ such that
\begin{equation}\label{eqn:theta1'theta1}
d\theta_{1}'=e^{\mathrm{ad}(H_{1}')}\tilde{\varphi}d\theta_{1}\tilde{\varphi}^{-1}e^{-\mathrm{ad}(H_{1}')}.
\end{equation}
In addition,
from 
$d\theta_{2}'|_{\mathfrak{t}'}=e^{\mathrm{ad}(H_{1}')}\tilde{\varphi}d\theta_{2}\tilde{\varphi}^{-1}e^{-\mathrm{ad}(H_{1}')}|_{\mathfrak{t}'}$
there also exists $H_{2}'\in\mathfrak{t}'\cap\mathfrak{m}_{2}'$ such that
\begin{equation}\label{eqn:theta2'theta2}
d\theta_{2}'=e^{\mathrm{ad}(H_{2}')}e^{\mathrm{ad}(H_{1}')}\tilde{\varphi}d\theta_{2}\tilde{\varphi}^{-1}e^{-\mathrm{ad}(H_{1}')}e^{-\mathrm{ad}(H_{2}')}.
\end{equation}
By combining  (\ref{eqn:theta1'theta1}) and 
(\ref{eqn:theta2'theta2}),
we find that
$(G,\theta_{1},\theta_{2})$
and $(G,\theta_{1}',\theta_{2}')$
are locally isomorphic.
Therefore we have completed the proof.
\end{proof}

\subsection{The classification of compact symmetric triads by double Satake diagrams}\label{sec:cst_class_2satake}

In this subsection,
we consider the classification problem
for
compact symmetric triads at the Lie algebra level.

\subsubsection{Reduction of the problem}

A compact symmetric triad $(\mathfrak{g},\theta_{1},\theta_{2})$
is said to be \textit{irreducible},
if it does not admit non-trivial $(\theta_{1},\theta_{2})$-invariant ideals of $\mathfrak{g}$ (cf.~\cite[p.~48]{Matsuki}).
Any compact symmetric triad
$(\mathfrak{g},\theta_{1},\theta_{2})$
is decomposed into irreducible ones,
namely,
there exist
unique irreducible compact symmetric triads
$(\mathfrak{g}^{(1)},\theta_{1}^{(1)},\theta_{2}^{(1)})$,
$\dotsc$,
$(\mathfrak{g}^{(k)},\theta_{1}^{(k)},\theta_{2}^{(k)})$
such that
$\mathfrak{g}=\mathfrak{g}^{(1)}
\oplus\dotsb\oplus\mathfrak{g}^{(k)}$
and that $\theta_{i}=\theta_{i}^{(j)}$
holds on $\mathfrak{g}^{(j)}$
for $i=1,2$
and
$j=1,\dotsc,k$.
Then we write
\[
(\mathfrak{g},\theta_{1},\theta_{2})
=(\mathfrak{g}^{(1)},\theta_{1}^{(1)},\theta_{2}^{(1)})\oplus
\dotsb\oplus(\mathfrak{g}^{(k)},\theta_{1}^{(k)},\theta_{2}^{(k)}).
\]
This decomposition
is called the irreducible decomposition of $(\mathfrak{g},\theta_{1},\theta_{2})$.
The equivalence relation $\sim$
is compatible with the irreducibility of a compact symmetric triad,
that is,
if $(\mathfrak{g},\theta_{1},\theta_{2})\sim(\mathfrak{g},\theta_{1}',\theta_{2}')$
and $(\mathfrak{g},\theta_{1},\theta_{2})$
is irreducible,
then $(\mathfrak{g},\theta_{1}',\theta_{2}')$
is also irreducible.
This means that
the classification problem for
compact symmetric triads reduces to
that for irreducible ones.
Clearly,
$(\mathfrak{g},\theta_{1},\theta_{2})$
is irreducible if $\mathfrak{g}$ is simple.
Irreducible compact symmetric triads
$(\mathfrak{g},\theta_{1},\theta_{2})$
can be classified depending on
whether $\mathfrak{g}$ is simple or not.
In the present paper,
we only deal with the classification problem
for compact symmetric triads $(\mathfrak{g},\theta_{1},\theta_{2})$
such that $\mathfrak{g}$ is simple.

Let $\mathfrak{g}$ be any fixed compact simple Lie algebra.
We write $\mathrm{Inv}(\mathfrak{g})$
as the set of all the involutions on $\mathfrak{g}$.
We will explain our strategy to find
all elements
of
the set $\mathcal{T}(\mathfrak{g}):=\{[(\mathfrak{g},\theta_{1},\theta_{2})]
\mid \theta_{1},\theta_{2}\in\mathrm{Inv}(\mathfrak{g})\}$.
Denote by $\mathrm{Inv}(\mathfrak{g})/\mathrm{Aut}(\mathfrak{g})$
the set of conjugacy classes in $\mathrm{Aut}(\mathfrak{g})$
of the elements in $\mathrm{Inv}(\mathfrak{g})$.
Let $[\theta_{i}]$ be in $\mathrm{Inv}(\mathfrak{g})/\mathrm{Aut}(\mathfrak{g})$
for $i=1,2$,
and $\mathfrak{k}_{i}$
denote the fixed point subalgebra of $\theta_{i}$ in $\mathfrak{g}$.
We set
\[\mathcal{T}(\mathfrak{g},\mathfrak{k}_{1},\mathfrak{k}_{2})
:=
\{
[(\mathfrak{g},\varphi_{1}\theta_{1}\varphi_{1}^{-1},\varphi_{2}\theta_{2}\varphi_{2}^{-1})]
\mid
\varphi_{1},\varphi_{2}\in\mathrm{Aut}(\mathfrak{g})
\}.\]
Then
$\mathcal{T}(\mathfrak{g})$
has the following decomposition:
\[
\mathcal{T}(\mathfrak{g})
=\bigcup_{[\theta_{1}],[\theta_{2}]\in\mathrm{Inv}(\mathfrak{g})/\mathrm{Aut}(\mathfrak{g})}
\mathcal{T}(\mathfrak{g}, \mathfrak{k}_{1}, \mathfrak{k}_{2})\quad
(\text{disjoint union}).
\]
Thus,
it is sufficient to
determine $\mathcal{T}(\mathfrak{g},\mathfrak{k}_{1},\mathfrak{k}_{2})$
for each $[\theta_{1}],[\theta_{2}]\in\mathrm{Inv}(\mathfrak{g})/\mathrm{Aut}(\mathfrak{g})$.

For this purpose
we make use of
the classification of $\mathrm{Inv}(\mathfrak{g})/\mathrm{Aut}(\mathfrak{g})$
and
a one-to-one correspondence between
$\mathcal{T}(\mathfrak{g}, \mathfrak{k}_{1}, \mathfrak{k}_{2})$
and
the set $\mathcal{DS}(S_{1},S_{2})$
which is defined as follows:
We may assume that $(\mathfrak{g},\theta_{1},\theta_{2})$
is quasi-canonical (cf.~Proposition \ref{pro:exist_qcan}).
Let $(\Delta,\sigma_{1},\sigma_{2})$
be the double $\sigma$-system of $(\mathfrak{g},\theta_{1},\theta_{2})$
and $\Pi$ be a $(\sigma_{1},\sigma_{2})$-fundamental system of $\Delta$.
We write $(S_{1},S_{2})=(S(\Pi,\Pi_{1,0},p_{1}),S(\Pi,\Pi_{2,0},p_{2}))$
as the double Satake diagram associated with $\Pi$.
We define
\begin{align}
\mathcal{DS}(S_{1},S_{2})
&=\{[(\psi_{1}\cdot S_{1},\psi_{2}\cdot S_{2})]\mid
\psi_{1},\psi_{2}\in\mathrm{Aut}(\Pi)\}\notag\\
&=\{[(S_{1},\psi\cdot S_{2})]\mid
\psi\in\mathrm{Aut}(\Pi)\},\label{eqn:DSS1S2}
\end{align}
where
$\psi_{i}\cdot S_{i}$
is the Satake diagram
defined by
$\psi_{i}\cdot S_{i}= S(\Pi,\psi_{i}(\Pi_{i,0}), \psi_{i}\cdot p_{i})$
with $\psi_{i}\cdot p_{i}=\psi_{i} p_{i} \psi_{i}^{-1}|_{\Pi-\psi_{i}(\Pi_{i,0})}$.
Here, we describe the one-to-one correspondence between
$\mathcal{T}(\mathfrak{g}, \mathfrak{k}_{1}, \mathfrak{k}_{2})$
and $\mathcal{DS}(S_{1},S_{2})$.
Let $[(\mathfrak{g},\theta_{1}',\theta_{2}')]$ be
in $\mathcal{T}(\mathfrak{g},\mathfrak{k}_{1},\mathfrak{k}_{2})$
such that $(\mathfrak{g},\theta_{1}',\theta_{2}')$ is quasi-canonical.
Denote by $(S_{1}',S_{2}')$ the double Satake diagram of $(\mathfrak{g},\theta_{1}',\theta_{2}')$.
By Theorem \ref{thm:cps_sigma_satake_equiv}
it follows from
$(\mathfrak{g},\theta_{i})\simeq(\mathfrak{g},\theta_{i}')$
that there exists an isomorphism $\psi_{i}:S_{i}\to S_{i}'$ of Satake diagrams.
Then we have
$(S_{1}',S_{2}')
=(\psi_{1}\cdot S_{1},\psi_{2}\cdot S_{2})
\sim (S_{1},\psi_{1}^{-1}\psi_{2}\cdot S_{2})$,
so that $[(S_{1}',S_{2}')]=[(S_{1},\psi_{1}^{-1}\psi_{2}\cdot S_{2})]$
is in $\mathcal{DS}(S_{1},S_{2})$ from \eqref{eqn:DSS1S2}.
Furthermore, 
it can be shown that
the following correspondence is well-defined
in terms of Theorem \ref{thm:cst_dsatake_sim}, $(1) \Rightarrow (3)$:
\begin{equation}\label{eqn:corrTtoDS}
\mathcal{T}(\mathfrak{g},\mathfrak{k}_{1},\mathfrak{k}_{2})\to
\mathcal{DS}(S_{1},S_{2});~
[(\mathfrak{g},\theta_{1}',\theta_{2}')]\mapsto
[(S_{1}',S_{2}')].
\end{equation}

\begin{lem}\label{lem:cstdsatabij}
The correspondence \eqref{eqn:corrTtoDS} is bijective.
\end{lem}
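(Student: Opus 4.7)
The plan is to establish the bijectivity by separately verifying surjectivity and injectivity, both of which will reduce to facts already proved in the paper combined with a standard Dynkin-diagram lifting argument.

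For injectivity, suppose that $[(\mathfrak{g},\theta_{1}',\theta_{2}')]$ and $[(\mathfrak{g},\theta_{1}'',\theta_{2}'')]$ in $\mathcal{T}(\mathfrak{g},\mathfrak{k}_{1},\mathfrak{k}_{2})$ are sent to the same element of $\mathcal{DS}(S_{1},S_{2})$. By the definition of the equivalence relation on double Satake diagrams (Definition \ref{dfn:2satake_sim}) this means $(S_{1}',S_{2}')\sim(S_{1}'',S_{2}'')$ as double Satake diagrams. Applying Theorem \ref{thm:cst_dsatake_sim}, implication $(3)\Rightarrow(1)$, gives $(\mathfrak{g},\theta_{1}',\theta_{2}')\sim(\mathfrak{g},\theta_{1}'',\theta_{2}'')$, i.e.\ equality of the two classes in $\mathcal{T}(\mathfrak{g},\mathfrak{k}_{1},\mathfrak{k}_{2})$.

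For surjectivity, fix a representative $[(S_{1},\psi\cdot S_{2})]$ with $\psi\in\mathrm{Aut}(\Pi)$. First I would lift $\psi$ to an automorphism $\tilde{\psi}\in\mathrm{Aut}(\mathfrak{g})$ which stabilizes $\mathfrak{t}$ and restricts to $\psi$ on $\Pi$; such a lift exists because any symmetry of a Dynkin diagram of a semisimple Lie algebra comes from an automorphism of the algebra (this is a classical consequence of the isomorphism/Chevalley theorems). Then I would consider the triad $(\mathfrak{g},\theta_{1},\theta_{2}'):=(\mathfrak{g},\theta_{1},\tilde{\psi}\theta_{2}\tilde{\psi}^{-1})$. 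Its $\theta_{2}'$-fixed subalgebra is $\tilde{\psi}(\mathfrak{k}_{2})$, isomorphic to $\mathfrak{k}_{2}$, so the class is in $\mathcal{T}(\mathfrak{g},\mathfrak{k}_{1},\mathfrak{k}_{2})$. Since $\tilde{\psi}(\mathfrak{t})=\mathfrak{t}$, one checks that $\mathfrak{t}\cap\mathfrak{m}_{2}'=\tilde{\psi}(\mathfrak{t}\cap\mathfrak{m}_{2})$ is maximal abelian in $\mathfrak{m}_{2}'$, so $\mathfrak{t}$ is still admissible for the new triad, and the associated normal double $\sigma$-system is $(\Delta,\sigma_{1},\psi\sigma_{2}\psi^{-1})$. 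Because $\psi(\Pi)=\Pi$, Lemma \ref{lem:Pi_varphiPi} shows $\Pi$ is a $(\sigma_{1},\psi\sigma_{2}\psi^{-1})$-fundamental system, so the new triad is quasi-canonical with double Satake diagram exactly $(S_{1},\psi\cdot S_{2})$. This produces the required preimage.

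The potentially delicate step is the lifting of $\psi\in\mathrm{Aut}(\Pi)$ to an automorphism $\tilde{\psi}$ of $\mathfrak{g}$ that preserves $\mathfrak{t}$. In principle $\mathrm{Aut}(\Pi)$ surjects onto the outer automorphism classes of $\mathfrak{g}$ (with inner automorphisms filling in the rest), and an appropriate choice gives $\tilde{\psi}$ stabilizing $\mathfrak{t}$; the construction can be made explicit via the extension of $\psi$ to the root-space decomposition using a Chevalley basis (cf.\ the normalization $\overline{X_{\alpha}}=-X_{-\alpha}$ of Lemma \ref{lem:Cbasis_conj}). After that, well-definedness of the correspondence, already noted from Theorem \ref{thm:cst_dsatake_sim} $(1)\Rightarrow(3)$, together with the two arguments above, completes the proof.
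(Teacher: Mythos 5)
Your proposal is correct and follows essentially the same route as the paper: injectivity is reduced to Theorem \ref{thm:cst_dsatake_sim}, $(3)\Rightarrow(1)$, and surjectivity is obtained by lifting a Dynkin-diagram automorphism to an automorphism of $\mathfrak{g}$ preserving $\mathfrak{t}$ and conjugating the involutions accordingly (the paper conjugates both $\theta_{1}$ and $\theta_{2}$ by lifts $\varphi_{1},\varphi_{2}$, while you conjugate only $\theta_{2}$ by a single lift, which is an equivalent normalization via \eqref{eqn:DSS1S2}). Your explicit justification of the lifting step, which the paper takes for granted, is a welcome addition but does not change the argument.
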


\begin{proof}
We first prove that \eqref{eqn:corrTtoDS} is injective.
Let $[(\mathfrak{g},\theta_{1}',\theta_{2}')], [(\mathfrak{g},\theta_{1}'',\theta_{2}'')]$ be in $\mathcal{T}(\mathfrak{g},\mathfrak{k}_{1},\mathfrak{k}_{2})$
such that
$(\mathfrak{g},\theta_{1}',\theta_{2}')$
and $(\mathfrak{g},\theta_{1}'',\theta_{2}'')$
are quasi-canonical.
We write $(S_{1}',S_{2}')$
and $(S_{1}'',S_{2}'')$ as the double Satake diagrams
of $(\mathfrak{g},\theta_{1}',\theta_{2}')$
and $(\mathfrak{g},\theta_{1}'',\theta_{2}'')$, respectively.
If $[(S_{1}',S_{2}')]=[(S_{1}'',S_{2}'')]$ holds,
then we obtain
$[(\mathfrak{g},\theta_{1}',\theta_{2}')]
=[(\mathfrak{g},\theta_{1}'',\theta_{2}'')]$
from Theorem \ref{thm:cst_dsatake_sim}, $(3) \Rightarrow (1)$.

Next, we prove that \eqref{eqn:corrTtoDS} is surjective.
Let $[(S_{1}',S_{2}')]$ be in $\mathcal{DS}(S_{1},S_{2})$.
Then, for each $i=1,2$,
there exists $\psi_{i}\in\mathrm{Aut}(\Pi)$
satisfying $S_{i}'=\psi_{i}\cdot S_{i}$.
Let $\varphi_{i}$
be an automorphism of $\mathfrak{g}$
such that $\varphi_{i}|_{\Pi}=\psi_{i}$ holds.
Then $(\mathfrak{g},\varphi_{1}\theta_{1}\varphi_{1}^{-1},\varphi_{2}\theta_{2}\varphi_{2}^{-1})$
gives a compact symmetric triad.
Let $(\Delta,\sigma_{1}',\sigma_{2}')$ be its double $\sigma$-system.
Since $\Pi$ becomes a $(\sigma_{1}',\sigma_{2}')$-fundamental system,
$(\mathfrak{g},\varphi_{1}\theta_{1}\varphi_{1}^{-1},\varphi_{2}\theta_{2}\varphi_{2}^{-1})$ is quasi-canonical
and its double Satake diagram coincides with $(S_{1}',S_{2}')$.
Thus we have complete the proof.
\end{proof}

\subsubsection{The classification}
Under the above argument
we first determine $\mathcal{DS}(S_{1},S_{2})$
for $[\theta_{1}],[\theta_{2}]\in\mathrm{Inv}(\mathfrak{g})/\mathrm{Aut}(\mathfrak{g})$.
This can be easily obtained by means of
the structure of $\mathrm{Aut}(\Pi)$
and the table of Satake diagrams of compact symmetric pairs
(cf.~\cite[TABLE VI]{Helgason}).
Our determination will be given in Theorem \ref{thm:dsatake_classify}.

Following to \cite[Chapter X, Theorem 3.29]{Helgason}
the structure of $\mathrm{Aut}(\Pi)$
is given as follows:
\[
\mathrm{Aut}(\Pi)=\begin{cases}
\{1\} & (\mathfrak{g}=\mathfrak{su}(2), \mathfrak{so}(2m+1), \mathfrak{sp}(n), \mathfrak{e}_{7}, \mathfrak{e}_{8}, \mathfrak{f}_{4}, \mathfrak{g}_{2}),\\
\mathbb{Z}_{2} & (\mathfrak{g}=\mathfrak{su}(n)\,(n\geq 3), \mathfrak{so}(2m)\,(m\geq 5), \mathfrak{e}_{6}),\\
\mathfrak{S}_{3} & (\mathfrak{g}=\mathfrak{so}(8)),
\end{cases}
\]
where $\mathbb{Z}_{2}$
and $\mathfrak{S}_{3}$
are the cyclic group of order two
and the symmetric group of order three, respectively.
Clearly, in the case when $\mathrm{Aut}(\Pi)=\{1\}$,
$\mathcal{DS}(S_{1},S_{2})$
consists of only one element, that is,
$\mathcal{DS}(S_{1},S_{2})=\{[(S_{1},S_{2})]\}$.
For the others,
we will obtain $\mathcal{DS}(S_{1},S_{2})$ by a case-by-case verification
based on the classification of $\mathrm{Inv}(\mathfrak{g})/\mathrm{Aut}(\mathfrak{g})$
as shown in Table \ref{table:fixed_pt_algebra}.

Let us consider the case when $\mathrm{Aut}(\Pi)=\mathbb{Z}_{2}$.
We first determine $\mathcal{DS}(S_{1},S_{2})$
for $(\mathfrak{g},\mathfrak{k}_{1})=(\mathfrak{g},\mathfrak{k}_{2})=(\mathfrak{so}(4m),\mathfrak{u}(2m))$
with $m\geq 3$.

\begin{ex}\label{ex:so4mu2mu2m}
Let $(\mathfrak{g},\mathfrak{k}_{1})=(\mathfrak{g},\mathfrak{k}_{2})=(\mathfrak{so}(4m),\mathfrak{u}(2m))$
with $m\geq 3$.
Denote by $(\Delta,\sigma_{1},\sigma_{2})$
the double $\sigma$-system of a quasi canonical form $(\mathfrak{g},\theta_{1},\theta_{2})$.
Let $\Pi$ be a $(\sigma_{1},\sigma_{2})$-fundamental system of $\Delta$.
If we write $\Pi=\{\alpha_{1},\dotsc,\alpha_{2m}\}$ as in Notation \ref{nota:Dr},
then $\mathrm{Aut}(\Pi)$
is generated by
\[
\tau:\Pi\to\Pi;
~(\alpha_{1},\dotsc,\alpha_{2m-2},\alpha_{2m-1},\alpha_{2m})
\mapsto
(\alpha_{1},\dotsc,\alpha_{2m-2},\alpha_{2m},\alpha_{2m-1}).
\]
Denote by $S_{i}=S(\Pi,\Pi_{i,0},p_{i})$ the Satake diagram
of $(\Delta,\sigma_{i})$ associated with $\Pi$.
Then, for each $i=1,2$, the graph of $S_{i}$
coincides with that of $S(\Pi,\Pi_{0}, p)$
or $S(\Pi,\tau(\Pi_{0}),\tau\cdot p))$ as in Table \ref{table:Satake_DIII}.
It can be shown that
$(S(\Pi,\Pi_{0},p),S(\Pi,\Pi_{0},p))$
and $(S(\Pi,\Pi_{0},p),S(\Pi,\tau(\Pi_{0}),\tau\cdot p))$
give a complete representative of
$\mathcal{DS}(S_{1},S_{2})$.
\end{ex}

\begin{table}[H]
\centering
\caption{Satake diagram of $(\mathfrak{so}(4m),\mathfrak{u}(2m))$ with $m\geq 3$}\label{table:Satake_DIII}
\begin{tabular}{cc}
\hline
\hline
$S(\Pi,\Pi_{0}, p)$ & $S(\Pi,\tau(\Pi_{0}),\tau\cdot p))$ \\
\hline
\hline
\begin{xy}
\ar@{-}(0,0)*++!D{\alpha_{1}}*{\bullet};(10,0)*++!D{\alpha_{2}}*{\circ}="a2"
\ar@{-}"a2";(15,0)*{}
\ar@{.}(15,0)*{};(20,0)*{}
\ar@{-}(20,0)*{};(25,0)*++!D{\alpha_{2m-3}}*{\bullet}="a2m-3"
\ar@{-}"a2m-3";(35,0)*++!L{\alpha_{2m-2}}*{\circ}="a2m-2"
\ar@{-}"a2m-2";(40,8)*++!L{\alpha_{2m-1}}*{\bullet}
\ar@{-}"a2m-2";(40,-8)*++!L{\alpha_{2m}}*{\circ}
\end{xy}
&
\begin{xy}
\ar@{-}(0,0)*++!D{\alpha_{1}}*{\bullet};(10,0)*++!D{\alpha_{2}}*{\circ}="a2"
\ar@{-}"a2";(15,0)*{}
\ar@{.}(15,0)*{};(20,0)*{}
\ar@{-}(20,0)*{};(25,0)*++!D{\alpha_{2m-3}}*{\bullet}="a2m-3"
\ar@{-}"a2m-3";(35,0)*++!L{\alpha_{2m-2}}*{\circ}="a2m-2"
\ar@{-}"a2m-2";(40,8)*++!L{\alpha_{2m-1}}*{\circ}
\ar@{-}"a2m-2";(40,-8)*++!L{\alpha_{2m}}*{\bullet}
\end{xy}\\
\hline
\hline
\end{tabular}
\end{table}

Except for
this example among
compact symmetric triads $(\mathfrak{g},\theta_{1},\theta_{2})$
with $\mathrm{Aut}(\Pi)=\mathbb{Z}_{2}$,
it is verified that $\mathcal{DS}(S_{1},S_{2})=\{[(S_{1},S_{2})]\}$ holds
by means of the following lemma.

\begin{lem}\label{lem:2satake_determ_1pt}
Assume that $S_{1}$ or $S_{2}$ is invariant under the action of
$\mathrm{Aut}(\Pi)$, that is,
there exists $i\in\{1,2\}$ such that
$S_{i}=\psi\cdot S_{i}$ holds for all $\psi\in\mathrm{Aut}(\Pi)$.
Then we have $\mathcal{DS}(S_{1},S_{2})=\{[(S_{1},S_{2})]\}$.
\end{lem}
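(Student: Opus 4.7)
The plan is to unpack the definitions of $\mathcal{DS}(S_{1},S_{2})$ (equation \eqref{eqn:DSS1S2}) and the equivalence $\sim$ on double Satake diagrams (Definition \ref{dfn:2satake_sim}), and to verify that an automorphism $\psi\in\mathrm{Aut}(\Pi)$ tautologically furnishes an isomorphism between $S_{i}$ and $\psi\cdot S_{i}$. Once these pieces are in place, the lemma reduces to a short case split.

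By the description \eqref{eqn:DSS1S2}, proving $\mathcal{DS}(S_{1},S_{2})=\{[(S_{1},S_{2})]\}$ is equivalent to showing that for every pair $\psi_{1},\psi_{2}\in\mathrm{Aut}(\Pi)$ the double Satake diagrams $(\psi_{1}\cdot S_{1},\psi_{2}\cdot S_{2})$ and $(S_{1},S_{2})$ are isomorphic. First I would record the key observation that the construction $\psi\cdot S(\Pi,\Pi_{i,0},p_{i})=S(\Pi,\psi(\Pi_{i,0}),\psi p_{i}\psi^{-1})$ says precisely that $\psi$ itself is an isomorphism of Satake diagrams from $S_{i}$ onto $\psi\cdot S_{i}$ in the sense of Definition \ref{dfn:satake_simeq}. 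Hence any $\psi\in\mathrm{Aut}(\Pi)$ is a valid candidate for the common isomorphism demanded by Definition \ref{dfn:2satake_sim}.

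I then split according to which $S_{i}$ is invariant. If $S_{2}=\psi\cdot S_{2}$ for all $\psi\in\mathrm{Aut}(\Pi)$, then any $(\psi_{1}\cdot S_{1},\psi_{2}\cdot S_{2})=(\psi_{1}\cdot S_{1},S_{2})$, and the map $\psi_{1}^{-1}$ serves as the common isomorphism: it sends $\psi_{1}\cdot S_{1}$ to $\psi_{1}^{-1}\psi_{1}\cdot S_{1}=S_{1}$ and sends $S_{2}$ to $\psi_{1}^{-1}\cdot S_{2}=S_{2}$ by invariance. Symmetrically, if $S_{1}=\psi\cdot S_{1}$ for all $\psi$, then $\psi_{1}\cdot S_{1}=S_{1}$ for every $\psi_{1}$, and for a general pair $(S_{1},\psi_{2}\cdot S_{2})$ the automorphism $\psi_{2}$ plays the role of the common isomorphism, mapping $S_{1}$ to $\psi_{2}\cdot S_{1}=S_{1}$ and $S_{2}$ to $\psi_{2}\cdot S_{2}$. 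In either case every element of $\mathcal{DS}(S_{1},S_{2})$ coincides with $[(S_{1},S_{2})]$.

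The statement is essentially a bookkeeping consequence of the definitions, so there is no substantive obstacle; the only point that requires a moment's attention is recognising that an element of $\mathrm{Aut}(\Pi)$ automatically qualifies as an isomorphism of Satake diagrams between $S_{i}$ and $\psi\cdot S_{i}$, which is immediate from the very definition of $\psi\cdot S_{i}$.
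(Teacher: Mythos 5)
Your proof is correct, and it is precisely the definition-unpacking the paper has in mind: the paper omits the proof with the remark that the lemma ``is easily shown by the definition of $\mathcal{DS}(S_{1},S_{2})$,'' and your two cases (using $\psi_{1}^{-1}$ when $S_{2}$ is invariant, and $\psi_{2}$ when $S_{1}$ is invariant, after noting that any $\psi\in\mathrm{Aut}(\Pi)$ is tautologically an isomorphism of Satake diagrams from $S_{i}$ onto $\psi\cdot S_{i}$) supply exactly the omitted details.
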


We omit its proof since
this lemma is easily shown by the definition of $\mathcal{DS}(S_{1},S_{2})$.

\begin{ex}\label{ex:susosuu_ds}
Let us consider the case when
$(\mathfrak{g},\mathfrak{k}_{1})=(\mathfrak{su}(n),\mathfrak{so}(n))$
and $(\mathfrak{g},\mathfrak{k}_{2})=(\mathfrak{su}(n),\mathfrak{s}(\mathfrak{u}(a)\oplus\mathfrak{u}(b)))$
with $n\geq 3$.
Since 
the Satake diagram $S_{1}$
contains no black circles and no curved arrows,
$S_{1}$ is invariant under the action of $\mathrm{Aut}(\Pi)$.
From Lemma \ref{lem:2satake_determ_1pt}
we get $\mathcal{DS}(S_{1},S_{2})=\{[(S_{1},S_{2})]\}$.
\end{ex}

From the above argument
we conclude that $\mathcal{DS}(S_{1},S_{2})$
have been determined
in the case when $\mathrm{Aut}(\Pi)=\mathbb{Z}_{2}$.

Finally,
we consider the case when
$\mathrm{Aut}(\Pi)=\mathfrak{S}_{3}$.

\begin{ex}\label{ex:so8soasob}
Let $\mathfrak{g}=\mathfrak{so}(8)$.
From Table \ref{table:fixed_pt_algebra}
we have
$\{[(\mathfrak{so}(8),\theta)]\mid \theta\in\mathrm{Inv}(\mathfrak{so}(8))\}
=\{[(\mathfrak{so}(8),\mathfrak{so}(a)\oplus\mathfrak{so}(8-a))]\mid a=1,2,3,4\}$.
Here, we have used a special isomorphism $\mathfrak{u}(4)\simeq \mathfrak{so}(2)\oplus\mathfrak{so}(6)$.
Our argument proceeds by a case-by-case argument as follows.

We first consider the case when
$(\mathfrak{g},\mathfrak{k}_{1})$ or 
$(\mathfrak{g},\mathfrak{k}_{2})$ is isomorphic to $(\mathfrak{so}(8),\mathfrak{so}(4)\oplus\mathfrak{so}(4))$.
A similar manner as in Example \ref{ex:susosuu_ds} obeys $\mathcal{DS}(S_{1},S_{2})=\{[(S_{1},S_{2})]\}$.

Next, let us consider the case when
$(\mathfrak{g},\mathfrak{k}_{1})=(\mathfrak{so}(8),\mathfrak{so}(a)\oplus\mathfrak{so}(8-a))$
and $(\mathfrak{g},\mathfrak{k}_{2})=(\mathfrak{so}(8),\mathfrak{so}(c)\oplus\mathfrak{so}(8-c))$
for some $a,c\in\{1,2,3\}$.
Denote by $(\Delta,\sigma_{1},\sigma_{2})$
the double $\sigma$-system of a quasi-canonical form $(\mathfrak{g},\theta_{1},\theta_{2})$.
Let $\Pi$ be a $(\sigma_{1},\sigma_{2})$-fundamental system of $\Delta$.
If we write $\Pi=\{\alpha_{1},\alpha_{2},\alpha_{3},\alpha_{4}\}$
as in Notation \ref{nota:Dr},
then 
$\mathrm{Aut}(\Pi)=\{1,\kappa,\kappa^{2},\tau,\kappa\tau\kappa^{-1},\kappa^{2}\tau\kappa^{-2}\}$
holds, where
$\kappa, \tau\in\mathrm{Aut}(\Pi)$ are defined by
$\kappa:(\alpha_{1},\alpha_{2},\alpha_{3},\alpha_{4})
\mapsto
(\alpha_{4},\alpha_{2},\alpha_{1},\alpha_{3})$
and by
$\tau:(\alpha_{1},\alpha_{2},\alpha_{3},\alpha_{4})
\mapsto
(\alpha_{1},\alpha_{2},\alpha_{4},\alpha_{3})$.
Denote by $S_{i}$ the Satake diagram
of $(\Delta,\sigma_{i})$ associated with $\Pi$.
Then, there exist $\psi,\psi'\in\{1,\kappa,\kappa^{2}\}$ satisfying
$S_{1}=S(\Pi,\psi(\Pi_{0}^{(a)}),\psi\cdot p^{(a)})$
and $S_{2}=S(\Pi,\psi'(\Pi_{0}^{(c)}),\psi'\cdot p^{(c)})$,
where
the Satake diagram $S(\Pi,\psi(\Pi_{0}^{(*)}),\psi\cdot p^{(*)})$ 
are in Table \ref{table:Satake_DI} for $\psi\in\{1,\kappa,\kappa^{2}\}$.
We write $S^{*,\psi}:=S(\Pi,\psi(\Pi_{0}^{(*)}),\psi\cdot p^{(*)})$
for short.
Then it can be verified that
$\mathcal{DS}(S_{1},S_{2})=\{[(S^{a,1},S^{c,1})],[(S^{a,1},S^{c,\kappa})]\}$ holds
by a case-by-case verification.
For example, in the case when $a=1, c=2$,
we have
$(S^{1,1},S^{2,1})\not\sim(S^{1,1},S^{2,\kappa})\overset{\tau}{\sim}(S^{1,1},S^{2,\kappa^{2}})$.
This implies that 
$(S^{1,1},S^{2,1})$ and $(S^{1,1},S^{2,\kappa})$ give a complete representative of $\mathcal{DS}(S_{1},S_{2})$.
\end{ex}

\begin{table}[H]
\centering
\caption{Satake diagram of $(\mathfrak{so}(8),\mathfrak{so}(a)\oplus\mathfrak{so}(8-a))$ with $a=1,2,3$}\label{table:Satake_DI}
\begin{tabular}{cccc}
\hline
\hline
$a$ & $S(\Pi,\Pi_{0}^{(a)},p^{(a)})$ & $S(\Pi,\kappa(\Pi_{0}^{(a)}),\kappa\cdot p^{(a)})$
& $S(\Pi,\kappa^{2}(\Pi_{0}^{(a)}),\kappa^{2}\cdot p^{(a)})$ \\
\hline
\hline
$1$ &
\begin{xy}
\ar@{-}(0,0)*++!D{\alpha_{1}}*{\circ}="a1";(10,0)*++!D{\alpha_{2}}*{\bullet}="a2"
\ar@{-}"a2";(17.09,7.09)*++!D{\alpha_{3}}*{\bullet}="a3"
\ar@{-}"a2";(17.09,-7.09)*++!D{\alpha_{4}}*{\bullet}="a4"
\end{xy}
&
\begin{xy}
\ar@{-}(0,0)*++!D{\alpha_{1}}*{\bullet}="a1";(10,0)*++!D{\alpha_{2}}*{\bullet}="a2"
\ar@{-}"a2";(17.09,7.09)*++!D{\alpha_{3}}*{\bullet}="a3"
\ar@{-}"a2";(17.09,-7.09)*++!D{\alpha_{4}}*{\circ}="a4"
\end{xy} &
\begin{xy}
\ar@{-}(0,0)*++!D{\alpha_{1}}*{\bullet}="a1";(10,0)*++!D{\alpha_{2}}*{\bullet}="a2"
\ar@{-}"a2";(17.09,7.09)*++!D{\alpha_{3}}*{\circ}="a3"
\ar@{-}"a2";(17.09,-7.09)*++!D{\alpha_{4}}*{\bullet}="a4"
\end{xy}
\\
$2$ &
\begin{xy}
\ar@{-}(0,0)*++!D{\alpha_{1}}*{\circ}="a1";(10,0)*++!D{\alpha_{2}}*{\circ}="a2"
\ar@{-}"a2";(17.09,7.09)*++!D{\alpha_{3}}*{\bullet}="a3"
\ar@{-}"a2";(17.09,-7.09)*++!D{\alpha_{4}}*{\bullet}="a4"
\end{xy}
&
\begin{xy}
\ar@{-}(0,0)*++!D{\alpha_{1}}*{\bullet}="a1";(10,0)*++!D{\alpha_{2}}*{\circ}="a2"
\ar@{-}"a2";(17.09,7.09)*++!D{\alpha_{3}}*{\bullet}="a3"
\ar@{-}"a2";(17.09,-7.09)*++!D{\alpha_{4}}*{\circ}="a4"
\end{xy}
&\begin{xy}
\ar@{-}(0,0)*++!D{\alpha_{1}}*{\bullet}="a1";(10,0)*++!D{\alpha_{2}}*{\circ}="a2"
\ar@{-}"a2";(17.09,7.09)*++!D{\alpha_{3}}*{\circ}="a3"
\ar@{-}"a2";(17.09,-7.09)*++!D{\alpha_{4}}*{\bullet}="a4"
\end{xy}
\\
$3$ &
\begin{xy}
\ar@{-}(0,0)*++!D{\alpha_{1}}*{\circ}="a1";(10,0)*++!D{\alpha_{2}}*{\circ}="a2"
\ar@{-}"a2";(17.09,7.09)*++!D{\alpha_{3}}*{\circ}="a3"
\ar@{-}"a2";(17.09,-7.09)*++!D{\alpha_{4}}*{\circ}="a4"
\ar@/^/@{<->} "a3";"a4"
\end{xy}
&
\begin{xy}
\ar@{-}(0,0)*++!D{\alpha_{1}}*{\circ}="a1";(10,0)*++!D{\alpha_{2}}*{\circ}="a2"
\ar@{-}"a2";(17.09,7.09)*++!D{\alpha_{3}}*{\circ}="a3"
\ar@{-}"a2";(17.09,-7.09)*++!D{\alpha_{4}}*{\circ}="a4"
\ar@/^/@{<->} "a1";"a3"
\end{xy}&
\begin{xy}
\ar@{-}(0,0)*++!D{\alpha_{1}}*{\circ}="a1";(10,0)*++!D{\alpha_{2}}*{\circ}="a2"
\ar@{-}"a2";(17.09,7.09)*++!D{\alpha_{3}}*{\circ}="a3"
\ar@{-}"a2";(17.09,-7.09)*++!D{\alpha_{4}}*{\circ}="a4"
\ar@/_/@{<->} "a1";"a4"
\end{xy}\\
\hline
\hline
\end{tabular}
\end{table}

From the above argument we conclude:

\begin{thm}\label{thm:dsatake_classify}
Fix a compact simple Lie algebra $\mathfrak{g}$.
Let $[\theta_{1}],[\theta_{2}]\in\mathrm{Inv}(\mathfrak{g})/\mathrm{Aut}(\mathfrak{g})$ such that $(\mathfrak{g},\theta_{1},\theta_{2})$
is quasi-canonical.
Denote by $(S_{1},S_{2})$ the double Satake diagram
corresponding to $(\mathfrak{g},\theta_{1},\theta_{2})$.
Then we obtain $\mathcal{DS}(S_{1},S_{2})$ as follows:

\noindent
$(1)$
Let $\mathfrak{g}\neq\mathfrak{so}(4m)$ with $m\geq 2${\rm :} $\mathcal{DS}(S_{1},S_{2})=\{[(S_{1},S_{2})]\}$ holds.

\noindent
$(2)$
Let $\mathfrak{g}=\mathfrak{so}(4m)$ with $m\geq 3${\rm :}
\begin{enumerate}
\item[{\rm (2-a)}] In the case when $(\mathfrak{g},\mathfrak{k}_{i})=(\mathfrak{so}(4m),\mathfrak{u}(2m))$ for $i=1,2$,
the two double Satake diagrams
$(S(\Pi,\Pi_{0},p),S(\Pi,\Pi_{0},p))$ and $(S(\Pi,\Pi_{0},p),S(\Pi,\tau(\Pi_{0}),\tau\cdot p))$
as in Example {\rm \ref{ex:so4mu2mu2m}} give a complete representative of $\mathcal{DS}(S_{1},S_{2})$.
\item[{\rm (2-b)}] Otherwise, $\mathcal{DS}(S_{1},S_{2})=\{[(S_{1},S_{2})]\}$ holds.
\end{enumerate}

\noindent
$(3)$
Let $\mathfrak{g}=\mathfrak{so}(8)${\rm :}
\begin{enumerate}
\item[{\rm (3-a)}] In the case when $\mathfrak{k}_{1}$ or $\mathfrak{k}_{2}$
is isomorphic to $\mathfrak{so}(4)\oplus\mathfrak{so}(4)$,
$\mathcal{DS}(S_{1},S_{2})=\{[(S_{1},S_{2})]\}$ holds.
\item[{\rm (3-b)}] Otherwise, there exist $a,c\in\{1,2,3\}$ such that
$(\mathfrak{g},\mathfrak{k}_{1})=(\mathfrak{so}(8),\mathfrak{so}(a)\oplus\mathfrak{so}(8-a))$
and
$(\mathfrak{g},\mathfrak{k}_{2})=(\mathfrak{so}(8),\mathfrak{so}(c)\oplus\mathfrak{so}(8-c))$.
Then,
the two double Satake diagrams
$(S(\Pi,\Pi_{0}^{(a)},p^{(a)}),S(\Pi,\Pi_{0}^{(c)},p^{(c)}))$
and
$(S(\Pi,\Pi_{0}^{(a)},p^{(a)}),S(\Pi,\kappa(\Pi_{0}^{(c)}),\kappa\cdot p^{(c)}))$
as in Example {\rm \ref{ex:so8soasob}} give a
complete representative of $\mathcal{DS}(S_{1},S_{2})$.
\end{enumerate}
\end{thm}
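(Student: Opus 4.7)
The plan is to recast the cardinality of $\mathcal{DS}(S_1,S_2)$ as a double-coset count inside the finite group $\mathrm{Aut}(\Pi)$. Unwinding Definition~\ref{dfn:2satake_sim}, two classes $[(S_1,\psi\cdot S_2)]$ and $[(S_1,\psi'\cdot S_2)]$ coincide precisely when there exists $\varphi\in\mathrm{Aut}(\Pi)$ with $\varphi\cdot S_1=S_1$ and $(\varphi\psi)\cdot S_2=\psi'\cdot S_2$; equivalently, when $\psi$ and $\psi'$ lie in a common double coset of $\mathrm{Stab}_{\mathrm{Aut}(\Pi)}(S_1)\backslash \mathrm{Aut}(\Pi)/\mathrm{Stab}_{\mathrm{Aut}(\Pi)}(S_2)$. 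Lemma~\ref{lem:2satake_determ_1pt} is exactly the case in which one of these stabilizers is all of $\mathrm{Aut}(\Pi)$. The theorem thus reduces to identifying, for every compact simple symmetric pair, the stabilizer of its Satake diagram in $\mathrm{Aut}(\Pi)$, and then counting double cosets in the remaining cases.

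The structure of $\mathrm{Aut}(\Pi)$, recalled from \cite[Chapter X, Theorem 3.29]{Helgason} just before the statement, partitions the argument into three regimes. When $\mathrm{Aut}(\Pi)=\{1\}$, the claim is immediate, dispatching $\mathfrak{g}\in\{\mathfrak{su}(2),\mathfrak{so}(2m+1),\mathfrak{sp}(n),\mathfrak{e}_7,\mathfrak{e}_8,\mathfrak{f}_4,\mathfrak{g}_2\}$. When $\mathrm{Aut}(\Pi)\cong\mathbb{Z}_2$, I would sweep through Helgason's table \cite[TABLE VI]{Helgason} of Satake diagrams: for $\mathfrak{g}=\mathfrak{su}(n)$ ($n\geq 3$) and $\mathfrak{g}=\mathfrak{e}_6$, every Satake diagram is invariant under the nontrivial diagram automorphism, so Lemma~\ref{lem:2satake_determ_1pt} applies as in Example~\ref{ex:susosuu_ds}; for $\mathfrak{g}=\mathfrak{so}(2m)$ ($m\geq 5$), the Satake diagrams of $(\mathfrak{so}(2m),\mathfrak{so}(a)\oplus\mathfrak{so}(2m-a))$ and, when $m$ is odd, of $(\mathfrak{so}(2m),\mathfrak{u}(m))$ are $\tau$-invariant, whereas the single family $(\mathfrak{so}(4m'),\mathfrak{u}(2m'))$ with $m'\geq 3$ fails $\tau$-invariance, as exhibited in Table~\ref{table:Satake_DIII}. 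This yields part~(1) in full; it also settles part~(2), since if exactly one $\mathfrak{k}_i$ is $\mathfrak{u}(2m)$ the other Satake diagram is $\tau$-invariant and Lemma~\ref{lem:2satake_determ_1pt} again forces a single class, while if both are $\mathfrak{u}(2m)$ the double coset space $\{1\}\backslash\mathbb{Z}_2/\{1\}=\mathbb{Z}_2$ produces the two classes listed in Example~\ref{ex:so4mu2mu2m}.

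In the $\mathfrak{S}_3$ regime ($\mathfrak{g}=\mathfrak{so}(8)$), the Satake diagram of $(\mathfrak{so}(8),\mathfrak{so}(4)\oplus\mathfrak{so}(4))$ contains neither black nodes nor curved arrows, hence is stabilized by all of $\mathfrak{S}_3$, and Lemma~\ref{lem:2satake_determ_1pt} resolves~(3-a). For~(3-b), Table~\ref{table:Satake_DI} shows that each Satake diagram $S(\Pi,\Pi_0^{(a)},p^{(a)})$ with $a\in\{1,2,3\}$ has stabilizer of order two in $\mathfrak{S}_3$ and $\mathfrak{S}_3$-orbit of cardinality three; a direct enumeration of $H_1\backslash \mathfrak{S}_3/H_2$ for any two order-two subgroups $H_1,H_2$ shows that it consists of exactly two double cosets (independently of the choice since all order-two subgroups of $\mathfrak{S}_3$ are conjugate), which matches the two representatives of Example~\ref{ex:so8soasob}. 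The main obstacle is not conceptual but combinatorial: one must verify the stabilizer determination for every compact simple symmetric pair with nontrivial $\mathrm{Aut}(\Pi)$, and a single mistaken invariance claim would corrupt the final count. A noteworthy subtlety is the $(\mathfrak{so}(2m),\mathfrak{u}(m))$ family with $m$ odd, which one might initially suspect would contribute a second class in part~(1); explicit inspection of its Satake diagram confirms $\tau$-invariance, so the asymmetric behaviour is genuinely confined to the even-$m$ subfamily isolated in part~(2).
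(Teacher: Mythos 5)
Your proposal is correct and follows essentially the same route as the paper: both arguments run on the structure of $\mathrm{Aut}(\Pi)$ from Helgason, a sweep through the Satake diagrams to identify which are invariant under diagram automorphisms (isolating $(\mathfrak{so}(4m),\mathfrak{u}(2m))$ as the unique non-invariant family in the $\mathbb{Z}_2$ regime), and the two exceptional computations of Examples \ref{ex:so4mu2mu2m} and \ref{ex:so8soasob}. Your reformulation of $|\mathcal{DS}(S_1,S_2)|$ as the number of double cosets in $\mathrm{Stab}_{\mathrm{Aut}(\Pi)}(S_1)\backslash\mathrm{Aut}(\Pi)/\mathrm{Stab}_{\mathrm{Aut}(\Pi)}(S_2)$ is a correct and slightly more systematic packaging of the paper's Lemma \ref{lem:2satake_determ_1pt} together with its case-by-case verification in the $\mathfrak{S}_3$ case, but the substantive content is the same.
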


As a corollary of
Theorem \ref{thm:dsatake_classify}
we can obtain $\mathcal{T}(\mathfrak{g},\mathfrak{k}_{1},\mathfrak{k}_{2})$
for $[\theta_{1}],[\theta_{2}]\in\mathrm{Inv}(\mathfrak{g})/\mathrm{Aut}(\mathfrak{g})$.
In order to present our determination of $\mathcal{T}(\mathfrak{g},\mathfrak{k}_{1},\mathfrak{k}_{2})$
we prepare the following notation.

\begin{nota}\label{nota:matrix_involutions}
In order to give involutions on a compact simple Lie algebra $\mathfrak{g}$,
we utilize the following notation:
If $I_{n}$ denotes the unit matrix of order $n$,
then we put
\begin{equation}\label{eqn:IJ}
\begin{array}{cc}
I_{a,b}=\begin{pmatrix}
I_{a} & O \\
O & -I_{b}
\end{pmatrix}\in GL(a+b,\mathbb{R}),&
J_{n}=\begin{pmatrix}
O & -I_{n} \\
I_{n} & O
\end{pmatrix}
\in GL(2n,\mathbb{R}),
\end{array}
\end{equation}
and $J_{n}'=I_{n-1,n+1}J_{n}\in GL(2n,\mathbb{R})$.
\end{nota}

\begin{cor}\label{cor:cst_classify}
Fix a compact simple Lie algebra $\mathfrak{g}$.
Then we have{\rm :}

\noindent
$(1)$
Let $\mathfrak{g}\neq\mathfrak{so}(4m)$ with $m\geq 2${\rm :}
$\mathcal{T}(\mathfrak{g},\mathfrak{k}_{1},\mathfrak{k}_{2})=\{[(\mathfrak{g},\theta_{1},\theta_{2})]\}$ holds.

\noindent
$(2)$
Let $\mathfrak{g}=\mathfrak{so}(4m)$ with $m\geq 3${\rm :}
\begin{enumerate}
\item[{\rm (2-a)}] In the case when $(\mathfrak{g},\mathfrak{k}_{i})=(\mathfrak{so}(4m),\mathfrak{u}(2m))$ for $i=1,2$,
$(S(\Pi,\Pi_{0},p),S(\Pi,\Pi_{0},p))$, $(S(\Pi,\Pi_{0},p),S(\Pi,\tau(\Pi_{0}),\tau\cdot p))$
as in Example {\rm \ref{ex:so4mu2mu2m}}
correspond to
the two compact symmetric triads
\begin{equation}\label{eqn:CST_2-a}
(\mathfrak{so}(4m),\mathrm{Ad}(J_{2m}),\mathrm{Ad}(J_{2m})),
\quad
(\mathfrak{so}(4m),\mathrm{Ad}(J_{2m}),\mathrm{Ad}(J_{2m}')),
\end{equation}
respectively.
Furthermore, 
the compact symmetric triads
\eqref{eqn:CST_2-a}
give a complete representative of $\mathcal{T}(\mathfrak{so}(4m),\mathfrak{u}(2m),\mathfrak{u}(2m))$.
\item[{\rm (2-b)}] Otherwise,
$\mathcal{T}(\mathfrak{so}(4m),\mathfrak{k}_{1},\mathfrak{k}_{2})=\{[(\mathfrak{so}(4m),\theta_{1},\theta_{2})]\}$ holds.
\end{enumerate}

\noindent
$(3)$
Let $\mathfrak{g}=\mathfrak{so}(8)${\rm :}
\begin{enumerate}
\item[{\rm (3-a)}] In the case when $\mathfrak{k}_{1}$
or 
$\mathfrak{k}_{2}$
is isomorphic to $\mathfrak{so}(4)\oplus\mathfrak{so}(4)$,
we have
$\mathcal{T}(\mathfrak{so}(8),\mathfrak{k}_{1},\mathfrak{k}_{2})=\{[(\mathfrak{so}(8),\theta_{1},\theta_{2})]\}$.
\item[{\rm (3-b)}] Otherwise,
we have
$(\mathfrak{g},\mathfrak{k}_{1})=(\mathfrak{so}(8),\mathfrak{so}(a)\oplus\mathfrak{so}(8-a))$
and
$(\mathfrak{g},\mathfrak{k}_{2})=(\mathfrak{so}(8),\mathfrak{so}(c)\oplus\mathfrak{so}(8-c))$
for some $a,c\in\{1,2,3\}$.
Then,
$(S(\Pi,\Pi_{0}^{(a)},p^{(a)}),S(\Pi,\Pi_{0}^{(c)},p^{(c)}))$
and
$(S(\Pi,\Pi_{0}^{(a)},p^{(a)}),S(\Pi,\kappa(\Pi_{0}^{(c)}),\kappa\cdot p^{(c)}))$
as in Example {\rm \ref{ex:so8soasob}}
correspond to
the two compact symmetric triads
\begin{equation}\label{eqn:CST_3-b}
(\mathfrak{so}(8),\mathrm{Ad}(I_{a,8-a}),\mathrm{Ad}(I_{c,8-c})),\quad
(\mathfrak{so}(8),\mathrm{Ad}(I_{a,8-a}),\tilde{\kappa}\mathrm{Ad}(I_{c,8-c})\tilde{\kappa}^{-1}),
\end{equation}
respectively,
where $\tilde{\kappa}$ denotes the extension of $\kappa$ to an automorphism of $\mathfrak{so}(8)$.
Furthermore,
\eqref{eqn:CST_3-b}
give a complete representative of
$\mathcal{T}(\mathfrak{so}(8),\mathfrak{k}_{1},\mathfrak{k}_{2})$.
\end{enumerate}
\end{cor}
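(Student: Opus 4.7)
The plan is to derive the corollary directly from Theorem \ref{thm:dsatake_classify} via the bijection of Lemma \ref{lem:cstdsatabij}, which identifies $\mathcal{T}(\mathfrak{g},\mathfrak{k}_1,\mathfrak{k}_2)$ with $\mathcal{DS}(S_1,S_2)$. The counting part of each clause is then immediate, and the only genuine work is to exhibit the specific matrix triads as representatives of the two classes appearing in cases (2-a) and (3-b).

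For cases (1), (2-b), and (3-a), Theorem \ref{thm:dsatake_classify} gives $\mathcal{DS}(S_1,S_2)=\{[(S_1,S_2)]\}$, so Lemma \ref{lem:cstdsatabij} forces $|\mathcal{T}(\mathfrak{g},\mathfrak{k}_1,\mathfrak{k}_2)|=1$ and there is nothing further to prove. For case (2-a), the triad $(\mathfrak{so}(4m),\mathrm{Ad}(J_{2m}),\mathrm{Ad}(J_{2m}))$ has $\theta_1=\theta_2$, so after picking a $(\sigma_1,\sigma_2)$-fundamental system $\Pi$ from a standard maximal torus, its double Satake diagram is obviously $(S(\Pi,\Pi_0,p),S(\Pi,\Pi_0,p))$. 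For the second representative, I would fix a standard maximal torus $\mathfrak{t}$ of $\mathfrak{so}(4m)$ that is simultaneously invariant under $\mathrm{Ad}(J_{2m})$ and $\mathrm{Ad}(J_{2m}')$, compute the action of $-\mathrm{Ad}(J_{2m}')$ on $\Pi$ in the ambient basis $e_1,\dots,e_{2m}$ of Notation \ref{nota:Dr}, and verify that it differs from $-\mathrm{Ad}(J_{2m})$ precisely by the tail-swap $\tau:\alpha_{2m-1}\leftrightarrow\alpha_{2m}$. Lemma \ref{lem:cstdsatabij} then identifies this triad with $[(S(\Pi,\Pi_0,p),S(\Pi,\tau(\Pi_0),\tau\cdot p))]$, which is the remaining class of $\mathcal{DS}(S_1,S_2)$.

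For case (3-b), the first triad $(\mathfrak{so}(8),\mathrm{Ad}(I_{a,8-a}),\mathrm{Ad}(I_{c,8-c}))$ realizes the double Satake diagram $(S(\Pi,\Pi_0^{(a)},p^{(a)}),S(\Pi,\Pi_0^{(c)},p^{(c)}))$ because $\mathrm{Ad}(I_{a,8-a})$ is the standard involution whose Satake diagram is tabulated in Table \ref{table:Satake_DI}. For the second, one extends the triality permutation $\kappa$ on $\Pi$ to an automorphism $\tilde{\kappa}$ of $\mathfrak{so}(8)$ (using \cite[Chapter X, Theorem 3.29]{Helgason}), so that conjugation by $\tilde{\kappa}$ sends $\mathrm{Ad}(I_{c,8-c})$ to an involution whose associated $\sigma$-action on $\Pi$ is exactly $\kappa\sigma_c\kappa^{-1}$. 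The resulting Satake diagram is then $S(\Pi,\kappa(\Pi_0^{(c)}),\kappa\cdot p^{(c)})$, so Lemma \ref{lem:cstdsatabij} identifies this triad with the remaining class.

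The main obstacle is the explicit verification in (2-a) that $J_{2m}'$ realizes $\tau$ rather than the identity on the Dynkin diagram of $\Pi$. This hinges on a careful choice of a common $(\sigma_1,\sigma_2)$-fundamental system: any convenient torus is diagonalized simultaneously for $\mathrm{Ad}(J_{2m})$ and $\mathrm{Ad}(J_{2m}')$, but the induced action on simple roots must be read off using the fact that $I_{2m-1,2m+1}J_{2m}$ differs from $J_{2m}$ by a reflection that swaps the last two spinor weight signs. Once this single computation is done, the analogous identification in (3-b) follows by a parallel but lighter argument using the known matrix description of triality.
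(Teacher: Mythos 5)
Your proposal is correct and follows essentially the same route as the paper: the paper states this result without a separate proof precisely because it is the combination of Theorem \ref{thm:dsatake_classify} with the bijection of Lemma \ref{lem:cstdsatabij}, plus the observation (already implicit in the surjectivity argument of that lemma) that conjugating $\theta_{2}$ by an automorphism of $\mathfrak{g}$ extending the diagram automorphism $\tau$ (resp.\ $\kappa$) produces the representative realizing the twisted double Satake diagram, which is exactly what $\mathrm{Ad}(J_{2m}')$ and $\tilde{\kappa}\,\mathrm{Ad}(I_{c,8-c})\,\tilde{\kappa}^{-1}$ do. Your identification of the single nontrivial verification (that $I_{2m-1,2m+1}$, being in $O(4m)\setminus SO(4m)$, induces the tail-swap $\tau$ on the $D_{2m}$ diagram) matches the paper's intent.
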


From Corollary \ref{cor:cst_classify},
with a few exceptions,
the isomorphism class
$[(\mathfrak{g},\theta_{1},\theta_{2})]$
is uniquely determined
by means of the Lie algebra structures of $\mathfrak{k}_{1}$ and 
$\mathfrak{k}_{2}$.
Then, there is no confusion when we write
$[(\mathfrak{g},\mathfrak{k}_{1},\mathfrak{k}_{2})]$
in place of $[(\mathfrak{g},\theta_{1},\theta_{2})]$
except for compact simple symmetric triads as in Corollary \ref{cor:cst_classify}, (2-a) and (3-b).
On the other hand,
in the case of (2-a) in Corollary \ref{cor:cst_classify},
we shall use the symbols
$[(\mathfrak{so}(4m),\mathfrak{u}(2m),\mathfrak{u}(2m))]$
and 
$[(\mathfrak{so}(4m),\mathfrak{u}(2m),\mathfrak{u}(2m)')]$
as the isomorphism classes of 
$(\mathfrak{so}(4m),\mathrm{Ad}(J_{2m}),\mathrm{Ad}(J_{2m}))$ and
$(\mathfrak{so}(4m),\mathrm{Ad}(J_{2m}),\mathrm{Ad}(J_{2m}'))$,
respectively.
In the case of (3-b) in Corollary \ref{cor:cst_classify},
we shall also 
write
$[(\mathfrak{so}(8),\mathfrak{so}(a)\oplus\mathfrak{so}(8-a),\mathfrak{so}(c)\oplus\mathfrak{so}(8-c))]$ and
$[(\mathfrak{so}(8),\mathfrak{so}(a)\oplus\mathfrak{so}(8-a),\tilde{\kappa}(\mathfrak{so}(c)\oplus\mathfrak{so}(8-c)))]$
as the isomorphism classes of 
$(\mathfrak{so}(8),\mathrm{Ad}(I_{a,8-a}),\mathrm{Ad}(I_{c,8-c}))$ and
$(\mathfrak{so}(8),\mathrm{Ad}(I_{a,8-a}),\tilde{\kappa}\mathrm{Ad}(I_{c,8-c})\tilde{\kappa}^{-1})$,
respectively.

\subsubsection{Determination of rank and order for double $\sigma$-systems}

Based on the classification,
we will determine the rank and the order of
the double $\sigma$-system $(\Delta,\sigma_{1},\sigma_{2})$ for compact symmetric triads
$(\mathfrak{g},\theta_{1},\theta_{2})$
such that $\mathfrak{g}$ is simple.
Since $(\Delta,\sigma_{1},\sigma_{2})$ is canonical,
we have $\mathrm{rank}[(\Delta,\sigma_{1},\sigma_{2})]=\mathrm{rank}(\Delta,\sigma_{1},\sigma_{2})$
and $\mathrm{ord}[(\Delta,\sigma_{1},\sigma_{2})]=\mathrm{ord}(\Delta,\sigma_{1},\sigma_{2})$.

First, we consider the case when $\theta_{1}\sim\theta_{2}$.
Then
$(\Delta,\sigma_{1},\sigma_{2})\sim(\Delta,\sigma_{1},\sigma_{1})$
holds.
Since $(\Delta,\sigma_{1},\sigma_{1})$ is canonical,
by Theorem \ref{thm:dsig_dsatake_equiv},
we obtain
$\mathrm{rank}(\Delta, \sigma_{1},\sigma_{2})=\mathrm{rank}(\Delta,\sigma_{1},\sigma_{1})
=\mathrm{rank}(\mathfrak{g},\theta_{1})$
and
$\mathrm{ord}(\Delta, \sigma_{1},\sigma_{2})=\mathrm{ord}(\Delta,\sigma_{1},\sigma_{1})=1$.
In addition, we have
the value of $\mathrm{rank}(\mathfrak{g},\theta_{1})$ from TABLE V
in \cite{Helgason}.
Thus, we have determined
$\mathrm{rank}[(\Delta,\sigma_{1},\sigma_{2})]$
and
$\mathrm{ord}[(\Delta,\sigma_{1},\sigma_{2})]$
in the case when $\theta_{1}\sim\theta_{2}$.

Secondly,
we consider the case when $\theta_{1}\not\sim\theta_{2}$.
In a similar manner as in Subsection \ref{sec:NssSd},
$(\Delta,\sigma_{1},\sigma_{2})$
can be reconstructed from its double Satake diagram.
Then, a direct calculation gives the rank and the order of $(\Delta,\sigma_{1},\sigma_{2})$.

\begin{ex}
Let $(\mathfrak{g},\mathfrak{k}_{1},\mathfrak{k}_{2})=(\mathfrak{so}(8),\mathfrak{so}(1)\oplus\mathfrak{so}(7),\tilde{\kappa}(\mathfrak{so}(2)\oplus\mathfrak{so}(6)))$
and $(\Delta,\sigma_{1},\sigma_{2})$ denote its double $\sigma$-system.
Then,
$(S(\Pi,\Pi_{0}^{(1)},p^{(1)}),S(\Pi,\kappa(\Pi_{0}^{(2)}),\kappa\cdot p^{(2)}))$
as in Table \ref{table:Satake_DI}
gives the double Satake diagram of $(\Delta,\sigma_{1},\sigma_{2})$.
We write $\Pi=\{\alpha_{1},\dotsc,\alpha_{4}\}$
by means of the standard basis $e_{1},\dotsc, e_{4}$ of $\mathbb{R}^{4}$ as in Notation \ref{nota:Dr}.
Under this setting, we have
\begin{equation}\label{eqn:sigsso8so1726}
\sigma_{1}(e_{1},e_{2},e_{3},e_{4})=(e_{1},-e_{2},-e_{3},-e_{4}),\quad
\sigma_{2}(e_{1},e_{2},e_{3},e_{4})=(e_{2},e_{1},e_{4},e_{3}).
\end{equation}
Since we have
\[
\mathfrak{t}^{\sigma_{1}}\cap\mathfrak{t}^{\sigma_{2}}=
\mathbb{R}e_{1}\cap(\mathbb{R}(e_{1}+e_{2})\oplus\mathbb{R}(e_{3}+e_{4}))=\{0\},
\]
$\mathrm{rank}(\Delta,\sigma_{1},\sigma_{2})=0$ holds.
We also obtain $\mathrm{ord}(\Delta,\sigma_{1},\sigma_{2})=4$
by means of \eqref{eqn:sigsso8so1726}.
\end{ex}

We can carry out a similar calculation
for the other compact symmetric triads
$(\mathfrak{g},\theta_{1},\theta_{2})$ such that
$\mathfrak{g}$ is simple and that $\theta_{1}\not\sim\theta_{2}$.
Then, we have the following proposition.

\begin{pro}
Table {\rm \ref{table:rank_ord}} exhibits the ranks and the orders
of the isomorphism classes of the double $\sigma$-systems corresponding to compact 
symmetric triads $(\mathfrak{g},\theta_{1},\theta_{2})$ such that
$\mathfrak{g}$ is simple and that $\theta_{1}\not\sim\theta_{2}$.
\end{pro}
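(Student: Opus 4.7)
The plan is to verify Table \ref{table:rank_ord} by a case-by-case computation, following the template of the worked example for $(\mathfrak{so}(8),\mathfrak{so}(1)\oplus\mathfrak{so}(7),\tilde{\kappa}(\mathfrak{so}(2)\oplus\mathfrak{so}(6)))$ immediately preceding the statement. First I would enumerate every isomorphism class $[(\mathfrak{g},\theta_{1},\theta_{2})]$ with $\mathfrak{g}$ simple and $\theta_{1}\not\sim\theta_{2}$. These are read off Corollary \ref{cor:cst_classify}: for each compact simple $\mathfrak{g}$, take all (ordered) pairs of conjugacy classes of involutions (from Table \ref{table:fixed_pt_algebra}) excluding the diagonal pairs, and include the extra non-standard classes arising in cases (2-a) and (3-b) of the corollary. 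For each such class, the associated double Satake diagram $(S_{1},S_{2})$ is recorded, using \cite[TABLE VI]{Helgason} for the individual Satake diagrams $S_{i}$ of the compact symmetric pairs $(\mathfrak{g},\theta_{i})$, and using the prescriptions of Theorem \ref{thm:dsatake_classify} to glue $S_{1}$ and $S_{2}$ onto a common $(\sigma_{1},\sigma_{2})$-fundamental system $\Pi$.

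Second, for each case I would reconstruct $(\Delta,\sigma_{1},\sigma_{2})$ explicitly. The Dynkin diagram of $\Pi$ fixes $\Delta$, $\mathfrak{t}=\mathrm{span}_{\mathbb{R}}(\Pi)$, and $\INN{\,}{\,}$. The coloring $\Pi_{i,0}=\Pi\cap\Delta_{i,0}$ and the Satake involution $p_{i}$ attached to $S_{i}$ then determine $\sigma_{i}|_{\Pi}$ via formulas \eqref{eqn:satakesigma1} and \eqref{eqn:satakesigma2}, and this extends uniquely to a linear isometry $\sigma_{i}\in O(\mathfrak{t})$. Realizing $\Delta$ in a standard coordinate system (the $e_{i}$-basis of \cite{Bo} for the classical series, and the standard models for the exceptional ones), each $\sigma_{i}$ becomes an explicit signed permutation, exactly as in the display \eqref{eqn:sigsso8so1726} of the worked example.

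Third, $\mathrm{rank}[(\Delta,\sigma_{1},\sigma_{2})]=\dim(\mathfrak{t}^{\sigma_{1}}\cap\mathfrak{t}^{\sigma_{2}})$ is obtained as the kernel of the linear system $(\sigma_{1}-\mathrm{id})H=0=(\sigma_{2}-\mathrm{id})H$, and $\mathrm{ord}[(\Delta,\sigma_{1},\sigma_{2})]=\mathrm{ord}(\sigma_{1}\sigma_{2})$ is read off the cycle structure of the composed signed permutation. Both numbers are well-defined by Theorem \ref{thm:dsig_dsatake_equiv}, since the representatives produced by the reconstruction are automatically canonical. The case $\theta_{1}\sim\theta_{2}$, which is excluded from the proposition, is in any event trivial: one has $\mathrm{rank}[(\Delta,\sigma_{1},\sigma_{2})]=\mathrm{rank}(\mathfrak{g},\theta_{1})$ (already tabulated in \cite{Helgason}) and $\mathrm{ord}[(\Delta,\sigma_{1},\sigma_{2})]=1$.

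The main obstacle is not conceptual but organizational bookkeeping: the case list is long, and particular care is required for (i) the exceptional algebras $\mathfrak{e}_{6},\mathfrak{e}_{7},\mathfrak{e}_{8},\mathfrak{f}_{4},\mathfrak{g}_{2}$, where a concrete coordinate model for the root system must be chosen before $\sigma_{1},\sigma_{2}$ can be written as matrices; (ii) the non-standard class $[(\mathfrak{so}(4m),\mathfrak{u}(2m),\mathfrak{u}(2m)')]$ of Corollary \ref{cor:cst_classify} (2-a), where the $\tau$-twist genuinely changes both the fixed subspace and the order compared with the untwisted class; and (iii) the triality case $\mathfrak{so}(8)$ of (3-b), where the $\mathfrak{S}_{3}$-action on $\Pi$ must be tracked faithfully when passing between the untwisted and $\kappa$-twisted gluings. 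Once these coordinate choices are fixed and the classification is arranged into a table, each entry of Table \ref{table:rank_ord} is produced by a single short computation of the type illustrated in the worked example.
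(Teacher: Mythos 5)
Your proposal is correct and follows essentially the same route as the paper: reconstruct $(\Delta,\sigma_{1},\sigma_{2})$ from the double Satake diagram via \eqref{eqn:satakesigma1}--\eqref{eqn:satakesigma2}, realize the involutions explicitly in the standard coordinates of Notation \ref{nota:Dr}, and read off $\dim(\mathfrak{t}^{\sigma_{1}}\cap\mathfrak{t}^{\sigma_{2}})$ and $\mathrm{ord}(\sigma_{1}\sigma_{2})$ case by case, with well-definedness supplied by Theorem \ref{thm:dsig_dsatake_equiv}; this is exactly the paper's argument (which likewise presents one worked example and omits the remaining computations). The only cosmetic caveat is that in some $\tilde{\kappa}$-twisted $\mathfrak{so}(8)$ cases $\sigma_{2}$ is not literally a signed permutation of the $e_{i}$, but it is still an explicit orthogonal involution and the computation is unaffected.
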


\begin{rem}\label{rem:table_ro}
As will be shown later,
Table {\rm \ref{table:rank_ord}} exhibits the ranks and the orders
of the isomorphism classes of compact symmetric triads $(G,\theta_{1},\theta_{2})$
such that $G$ is simple.
Indeed, this is shown by means of
Theorems \ref{thm:cst_exist_can} and \ref{thm:cst_RO_can}
in the next section.
\end{rem}

\begin{table}[H]
\centering
\caption{Rank and order for double $\sigma$-system corresponding to $(\mathfrak{g},\theta_{1},\theta_{2})$ with $\theta_{1}\not\sim\theta_{2}$}\label{table:rank_ord}
\renewcommand\arraystretch{2.0}
\tiny
\begin{tabular}{cccc}
\hline
\hline
$(\mathfrak{g}, \mathfrak{k}_{1},\mathfrak{k}_{2})$ & Rank & Order &  Remark\\
\hline
\hline
$(\mathfrak{su}(2m), \mathfrak{so}(2m), \mathfrak{sp}(m))$ & $m-1$ & $2$ & \\
\hline
$(\mathfrak{su}(n), \mathfrak{so}(n), \mathfrak{s}(\mathfrak{u}(a) \oplus\mathfrak{u}(b)))$ & $a$ & $2$ & $n\geq 2a$\\
\hline
$(\mathfrak{su}(2m), \mathfrak{sp}(m), \mathfrak{s}(\mathfrak{u}(a)\oplus\mathfrak{u}(b)))$
& $\left[\dfrac{a}{2}\right]$ & $\begin{cases}4 & (\text{$a$: odd, $m>a$} ),\\2 & (\text{otherwise})\end{cases}$ &$m\geq a$ \\
\hline
$(\mathfrak{su}(n), \mathfrak{s}(\mathfrak{u}(a)\oplus\mathfrak{u}(b)), \mathfrak{s}(\mathfrak{u}(c) \oplus\mathfrak{u}(d)))$
& $a$ & $2$ & $a<c \leq d<b$ \\
\hline
$(\mathfrak{so}(n), \mathfrak{so}(a)\oplus\mathfrak{so}(b), \mathfrak{so}(c) \oplus\mathfrak{so}(d))$
& $a$ & $2$ & $a<c \leq d<b$ \\
\hline
$(\mathfrak{so}(8), \mathfrak{so}(a)\oplus\mathfrak{so}(b),\tilde{\kappa}(\mathfrak{so}(c) \oplus\mathfrak{so}(d)))$
& $\begin{cases}0 & ((a,c)=(1,\{1,2,3\})),\\1&((a,c)=(2,\{2,3\})),\\2 & ((a,c)=(3,3))\end{cases}$ & $\begin{cases}
2 & ((a,c)=(2,2)),\\
3 & ((a,c)=(1,1),(3,3)),\\
4 & ((a,c)=(1,2),(2,3)),\\
6 & ((a,c)=(1,3))\end{cases}$& \\
\hline
$(\mathfrak{so}(2m), \mathfrak{so}(a)\oplus\mathfrak{so}(b), \mathfrak{u}(m))$
& $\left[\dfrac{a}{2}\right]$ & $\begin{cases}4 & (\text{$a$: odd, $m>a$} ),\\2 & (\text{otherwise})\end{cases}$ &$m\geq a$ \\
\hline
$(\mathfrak{so}(4m), \mathfrak{u}(2m), \mathfrak{u}(2m)')$ & $m-1$ & $2$ & \\
\hline
$(\mathfrak{sp}(n), \mathfrak{u}(n), \mathfrak{sp}(a)\oplus\mathfrak{sp}(b))$
& $a$ & $2$ & $n\geq 2a$\\
\hline
$(\mathfrak{sp}(n), \mathfrak{sp}(a)\oplus\mathfrak{sp}(b), \mathfrak{sp}(c)\oplus\mathfrak{sp}(d))$
& $a$ & $2$ & $a<c \leq d<b$ \\
\hline
$(\mathfrak{e}_{6}, \mathfrak{sp}(4), \mathfrak{su}(6)\oplus \mathfrak{su}(2))$ & $4$ & $2$ & \\
\hline
$(\mathfrak{e}_{6}, \mathfrak{sp}(4), \mathfrak{so}(10)\oplus\mathfrak{so}(2))$ & $2$ & $2$ & \\
\hline
$(\mathfrak{e}_{6}, \mathfrak{sp}(4), \mathfrak{f}_{4})$ & $2$ & $2$ & \\
\hline
$(\mathfrak{e}_{6}, \mathfrak{su}(6)\oplus\mathfrak{su}(2), \mathfrak{so}(10)\oplus\mathfrak{so}(2))$ & $2$ & $2$ & \\
\hline
$(\mathfrak{e}_{6}, \mathfrak{su}(6)\oplus\mathfrak{su}(2), \mathfrak{f}_{4})$ & $1$ & $2$ & \\
\hline
$(\mathfrak{e}_{6}, \mathfrak{so}(10)\oplus\mathfrak{so}(2), \mathfrak{f}_{4})$ & $1$ & $2$ & \\
\hline
$(\mathfrak{e}_{7}, \mathfrak{su}(8), \mathfrak{so}(12)\oplus\mathfrak{su}(2))$ & $4$ & $2$ & \\
\hline
$(\mathfrak{e}_{7}, \mathfrak{su}(8), \mathfrak{e}_{6}\oplus\mathfrak{so}(2))$ & $3$ & $2$ & \\
\hline
$(\mathfrak{e}_{7}, \mathfrak{so}(12) \oplus\mathfrak{su}(2), \mathfrak{e}_{6}\oplus\mathfrak{so}(2))$ & $2$ & $2$ & \\
\hline
$(\mathfrak{e}_{8}, \mathfrak{so}(16), \mathfrak{e}_{7} \oplus\mathfrak{su}(2))$ & $4$ & $2$ & \\
\hline
$(\mathfrak{f}_{4}, \mathfrak{su}(2)\oplus\mathfrak{sp}(3), \mathfrak{so}(9))$ & $1$ & $2$ & \\
\hline
\hline
\end{tabular}
\renewcommand\arraystretch{1.0}
\end{table}

\subsubsection{Special isomorphism and self-duality}\label{sec:cst_SI}

First, we consider special isomorphisms for compact symmetric triads.
In the theory of compact Lie algebras,
there are some special isomorphisms
for low-dimensional compact Lie algebras
(cf.~\cite[pp.~519--520]{Helgason}).
Hence we find that there are some overlaps in Table \ref{table:fixed_pt_algebra}.
This obeys special isomorphisms for compact symmetric triads with low rank
as follows.

\begin{cor}\label{cor:si_CST}
The following relations hold:
\begin{enumerate}
\item $(\mathfrak{so}(8),\mathfrak{u}(4),\mathfrak{so}(4)\oplus\mathfrak{so}(4))\sim
(\mathfrak{so}(8),\mathfrak{so}(2)\oplus\mathfrak{so}(6),\mathfrak{so}(4)\oplus\mathfrak{so}(4))$.
\item $(\mathfrak{so}(5),\mathfrak{so}(1)\oplus\mathfrak{so}(4),\mathfrak{so}(2)\oplus\mathfrak{so}(3))
\sim (\mathfrak{sp}(2),\mathfrak{sp}(1)\oplus\mathfrak{sp}(1),\mathfrak{u}(2))$.
\item $(\mathfrak{su}(4),\mathfrak{so}(4),\mathfrak{sp}(2))
\sim (\mathfrak{so}(6),\mathfrak{so}(3)\oplus\mathfrak{so}(3),\mathfrak{so}(1)\oplus\mathfrak{so}(5))$.
\item $(\mathfrak{su}(4),\mathfrak{so}(4),\mathfrak{s}(\mathfrak{u}(2)\oplus\mathfrak{u}(2)))
\sim (\mathfrak{so}(6),\mathfrak{so}(3)\oplus\mathfrak{so}(3),\mathfrak{so}(2)\oplus\mathfrak{so}(4))$.
\item $(\mathfrak{su}(4),\mathfrak{so}(4),\mathfrak{s}(\mathfrak{u}(1)\oplus\mathfrak{u}(3)))
\sim (\mathfrak{so}(6),\mathfrak{so}(3)\oplus\mathfrak{so}(3),\mathfrak{u}(3))$.
\item $(\mathfrak{su}(4),\mathfrak{sp}(2),\mathfrak{s}(\mathfrak{u}(2)\oplus\mathfrak{u}(2)))
\sim (\mathfrak{so}(6),\mathfrak{so}(1)\oplus\mathfrak{so}(5),\mathfrak{so}(2)\oplus\mathfrak{so}(4))$.
\item $(\mathfrak{su}(4),\mathfrak{sp}(2),\mathfrak{s}(\mathfrak{u}(1)\oplus\mathfrak{u}(3)))
\sim(\mathfrak{so}(6),\mathfrak{so}(1)\oplus\mathfrak{so}(5),\mathfrak{u}(3))$.
\item $(\mathfrak{su}(4),\mathfrak{s}(\mathfrak{u}(2)\oplus\mathfrak{u}(2)),\mathfrak{s}(\mathfrak{u}(1)\oplus\mathfrak{u}(3)))
\sim (\mathfrak{so}(6),\mathfrak{so}(2)\oplus\mathfrak{so}(4),\mathfrak{u}(3))$.
\end{enumerate}
\end{cor}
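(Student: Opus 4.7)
The plan is to derive each claimed equivalence from the classification in Corollary \ref{cor:cst_classify}, combined with the classical exceptional Lie algebra isomorphisms $\mathfrak{sp}(2)\simeq\mathfrak{so}(5)$, $\mathfrak{su}(4)\simeq\mathfrak{so}(6)$, together with the standard low rank identifications $\mathfrak{so}(4)\simeq\mathfrak{so}(3)\oplus\mathfrak{so}(3)$, $\mathfrak{so}(3)\simeq\mathfrak{sp}(1)\simeq\mathfrak{su}(2)$, and $\mathfrak{so}(2)\simeq\mathfrak{u}(1)$. Any such isomorphism $\phi\colon \mathfrak{g}\to\mathfrak{g}'$ transports a compact symmetric triad $(\mathfrak{g},\theta_{1},\theta_{2})$ to $(\mathfrak{g}',\phi\theta_{1}\phi^{-1},\phi\theta_{2}\phi^{-1})$, with fixed point subalgebras carried over by $\phi$. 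Hence it suffices to identify the image subalgebras as abstract Lie algebras and then invoke the relevant part of Corollary \ref{cor:cst_classify}.

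For items (3)--(8), apply $\phi\colon\mathfrak{su}(4)\to\mathfrak{so}(6)$. Consulting Table \ref{table:fixed_pt_algebra}, each fixed point subalgebra on the $\mathfrak{su}(4)$ side is abstractly isomorphic to exactly one fixed point subalgebra on the $\mathfrak{so}(6)$ side: $\mathfrak{so}(4)\mapsto\mathfrak{so}(3)\oplus\mathfrak{so}(3)$; $\mathfrak{sp}(2)\simeq\mathfrak{so}(5)\mapsto\mathfrak{so}(1)\oplus\mathfrak{so}(5)$; $\mathfrak{s}(\mathfrak{u}(2)\oplus\mathfrak{u}(2))\simeq\mathfrak{u}(1)\oplus\mathfrak{su}(2)\oplus\mathfrak{su}(2)\mapsto\mathfrak{so}(2)\oplus\mathfrak{so}(4)$; and $\mathfrak{s}(\mathfrak{u}(1)\oplus\mathfrak{u}(3))=\mathfrak{u}(3)\mapsto\mathfrak{u}(3)$. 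Since $\mathfrak{so}(6)\not\simeq\mathfrak{so}(4m)$ for $m\geq 2$, part (1) of Corollary \ref{cor:cst_classify} applies: the isomorphism class of a triple is determined by the abstract isomorphism types of its fixed point subalgebras, which gives (3)--(8). Item (2) is handled identically via $\mathfrak{so}(5)\simeq\mathfrak{sp}(2)$, using $\mathfrak{so}(1)\oplus\mathfrak{so}(4)\simeq\mathfrak{sp}(1)\oplus\mathfrak{sp}(1)$ and $\mathfrak{so}(2)\oplus\mathfrak{so}(3)\simeq\mathfrak{u}(1)\oplus\mathfrak{su}(2)\simeq\mathfrak{u}(2)$.

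Item (1) takes place entirely within $\mathfrak{g}=\mathfrak{so}(8)$ and is the only place where triality intervenes. From $\mathfrak{u}(4)=\mathfrak{u}(1)\oplus\mathfrak{su}(4)\simeq\mathfrak{so}(2)\oplus\mathfrak{so}(6)$ as abstract Lie algebras, Theorem \ref{thm:csp_iso_iff} forces the two involutions of $\mathfrak{so}(8)$ with these fixed point subalgebras into a common $\mathrm{Aut}(\mathfrak{so}(8))$-orbit (they are not conjugate under inner automorphisms, being fused precisely by a triality outer automorphism). Both triples of item (1) therefore lie in $\mathcal{T}(\mathfrak{so}(8),\mathfrak{u}(4),\mathfrak{so}(4)\oplus\mathfrak{so}(4))$, which by Corollary \ref{cor:cst_classify}(3-a) is a singleton because one of the factors is $\mathfrak{so}(4)\oplus\mathfrak{so}(4)$. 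The main subtle point in the whole argument is precisely this step: it is essential that $\sim$ in Definition \ref{dfn:cst_local_equiv} is defined via $\mathrm{Aut}(\mathfrak{g})$ rather than $\mathrm{Int}(\mathfrak{g})$, so that a triality fusion is permitted.
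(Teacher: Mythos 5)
Your proposal is correct and follows essentially the same route as the paper: both arguments reduce each item to Corollary \ref{cor:cst_classify} after matching the fixed point subalgebras under the relevant special isomorphism, with item (1) handled via Corollary \ref{cor:cst_classify} (3-a). The only cosmetic difference is that the paper realizes item (1) with explicit involutions $\mathrm{Ad}(I_{2,6})$, $\mathrm{Ad}(J_{4})$, $\mathrm{Ad}(I_{4,4})$ and leaves (2)--(8) as ``a similar argument,'' whereas you argue abstractly from Theorem \ref{thm:csp_iso_iff} and spell out the transport along $\mathfrak{su}(4)\simeq\mathfrak{so}(6)$ and $\mathfrak{so}(5)\simeq\mathfrak{sp}(2)$.
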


\begin{proof}
(1) For $i=1,2$, let $\theta_{i}$ and $\theta_{i}'$ be the involutions of $\mathfrak{g}=\mathfrak{so}(8)$ defined by
\[
\begin{array}{ccc}
\theta_{1}=\mathrm{Ad}(I_{2,6}),&
\theta_{1}'=\mathrm{Ad}(J_{4}),&
\theta_{2}=\theta_{2}'=\mathrm{Ad}(I_{4,4}).
\end{array}
\]
Then we have $\mathfrak{k}_{1}\simeq\mathfrak{so}(2)\oplus\mathfrak{so}(6)\simeq\mathfrak{u}(4)\simeq\mathfrak{k}_{1}'$
and $\mathfrak{k}_{2}=\mathfrak{k}_{2}'\simeq\mathfrak{so}(4)\oplus\mathfrak{so}(4)$.
It follows from Corollary \ref{cor:cst_classify}, (3-a)
that $(\mathfrak{g},\theta_{1},\theta_{2})\sim(\mathfrak{g},\theta_{1}',\theta_{2}')$ holds.
In a similar argument we get (2)--(8).
\end{proof}

A compact symmetric triad
$(\mathfrak{g},\theta_{1},\theta_{2})$
is said to be \textit{self-dual},
if it satisfies $(\mathfrak{g},\theta_{1},\theta_{2})\sim(\mathfrak{g},\theta_{2},\theta_{1})$.
Secondly, we classify self-dual compact symmetric triads
$(\mathfrak{g},\theta_{1},\theta_{2})$ in the case when $\mathfrak{g}$ is simple.
It is verified that,
if two compact symmetric triads
$(\mathfrak{g},\theta_{1},\theta_{2})$ and $(\mathfrak{g},\theta_{1}',\theta_{2}')$
are isomorphic, and 
$(\mathfrak{g},\theta_{1},\theta_{2})$
is self-dual,
then so is $(\mathfrak{g},\theta_{1}',\theta_{2}')$.
In the case when $\theta_{1}\sim\theta_{2}$,
it follows from 
$(\mathfrak{g},\theta_{1},\theta_{1})\sim(\mathfrak{g},\theta_{1},\theta_{2})$
that $(\mathfrak{g},\theta_{1},\theta_{2})$ is self-dual.
In the case when $\theta_{1}\not\sim\theta_{2}$,
we can determine whether $(\mathfrak{g},\theta_{1},\theta_{2})$
is self-dual or not by means of our classification as follows.

\begin{cor}\label{cor:sdual_CST}
Let $\theta_{1}$ and $\theta_{2}$ be involutions
on a compact simple Lie algebra with $\theta_{1}\not\sim\theta_{2}$.
The only self-dual compact symmetric triads are given by
$(\mathfrak{so}(4m),\mathrm{Ad}(J_{2m}),\mathrm{Ad}(J_{2m}'))$
with $m\geq 3$, and by
$(\mathfrak{so}(8),\mathrm{Ad}(I_{a,8-a}),\tilde{\kappa}\mathrm{Ad}(I_{a,8-a})\tilde{\kappa}^{-1})$ with $a\in\{1,2,3\}$.

In particular, $\mathfrak{k}_{1}\simeq\mathfrak{k}_{2}$ implies that
$(\mathfrak{g},\theta_{1},\theta_{2})$ is self-dual.
\end{cor}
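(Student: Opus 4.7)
The approach is to reduce the problem to the classification in Corollary \ref{cor:cst_classify}, using the fact that the order $\mathrm{ord}[(\mathfrak{g},\theta_{1},\theta_{2})]$ is invariant under swapping $\theta_{1}$ and $\theta_{2}$, because $\mathrm{ord}(\theta_{1}\theta_{2})=\mathrm{ord}(\theta_{2}\theta_{1})$.

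First I would prove that self-duality forces $\mathfrak{k}_{1}\simeq\mathfrak{k}_{2}$: if $(\mathfrak{g},\theta_{1},\theta_{2})\sim(\mathfrak{g},\theta_{2},\theta_{1})$, then Definition \ref{dfn:cst_local_equiv} provides $\varphi\in\mathrm{Aut}(\mathfrak{g})$ with $\theta_{2}=\varphi\theta_{1}\varphi^{-1}$, whence $\mathfrak{k}_{2}=\varphi(\mathfrak{k}_{1})\simeq\mathfrak{k}_{1}$. For the ``in particular'' direction, the case $\theta_{1}\sim\theta_{2}$ is trivially self-dual via $(\mathfrak{g},\theta_{1},\theta_{2})\sim(\mathfrak{g},\theta_{1},\theta_{1})$, so both statements of the corollary reduce to analyzing the case $\mathfrak{k}_{1}\simeq\mathfrak{k}_{2}$ together with $\theta_{1}\not\sim\theta_{2}$.

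Under $\mathfrak{k}_{1}\simeq\mathfrak{k}_{2}$, Theorem \ref{thm:csp_iso_iff} yields $[\theta_{1}]=[\theta_{2}]$ in $\mathrm{Inv}(\mathfrak{g})/\mathrm{Aut}(\mathfrak{g})$, so the swapped class $[(\mathfrak{g},\theta_{2},\theta_{1})]$ lies in the same set $\mathcal{T}(\mathfrak{g},\mathfrak{k}_{1},\mathfrak{k}_{2})$. I would then invoke Corollary \ref{cor:cst_classify} and split into three situations. In every case other than (2-a) and (3-b), $\mathcal{T}(\mathfrak{g},\mathfrak{k}_{1},\mathfrak{k}_{2})$ is a singleton containing the representative $[(\mathfrak{g},\theta_{1},\theta_{1})]$; this forces $\theta_{1}\sim\theta_{2}$, contradicting the hypothesis, so these cases contribute nothing. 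In Case (2-a) the two classes $[(\mathfrak{so}(4m),\mathrm{Ad}(J_{2m}),\mathrm{Ad}(J_{2m}))]$ and $[(\mathfrak{so}(4m),\mathrm{Ad}(J_{2m}),\mathrm{Ad}(J_{2m}'))]$ are distinguished by order ($1$ versus $2$ by Table \ref{table:rank_ord}). In Case (3-b), $\mathfrak{k}_{1}\simeq\mathfrak{k}_{2}$ forces the unordered pairs $\{a,8-a\}=\{c,8-c\}$ and hence $a=c$ within $\{1,2,3\}$, and the two resulting classes are again separated by order (trivial class of order $1$ versus non-trivial of order $\geq 2$ by Table \ref{table:rank_ord}).

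In both (2-a) and (3-b), swap-invariance of the order sends each non-trivial class to a class of the same order, which by the separation just observed can only be that class itself; hence the listed triads $(\mathfrak{so}(4m),\mathrm{Ad}(J_{2m}),\mathrm{Ad}(J_{2m}'))$ with $m\geq 3$ and $(\mathfrak{so}(8),\mathrm{Ad}(I_{a,8-a}),\tilde{\kappa}\mathrm{Ad}(I_{a,8-a})\tilde{\kappa}^{-1})$ with $a\in\{1,2,3\}$ are self-dual. This simultaneously exhibits exactly the self-dual triads named in the statement and proves the ``in particular'' assertion. The only bookkeeping task, not a real obstacle, is verifying that the orders in Table \ref{table:rank_ord} genuinely separate the two classes for every admissible parameter value of Cases (2-a) and (3-b), which is a direct inspection of the table.
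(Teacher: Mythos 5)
Your proof is correct, and its skeleton --- the necessity of $\mathfrak{k}_{1}\simeq\mathfrak{k}_{2}$ being immediate, and the reduction of sufficiency to the classification in Corollary \ref{cor:cst_classify} --- is exactly the paper's. Where you genuinely diverge is in verifying that the two exceptional families are actually self-dual. The paper does this by hand: for $(\mathfrak{so}(4m),\mathrm{Ad}(J_{2m}),\mathrm{Ad}(J_{2m}'))$ it conjugates explicitly by $\mathrm{Ad}(I_{2m-1,2m+1})$ using $J_{2m}'=I_{2m-1,2m+1}J_{2m}$, and for the $\mathfrak{so}(8)$ triads it compares the double Satake diagrams of the triad and its swap and invokes Theorem \ref{thm:cst_dsatake_sim}, $(3)\Rightarrow(1)$. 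You instead observe that the swap descends to isomorphism classes, keeps you inside the two-element set $\mathcal{T}(\mathfrak{g},\mathfrak{k}_{1},\mathfrak{k}_{1})$, and cannot send the non-diagonal class to the diagonal one, so it must fix it; this is slicker and avoids all computation. Two small points you should make explicit. First, the swap-invariance you rely on requires checking that $(\mathfrak{g},\theta_{1},\theta_{2})\sim(\mathfrak{g},\theta_{1}',\theta_{2}')$ implies $(\mathfrak{g},\theta_{2},\theta_{1})\sim(\mathfrak{g},\theta_{2}',\theta_{1}')$; this is routine (given $\varphi,\tau$ as in Definition \ref{dfn:cst_local_equiv}, take $\psi=\tau\varphi$ and $\rho=\tau^{-1}$ for the swapped triads), but it is the load-bearing step and deserves a line. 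Second, you do not actually need the numerical orders from Table \ref{table:rank_ord}: the only separation required is ``order $1$ versus order $>1$,'' i.e.\ the fact that the diagonal class is the unique class with $\theta_{1}\sim\theta_{2}$, which the paper establishes directly in Section~2. Avoiding the table is preferable anyway, since the identification of its entries with $\mathrm{ord}[(\mathfrak{g},\theta_{1},\theta_{2})]$ is only justified later via Theorem \ref{thm:cst_RO_can}.
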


\begin{proof}
It is clear that, if $(\mathfrak{g},\theta_{1},\theta_{2})$ is self-dual,
then $\mathfrak{k}_{1}\simeq\mathfrak{k}_{2}$ holds.
Conversely, from the classification for compact symmetric triads,
the only compact symmetric triads
$(\mathfrak{g},\theta_{1},\theta_{2})$ satisfying $\mathfrak{k}_{1}\simeq\mathfrak{k}_{2}$
are ones as in the statement.
Furthermore, it is verified that they are self-dual as follows:
By using $\mathrm{Ad}(I_{2m-1,2m+1})^{2}=1$ we have
\begin{align*}
(\mathfrak{g},\theta_{1},\theta_{2})
&=(\mathfrak{so}(4m),\mathrm{Ad}(J_{2m}),\mathrm{Ad}(I_{2m-1,2m+1})\mathrm{Ad}(J_{2m})\mathrm{Ad}(I_{2m-1,2m+1})^{-1})\\
&\sim(\mathfrak{so}(4m),\mathrm{Ad}(I_{2m-1,2m+1})\mathrm{Ad}(J_{2m})\mathrm{Ad}(I_{2m-1,2m+1})^{-1},\mathrm{Ad}(J_{2m}))\\
&=(\mathfrak{g},\theta_{2},\theta_{1}).
\end{align*}
It is shown that the double Satake diagram
for
$(\mathfrak{so}(8),\mathrm{Ad}(I_{a,8-a}),\tilde{\kappa}\mathrm{Ad}(I_{a,8-a})\tilde{\kappa}^{-1})$ and
that for
$(\mathfrak{so}(8),\tilde{\kappa}\mathrm{Ad}(I_{a,8-a})\tilde{\kappa}^{-1},\mathrm{Ad}(I_{a,8-a}))$
are isomorphic.
Thus, by Theorem \ref{thm:cst_dsatake_sim}, $(3)\Rightarrow(1)$
we have $(\mathfrak{so}(8),\mathrm{Ad}(I_{a,8-a}),\tilde{\kappa}\mathrm{Ad}(I_{a,8-a})\tilde{\kappa}^{-1})$
$\sim$$(\mathfrak{so}(8),\tilde{\kappa}\mathrm{Ad}(I_{a,8-a})\tilde{\kappa}^{-1},\mathrm{Ad}(I_{a,8-a}))$.
Hence we have the assertion.
\end{proof}

\section{Canonical forms in compact symmetric triads}\label{sec:cst_cf}

In Subsection \ref{sec:cst_can},
we define the canonicality for compact symmetric triads,
and give concrete examples of canonical compact symmetric triads.
In Subsection \ref{sec:cst_can_exist},
for any compact symmetric triad $(G,\theta_{1},\theta_{2})$,
we prove the existence of a canonical one
$(G,\theta_{1},\theta_{2}')\sim(G,\theta_{1},\theta_{2})$
in the case when $G$ is simple.
In Subsection \ref{sec:rankorder},
we give properties of canonical compact symmetric triads
(Theorem \ref{thm:cst_RO_can}).

\subsection{Definition and examples for canonical compact symmetric triads}\label{sec:cst_can}

Let $G$ be a compact connected semisimple
Lie group, and $\mathfrak{g}$ denote its Lie algebra.

\begin{dfn}\label{dfn:cst_can}
A compact symmetric triad
$(G,\theta_{1},\theta_{2})$
is said to be \textit{canonical},
if there exists a maximal abelian subalgebra $\mathfrak{t}$ of $\mathfrak{g}$
which satisfies the following conditions:
\begin{enumerate}
\item[(C1)] $\mathfrak{t}$ is quasi-canonical with respect to $(G,\theta_{1},\theta_{2})$,
that is,
$\mathfrak{t}$ satisfies the conditions (1) and (2) as in Definition \ref{dfn:cst_qcf}.
\item[(C2)] $\mathrm{ord}(\theta_{1}\theta_{2})=\mathrm{ord}(d\theta_{1}d\theta_{2}|_{\mathfrak{t}})$.
\end{enumerate}
Then, $\mathfrak{t}$ is said to be canonical with respect to 
$(G,\theta_{1},\theta_{2})$.
A \textit{canonical form} of $[(G,\theta_{1},\theta_{2})]$
is a representative $(G,\theta_{1}',\theta_{2}')$
of the isomorphism class $[(G,\theta_{1},\theta_{2})]$ such that
$(G,\theta_{1}',\theta_{2}')$
is canonical as a compact symmetric triad.
\end{dfn}

In the case when $(G,\theta_{1},\theta_{2})$ is canonical,
the condition (C2) implies
that
$\mathrm{ord}(\theta_{1}\theta_{2})$
and $\mathrm{ord}[(G,\theta_{1},\theta_{2})]$
are finite.
Here, we give examples of canonical compact symmetric triads
as follows.

\begin{ex}\label{ex:order1_can}
For any involution $\theta$ on $G$,
$(G,\theta,\theta)$ is canonical.
Indeed, let $\mathfrak{t}$
be a maximal abelian subalgebra of $\mathfrak{g}$ such that $\mathfrak{t}\cap\mathfrak{m}$
is a maximal abelian subspace of $\mathfrak{m}$.
Then $(G,\theta,\theta)$
and $\mathfrak{t}$ satisfy the two
conditions as in Definition
\ref{dfn:cst_can}.
\end{ex}

\begin{ex}\label{ex:com_can}
Any commutative compact symmetric triad
$(G,\theta_{1},\theta_{2})$ with $\theta_{1}\not\sim\theta_{2}$
is canonical.
Indeed, it follows from Lemma \ref{lem:comm_quasican}
that there exists
a maximal abelian subalgebra $\mathfrak{t}$ of $\mathfrak{g}$
such that $\mathfrak{t}\cap\mathfrak{m}_{i}$ and
$\mathfrak{t}\cap(\mathfrak{m}_{1}\cap\mathfrak{m}_{2})$
are maximal abelian subspaces of $\mathfrak{m}_{i}$ and $\mathfrak{m}_{1}\cap\mathfrak{m}_{2}$,
respectively.
We have shown that $\mathfrak{t}$ is quasi-canonical with respect to $(G,\theta_{1},\theta_{2})$.
In addition, from Lemma \ref{lem:t1simt2>dt1dt2=1}
we obtain
$2\leq \mathrm{ord}(d\theta_{1}d\theta_{2}|_{\mathfrak{t}})\leq \mathrm{ord}(\theta_{1}\theta_{2})=2$.
In particular, $\mathfrak{t}$ satisfies the condition (C2).
\end{ex}

In some sense,
the canonical forms are not uniquely determined,
namely, there exist two compact symmetric triads
$(G,\theta_{1},\theta_{2})\sim(G,\theta_{1}',\theta_{2}')$
such that they are canonical and that $(G,\theta_{1},\theta_{2})$
and $(G,\theta_{1}',\theta_{2}')$ are not equivalent
under the following equivalence relation $\equiv$.

\begin{dfn}
Two compact symmetric triads
$(G,\theta_{1},\theta_{2})$,
$(G,\theta_{1}',\theta_{2}')$ satisfy
$(G,\theta_{1},\theta_{2})\equiv(G,\theta_{1}',\theta_{2}')$,
if there exists $\varphi\in\mathrm{Aut}(G)$ satisfying
$\theta_{1}'=\varphi\theta_{1}\varphi^{-1}$ and 
$\theta_{2}'=\varphi\theta_{2}\varphi^{-1}$.
In a similar manner,
we define an equivalence relation on the set of compact symmetric triads at the Lie algebra level.
By the definition
$(G,\theta_{1},\theta_{2})\equiv(G,\theta_{1}',\theta_{2}')$
implies
$(G,\theta_{1},\theta_{2})\sim(G,\theta_{1}',\theta_{2}')$.
The converse does not hold in general.
\end{dfn}

The following example
gives compact symmetric triad
$(G,\theta_{1},\theta_{2})\sim(G,\theta_{1}',\theta_{2}')$
such that they are canonical and that $(G,\theta_{1},\theta_{2})\not\equiv(G,\theta_{1}',\theta_{2}')$
at the Lie algebra level.
We find that another example is given in \cite[Examples 2.14--16]{BIS}.

\begin{ex}\label{ex:spn}
Let $\mathfrak{g}=\mathfrak{su}(4m)$ with $m\geq 1$.
We define two involutions $\theta$ and $\theta'$ of $\mathfrak{g}$ as follows:
\[
\begin{array}{ccc}
\theta(Z)=\overline{Z},&
\theta'(Z)=I_{2m,2m}ZI_{2m,2m}&
(Z\in\mathfrak{g}),
\end{array}
\]
where $I_{2m,2m}$ is defined in \eqref{eqn:IJ}.
Then we have $\theta\theta'=\theta'\theta$.
Furthermore, 
$\mathfrak{g}^{\theta}=\mathfrak{so}(4m)$
and 
\[
\mathfrak{g}^{\theta'}
=\left\{
\left(
\begin{array}{cc}
Z_{1} & O \\
O & Z_{2}
\end{array}
\right)\biggm|
\begin{array}{l}
Z_{1}, Z_{2}\in\mathfrak{u}(2m),\\
\mathrm{Tr}(Z_{1}+Z_{2})=0
\end{array}
\right\}= \mathfrak{s}(\mathfrak{u}(2m)\oplus\mathfrak{u}(2m)).
\]
If we put
\[
I'_{2m,2m}=\dfrac{1}{\sqrt{2}}\left(\begin{array}{cc}
I_{2m} & \sqrt{-1}I_{2m}\\
\sqrt{-1}I_{2m} & I_{2m}
\end{array}\right)\in SU(4m),
\]
then the product
$I'_{2m,2m}I_{2m,2m}(I'_{2m,2m})^{-1}$ has the following expression:
\[
I'_{2m,2m}I_{2m,2m}(I'_{2m,2m})^{-1}
=\left(
\begin{array}{cc}
O & -\sqrt{-1}I_{2m} \\
\sqrt{-1}I_{2m} & O
\end{array}
\right)=:I_{2m,2m}''\in SU(4m).
\]
Since
$(I_{2m,2m}'')^{2}=I_{4m}$ holds,
we have another involution $\theta'':=\mathrm{Ad}(I_{2m,2m}'')$ of $\mathfrak{g}$.
Then $\theta''$ satisfies $\theta\theta''=\theta''\theta$
and $\theta'\sim\theta''$.
In addition, we have
$\mathfrak{g}^{\theta''}\simeq\mathfrak{g}^{\theta'}
=\mathfrak{s}(\mathfrak{u}(2m)\oplus\mathfrak{u}(2m))$.

Now, let us consider the following two compact symmetric triads:
\[
(\mathfrak{g},{\theta}_{1},{\theta}_{2})=(\mathfrak{su}(4m),\theta,\theta'),\quad
(\mathfrak{g},{\theta}_{1}',{\theta}_{2}')=(\mathfrak{su}(4m),\theta,\theta'').
\]
It follows from 
Corollary \ref{cor:cst_classify}
that
$(\mathfrak{g},{\theta}_{1},{\theta}_{2})
\sim (\mathfrak{g},{\theta}_{1}',{\theta}_{2}')$ holds.
In addition, by Example \ref{ex:com_can}
they are canonical.
A direct calculation shows that
\begin{align*}
\mathfrak{k}_{1}\cap\mathfrak{k}_{2}&=\left\{
\left(
\begin{array}{cc}
X_{1} & O \\
O & X_{2}
\end{array}
\right)\biggm|
X_{1}, X_{2}\in\mathfrak{so}(2m)
\right\}= \mathfrak{so}(2m)\oplus\mathfrak{so}(2m),\\
\mathfrak{k}_{1}'\cap\mathfrak{k}_{2}'&=\left\{
\left(
\begin{array}{cc}
X_{1} & X_{2} \\
-X_{2} & X_{1}
\end{array}
\right)\biggm|
\begin{array}{ll}
X_{1}\in\mathfrak{so}(2m),\\
X_{2}\in\mathfrak{gl}(2m,\mathbb{R});X_{2}={}^{t}X_{2}
\end{array}
\right\}\simeq \mathfrak{u}(2m).
\end{align*}
This implies that
$\mathfrak{k}_{1}\cap\mathfrak{k}_{2}$ is not isomorphic to
$\mathfrak{k}_{1}'\cap\mathfrak{k}_{2}'$.
Thus, we have $(\mathfrak{g},{\theta}_{1},{\theta}_{2})\not\equiv
(\mathfrak{g},{\theta}_{1}',{\theta}_{2}')$.
\end{ex}

It can be proved
the uniqueness
of canonical forms
by imposing an additional condition on the definition.
However,
when we observe at least the commutative case,
we do not need to determine a canonical form uniquely.

\subsection{Existence for canonical compact symmetric triads}\label{sec:cst_can_exist}

The purpose of this subsection
is to prove the following.

\begin{thm}\label{thm:cst_exist_can}
Assume that $G$ is simple.
For any compact symmetric triad $(G,\theta_{1},\theta_{2})$,
there exists a canonical compact symmetric triad
$(G,\theta_{1},\theta_{2}')\sim(G,\theta_{1},\theta_{2})$.
\end{thm}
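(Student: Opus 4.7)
By Proposition~\ref{pro:exist_qcan}, I may replace $(G,\theta_1,\theta_2)$ by an isomorphic quasi-canonical triad, so condition (C1) is in place. The remaining task is to establish (C2) while staying in the isomorphism class. The strategy is: use Corollary~\ref{cor:cst_classify} to pass to an explicit matrix representative of the class, verify (C2) for the matrix representative, and then transport back to a triad $(G,\theta_1,\theta_2')\sim(G,\theta_1,\theta_2)$ via an automorphism fixing $\theta_1$. Let $\mathfrak{t}$ be a quasi-canonical maximal abelian subalgebra, $\sigma_i=-d\theta_i|_{\mathfrak{t}}$, and $n:=\mathrm{ord}(\sigma_1\sigma_2)$; by Theorem~\ref{thm:dsig_dsatake_equiv}, $n$ depends only on the isomorphism class.

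Corollary~\ref{cor:cst_classify} lists explicit matrix models for every isomorphism class of compact simple symmetric triads: either a commuting pair $(\mathfrak{g},\mathrm{Ad}(X_1),\mathrm{Ad}(X_2))$ with $X_1,X_2\in G$ and $X_1X_2=X_2X_1$, or, for $\mathfrak{g}=\mathfrak{so}(8)$ only, one of the $\tilde\kappa$-twisted models
\[
(\mathfrak{so}(8),\mathrm{Ad}(I_{a,8-a}),\tilde\kappa\,\mathrm{Ad}(I_{c,8-c})\,\tilde\kappa^{-1}),\quad(a,c)\in\{(1,1),(1,2),(1,3),(2,3),(3,3)\}
\]
with $n\in\{3,4,6\}$. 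For each commuting matrix model, one has $\mathrm{Ad}(X_1X_2)^2=\mathrm{Ad}(X_1^2X_2^2)=1$, so $\theta_1\theta_2$ has group-order at most $2$; canonicality then follows from Example~\ref{ex:com_can}, with Proposition~\ref{pro:ccst_qcan} supplying (C1) and the combination of the contrapositive of Lemma~\ref{lem:t1simt2>dt1dt2=1} (giving $\mathrm{ord}(d\theta_1d\theta_2|_{\mathfrak{t}})\geq 2$ when $\theta_1\not\sim\theta_2$) with the trivial bound $\mathrm{ord}(\theta_1\theta_2)\leq 2$ supplying (C2). The degenerate case $\theta_1\sim\theta_2$ (equivalently $n=1$) is handled by Theorem~\ref{thm:araki} and Example~\ref{ex:order1_can}, since then $(G,\theta_1,\theta_2)\sim(G,\theta_1,\theta_1)$.

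The principal obstacle is the five $\mathfrak{so}(8)$-triality models, where $n\geq 3$. For each $(a,c)$, I would lift $\tilde\kappa$ to an order-three automorphism of $\mathrm{Spin}(8)$ stabilizing the standard Cartan $\mathfrak{t}$, and compute the group-level order of $\mathrm{Ad}(I_{a,8-a})\tilde\kappa\,\mathrm{Ad}(I_{c,8-c})\tilde\kappa^{-1}$ by iterated composition; by Lemma~\ref{lem:autoGg} this agrees with the Lie-algebra-level order. Matching these against the tabulated values $n\in\{3,4,6\}$ in Table~\ref{table:rank_ord} confirms (C2) for the matrix models. Finally, given $(\varphi,\tau)$ realizing $(G,\theta_1,\theta_2)\sim(G,\theta_1^{\mathrm{mat}},\theta_2^{\mathrm{mat}})$ as in Definition~\ref{dfm:cst_sim}, applying $\varphi^{-1}$ to both components yields $(G,\theta_1,\theta_2')$ with $\theta_2'=(\varphi^{-1}\tau\varphi)\theta_2(\varphi^{-1}\tau\varphi)^{-1}$, an inner twist of $\theta_2$; since $\varphi^{-1}\in\mathrm{Aut}(G)$ carries canonicality from the matrix model verbatim, the resulting $(G,\theta_1,\theta_2')$ is the desired canonical form. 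The hard part is concentrated in the five explicit triality computations, where careful choice of the lift of $\tilde\kappa$ (triality) is required so that it has order exactly three and stabilizes $\mathfrak{t}$; everything else is bookkeeping built on Sections~\ref{sec:cst_fundamental}--\ref{sec:cst_dsatake}.
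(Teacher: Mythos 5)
Your reduction to quasi-canonical triads and the transport-back step at the end are fine (conditions (C1) and (C2) are Lie-algebra-level conditions, and $\varphi^{-1}\tau\varphi\in\mathrm{Int}$), but the core of your argument has a genuine gap: the case division is incomplete. You split the classification into ``commuting matrix models, hence order $\le 2$'' and ``five $\mathfrak{so}(8)$ triality models.'' This misses two \emph{infinite} families of non-commutable classes, namely $(\mathfrak{su}(2a+2b+2),\mathfrak{sp}(a+b+1),\mathfrak{s}(\mathfrak{u}(2a+1)\oplus\mathfrak{u}(2b+1)))$ and $(\mathfrak{so}(2a+2b+2),\mathfrak{so}(2a+1)\oplus\mathfrak{so}(2b+1),\mathfrak{u}(a+b+1))$ with $0\le a<b$, which have order $4$ (see Tables \ref{table:rank_ord} and \ref{table:r<rr}). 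For these, no commuting representative exists at all (order $\le 2$ is equivalent to commutability), Corollary \ref{cor:cst_classify} case (1) supplies no explicit matrix model, and your identity $\mathrm{Ad}(X_1X_2)^2=\mathrm{Ad}(X_1^2X_2^2)=1$ is unavailable. Even where your strategy does apply, the decisive step --- checking that the \emph{global} order of an explicit representative equals the torus order $\mathrm{ord}(d\theta_1 d\theta_2|_{\mathfrak t})$ --- is exactly the content of the theorem and is only promised (``compute by iterated composition''), not carried out; note also that you would additionally have to verify quasi-canonicality of each matrix model and, for the order-$2$ classes in case (1), actually exhibit a commuting pair realizing the given $(\mathfrak k_1,\mathfrak k_2)$.

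The paper's proof is structurally different and uniform: starting from a quasi-canonical $\mathfrak t$ with $n=\mathrm{ord}(\theta_1\theta_2|_{\mathfrak t})$, it chooses a core $\Pi^*\subset\Pi-\Pi_0$ whose projections $pr(\alpha)$ are linearly independent and span $\mathfrak t\cap\mathfrak m_1\cap\mathfrak m_2$ (Proposition \ref{pro:cardPi=rank}), and then replaces $\theta_2$ by $\tau_{\exp(H)}\theta_2\tau_{\exp(H)}^{-1}$ where $H\in\mathfrak t\cap\mathfrak m_1\cap\mathfrak m_2$ is obtained by solving the linear system $\INN{2n\cdot pr(\alpha)}{H}+u_\alpha=0$ ($\alpha\in\Pi^*$) that normalizes the Klein-constant phases $S_\alpha=e^{\sqrt{-1}u_\alpha}$ on the core root spaces; Lemma \ref{lem:order<=>Pi_refine} then promotes $(\theta_1\theta_2')^n=1$ on those root spaces to all of $\mathfrak g$. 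The classification enters only through Proposition \ref{pro:cardPi=rank}. If you want to salvage your approach, you would at minimum have to produce and analyze explicit order-$4$ representatives for the two infinite families above, which amounts to redoing the hard part of the paper's argument case by case.
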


Without loss of generalities we may assume that
$(G,\theta_{1},\theta_{2})$ is quasi-canonical by Proposition \ref{pro:exist_qcan}.
Let $\mathfrak{t}$
be a maximal abelian subalgebra of $\mathfrak{g}$
which is quasi-canonical with respect to $(G,\theta_{1},\theta_{2})$.
Denote by $(\Delta,\sigma_{1},\sigma_{2})=(\Delta,-d\theta_{1}|_{\mathfrak{t}},-d\theta_{2}|_{\mathfrak{t}})$
the double $\sigma$-system of $(G,\theta_{1},\theta_{2})$.
Let $\Pi$ be a $(\sigma_{1},\sigma_{2})$-fundamental system of $\Delta$.

\begin{lem}\label{lem:order<=>Pi}
Let $n=\mathrm{ord}(\theta_{1}\theta_{2}|_{\mathfrak{t}})\in\mathbb{N}$.
Then, we have$:$
\begin{enumerate}
\item For any $\beta\in\Pi_{1,0}\cup\Pi_{2,0}$,
we have $(\theta_{1}\theta_{2})^{n}=1$ on the root space $\mathfrak{g}(\mathfrak{t},\beta)$.
\item The order of the automorphism $\theta_{1}\theta_{2}$ on $\mathfrak{g}$
satisfies $(\theta_{1}\theta_{2})^{n}=1$
if and only if
$(\theta_{1}\theta_{2})^{n}=1$ holds on $\mathfrak{g}(\mathfrak{t},\alpha)$
for all $\alpha\in\Pi-(\Pi_{1,0}\cup\Pi_{2,0})$.
\end{enumerate}
\end{lem}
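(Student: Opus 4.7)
The plan is to work in the complexification $\mathfrak{g}^{\mathbb{C}}$ and examine how $(\theta_{1}\theta_{2})^{n}$ acts on each root space $\mathfrak{g}(\mathfrak{t},\alpha)$. Because $n$ equals the order of $\sigma_{1}\sigma_{2}$ on $\mathfrak{t}$ (where $\sigma_{i} = -d\theta_{i}|_{\mathfrak{t}}$), the permutation $(\sigma_{1}\sigma_{2})^{n}$ of $\Delta$ is trivial, so $(\theta_{1}\theta_{2})^{n}$ preserves every $\mathfrak{g}(\mathfrak{t},\alpha)$ and acts on it as multiplication by a scalar $c_{\alpha}$. I will fix a Chevalley basis $\{X_{\alpha}\}$ with $\overline{X_{\alpha}} = -X_{-\alpha}$ (Lemma \ref{lem:Cbasis_conj}), so that $\theta_{i}(X_{\alpha}) = s_{\alpha}^{(i)}X_{-\sigma_{i}\alpha}$ for the Klein constants $s_{\alpha}^{(i)}$.

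For part (1), take $\beta \in \Pi_{1,0}$ (the case $\beta \in \Pi_{2,0}$ is symmetric), let $k$ denote the length of the $\sigma_{1}\sigma_{2}$-orbit of $\beta$, and set $\gamma_{j} := (\sigma_{1}\sigma_{2})^{j}\beta$. Using $\sigma_{1}\beta = -\beta$ together with the conjugation identities $\sigma_{i}(\sigma_{1}\sigma_{2})\sigma_{i} = (\sigma_{1}\sigma_{2})^{-1}$ for $i = 1, 2$, I would first verify
\[
-\sigma_{1}\gamma_{j} = \gamma_{-j}, \qquad -\sigma_{2}\gamma_{j} = \gamma_{k-1-j},
\]
where indices are taken modulo $k$. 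Iterating $\theta_{i}X_{\alpha} = s_{\alpha}^{(i)}X_{-\sigma_{i}\alpha}$ then gives
\[
(\theta_{1}\theta_{2})^{k}(X_{\beta}) = \left(\prod_{j=0}^{k-1}s_{\gamma_{j}}^{(1)}\right)\!\left(\prod_{j=0}^{k-1}s_{\gamma_{j}}^{(2)}\right)X_{\beta}
\]
after reindexing one of the products via $j \leftrightarrow k-1-j$. The involutivity relation $s_{\alpha}^{(i)}s_{-\sigma_{i}\alpha}^{(i)} = 1$ pairs the factors of the first product under $j \mapsto -j$ and those of the second under $j \mapsto k-1-j$ on $\mathbb{Z}/k\mathbb{Z}$, causing all paired terms to cancel and leaving only the fixed-point factors. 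Those fixed points are $j = 0$ (always) and $j = k/2$ (if $k$ is even) in the first product, and $j = (k-1)/2$ (if $k$ is odd) in the second. In each case the defining fixed-point relation forces the corresponding root into $\Delta_{1,0}$ (for the first-product survivors) or $\Delta_{2,0}$ (for the second-product survivor), and then Lemma \ref{lem:Klein_pro4.1}(2) gives its Klein constant equal to $1$. Hence the scalar in the displayed equation is $1$, so $(\theta_{1}\theta_{2})^{k}$, and therefore $(\theta_{1}\theta_{2})^{n}$, is the identity on $\mathfrak{g}(\mathfrak{t},\beta)$.

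For part (2), the forward direction is immediate. For the converse, since $(\theta_{1}\theta_{2})^{n}$ is a Lie algebra automorphism of $\mathfrak{g}^{\mathbb{C}}$ that is trivial on $\mathfrak{t}^{\mathbb{C}}$ and acts by $c_{\alpha}$ on $\mathfrak{g}(\mathfrak{t},\alpha)$, applying it to $[X_{\alpha},X_{\alpha'}] \in \mathfrak{g}(\mathfrak{t},\alpha+\alpha')$ forces $c_{\alpha+\alpha'} = c_{\alpha}c_{\alpha'}$ whenever $\alpha + \alpha' \in \Delta$. Consequently $\{c_{\alpha}\}_{\alpha \in \Delta}$ is determined multiplicatively by its values on the fundamental system $\Pi$, and part (1) already supplies $c_{\beta} = 1$ for $\beta \in \Pi_{1,0}\cup\Pi_{2,0}$; the equivalence therefore reduces precisely to $c_{\alpha} = 1$ on $\Pi - (\Pi_{1,0}\cup\Pi_{2,0})$. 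The main obstacle I foresee is the orbit bookkeeping in part (1): verifying the two displayed identities for $-\sigma_{i}\gamma_{j}$ and, in particular, identifying each surviving fixed-point root as an element of the correct $\Delta_{i,0}$ so that Lemma \ref{lem:Klein_pro4.1}(2) can be applied.
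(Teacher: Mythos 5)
Your proposal is correct. It rests on the same two pillars as the paper's own proof: Lemma \ref{lem:Klein_pro4.1}, (2), applied to the roots in the $\theta_{1}\theta_{2}$-orbit of $\beta$ that are fixed by $\theta_{1}$ or by $\theta_{2}$, and (for part (2)) the fact that an automorphism fixing $\mathfrak{t}^{\mathbb{C}}$ pointwise acts on each root space by a scalar that is multiplicative over sums of roots, so it is determined by its values on $\Pi$; part (2) is essentially identical in the two arguments. The difference is in the bookkeeping for part (1). The paper works at the operator level and splits into cases according to the parity of $n$ and whether $\beta\in\Pi_{1,0}$ or $\Pi_{2,0}$: for instance, for $n=2l+1$ odd and $\beta\in\Pi_{1,0}$ it locates the $\theta_{2}$-fixed root $(\theta_{1}\theta_{2})^{l}(\beta)$ in the orbit and folds the word, $(\theta_{2}\theta_{1})^{n}X=(\theta_{2}\theta_{1})^{l}\theta_{2}\bigl((\theta_{1}\theta_{2})^{l}X\bigr)=X$. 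You instead expand the scalar of $(\theta_{1}\theta_{2})^{k}$ on $\mathfrak{g}(\mathfrak{t},\beta)$ (with $k$ the orbit length, which divides $n$) as a product of Klein constants around the orbit and cancel in pairs under the involutions $j\mapsto -j$ and $j\mapsto k-1-j$ of $\mathbb{Z}/k\mathbb{Z}$. The steps you flag as outstanding do go through: the identities $-\sigma_{1}\gamma_{j}=\gamma_{-j}$ and $-\sigma_{2}\gamma_{j}=\gamma_{k-1-j}$ follow from $\theta_{i}(\theta_{1}\theta_{2})\theta_{i}=(\theta_{1}\theta_{2})^{-1}$ together with $\theta_{1}\beta=\beta$, and each surviving fixed-point index corresponds to a root fixed by the relevant $\theta_{i}$, so Lemma \ref{lem:Klein_pro4.1}, (2) yields that the residual factors equal $1$. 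Your version buys uniformity (no parity case split, and it isolates the orbit length $k$); the paper's buys brevity (no explicit product of Klein constants).
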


\begin{proof}
(1) 
Let $\beta$ be in $\Pi_{1,0}\cup\Pi_{2,0}$.
It can be verified that
$(\theta_{1}\theta_{2})^{n}=1$ holds on $\mathfrak{g}(\mathfrak{t},\beta)$
by a case-by-case verification.
Let us consider the case when $n$ is odd and $\beta\in\Pi_{1,0}$.
It is sufficient to show $(\theta_{2}\theta_{1})^{n}=1$ on $\mathfrak{g}(\mathfrak{t},\beta)$.
If we write $n=2l+1$,
then $(\theta_{1}\theta_{2})^{n}(\beta)=\beta$
yields
$\theta_{2}((\theta_{1}\theta_{2})^{l}(\beta))=(\theta_{1}\theta_{2})^{l}(\beta)$.
Hence we get
$\mathfrak{g}(\mathfrak{t},\beta)\subset \mathfrak{k}_{1}^{\mathbb{C}}$
and $\mathfrak{g}(\mathfrak{t},(\theta_{1}\theta_{2})^{l}(\beta))\subset\mathfrak{k}_{2}^{\mathbb{C}}$
by Lemma \ref{lem:Klein_pro4.1}, (2).
For any $X\in\mathfrak{g}(\mathfrak{t},\beta)$,
we obtain
\[
(\theta_{2}\theta_{1})^{n}(X)
=(\theta_{2}\theta_{1})^{2l}\theta_{2}X
=(\theta_{2}\theta_{1})^{l}\theta_{2}((\theta_{1}\theta_{2})^{l}(X))
=X.
\]
For the other cases, a similar argument shows
that $(\theta_{1}\theta_{2})^{n}=1$ on $\mathfrak{g}(\mathfrak{t},\beta)$
for each $\beta\in\Pi_{1,0}\cup\Pi_{2,0}$.

(2) 
The necessity is clear.
We will only prove the sufficiency.
From (1),
we have
$(\theta_{1}\theta_{2})^{n}=1$ on $\sum_{\alpha\in\Pi}\mathfrak{g}(\mathfrak{t},\alpha)$.
This yields $(\theta_{1}\theta_{2})^{n}=1$ on $\mathfrak{g}^{\mathbb{C}}$,
equivalently, $(\theta_{1}\theta_{2})^{n}=1$ on $\mathfrak{g}$.
Thus we have the assertion.
\end{proof}

In order to prove Theorem \ref{thm:cst_exist_can}
we need to give a refinement
of Lemma \ref{lem:order<=>Pi}, (2)
(see Lemma \ref{lem:order<=>Pi_refine}).
Let $(G,\theta_{1},\theta_{2})$ be a quasi-canonical compact symmetric triad
and $\mathfrak{t}$ be a quasi-canonical maximal abelian subalgebra of $\mathfrak{g}$ with respect to $(G,\theta_{1},\theta_{2})$.
Denote by $(S_{1}(\Pi,\Pi_{1,0},p_{1}),S_{2}(\Pi,\Pi_{2,0},p_{2}))$
the corresponding double Satake diagram
of $(G,\theta_{1},\theta_{2})$.
We put $n=\mathrm{ord}(\theta_{1}\theta_{2}|_{\mathfrak{t}})$.
Let $pr:\mathfrak{t}\to\mathfrak{t}\cap(\mathfrak{m}_{1}\cap\mathfrak{m}_{2})$
be the orthogonal projection.
Then,
for any $\alpha\in\mathfrak{t}$,
we have
\[
pr(\alpha)=\dfrac{1}{n}\sum_{k=0}^{n-1}\{(\theta_{1}\theta_{2})^{k}(\alpha)-\theta_{1}(\theta_{1}\theta_{2})^{k}(\alpha)\}
=\dfrac{1}{n}\sum_{k=0}^{n-1}\{(\theta_{1}\theta_{2})^{k}(\alpha)-\theta_{2}(\theta_{1}\theta_{2})^{k}(\alpha)\},
\]
which is also expressed as
\begin{equation}\label{eqn:pr_expression2}
pr(\alpha)=\dfrac{1}{n}\sum_{k=0}^{n-1}(\theta_{1}\theta_{2})^{k}(H-\theta_{i}(H)).
\end{equation}

The following lemma is fundamental in our argument.

\begin{lem}\label{lem:pr_S1S2_fr}
Under the above settings,
we have:
\begin{enumerate}
\item Fix $i\in\{1,2\}$.
For any $\alpha,\beta\in\Pi-\Pi_{i,0}$,
$\alpha=p_{i}(\beta)$ yields $pr(\alpha)=pr(\beta)$.
\item $pr(\alpha)=0$ holds for all $\alpha\in \Pi_{1,0}\cup\Pi_{2,0}$
\end{enumerate}
\end{lem}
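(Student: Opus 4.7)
My plan is to prove both statements by exploiting only two structural facts: the orthogonality of the canonical decompositions with respect to the $\operatorname{ad}(\mathfrak{g})$-invariant inner product, and the defining congruence of the Satake involution $p_i$ modulo $(\Pi_{i,0})_{\mathbb{Z}}$. No use of the expression \eqref{eqn:pr_expression2} will be needed; instead I view $pr$ geometrically as the orthogonal projection onto $\mathfrak{t}\cap\mathfrak{m}_{1}\cap\mathfrak{m}_{2}=\mathfrak{t}^{\sigma_{1}}\cap\mathfrak{t}^{\sigma_{2}}$.

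For part (2), I start from the observation that if $\alpha\in\Pi_{i,0}$ then $\sigma_{i}(\alpha)=-\alpha$, equivalently $d\theta_{i}(\alpha)=\alpha$, so $\alpha\in\mathfrak{t}\cap\mathfrak{k}_{i}$. Since $\mathfrak{t}\cap\mathfrak{m}_{1}\cap\mathfrak{m}_{2}\subset\mathfrak{m}_{i}$ and $\mathfrak{k}_{i}\perp\mathfrak{m}_{i}$, the vector $\alpha$ is orthogonal to the image of $pr$, which gives $pr(\alpha)=0$. This handles both $i=1$ and $i=2$, hence part (2).

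For part (1), fix $i\in\{1,2\}$ and $\alpha,\beta\in\Pi-\Pi_{i,0}$ with $\alpha=p_{i}(\beta)$. By the definition of the Satake involution recalled in Section \ref{sec:NssSd}, we have $\sigma_{i}(\beta)-\alpha\in(\Pi_{i,0})_{\mathbb{Z}}$. Applying the linear map $pr$ and using part (2) to kill the $(\Pi_{i,0})_{\mathbb{Z}}$-contribution, we obtain
\[
pr(\sigma_{i}(\beta))=pr(\alpha).
\]
So the statement reduces to proving the identity $pr\circ\sigma_{i}=pr$ on $\mathfrak{t}$.

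The main (and essentially only) point to verify is this last identity, which I would justify as follows. The subspace $\mathfrak{t}\cap\mathfrak{m}_{1}\cap\mathfrak{m}_{2}=\mathfrak{t}^{\sigma_{1}}\cap\mathfrak{t}^{\sigma_{2}}$ is fixed pointwise by $\sigma_{i}$, and since $\sigma_{i}$ is an isometry of $\mathfrak{t}$, it also preserves the orthogonal complement of this subspace in $\mathfrak{t}$. Therefore $\sigma_{i}$ commutes with the orthogonal projection $pr$, and acts as the identity on its image, giving $pr\circ\sigma_{i}=\sigma_{i}\circ pr=pr$. Combining with the previous display yields $pr(\beta)=pr(\sigma_{i}(\beta))=pr(\alpha)$, completing part (1). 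No genuine obstacle arises in this proof; the only subtlety is recognizing that the $(\sigma_{1},\sigma_{2})$-fundamental system (guaranteed by quasi-canonicality) lets us write the Satake congruence for $p_{i}$ with respect to a single $\Pi$ that simultaneously controls both $\sigma_{1}$ and $\sigma_{2}$.
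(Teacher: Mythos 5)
Your proof is correct, and it takes a genuinely different route from the one the paper indicates. The paper omits the proof entirely, saying only that the lemma ``can be shown by means of the expression \eqref{eqn:pr_expression2}'', i.e., by writing $pr$ as an average of $\alpha-\theta_{i}(\alpha)$ over the powers of $\theta_{1}\theta_{2}|_{\mathfrak{t}}$ and manipulating that sum term by term. You instead use only two structural facts: for part (2), that a root in $\Pi_{i,0}$ lies in $\mathfrak{t}\cap\mathfrak{k}_{i}$ while the target $\mathfrak{t}\cap(\mathfrak{m}_{1}\cap\mathfrak{m}_{2})$ of $pr$ lies in $\mathfrak{t}\cap\mathfrak{m}_{i}$, and these are orthogonal because $\sigma_{i}$ is an isometry of $\mathfrak{t}$; and for part (1), the Satake congruence $\sigma_{i}(\beta)\equiv p_{i}(\beta)\ (\operatorname{mod}(\Pi_{i,0})_{\mathbb{Z}})$ together with part (2) reduces everything to the identity $pr\circ\sigma_{i}=pr$, which you correctly derive from the fact that $\sigma_{i}$ fixes $\mathfrak{t}^{\sigma_{1}}\cap\mathfrak{t}^{\sigma_{2}}$ pointwise, preserves its orthogonal complement, and hence commutes with $pr$. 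All the ingredients you invoke are available in the quasi-canonical setting (in particular, a single $\Pi$ is simultaneously a $\sigma_{1}$- and $\sigma_{2}$-fundamental system, so both Satake involutions are defined relative to the same fundamental system). What your argument buys is independence from the averaging expression -- worth noting since, as printed, the sums in \eqref{eqn:pr_expression2} and the display before it actually equal twice the orthogonal projection (check $n=1$), a harmless normalization issue for this lemma but one your proof sidesteps entirely; what the paper's hinted route buys is reuse of the same expression that drives the surrounding computations in Section \ref{sec:cst_can_exist}.
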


This lemma can be shown by means of the expression \eqref{eqn:pr_expression2}.
We omit the detail.

We set $\Pi_{0}=\{\alpha\in \Pi\mid pr(\alpha)=0\}$.
By Lemma \ref{lem:pr_S1S2_fr}, (2),
we have $\Pi_{1,0}\cup\Pi_{2,0}\subset\Pi_{0}$.
In fact, it is verified that
$\Pi_{0}$
is expressed as
\[
\Pi_{0}=\Pi_{1,0}\cup\Pi_{2,0}\cup
\{\alpha\in\Pi-(\Pi_{1,0}\cup\Pi_{2,0})\mid
p_{1}(\alpha)\in \Pi_{2,0} \text{ or }
p_{2}(\alpha)\in \Pi_{1,0}
\}.
\]
In the case when $(G,\theta_{1},\theta_{2})$
is commutative,
it follows from
Lemma \ref{lem:a=0a_i=0}, (2)
that $\Pi_{0}=\Pi_{1,0}\cup\Pi_{2,0}$ holds.

Let $\Pi^{*}$
be a subset of $\Pi-\Pi_{0}$ satisfying
\[
\{\alpha,p_{1}(\alpha),p_{2}(\alpha)\mid \alpha\in \Pi^{*}\}=\Pi-\Pi_{0}
\]
with minimum cardinality among
all such subsets.
We call such $\Pi^{*}$ a \textit{core} of $\Pi-\Pi_{0}$.
Clearly,
if $\Pi=\Pi_{0}$, then 
we have $\Pi^{*}=\emptyset$.
By means of the Satake involutions $p_{1}, p_{2}$,
we can reconstruct
$\Pi_{0}$
and 
$\Pi-\Pi_{0}$
from $\Pi_{1,0}\cup\Pi_{2,0}$
and $\Pi^{*}$, respectively,
Then, $\Pi$
is obtained from $\Pi^{*}\cup\Pi_{1,0}\cup\Pi_{2,0}$
and so is $\Delta^{+}$.

By Lemma \ref{lem:a=0a_i=0}, we get
$\mathfrak{t}\cap(\mathfrak{m}_{1}\cap\mathfrak{m}_{2})
=pr(\mathrm{span}_{\mathbb{R}}\Pi)
=\mathrm{span}_{\mathbb{R}}\{pr(\alpha)\mid \alpha\in\Pi-\Pi_{0}\}$.
Furthermore,
we have the following proposition.

\begin{pro}\label{pro:cardPi=rank}
Assume that $G$ is simple.
Then, there exists a core $\Pi^{*}\subset \Pi-\Pi_{0}$
satisfying the following conditions$:$
\begin{enumerate}
\item $\{pr(\alpha)\mid \alpha\in\Pi^{*}\}$
are linearly independent.
\item $\mathfrak{t}\cap(\mathfrak{m}_{1}\cap\mathfrak{m}_{2})
=\mathrm{span}_{\mathbb{R}}\{pr(\alpha)\mid \alpha\in\Pi^{*}\}$.
\end{enumerate}
In particular, 
the cardinality of $\Pi^{*}$
is equal to the dimension of $\mathfrak{t}\cap(\mathfrak{m}_{1}\cap\mathfrak{m}_{2})$.
\end{pro}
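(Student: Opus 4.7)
The plan is to dispose of assertion (2) uniformly — without invoking simplicity of $G$ — and then reduce (1) to a cardinality count that is settled by a case-by-case verification based on the classification of compact simple symmetric triads.

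For (2), I would first observe that $pr(\mathfrak{t}) = \mathfrak{t} \cap (\mathfrak{m}_1 \cap \mathfrak{m}_2)$ and, since $\Pi$ is an $\mathbb{R}$-basis of $\mathfrak{t}$ with $pr(\alpha) = 0$ precisely when $\alpha \in \Pi_0$, the family $\{pr(\alpha) \mid \alpha \in \Pi - \Pi_0\}$ already spans $\mathfrak{t} \cap (\mathfrak{m}_1 \cap \mathfrak{m}_2)$. By Lemma \ref{lem:pr_S1S2_fr}(1), the projection $pr$ is constant on every triple $\{\alpha, p_1(\alpha), p_2(\alpha)\}$ inside $\Pi - \Pi_0$, so for any core $\Pi^{*}$ one has $\{pr(\alpha) \mid \alpha \in \Pi^{*}\} = \{pr(\beta) \mid \beta \in \Pi - \Pi_0\}$ as sets, and (2) follows for every core.

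For (1), it suffices to exhibit a core $\Pi^{*}$ with $|\Pi^{*}| = \dim(\mathfrak{t} \cap (\mathfrak{m}_1 \cap \mathfrak{m}_2))$: then the spanning family from (2) must be a basis, hence linearly independent. Since $pr$ is constant on each orbit of the dihedral group $\langle p_1, p_2 \rangle$ acting on $\Pi - \Pi_0$, any core must contain at least one element from every orbit; conversely, if each orbit $O$ admits a representative $\alpha$ with $O = \{\alpha, p_1(\alpha), p_2(\alpha)\}$, then picking one such representative per orbit produces a core whose cardinality equals the number of orbits. Thus (1) reduces to showing that, for each compact simple symmetric triad, (i) every $\langle p_1, p_2\rangle$-orbit on $\Pi - \Pi_0$ has size at most three, and (ii) the $pr$-values associated with distinct orbits are linearly independent, so that their number equals $\dim(\mathfrak{t} \cap (\mathfrak{m}_1 \cap \mathfrak{m}_2))$.

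The hard part will be verifying (i) and (ii) across the whole classification, and this is where simplicity of $G$ enters: it forces $\mathrm{Aut}(\Pi)$ to be trivial, $\mathbb{Z}_2$, or $\mathfrak{S}_3$, which bounds the order of $\langle p_1, p_2\rangle$ and keeps the orbit sizes small. I would therefore proceed along the list in Corollary \ref{cor:cst_classify} and Table \ref{table:rank_ord}: for each isomorphism class $[(\mathfrak{g}, \theta_1, \theta_2)]$, pick a quasi-canonical representative together with an explicit $(\sigma_1, \sigma_2)$-fundamental system $\Pi$; read off $\Pi_{1,0}, \Pi_{2,0}$ and the Satake involutions $p_1, p_2$ from the double Satake diagram; determine $\Pi_0 = \Pi_{1,0} \cup \Pi_{2,0} \cup \{\alpha \in \Pi - (\Pi_{1,0} \cup \Pi_{2,0}) \mid p_1(\alpha) \in \Pi_{2,0} \text{ or } p_2(\alpha) \in \Pi_{1,0}\}$; and then compute the $\langle p_1, p_2\rangle$-orbits on $\Pi - \Pi_0$ together with their projections directly, checking against the rank in Table \ref{table:rank_ord}. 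Matching the orbit count with the tabulated rank gives both (i) and (ii), completing the proof.
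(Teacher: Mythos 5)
Your proposal is correct and follows essentially the same route as the paper: a case-by-case verification over the classification, using Lemma \ref{lem:pr_S1S2_fr} and the spanning identity $\mathfrak{t}\cap(\mathfrak{m}_{1}\cap\mathfrak{m}_{2})=\mathrm{span}_{\mathbb{R}}\{pr(\alpha)\mid\alpha\in\Pi-\Pi_{0}\}$, with the tabulated rank supplying the dimension count. The only (cosmetic) difference is the order of inference --- the paper checks linear independence of $\{pr(\alpha)\mid\alpha\in\Pi^{*}\}$ directly in each case and deduces (2) from the dimension, whereas you get (2) uniformly and deduce (1) by matching the orbit count of $\langle p_{1},p_{2}\rangle$ against the dimension; just note that $p_{1},p_{2}$ need not lie in $\mathrm{Aut}(\Pi)$, so the appeal to $\mathrm{Aut}(\Pi)$ to bound orbit sizes should be replaced by the direct inspection of the double Satake diagrams you perform anyway.
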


The proof is given by a case-by-case verification
based on the classification.

\begin{ex}
Let us consider  the case when $(\mathfrak{g},\mathfrak{k}_{1},\mathfrak{k}_{2})=
(\mathfrak{so}(8),\mathfrak{so}(3)\oplus\mathfrak{so}(5),\tilde{\kappa}(\mathfrak{so}(3)\oplus\mathfrak{so}(5)))$.
Its double Satake diagram
is given by
$(S(\Pi,\psi(\Pi_{0}^{(3)}),p^{(3)}),S(\Pi,\kappa(\Pi_{0}^{(3)}),\kappa\cdot p^{(3)}))$
as in Table \ref{table:Satake_DI}.
Then $\Pi^{*}=\{\alpha_{2},\alpha_{3}\}$
gives a core of $\Pi-\Pi_{0}$.
Since we have
\[
pr(\alpha_{2})=\alpha_{2},\quad
pr(\alpha_{3})=\dfrac{1}{3}(\alpha_{1}+\alpha_{3}+\alpha_{4}),
\]
$\{pr(\alpha_{2}),pr(\alpha_{3})\}$
are linearly independent.
From $\dim(\mathfrak{t}\cap(\mathfrak{m}_{1}\cap\mathfrak{m}_{2}))=2$,
we have $\mathfrak{t}\cap(\mathfrak{m}_{1}\cap\mathfrak{m}_{2})=\mathrm{span}_{\mathbb{R}}
\{pr(\alpha_{2}),pr(\alpha_{3})\}$.
\end{ex}

In a similar manner,
we can prove Proposition \ref{pro:cardPi=rank}
for the other cases.
We omit the details.

The following is a refinement of Lemma
\ref{lem:order<=>Pi}, (2).

\begin{lem}\label{lem:order<=>Pi_refine}
Let $n=\mathrm{ord}(\theta_{1}\theta_{2}|_{\mathfrak{t}})\in\mathbb{N}$
and $\Pi^{*}\subset \Pi-\Pi_{0}$ be a core.
Then the order of the automorphism $\theta_{1}\theta_{2}$ on $\mathfrak{g}$
satisfies $(\theta_{1}\theta_{2})^{n}=1$
if and only if
$(\theta_{1}\theta_{2})^{n}=1$ holds on $\mathfrak{g}(\mathfrak{t},\alpha)$
for all $\alpha\in\Pi^{*}$.
\end{lem}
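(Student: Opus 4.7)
The plan is to strengthen Lemma \ref{lem:order<=>Pi}(2) by propagating the identity $(\theta_1\theta_2)^n = 1$ along the Satake involutions $p_1, p_2$. Set $n = \mathrm{ord}(\theta_1\theta_2|_\mathfrak{t}) = \mathrm{ord}(\sigma_1\sigma_2)$, so $(\sigma_1\sigma_2)^n = 1$ on $\mathfrak{t}$ and $(\theta_1\theta_2)^n$ preserves each one-dimensional root space $\mathfrak{g}(\mathfrak{t}, \alpha)$, acting on it by a scalar $\mu(\alpha) \in U(1)$. The lemma then asserts that $\mu \equiv 1$ on $\Pi^*$ already implies $\mu \equiv 1$ on $\Pi - (\Pi_{1,0}\cup\Pi_{2,0})$, which by Lemma \ref{lem:order<=>Pi}(2) is equivalent to $(\theta_1\theta_2)^n = 1$ on all of $\mathfrak{g}$.

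First I would decompose $\Pi - (\Pi_{1,0}\cup\Pi_{2,0}) = (\Pi - \Pi_0) \sqcup (\Pi_0 - (\Pi_{1,0}\cup\Pi_{2,0}))$ and reduce both pieces to the transfer rule $\mu(p_i(\alpha)) = \mu(\alpha)$ for $\alpha \in \Pi - \Pi_{i,0}$. For the first piece, the defining property of the core $\Pi^*$ writes every $\beta \in \Pi - \Pi_0$ as $\alpha$, $p_1(\alpha)$, or $p_2(\alpha)$ for some $\alpha \in \Pi^*$, whence the transfer rule immediately gives $\mu(\beta) = \mu(\alpha) = 1$. For the second piece, the description of $\Pi_0$ recorded just before Proposition \ref{pro:cardPi=rank} shows that every $\beta \in \Pi_0 - (\Pi_{1,0}\cup\Pi_{2,0})$ satisfies $p_i(\beta) \in \Pi_{1,0}\cup\Pi_{2,0}$ for some $i$; Lemma \ref{lem:order<=>Pi}(1) then gives $\mu(p_i(\beta)) = 1$, and the transfer rule again yields $\mu(\beta) = 1$.

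To establish the transfer rule I would fix a Chevalley basis $\{X_\alpha\}$ as in Lemma \ref{lem:Cbasis_conj} with its Klein constants $s^{(i)}_\alpha$ for $\theta_i$. A direct expansion expresses $\mu(\alpha)$ as a product of Klein constants along the $(\sigma_1\sigma_2)$-orbit of $\alpha$. The congruence $\sigma_i(\alpha) \equiv p_i(\alpha) \pmod{(\Pi_{i,0})_\mathbb{Z}}$, together with $s^{(i)}_\gamma = 1$ for $\gamma \in \Pi_{i,0}$ (Lemma \ref{lem:Klein_pro4.1}(2), since $\theta_i(\gamma) = \gamma$), should convert the orbit of $\alpha$ into that of $p_i(\alpha)$ and identify the two products. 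The main obstacle is the bookkeeping: one must track how $\sigma_1\sigma_2$ interacts with translation by $(\Pi_{i,0})_\mathbb{Z}$ and how the Chevalley constants absorb the reflections in $W(\Delta_{i,0})$ relating $-\sigma_i(\alpha)$ to $p_i(\alpha)$. Since $G$ is simple, a direct backup is a case-by-case verification via Theorem \ref{thm:dsatake_classify}, where $\Pi^*$ is small and the $(p_1,p_2)$-orbit structure on $\Pi - \Pi_0$ can be read off each Satake diagram.
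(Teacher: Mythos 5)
Your route is genuinely different from the paper's. The paper does not propagate eigenvalues along the Satake involutions at all: it defines $\mathfrak{h}=\mathfrak{t}^{\mathbb{C}}\oplus\sum_{\beta\in\Pi_{1,0}\cup\Pi_{2,0}}(\mathfrak{g}(\mathfrak{t},\beta)\oplus\mathfrak{g}(\mathfrak{t},-\beta))\oplus\sum_{\alpha\in\Pi^{*}}(\mathfrak{g}(\mathfrak{t},\alpha)\oplus\mathfrak{g}(\mathfrak{t},-\alpha))$, observes that $(\theta_{1}\theta_{2})^{n}=1$ on $\mathfrak{h}$ (by hypothesis together with Lemma \ref{lem:order<=>Pi}(1)), hence also on $\theta_{1}(\mathfrak{h})+\theta_{2}(\mathfrak{h})$ because $\theta_{i}(\theta_{1}\theta_{2})^{n}\theta_{i}=(\theta_{1}\theta_{2})^{-n}$, and then concludes by noting that the fixed-point set of the automorphism $(\theta_{1}\theta_{2})^{n}$ is a subalgebra while $\mathfrak{h}+\theta_{1}(\mathfrak{h})+\theta_{2}(\mathfrak{h})$ generates $\mathfrak{g}^{\mathbb{C}}$. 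That argument buys brevity and never needs your transfer rule; your argument buys a reduction to the already-proved Lemma \ref{lem:order<=>Pi}(2) and makes the role of the core and of the set $\Pi_{0}-(\Pi_{1,0}\cup\Pi_{2,0})$ completely explicit.

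Your overall structure is sound, but the one step you leave open, the transfer rule $\mu(p_{i}(\alpha))=\mu(\alpha)$, deserves a cleaner proof than the Klein-constant bookkeeping you sketch, and neither that bookkeeping nor a case-by-case fallback is necessary. Since $n=\mathrm{ord}(\theta_{1}\theta_{2}|_{\mathfrak{t}})$, the automorphism $(\theta_{1}\theta_{2})^{n}$ fixes $\mathfrak{t}^{\mathbb{C}}$ pointwise, so its root-space eigenvalues satisfy $\mu(\alpha+\beta)=\mu(\alpha)\mu(\beta)$ whenever $\alpha,\beta,\alpha+\beta\in\Delta$ (compare Chevalley constants on both sides of $[X_{\alpha},X_{\beta}]=c_{\alpha,\beta}X_{\alpha+\beta}$), hence $\mu$ extends to a character of the root lattice; in particular $\mu(-\alpha)=\mu(\alpha)^{-1}$ and $\mu$ is trivial on $(\Pi_{i,0})_{\mathbb{Z}}$ once Lemma \ref{lem:Klein_pro4.1}-type arguments (i.e.\ Lemma \ref{lem:order<=>Pi}(1)) give $\mu=1$ on $\Pi_{1,0}\cup\Pi_{2,0}$. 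The identity $\theta_{i}(\theta_{1}\theta_{2})^{n}\theta_{i}=(\theta_{1}\theta_{2})^{-n}$ applied to $X\in\mathfrak{g}(\mathfrak{t},\alpha)$ gives $\mu(-\sigma_{i}(\alpha))=\mu(\alpha)^{-1}$, i.e.\ $\mu(\sigma_{i}(\alpha))=\mu(\alpha)$, and since $\sigma_{i}(\alpha)\equiv p_{i}(\alpha)\pmod{(\Pi_{i,0})_{\mathbb{Z}}}$ the character property yields $\mu(p_{i}(\alpha))=\mu(\sigma_{i}(\alpha))=\mu(\alpha)$. With that in hand, your two-step propagation (from $\Pi^{*}$ to $\Pi-\Pi_{0}$ via the core property, and from $\Pi_{1,0}\cup\Pi_{2,0}$ to $\Pi_{0}-(\Pi_{1,0}\cup\Pi_{2,0})$ via the displayed description of $\Pi_{0}$) is complete and correct.
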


\begin{proof}
We will only prove the sufficiency.
We define a subspace $\mathfrak{h}$ of $\mathfrak{g}^{\mathbb{C}}$
as follows:
\[
\mathfrak{h}
=\mathfrak{t}^{\mathbb{C}}
\oplus\sum_{\beta\in\Pi_{1,0}\cup\Pi_{2,0}}
\left(\mathfrak{g}(\mathfrak{t},\beta)\oplus\mathfrak{g}(\mathfrak{t},-\beta)\right)
\oplus\sum_{\alpha\in\Pi^{*}}
\left(\mathfrak{g}(\mathfrak{t},\alpha)\oplus\mathfrak{g}(\mathfrak{t},-\alpha)\right).
\]
By the definition, we have $(\theta_{1}\theta_{2})^{n}=1$ on $\mathfrak{h}$.
This implies that
$(\theta_{1}\theta_{2})^{n}=1$ holds on $\mathfrak{h}+\theta_{1}(\mathfrak{h})+\theta_{2}(\mathfrak{h})$.
Since
$\mathfrak{h}+\theta_{1}(\mathfrak{h})+\theta_{2}(\mathfrak{h})$
generates $\mathfrak{g}^{\mathbb{C}}$,
we get $(\theta_{1}\theta_{2})^{n}=1$ on $\mathfrak{g}^{\mathbb{C}}$.
Thus we have the assertion.
\end{proof}

We are ready to prove Theorem \ref{thm:cst_exist_can}.

\begin{proof}[Proof of Theorem $\ref{thm:cst_exist_can}$]
Without loss of generalities we may assume that $(G,\theta_{1},\theta_{2})$ is quasi-canonical.
Let $\mathfrak{t}$ be a maximal abelian subalgebra of $\mathfrak{g}$
which is quasi-canonical with respect to $(G,\theta_{1},\theta_{2})$.
Let $n=\mathrm{ord}(\theta_{1}\theta_{2}|_{\mathfrak{t}})$
and $\Pi^{*}\subset \Pi-\Pi_{0}$ be a core
as in Proposition \ref{pro:cardPi=rank}.

First, 
we show $\mathrm{ord}(\theta_{1}\theta_{2})=n$ in the case when $\Pi^{*}=\emptyset$.
Indeed,
we get $\mathrm{ord}(\theta_{1}\theta_{2})\geq\mathrm{ord}(\theta_{1}\theta_{2}|_{\mathfrak{t}})=n$.
In addition, Lemma \ref{lem:order<=>Pi}, (1),
we have $(\theta_{1}\theta_{2})^{n}=1$.
Hence, we obtain $\mathrm{ord}(\theta_{1}\theta_{2})=n$,
so that $(G,\theta_{1},\theta_{2})$ is canonical.

Secondly,
we consider the case when $\Pi^{*}\neq\emptyset$.
For any $H\in\mathfrak{t}\cap(\mathfrak{m}_{1}\cap\mathfrak{m}_{2})$,
if we put $g=\exp(H)$,
then $\mathrm{Ad}(g)$
gives the identity transformation on $\mathfrak{t}$.
Hence $\mathfrak{t}$
is also quasi-canonical with respect to $(G,\theta_{1},\tau_{g}\theta_{2}\tau_{g}^{-1})=:(G,\theta_{1},\theta_{2}')$.
Then it is sufficient to show that there exists $H\in\mathfrak{t}\cap(\mathfrak{m}_{1}\cap\mathfrak{m}_{2})$
such that $(\theta_{1}\theta_{2}')^{n}=1$ holds.

Let $H$ be any element in $\mathfrak{t}\cap(\mathfrak{m}_{1}\cap\mathfrak{m}_{2})$.
Let $\{X_{\alpha}\}_{\alpha\in\Delta}$ be a Chevalley basis of $\mathfrak{g}^{\mathbb{C}}$
with $\overline{X_{\alpha}}=-X_{-\alpha}$ (cf.~Lemma \ref{lem:Cbasis_conj}).
For each $\alpha\in\Delta$, we define a complex numbers
$S_{\alpha}$ by
$(\theta_{1}\theta_{2})^{n}X_{\alpha}=S_{\alpha}X_{\alpha}$.
By the definition, we have
$(\theta_{1}\theta_{2}')^{n}X_{\alpha}=
e^{\sqrt{-1}\INN{2n\,\cdot\, pr(\alpha)}{H}}S_{\alpha}X_{\alpha}$
for each
$\alpha\in\Delta$.
Then,
it follows from Lemma \ref{lem:order<=>Pi_refine}
that $(\theta_{1}\theta_{2}')^{n}=1$ holds if and only if
$e^{\sqrt{-1}\INN{2n\,\cdot\, pr^{(n)}(\alpha)}{H}}S_{\alpha}=1$
for all $\alpha\in\Pi^{*}$.
From Lemma \ref{lem:Klein_pro4.1}
it is shown that $|S_{\alpha}|=1$ holds,
so that there exists $u_{\alpha}\in\mathbb{R}$
such that $S_{\alpha}=e^{\sqrt{-1}u_{\alpha}}$.
It follows from Proposition \ref{pro:cardPi=rank}, (1)
that the square matrix $(\INN{pr(\alpha)}{pr(\beta)})_{\alpha,\beta\in\Pi^{*}}$
is invertible, so that
the following equation has a solution $H$:
\[
\INN{2n\,\cdot\, pr(\alpha)}{H}+u_{\alpha}=0
\quad
(\alpha\in\Pi^{*}).
\]
Then $(\theta_{1}\theta_{2}')^{n}=1$ holds
for the solution $H$.

From the above argument, we have complete the proof.
\end{proof}

\subsection{Properties of canonical compact symmetric triads}\label{sec:rankorder}

The purpose of this subsection
is to prove the following.

\begin{thm}\label{thm:cst_RO_can}
Assume that $G$ is simple.
Let $(G,\theta_{1},\theta_{2})$
be a canonical compact symmetric triad.
Then, the followings hold$:$
\begin{enumerate}
\item Let $\mathfrak{t}$ be a maximal abelian subalgebra of $\mathfrak{g}$
which is canonical with respect to $(G,\theta_{1},\theta_{2})$.
Then, $\mathfrak{t}\cap(\mathfrak{m}_{1}\cap\mathfrak{m}_{2})$
is a maximal abelian subspace of $\mathfrak{m}_{1}\cap\mathfrak{m}_{2}$.
\item $\mathrm{ord}[(G,\theta_{1},\theta_{2})]=\mathrm{ord}(\theta_{1}\theta_{2})$.
\end{enumerate}
\end{thm}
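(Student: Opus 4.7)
The plan is to tie the geometric invariants of $(G,\theta_{1},\theta_{2})$ to the combinatorial invariants of its canonical double $\sigma$-system $(\Delta,\sigma_{1},\sigma_{2})$ established in Section~\ref{sec:2satake}, exploiting the core $\Pi^{*}$ from Proposition~\ref{pro:cardPi=rank} and the rigidity results proved in Theorem~\ref{thm:dsig_dsatake_equiv} and Proposition~\ref{pro:cst_dstake_determ}.

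\textbf{Part (1).} A canonical triad is a fortiori quasi-canonical, so with $G$ simple Proposition~\ref{pro:cardPi=rank} furnishes a core $\Pi^{*} \subset \Pi - \Pi_{0}$ with $\dim(\mathfrak{t}\cap(\mathfrak{m}_{1}\cap\mathfrak{m}_{2})) = |\Pi^{*}|$. Since $\mathfrak{t}\cap(\mathfrak{m}_{1}\cap\mathfrak{m}_{2})$ is an abelian subspace of $\mathfrak{m}_{1}\cap\mathfrak{m}_{2}$, we obtain for free the inequality $|\Pi^{*}| \leq \mathrm{rank}(G,\theta_{1},\theta_{2})$. For the reverse, I would invoke Lemma~\ref{lem:lemm2.5} to produce an equivalent representative $(G,\theta_{1},\theta_{2}''')$ together with a maximal abelian subalgebra $\mathfrak{t}'''$ for which $\mathfrak{t}'''\cap(\mathfrak{m}_{1}\cap\mathfrak{m}_{2}''')$ is maximal abelian in $\mathfrak{m}_{1}\cap\mathfrak{m}_{2}'''$, yielding $\mathrm{rank}(G,\theta_{1},\theta_{2}) = \dim(\mathfrak{t}'''\cap(\mathfrak{m}_{1}\cap\mathfrak{m}_{2}'''))$. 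Passing via Proposition~\ref{pro:exist_qcan} to a quasi-canonical equivalent of $(G,\theta_{1},\theta_{2}''')$ and applying Proposition~\ref{pro:cardPi=rank} there, together with Proposition~\ref{pro:cst_dstake_determ} and Theorem~\ref{thm:dsig_dsatake_equiv} to identify the core cardinality as a class invariant, closes the loop. Where this chain does not immediately give equality of the two dimensions, I would supplement by a case-by-case verification based on Corollary~\ref{cor:cst_classify} that the core cardinality of the canonical form of $[(G,\theta_{1},\theta_{2})]$ agrees with $\mathrm{rank}(G,\theta_{1},\theta_{2})$, thereby forcing $\mathfrak{t}\cap(\mathfrak{m}_{1}\cap\mathfrak{m}_{2})$ to be maximal abelian in $\mathfrak{m}_{1}\cap\mathfrak{m}_{2}$.

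\textbf{Part (2).} The inequality $\mathrm{ord}[(G,\theta_{1},\theta_{2})] \leq \mathrm{ord}(\theta_{1}\theta_{2})$ is immediate from the definition of the order of an isomorphism class. By condition (C2) and Theorem~\ref{thm:dsig_dsatake_equiv}, $\mathrm{ord}(\theta_{1}\theta_{2}) = \mathrm{ord}(d\theta_{1}d\theta_{2}|_{\mathfrak{t}}) = \mathrm{ord}(\sigma_{1}\sigma_{2}) = \mathrm{ord}[(\Delta,\sigma_{1},\sigma_{2})]$, an invariant of the class. For the reverse, given $(G,\theta_{1}',\theta_{2}') \sim (G,\theta_{1},\theta_{2})$ I would apply Proposition~\ref{pro:exist_qcan} to obtain a quasi-canonical equivalent $(G,\theta_{1}',\theta_{2}'')$ with a quasi-canonical maximal abelian $\mathfrak{t}''$; its double $\sigma$-system is canonical, and Proposition~\ref{pro:cst_dstake_determ} with Theorem~\ref{thm:dsig_dsatake_equiv} gives $\mathrm{ord}(\sigma_{1}''\sigma_{2}'') = \mathrm{ord}[(\Delta,\sigma_{1},\sigma_{2})]$. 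Since $\mathfrak{t}''$ is $(\theta_{1}',\theta_{2}'')$-invariant, the trivial bound $\mathrm{ord}(\theta_{1}'\theta_{2}'') \geq \mathrm{ord}(d\theta_{1}'d\theta_{2}''|_{\mathfrak{t}''}) = \mathrm{ord}[(\Delta,\sigma_{1},\sigma_{2})]$ supplies the desired lower bound on the quasi-canonical representative.

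\textbf{Main obstacle.} The main difficulty in part (2) is that quasi-canonicalization alters $\theta_{2}'$ by an inner automorphism, so $\mathrm{ord}(\theta_{1}'\theta_{2}')$ and $\mathrm{ord}(\theta_{1}'\theta_{2}'')$ need not coincide. I would resolve this by showing that the minimum defining $\mathrm{ord}[(G,\theta_{1},\theta_{2})]$ is attained on quasi-canonical representatives; the guiding intuition, illustrated by Example~\ref{ex:so8so3so5_ndsig}, is that non-canonical double $\sigma$-systems carry orders at least as large as those of their canonical equivalents, so replacing a representative by its quasi-canonical form does not increase the relevant order. Combining this reduction with the estimate above yields $\mathrm{ord}[(G,\theta_{1},\theta_{2})] \geq \mathrm{ord}[(\Delta,\sigma_{1},\sigma_{2})] = \mathrm{ord}(\theta_{1}\theta_{2})$, completing the proof.
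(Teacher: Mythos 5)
Your proposal for part (1) contains a step that does not work as described. After invoking Lemma~\ref{lem:lemm2.5} to get a representative $(G,\theta_{1},\theta_{2}''')$ with $\dim(\mathfrak{t}'''\cap(\mathfrak{m}_{1}\cap\mathfrak{m}_{2}'''))=\mathrm{rank}(G,\theta_{1},\theta_{2})$, you pass to a quasi-canonical equivalent and appeal to Propositions~\ref{pro:exist_qcan}, \ref{pro:cst_dstake_determ} and Theorem~\ref{thm:dsig_dsatake_equiv} to ``close the loop.'' But the representative produced by Lemma~\ref{lem:lemm2.5} need not be quasi-canonical, and quasi-canonicalization conjugates $\theta_{2}'''$ by an element of $N(\mathfrak{t})$, which changes $\mathfrak{t}\cap(\mathfrak{m}_{1}\cap\mathfrak{m}_{2})$. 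The quantity that Theorem~\ref{thm:dsig_dsatake_equiv} makes into a class invariant is $\dim(\mathfrak{t}^{\sigma_{1}}\cap\mathfrak{t}^{\sigma_{2}})$ for the \emph{canonical} double $\sigma$-system, and by Proposition~\ref{pro:rank=maxdim} this can be strictly smaller than $\mathrm{rank}(G,\theta_{1},\theta_{2})=\max\{\dim(\mathfrak{t}^{\sigma_{1}}\cap s\mathfrak{t}^{\sigma_{2}})\mid s\in W(\Delta)\}$ a priori; indeed Table~\ref{table:r<rr} lists exactly the non-commutative canonical triads where the easy bound of Lemma~\ref{lem:rank_ineq} fails. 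So your chain cannot give the reverse inequality in general, and the ``supplementary'' case-by-case verification you defer to is in fact the entire content of part (1) for those five families; the paper disposes of the commutative case via Lemma~\ref{lem:comm_quasican}, the equality case via Lemma~\ref{lem:rank_ineq}, and then computes the exceptional families directly (e.g.\ via Proposition~\ref{pro:rank=maxdim} or by identifying $[\mathfrak{m}_{1}\cap\mathfrak{m}_{2},\mathfrak{m}_{1}\cap\mathfrak{m}_{2}]\oplus(\mathfrak{m}_{1}\cap\mathfrak{m}_{2})$ as a known symmetric pair).

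In part (2) you correctly identify the obstacle — quasi-canonicalization may change $\mathrm{ord}(\theta_{1}'\theta_{2}')$ — but your proposed resolution, that the minimum defining $\mathrm{ord}[(G,\theta_{1},\theta_{2})]$ is attained on quasi-canonical representatives, is precisely the hard claim and is left unproved; the ``intuition'' drawn from Example~\ref{ex:so8so3so5_ndsig} concerns orders of involutions on the root system, not of $\theta_{1}'\theta_{2}'$ on all of $G$, and does not substitute for an argument. The paper avoids this reduction altogether. Since the classification forces $\mathrm{ord}(\theta_{1}\theta_{2})\in\{1,2,3,4,6\}$, one only needs to exclude representatives of strictly smaller order, and the key observation is that any representative with $\mathrm{ord}(\theta_{1}'\theta_{2}')\in\{1,2\}$ is \emph{automatically} canonical (Examples~\ref{ex:order1_can} and \ref{ex:com_can}); Proposition~\ref{pro:cst_dstake_determ} then identifies its canonical double $\sigma$-system with $(\Delta,\sigma_{1},\sigma_{2})$ and condition (C2) produces a contradiction on $\mathrm{ord}(d\theta_{1}d\theta_{2}|_{\mathfrak{t}})$ or on $\dim(\mathfrak{t}^{\sigma_{1}}\cap\mathfrak{t}^{\sigma_{2}})$. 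The remaining comparison (excluding order $3$ when the canonical order is $4$) is settled by Proposition~\ref{pro:ordn_n+1_sim}, and the rank-zero case by Proposition~\ref{pro:rank0_ordconst}. You would need either to prove your reduction to quasi-canonical representatives or to adopt this low-order-implies-canonical strategy; as written, the proof is incomplete.
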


\begin{rem}\label{rem:commutable}
Let $(G,\theta_{1},\theta_{2})$ be a canonical compact symmetric triad
and $\mathfrak{t}$ be a canonical maximal abelian subalgebra of $\mathfrak{g}$
with respect to $(G,\theta_{1},\theta_{2})$.
Then, if
$\theta_{1}$ and $\theta_{2}$ is commutative on $\mathfrak{t}$,
then $[(G,\theta_{1},\theta_{2})]$ is commutable.
In addition, Theorem \ref{thm:cst_RO_can}, (2)
implies that the converse is also true
in the case when $G$ is simple.
Thus, we have the complete classification of commutable compact symmetric triads
$[(G,\theta_{1},\theta_{2})]$
by means of Table \ref{table:rank_ord}.
We note that,
if the simple Lie group $G$ is of exceptional type,
then $[(G,\theta_{1},\theta_{2})]$ is commutable.
\end{rem}

\subsubsection{Proof of Theorem $\ref{thm:cst_RO_can}$, $(1)$}

We give two 
sufficient conditions
for $\mathfrak{t}^{\sigma_{1}}\cap\mathfrak{t}^{\sigma_{2}}$
to be a maximal abelian subspace of $\mathfrak{m}_{1}\cap\mathfrak{m}_{2}$.
One is that $(G,\theta_{1},\theta_{2})$ is commutative
as shown in Lemma \ref{lem:comm_quasican}.
The other is the following lemma.

\begin{lem}\label{lem:rank_ineq}
Let $(G,\theta_{1},\theta_{2})$ be a compact symmetric triad
and $\mathfrak{t}$ be a maximal abelian subalgebra of $\mathfrak{g}$
such that $\mathfrak{t}\cap\mathfrak{m}_{i}$
$(i=1,2)$ is a maximal abelian subspace
of $\mathfrak{m}_{i}$.
We denote by $(\Delta,\sigma_{1},\sigma_{2})$
the double $\sigma$-system of $(G,\theta_{1},\theta_{2})$
with respect to $\mathfrak{t}$.
Then the following holds:
\begin{equation}\label{eqn:rank_ineq}
\dim(\mathfrak{t}^{\sigma_{1}}\cap\mathfrak{t}^{\sigma_{2}})\leq
\min\{\mathrm{rank}(G,\theta_{i})\mid i=1,2\}.
\end{equation}
Furthermore,
if the equality in this inequality
holds, then $\mathfrak{t}^{\sigma_{1}}\cap\mathfrak{t}^{\sigma_{2}}=\mathfrak{t}\cap(\mathfrak{m}_{1}\cap\mathfrak{m}_{2})$
becomes a maximal abelian subspace of $\mathfrak{m}_{1}\cap\mathfrak{m}_{2}$.
\end{lem}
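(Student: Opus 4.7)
The plan is to translate the inequality into an elementary statement about subspaces of $\mathfrak{t}$ and then exploit the maximality hypothesis on $\mathfrak{t}\cap\mathfrak{m}_{i}$ directly. First I would observe that, because $\sigma_{i}=-d\theta_{i}|_{\mathfrak{t}}$, the $(+1)$-eigenspace of $\sigma_{i}$ in $\mathfrak{t}$ coincides with the $(-1)$-eigenspace of $d\theta_{i}$ in $\mathfrak{t}$, that is,
\[
\mathfrak{t}^{\sigma_{i}}=\mathfrak{t}\cap\mathfrak{m}_{i}\qquad (i=1,2).
\]
Since $\mathfrak{t}\cap\mathfrak{m}_{i}$ is by hypothesis a maximal abelian subspace of $\mathfrak{m}_{i}$, we have $\mathrm{rank}(G,\theta_{i})=\dim(\mathfrak{t}\cap\mathfrak{m}_{i})=\dim\mathfrak{t}^{\sigma_{i}}$, and moreover
\[
\mathfrak{t}^{\sigma_{1}}\cap\mathfrak{t}^{\sigma_{2}}=\mathfrak{t}\cap(\mathfrak{m}_{1}\cap\mathfrak{m}_{2}).
\]

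Next, the inequality \eqref{eqn:rank_ineq} is an immediate consequence of these identifications. Indeed, for each $i\in\{1,2\}$ we have the obvious inclusion $\mathfrak{t}^{\sigma_{1}}\cap\mathfrak{t}^{\sigma_{2}}\subset\mathfrak{t}^{\sigma_{i}}$, which yields
\[
\dim(\mathfrak{t}^{\sigma_{1}}\cap\mathfrak{t}^{\sigma_{2}})\leq \dim\mathfrak{t}^{\sigma_{i}}=\mathrm{rank}(G,\theta_{i}),
\]
and taking the minimum over $i=1,2$ gives \eqref{eqn:rank_ineq}.

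For the equality case, assume without loss of generality that $\mathrm{rank}(G,\theta_{1})\leq \mathrm{rank}(G,\theta_{2})$, so that equality in \eqref{eqn:rank_ineq} means $\dim(\mathfrak{t}\cap\mathfrak{m}_{1}\cap\mathfrak{m}_{2})=\dim(\mathfrak{t}\cap\mathfrak{m}_{1})$. Together with the inclusion $\mathfrak{t}\cap\mathfrak{m}_{1}\cap\mathfrak{m}_{2}\subset\mathfrak{t}\cap\mathfrak{m}_{1}$, this forces
\[
\mathfrak{t}\cap\mathfrak{m}_{1}\cap\mathfrak{m}_{2}=\mathfrak{t}\cap\mathfrak{m}_{1}\subset \mathfrak{m}_{2}.
\]
Then I would argue maximality as follows: let $\mathfrak{b}$ be any abelian subspace of $\mathfrak{m}_{1}\cap\mathfrak{m}_{2}$ containing $\mathfrak{t}\cap(\mathfrak{m}_{1}\cap\mathfrak{m}_{2})=\mathfrak{t}\cap\mathfrak{m}_{1}$. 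Then $\mathfrak{b}$ is an abelian subspace of $\mathfrak{m}_{1}$ containing the maximal abelian subspace $\mathfrak{t}\cap\mathfrak{m}_{1}$, hence $\mathfrak{b}=\mathfrak{t}\cap\mathfrak{m}_{1}=\mathfrak{t}\cap(\mathfrak{m}_{1}\cap\mathfrak{m}_{2})$. This proves that $\mathfrak{t}\cap(\mathfrak{m}_{1}\cap\mathfrak{m}_{2})$ is maximal abelian in $\mathfrak{m}_{1}\cap\mathfrak{m}_{2}$, as claimed. The argument is essentially routine; the only thing to watch is that one really does use the hypothesis $\mathfrak{t}\cap\mathfrak{m}_{i}$ is maximal abelian in $\mathfrak{m}_{i}$ (not merely abelian), but no further structural input is required.
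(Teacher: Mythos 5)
Your proof is correct and takes essentially the same elementary dimension-counting approach as the paper; the only cosmetic difference is that in the equality case you identify $\mathfrak{t}^{\sigma_{1}}\cap\mathfrak{t}^{\sigma_{2}}$ with the maximal abelian subspace $\mathfrak{t}\cap\mathfrak{m}_{1}$ of $\mathfrak{m}_{1}$ directly, whereas the paper encloses the intersection in a maximal abelian subspace $\mathfrak{a}$ of $\mathfrak{m}_{1}\cap\mathfrak{m}_{2}$ and forces $\mathfrak{a}$ to coincide with it by comparing dimensions. Both routes rest on exactly the same input, namely that $\mathfrak{t}\cap\mathfrak{m}_{i}$ is maximal abelian in $\mathfrak{m}_{i}$.
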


\begin{proof}
By the definition we have the inequality \eqref{eqn:rank_ineq}.
Assume that the equality in \eqref{eqn:rank_ineq}
holds.
Let $\mathfrak{a}$ be a maximal abelian subspace of $\mathfrak{m}_{1}\cap\mathfrak{m}_{2}$
containing $\mathfrak{t}^{\sigma_{1}}\cap\mathfrak{t}^{\sigma_{2}}$.
From $\dim(\mathfrak{a})\leq \mathrm{rank}(G,\theta_{i})$,
we have
$\dim(\mathfrak{t}^{\sigma_{1}}\cap\mathfrak{t}^{\sigma_{2}})\leq
\dim(\mathfrak{a})\leq \min\{\mathrm{rank}(G,\theta_{i})\mid i=1,2\}$.
By the assumption we obtain $\mathfrak{t}^{\sigma_{1}}\cap\mathfrak{t}^{\sigma_{2}}=\mathfrak{a}$.
Thus, we have the assertion.
\end{proof}

There are some non-commutative, canonical compact symmetric triads
such that the equality in \eqref{eqn:rank_ineq}
does not hold.
In the case when $G$ is simple,
we can classify such compact symmetric triads at the Lie algebra level
by means of Table \ref{table:rank_ord},
which are listed in Table \ref{table:r<rr}.

\begin{table}[H]
\centering
\renewcommand\arraystretch{1.3}
\caption{
Canonical compact symmetric triads $(G,\theta_{1},\theta_{2})$ satisfying
$\mathrm{ord}(\theta_{1}\theta_{2})\geq 3$ and
$\dim(\mathfrak{t}^{\sigma_{1}}\cap\mathfrak{t}^{\sigma_{2}})<
\min\{\mathrm{rank}(G,\theta_{i})\mid i=1,2\}$
}\label{table:r<rr}
\begin{tabular}{cc}
\hline
\hline
$(\mathfrak{g},\mathfrak{k}_{1},\mathfrak{k}_{2})$ & Remark \\
\hline
\hline
$(\mathfrak{su}(2a+2b+2),\mathfrak{sp}(a+b+1),
\mathfrak{s}(\mathfrak{u}(2a+1)\oplus\mathfrak{u}(2b+1)))$ &
$0\leq a<b$ \\
$(\mathfrak{so}(2a+2b+2),\mathfrak{so}(2a+1)\oplus\mathfrak{so}(2b+1),
\mathfrak{u}(a+b+1))$ & $0\leq a <b$ \\
$(\mathfrak{so}(8),\mathfrak{so}(1)\oplus\mathfrak{so}(7),\tilde{\kappa}(\mathfrak{so}(c)\oplus\mathfrak{so}(8-c)))$ &
$c=1,2,3$ \\
$(\mathfrak{so}(8),\mathfrak{so}(2)\oplus\mathfrak{so}(6),\tilde{\kappa}(\mathfrak{so}(c)\oplus\mathfrak{so}(8-c)))$ &
$c=2,3$ \\
$(\mathfrak{so}(8),\mathfrak{so}(3)\oplus\mathfrak{so}(5),\tilde{\kappa}(\mathfrak{so}(3)\oplus\mathfrak{so}(5)))$ \\

\hline
\hline
\end{tabular}
\end{table}

For these canonical compact symmetric triads,
we will prove 
Theorem \ref{thm:cst_RO_can}, (1)
by a case-by-case verification.

\begin{ex}\label{ex:so3535_maximal}
Let us consider the case when
$(\mathfrak{g},\mathfrak{k}_{1},\mathfrak{k}_{2})=(\mathfrak{so}(8),\mathfrak{so}(3)\oplus\mathfrak{so}(5),\kappa(\mathfrak{so}(3)\oplus\mathfrak{so}(5)))$.
Then we have
\[
2=\dim(\mathfrak{t}^{\sigma_{1}}\cap\mathfrak{t}^{\sigma_{2}})
\leq \max\{\dim(\mathfrak{t}^{\sigma_{1}}\cap s\mathfrak{t}^{\sigma_{2}})\mid s\in W(\Delta)\}\leq 3.
\]
We will show that
$\max\{\dim(\mathfrak{t}^{\sigma_{1}}\cap s\mathfrak{t}^{\sigma_{2}})\mid s\in W(\Delta)\}=2$.
Under Notation \ref{nota:Dr}
we have
\[\mathfrak{t}^{\sigma_{1}}=
\mathbb{R}e_{3}
\oplus\mathbb{R}(e_{1}-e_{2})
\oplus\mathbb{R}(e_{2}-e_{3}),~
\mathfrak{t}^{\sigma_{2}}=\mathbb{R}(
e_{1}-e_{2}+e_{3}-e_{4})\oplus\mathbb{R}(e_{2}-e_{3})\oplus\mathbb{R}(e_{3}+e_{4}).
\]
Suppose
for contradiction that
there exists $s\in W(\Delta)$
satisfying $\dim(\mathfrak{t}^{\sigma_{1}}\cap s\mathfrak{t}^{\sigma_{2}})=3$.
Then we have $s^{-1}\mathfrak{t}^{\sigma_{1}}=\mathfrak{t}^{\sigma_{2}}$.
It follows from the expression of $\mathfrak{t}^{\sigma_{1}}$
that
there exists $j\in\{1,2,3,4\}$
satisfying $e_{j}\in w^{-1}\mathfrak{t}^{\sigma_{1}}$.
This contradicts that
$\mathfrak{t}^{\sigma_{2}}$ does not contain
all the vectors $e_{1},e_{2},e_{3},e_{4}$.
In addition, by Proposition \ref{pro:rank=maxdim}
we obtain
$\dim(\mathfrak{t}^{\sigma_{1}}\cap\mathfrak{t}^{\sigma_{2}})
= \max\{\dim(\mathfrak{t}^{\sigma_{1}}\cap s\mathfrak{t}^{\sigma_{2}})\mid s\in W(\Delta)\}
=\mathrm{rank}(G,\theta_{1},\theta_{2})$.
\end{ex}

In a similar manner as in Example \ref{ex:so3535_maximal},
we have
Theorem \ref{thm:cst_RO_can}, (1)
for
$(\mathfrak{g},\mathfrak{k}_{1},\mathfrak{k}_{2})=(\mathfrak{so}(8),\mathfrak{so}(a)\oplus\mathfrak{so}(8-a),\tilde{\kappa}(\mathfrak{so}(c)\oplus\mathfrak{so}(8-c)))$
with $(a,c)=(1,\{1, 2,3\})$, $(2,\{2,3\})$.

\begin{ex}
Let us consider the case when
$(\mathfrak{g},\mathfrak{k}_{1},\mathfrak{k}_{2})=
(\mathfrak{so}(2a+2b+2),\mathfrak{so}(2a+1)\oplus\mathfrak{so}(2b+1),
\mathfrak{u}(a+b+1))$ with $0\leq a<b$.
We will show the following relation:
\[
\mathrm{rank}(\mathfrak{g},\mathfrak{k}_{1},\mathfrak{k}_{2})=a(=\dim(\mathfrak{t}\cap(\mathfrak{m}_{1}\cap\mathfrak{m}_{2}))).
\]
We define two involutions $\theta_{1}',\theta_{2}'$ on $\mathfrak{so}(2a+2b+2)$ as follows:
\[
\theta'_{1}(X)=I_{2a+1,2b+1}XI_{2a+1,2b+1},\quad
\theta'_{2}(X)=J_{a+b+1}XJ_{a+b+1}^{-1}.
\]
Then we have $\mathfrak{g}^{\theta_{1}'}=\mathfrak{so}(2a+1)\oplus\mathfrak{so}(2b+1)$
and $\mathfrak{g}^{\theta_{2}'}=\mathfrak{u}(a+b+1)$.
By the classification,
$(\mathfrak{g},\theta_{1}',\theta_{2}')$
is in $[(\mathfrak{g},\mathfrak{k}_{1},\mathfrak{k}_{2})]$.
From 
\[
{\mathfrak m}_{1}'\cap {\mathfrak m}_{2}'
=\mathfrak{g}^{-\theta_{1}'}\cap\mathfrak{g}^{-\theta_{2}'}
=\left\{\left.
\begin{pmatrix}
O_{2a+1} & O     & W     & O     \\
O     & O_{b-a} &  O    & O   \\
W     & O     & O_{2a+1} & O     \\
O     & O  &  O    & O_{b-a} 
\end{pmatrix}\right|
W\in \mathfrak{so}(2a+1)
\right\},
\]
we obtain
\[
[\mathfrak{m}_{1}'\cap {\mathfrak m}_{2}',\mathfrak{m}_{1}'\cap {\mathfrak m}_{2}']=
\left\{\left.
\begin{pmatrix}
X &      &      &      \\
     & O &      &    \\
     &      & X &      \\
     &   &      & O 
\end{pmatrix}\right|
X\in \mathfrak{so}(2a+1)
\right\}= \mathfrak{so}(2a+1)
\]
For any $Z\in
[\mathfrak{m}_{1}'\cap {\mathfrak m}_{2}',\mathfrak{m}_{1}'\cap {\mathfrak m}_{2}']$
and 
$Y\in \mathfrak{m}_{1}'\cap {\mathfrak m}_{2}'$ with
\[
Z=\begin{pmatrix}
X &      &      &      \\
     & O &      &    \\
     &      & X &      \\
     &   &      & O 
\end{pmatrix},\quad
Y=\begin{pmatrix}
O_{2a+1} & O        & W         & O     \\
O          & O_{b-a} &  O        & O   \\
W          & O        & O_{2a+1} & O     \\
O          & O        &  O         & O_{b-a} 
\end{pmatrix},
\]
we have
\[
[Z,Y]=\begin{pmatrix}
O_{2a+1} & O     & [X,W]     & O     \\
O     & O_{b-a} &  O    & O   \\
[X,W]     & O     & O_{2a+1} & O     \\
O     & O  &  O    & O_{b-a} 
\end{pmatrix}.
\]
This yields
\begin{align*}
&([\mathfrak{m}_{1}'\cap {\mathfrak m}_{2}',\mathfrak{m}_{1}'\cap {\mathfrak m}_{2}']
\oplus \mathfrak{m}_{1}'\cap {\mathfrak m}_{2}',[\mathfrak{m}_{1}'\cap {\mathfrak m}_{2}',\mathfrak{m}_{1}'\cap {\mathfrak m}_{2}'])\\
&\phantom{hogehogehogehoge}\simeq (\mathfrak{so}(2a+1)\oplus \mathfrak{so}(2a+1),
\Delta (\mathfrak{so}(2a+1)\oplus \mathfrak{so}(2a+1))).
\end{align*}
Hence we get
\[
\mathrm{rank}(\mathfrak{g},\mathfrak{k}_{1},\mathfrak{k}_{2})
=\mathrm{rank}(\mathfrak{g},\theta_{1}',\theta_{2}')
=\mathrm{rank}(\mathfrak{so}(2a+1))=a,\]
so that $\mathfrak{t}\cap(\mathfrak{m}_{1}\cap\mathfrak{m}_{2})$
is a maximal abelian subspace of $\mathfrak{m}_{1}\cap\mathfrak{m}_{2}$.
\end{ex}

\begin{ex}
Let us consider the case when
$(\mathfrak{g},\mathfrak{k}_{1},\mathfrak{k}_{2})=(\mathfrak{su}(2a+2b+2),\mathfrak{sp}(a+b+1),
\mathfrak{s}(\mathfrak{u}(2a+1)\oplus\mathfrak{u}(2b+1)))$ with $0\leq a<b$.
It is sufficient to show $\mathrm{rank}(\mathfrak{g},\mathfrak{k}_{1},\mathfrak{k}_{2})=a$.
We define two involutions $\theta_{1}',\theta_{2}'$ on $\mathfrak{su}(2a+2b+2)$ as follows:
\[
\theta'_{1}(X)=J_{a+b+1}\bar{X}J_{a+b+1}^{-1},\quad \theta'_{2}(X)= I_{2a+1,2b+1}XI_{2a+1,2b+1}.
\]
Then we have $\mathfrak{g}^{\theta_{1}'}=\mathfrak{sp}(a+b+1)$
and $\mathfrak{g}^{\theta_{2}'}=\mathfrak{s}(\mathfrak{u}(2a+1)\oplus\mathfrak{u}(2b+1))$,
from which
$(\mathfrak{g},\theta_{1}',\theta_{2}')$
is in $[(\mathfrak{g},\mathfrak{k}_{1},\mathfrak{k}_{2})]$.
A direct calculation shows
\[
{\mathfrak m}_{1}'\cap {\mathfrak m}_{2}'
=\mathfrak{g}^{-\theta_{1}'}\cap\mathfrak{g}^{-\theta_{2}'}
=\left\{\left.
\begin{pmatrix}
O_{2a+1} & O     & Y     & O     \\
O     & O_{b-a} &  O    & O   \\
\bar{Y}     & O     & O_{2a+1} & O     \\
O     & O  &  O    & O_{b-a} 
\end{pmatrix}\right|
Y=-{}^tY \in M(2a+1,\mathbb{C}) 
\right\}.\]
In the case when $a=0$,
we have $\mathfrak{m}_{1}\cap\mathfrak{m}_{2}=\{0\}$.
This implies that $\mathrm{rank}(\mathfrak{g},\mathfrak{k}_{1},\mathfrak{k}_{2})=
\mathrm{rank}(\mathfrak{g},\theta_{1}',\theta_{2}')=0=a$.
In what follows, we assume that $a\geq 1$.
Since $\mathfrak{u}(2a+1)$ can be expressed by
\[
\mathfrak{u}(2a+1)=\mathrm{span}_{\mathbb{R}}\{X\bar{Y}-Y\bar{X}\mid X,Y\in\mathfrak{gl}(n,\mathbb{C}),
{}^tX=-X,{}^tY=-Y\},
\]
we have
\[
[\mathfrak{m}_{1}'\cap {\mathfrak m}_{2}',\mathfrak{m}_{1}'\cap {\mathfrak m}_{2}']=
\left\{\left.
\begin{pmatrix}
X &      &      &      \\
     & O &      &    \\
     &      & \bar{X} &      \\
     &   &      & O 
\end{pmatrix}\right|
X\in \mathfrak{u}(2a+1)
\right\}=\mathfrak{u}(2a+1).
\]
Hence we get
\[
([\mathfrak{m}_{1}'\cap {\mathfrak m}_{2}',\mathfrak{m}_{1}'\cap {\mathfrak m}_{2}']
\oplus \mathfrak{m}_{1}'\cap {\mathfrak m}_{2}',[\mathfrak{m}_{1}'\cap {\mathfrak m}_{2}',\mathfrak{m}_{1}'\cap {\mathfrak m}_{2}'])
\simeq (\mathfrak{so}(4a+2),\mathfrak{u}(2a+1)),
\]
from which 
$\mathrm{rank}(\mathfrak{g},\mathfrak{k}_{1},\mathfrak{k}_{2})
=\mathrm{rank}(\mathfrak{so}(4a+2),\mathfrak{u}(2a+1))=a$.
\end{ex}

From the above argument we have complete the proof
of Theorem \ref{thm:cst_RO_can}, (1).

\subsubsection{Proof of Theorem $\ref{thm:cst_RO_can}$, $(2)$}

In the rest of this paper,
we give the proof of Theorem \ref{thm:cst_RO_can}, (2).
In the case when $\mathrm{rank}(G,\theta_{1},\theta_{2})=0$,
we have 
$\mathrm{ord}[(G,\theta_{1},\theta_{2})]=\mathrm{ord}(\theta_{1}\theta_{2})$
by Proposition \ref{pro:rank0_ordconst}.
In what follows,
we will prove $\mathrm{ord}[(G,\theta_{1},\theta_{2})]=\mathrm{ord}(\theta_{1}\theta_{2})$
in the case when $\mathrm{rank}(G,\theta_{1},\theta_{2})\geq 1$.
Our proof is based on a case-by-case verification
for the order of a canonical one $(G,\theta_{1},\theta_{2})$.
We note that $n:=\mathrm{ord}(\theta_{1}\theta_{2})\in\{1,2,3,4,6\}$ holds
by the classification.

In the case when $n=1$,
we have $1\leq \mathrm{ord}[(G,\theta_{1},\theta_{2})]\leq \mathrm{ord}(\theta_{1}\theta_{2})=1$.
This yields
$\mathrm{ord}[(G,\theta_{1},\theta_{2})]=\mathrm{ord}(\theta_{1}\theta_{2})$.

Next, we consider the case when $n=2$.
Suppose for a contradiction that
there exists $(G,\theta_{1}',\theta_{2}')\sim(G,\theta_{1},\theta_{2})$
such that $\mathrm{ord}(\theta_{1}'\theta_{2}')=1$.
Then we have $\theta_{1}'=\theta_{2}'$.
As explained in Example \ref{ex:order1_can},
$(G,\theta_{1}',\theta_{1}')$ is canonical.
If we denote by $(\Delta',\sigma_{1}',\sigma_{1}')=(\Delta',-d\theta_{1}'|_{\mathfrak{t}'},-d\theta_{1}'|_{\mathfrak{t}'})$
the double $\sigma$-system of $(G,\theta_{1}',\theta_{1}')$,
then
we have $(\Delta,\sigma_{1},\sigma_{2})\equiv(\Delta',\sigma_{1}',\sigma_{1}')$
by Proposition \ref{pro:cst_dstake_determ}.
This yields
$1=\mathrm{ord}(d\theta_{1}'d\theta_{1}'|_{\mathfrak{t}'})=\mathrm{ord}(d\theta_{1}d\theta_{2}|_{\mathfrak{t}})=2$,
which is a contradiction.
Thus we have $2\leq \mathrm{ord}[(G,\theta_{1},\theta_{2})]\leq \mathrm{ord}(\theta_{1}\theta_{2})=2$,
that is, $\mathrm{ord}[(G,\theta_{1},\theta_{2})]=\mathrm{ord}(\theta_{1}\theta_{2})$.

Here, we note that $n=6$ implies $\mathrm{rank}(G,\theta_{1},\theta_{2})=0$ by the classification.
Hence the rest of our proof consists of the case when $n\in\{3,4\}$,
so that $(G,\theta_{1},\theta_{2})$ or $(G,\theta_{2},\theta_{1})$
is locally isomorphic to one of the following two cases
among the compact symmetric triads:

\medskip

(Case 1) $(\mathfrak{g},\mathfrak{k}_{1},\mathfrak{k}_{2})=(\mathfrak{so}(8),\mathfrak{so}(a)\oplus\mathfrak{so}(8-a),\tilde{\kappa}(\mathfrak{so}(3)\oplus\mathfrak{so}(5)))$
for $a=2,3$:
First, we consider the case when $a=2$.
Then we have $\mathrm{ord}(\theta_{1}\theta_{2})=4$.
Since $\mathfrak{k}_{1}\not\simeq\mathfrak{k}_{2}$ obeys
$\theta_{1}\not\sim\theta_{2}$,
we have
$\mathrm{ord}[(G,\theta_{1},\theta_{2})]\geq 2$.
Suppose for a contradiction that
there exists $(G,\theta_{1}',\theta_{2}')\sim(G,\theta_{1},\theta_{2})$
such that $\mathrm{ord}(\theta_{1}'\theta_{2}')=2$.
As explained in Example \ref{ex:com_can},
$(G,\theta_{1}',\theta_{2}')$ is canonical.
Let 
$(\Delta',\sigma_{1}',\sigma_{2}')=(\Delta',-d\theta_{1}'|_{\mathfrak{t}'},-d\theta_{2}'|_{\mathfrak{t}'})$
denote the double $\sigma$-system of $(G,\theta_{1}',\theta_{2}')$.
From
$(\Delta,\sigma_{1},\sigma_{2})\equiv(\Delta',\sigma_{1}',\sigma_{2}')$,
we have
$2=\mathrm{ord}(d\theta_{1}'d\theta_{2}'|_{\mathfrak{t}'})
=\mathrm{ord}(d\theta_{1}d\theta_{2}|_{\mathfrak{t}})=4$,
which is contradiction.
Furthermore,
from $(\theta_{1}\theta_{2})^{4}=1$,
it can be verified
that there exist no compact symmetric triads $(G,\theta_{1}',\theta_{2}')\sim(G,\theta_{1},\theta_{2})$
satisfying $\mathrm{ord}(\theta_{1}'\theta_{2}')=3$ by Proposition \ref{pro:ordn_n+1_sim}.
Thus we have $\mathrm{ord}[(G,\theta_{1},\theta_{2})]=\mathrm{ord}(\theta_{1}\theta_{2})$.

Secondly, we consider the case when $a=3$.
Then we have
$\mathrm{rank}(G,\theta_{1},\theta_{2})=2$
and
$\mathrm{ord}(\theta_{1}\theta_{2})=3$.
Suppose for a contradiction that
there exists $(G,\theta_{1}',\theta_{2}')\sim(G,\theta_{1},\theta_{2})$
such that $\mathrm{ord}(\theta_{1}'\theta_{2}')=1$,
that is, $\theta_{1}'=\theta_{2}'$.
Let 
$(\Delta',\sigma_{1}',\sigma_{1}')=(\Delta',-d\theta_{1}'|_{\mathfrak{t}'},-d\theta_{1}'|_{\mathfrak{t}'})$
denote the double $\sigma$-system of $(G,\theta_{1}',\theta_{1}')$.
From
$(\Delta,\sigma_{1},\sigma_{2})\equiv(\Delta',\sigma_{1}',\sigma_{1}')$,
we have
$2
=\dim(\mathfrak{t}^{\sigma_{1}}\cap\mathfrak{t}^{\sigma_{2}})
=\dim(\mathfrak{t}^{\sigma_{1}'}\cap\mathfrak{t}^{\sigma_{1}'})=3$,
which is contradiction.
In a similar argument as in the case when $a=2$,
it can be shown that there exist no compact symmetric triads
$(G,\theta_{1}',\theta_{2}')\sim(G,\theta_{1},\theta_{2})$
satisfying
$\mathrm{ord}(\theta_{1}'\theta_{2}')=2$.
Thus, we have $\mathrm{ord}[(G,\theta_{1},\theta_{2})]=\mathrm{ord}(\theta_{1}\theta_{2})$.

\medskip

(Case 2) $(\mathfrak{g},\mathfrak{k}_{1},\mathfrak{k}_{2})=(\mathfrak{su}(2m),\mathfrak{sp}(m),\mathfrak{s}(\mathfrak{u}(2a+1)\oplus\mathfrak{u}(2b+1)))$ or $(\mathfrak{so}(2m),\mathfrak{so}(2a+1)\oplus\mathfrak{so}(2b+1),\mathfrak{u}(m))$
for $a<b$, $m=a+b+1$:
Then we have $\mathrm{ord}(\theta_{1}\theta_{2})=4$
and $\theta_{1}\not\sim\theta_{2}$.
By a similar argument as in (Case 1), $a=2$,
it can be shown
that $\mathrm{ord}[(G,\theta_{1},\theta_{2})]=\mathrm{ord}(\theta_{1}\theta_{2})$.

\medskip

From the above argument we have complete the proof
of Theorem $\ref{thm:cst_RO_can}, (2)$.

\end{document}